\documentclass[11pt]{amsart}
\usepackage[normalem]{ulem}
\usepackage{cancel} % for striking forulas out
\usepackage{enumerate}
\usepackage{amssymb}
\usepackage{amsmath}
\usepackage{mathrsfs}
\usepackage{amsthm}
\usepackage{bm}
\usepackage{verbatim}
\usepackage{pgf}
\usepackage[font=scriptsize, labelfont=bf]{caption}
\usepackage{float}
\usepackage[square,numbers]{natbib}
\usepackage{moreverb,url}
\usepackage{amssymb,amsmath,amsfonts}
\usepackage{bm}
\usepackage[font=scriptsize, labelfont=bf]{caption}
\usepackage{float}
\usepackage{varwidth}
\usepackage{booktabs}
\usepackage{afterpage}
\usepackage{mathrsfs}
\usepackage{subcaption}
\usepackage{verbatim}
\usepackage[square,numbers]{natbib}
\usepackage{stmaryrd}
\usepackage{fancyhdr}
\usepackage{syntax,etoolbox}
\usepackage{graphicx}
\usepackage{grffile}
\usepackage[square,numbers]{natbib}
\usepackage[left=1in, right=1in, bottom=1in,top=1in]{geometry}
\usepackage{lineno}
\usepackage{syntax,etoolbox}
\usepackage{algorithmicx}
\usepackage{algorithm}% http://ctan.org/pkg/algorithms
\usepackage{algpseudocode}% http://ctan.org/pkg/algorithmicx
\usepackage{tikz}
\usepackage{gensymb}
\usepackage{hyperref}
\usepackage[nomain,symbols,nogroupskip]{glossaries}
\hypersetup{colorlinks=true,allcolors=blue}
\setglossarystyle{list}
\newglossary[mlg]{mathsymbols}{msy}{msn}{List of Mathematical Symbols}
\newglossary[plg]{physsymbols}{psy}{psn}{List of Physics Symbols}
\makeglossaries
\newglossaryentry{Omega}{
	type=mathsymbols,
	name=\ensuremath{\Omega},
	description={domain in $\mathbb{R}^2$},
	sort={A2Omega}
}
\newglossaryentry{nuOmega}{
	type=mathsymbols,
	name=\ensuremath{\bm{\nu}_{\partial\Omega}},
	description={outer unit normal vector along $\partial\Omega$},
	sort={A2nuOmega}
}
\newglossaryentry{nabla}{
	type=mathsymbols,
	name=\ensuremath{\nabla},
	description={gradient operator with respect to $(x_1,x_2)\in\Omega$},
	sort={A1nabla}
}
\newglossaryentry{T}{
	type=mathsymbols,
	name=\ensuremath{T},
	description={terminal observation time},
	sort={A1T}
}
\newglossaryentry{C00TX}{
	type=mathsymbols,
	name=\ensuremath{\mathcal{C}^0(\overline{I};X)},
	description={space of $X$-valued functions that are continuous on $\overline{I}$},
	sort={A1C0}
}
\newglossaryentry{Lp0TX}{
	type=mathsymbols,
	name=\ensuremath{L^p(I;X)},
	description={space of $X$-valued functions that are $L^p$ integrable in the interval $I$},
	sort={A1Lp}
}
\newglossaryentry{Wmp0TX}{
	type=mathsymbols,
	name=\ensuremath{W^{m,p}(I;X)},
	description={space of $X$-valued functions $v:I\to X$ such that $v, v', \dots, v^{(m)}\in L^p(I;X)$},
	sort={A1Wmp}
}
\newglossaryentry{W1pq0TX}{
	type=mathsymbols,
	name=\ensuremath{W^{1,p,q}(I;X)},
	description={space of $X$-valued functions $v:I\to X$ such that $v\in L^p(I;X)$ and $v'\in L^q(I;X)$},
	sort={A1Wmpq}
}
\newglossaryentry{Xprime}{
	type=mathsymbols,
	name=\ensuremath{X'},
	description={dual space of the normed vector space $X$},
	sort={A1Xprime}
}
\newglossaryentry{div}{
	type=mathsymbols,
	name=\ensuremath{\nabla\, \cdot},
	description={divergence operator},
	sort={A1div}
}
\newglossaryentry{Domega}{
	type=mathsymbols,
	name=\ensuremath{\mathcal{D}(\omega)},
	description={space of smooth functions with compact support in the open set $\omega$},
	sort={A1D}
}
\newglossaryentry{cP}{
	type=mathsymbols,
	name=\ensuremath{c_{P}},
	description={constant of the Poincar\'{e}-Friedrichs inequality. It results $c_{P}=\frac{\textup{diam }\Omega}{p^{1/p}}$ for~\eqref{penalty:seq} and $c_{P}=\sqrt{2}L$ for~\eqref{FD:temp}},
	sort={A1cP}
}
\newglossaryentry{Ctr}{
	type=mathsymbols,
	name=\normalfont\ensuremath{\hat{C}_{\textup{tr}}},
	description={continuity constant for the classical trace operator defined over the space $V$},
	sort={A1Csurf}
}
\newglossaryentry{Csurf}{
	type=mathsymbols,
	name=\normalfont\ensuremath{\hat{C}_{\textup{surf}}},
	description={bounding constant for the source term in~\eqref{penalty-thetatilde}},
	sort={A1Ctr}
}
\newglossaryentry{Ksurf}{
	type=mathsymbols,
	name=\normalfont\ensuremath{K_{\textup{surf}}},
	description={set of admissible transformed ice surface elevations satisfying the associated physical constraints},
	sort={A1Ksurf}
}
\newglossaryentry{Ksurf2}{
	type=mathsymbols,
	name=\normalfont\ensuremath{\mathcal{K}_{\textup{surf}}},
	description={time-dependent analogue of $K_{\textup{surf}}$},
	sort={A1Ksurf2}
}
\newglossaryentry{V}{
	type=mathsymbols,
	name=\ensuremath{V},
	description={vector space of admissible temperatures},
	sort={A1V}
}
\newglossaryentry{Ktemp}{
	type=mathsymbols,
	name=\normalfont\ensuremath{K_{\textup{temp}}},
	description={set of admissible temperatures in $V$ satisfying the associated physical constraints},
	sort={A1Ktemp}
}
\newglossaryentry{Ktemp2}{
	type=mathsymbols,
	name=\normalfont\ensuremath{\mathcal{K}_{\textup{temp}}},
	description={time-dependent analogue of $K_{\textup{temp}}$},
	sort={A1Ktemp2}
}
\newglossaryentry{neg-part}{
	type=mathsymbols,
	name=\ensuremath{-\{\cdot\}^{-}},
	description={negative part operator defined pointwise by $-\{f\}^{-}=-\min\{f,0\}$},
	sort={A1neg-part}
}
\newglossaryentry{pos-part}{
	type=mathsymbols,
	name=\ensuremath{\{\cdot\}^{+}},
	description={positive part operator defined pointwise by $\{f\}^{+}=\max\{f,0\}$},
	sort={A1pos-part}
}
\newglossaryentry{expf}{
	type=mathsymbols,
	name=\ensuremath{\exp(f)},
	description={exponential of a non-positive integrable function $f$},
	sort={A1expf}
}
\newglossaryentry{left-shift-u}{
	type=mathsymbols,
	name=\ensuremath{\Lambda_k \bm{u}_{\ell,k}},
	description={left-shift in time of the vector $\bm{u}_{\ell,k}$ defined in~\eqref{PilL}},
	sort={A2left-shift-u}
}
\newglossaryentry{right-shift-u}{
	type=mathsymbols,
	name=\ensuremath{\Pi_k \bm{u}_{\ell,k}},
	description={right-shift in time of the vector $\bm{u}_{\ell,k}$ defined in~\eqref{Pil}},
	sort={A2right-shift-u}
}
\newglossaryentry{left-shift-T}{
	type=mathsymbols,
	name=\ensuremath{\Lambda_k \tilde{\bm{\Theta}}_{\ell,k}},
	description={left-shift in time of the vector $\tilde{\bm{\Theta}}_{\ell,k}$ defined in~\eqref{PilTL}},
	sort={A2left-shift-T}
}
\newglossaryentry{right-shift-T}{
	type=mathsymbols,
	name=\ensuremath{\Pi_k \tilde{\bm{\Theta}}_{\ell,k}},
	description={right-shift in time of the vector $\tilde{\bm{\Theta}}_{\ell,k}$ defined in~\eqref{PilT}},
	sort={A2right-shift-T}
}
\newglossaryentry{Nodes}{
	type=mathsymbols,
	name=\ensuremath{N},
	description={number of equally-spaced intervals the interval $[0,T]$ is divided into},
	sort={A1Nodes}
}
\newglossaryentry{k}{
	type=mathsymbols,
	name=\ensuremath{k},
	description={time-step defined by $k:=T/N$},
	sort={A1k}
}
\newglossaryentry{Dk}{
	type=mathsymbols,
	name=\ensuremath{D_k},
	description={finite difference quotient in time with time-step $k$},
	sort={A1Dk}
}
\newglossaryentry{un+1}{
	type=mathsymbols,
	name=\ensuremath{u_{\ell,k}^{n+1}},
	description={Unique solution for~\eqref{penalty:seq} in Problem~\ref{Pkappa:seq}},
	sort={A1un+1}
}
\newglossaryentry{Tn+1}{
	type=mathsymbols,
	name=\ensuremath{\tilde{\Theta}_{\ell,k}^{n+1}},
	description={Unique solution for~\eqref{FD:temp} in Problem~\ref{Pkappa:seq}},
	sort={A2Tn+1}
}
\newglossaryentry{uell}{
	type=mathsymbols,
	name=\ensuremath{u_\ell},
	description={Solution for~\eqref{penalty-u} in Problem~\ref{Pkappa}},
	sort={A1uell}
}
\newglossaryentry{Tell}{
	type=mathsymbols,
	name=\ensuremath{\tilde{\Theta}_\ell},
	description={Solution for~\eqref{penalty-thetatilde} in Problem~\ref{Pkappa}},
	sort={A2Tell}
}
\newglossaryentry{elle}{
	type=mathsymbols,
	name=\ensuremath{\ell},
	description={positive penalty parameter meant to approach zero},
	sort={A1elle}
}
\newglossaryentry{G}{
	type=mathsymbols,
	name=\ensuremath{G},
	description={a weak-star limit for the sequence $\{-\nabla\cdot\left(|\nabla u_{\ell_n}|^{p-1}\nabla u_{\ell_n}\right)\}_{n=1}^\infty$},
	sort={A1G}
}
\newglossaryentry{b}{
	type=physsymbols,
	name=\ensuremath{b},
	description={bedrock},
	sort={A1b}
}
\newglossaryentry{h}{
	type=physsymbols,
	name=\ensuremath{h},
	description={ice surface elevation/ice thickness},
	sort={A1h}
}
\newglossaryentry{Omega+}{
	type=physsymbols,
	name=\ensuremath{\Omega_{t}^{+}},
	description={region of $\Omega$ where $h(t,\cdot)>0$},
	sort={A2Omega+}
}
\newglossaryentry{Omega-}{
	type=physsymbols,
	name=\ensuremath{\Omega_{t}^{-}},
	description={region of $\Omega$ where $h(t,\cdot)=0$},
	sort={A2Omega-}
}
\newglossaryentry{Gammaft}{
	type=physsymbols,
	name=\ensuremath{\Gamma_{f,t}},
	description={free boundary of the ice region},
	sort={A2Gammaft}
}
\newglossaryentry{as}{
	type=physsymbols,
	name=\ensuremath{a_s},
	description={surface mass balance},
	sort={A1as}
}
\newglossaryentry{ab}{
	type=physsymbols,
	name=\ensuremath{a_b},
	description={basal melting rate},
	sort={A1ab}
}
\newglossaryentry{a}{
	type=physsymbols,
	name=\ensuremath{a},
	description={\ensuremath{a_s-a_b}},
	sort={A1a0}
}
\newglossaryentry{atilde}{
	type=physsymbols,
	name=\ensuremath{\tilde{a}},
	description={function defined in~\eqref{aNEW}},
	sort={A1atilde}
}
\newglossaryentry{Ub}{
	type=physsymbols,
	name=\ensuremath{\bm{U}_{b}},
	description={basal sliding velocity},
	sort={A1Ub}
}
\newglossaryentry{U}{
	type=physsymbols,
	name=\ensuremath{\bm{U}},
	description={horizontal ice flow velocity},
	sort={A1U}
}
\newglossaryentry{Q}{
	type=physsymbols,
	name=\ensuremath{\bm{Q}},
	description={vertically integrated ice volume flux},
	sort={A1Q}
}
\newglossaryentry{h0}{
	type=physsymbols,
	name=\ensuremath{h_0},
	description={initial ice surface elevation/ice thickness},
	sort={A1h0}
}
\newglossaryentry{nuGammaft}{
	type=physsymbols,
	name=\ensuremath{\bm{\nu}_{\Gamma_{f,t}}},
	description={outer unit normal vector along the free boundary $\Gamma_{f,t}$},
	sort={A2nuGammaft}
}
\newglossaryentry{nubase}{
	type=physsymbols,
	name=\ensuremath{\bm{\nu}_{b}},
	description={unit vector normal to the bedrock which points towards the portion of three-dimensional space that does not contain the bedrock},
	sort={A2nubase}
}
\newglossaryentry{nuice}{
	type=physsymbols,
	name={\normalfont\ensuremath{\bm{\nu}_{\text{ice}}}},
	description={outer unit normal vector applied along the surface $\{(x,h(t,x));x\in\overline{\Omega_{t}^{+}}\}$ and pointing towards to portion of space complementary to the one occupied by ice at time $t\in[0,T]$},
	sort={A2nuice}
}
\newglossaryentry{Tice}{
	type=physsymbols,
	name={\normalfont\ensuremath{\Theta_{\textup{ice}}}},
	description={shallow ice sheet internal temperature},
	sort={A2Tice}
}
\newglossaryentry{Tm}{
	type=physsymbols,
	name={\normalfont\ensuremath{\Theta_{\textup{m}}}},
	description={temperature that is sufficiently below the pressure melting point},
	sort={A2Tm}
}
\newglossaryentry{Ts}{
	type=physsymbols,
	name={\normalfont\ensuremath{\Theta_{\textup{surf}}}},
	description={ice temperature on the ice surface},
	sort={A2Ts}
}
\newglossaryentry{Tbase}{
	type=physsymbols,
	name={\normalfont\ensuremath{\Theta_{\textup{base}}}},
	description={ice temperature on the bedrock},
	sort={A2Tbase}
}
\newglossaryentry{Tbasetilde}{
	type=physsymbols,
	name={\normalfont\ensuremath{\tilde{\Theta}_{\textup{base}}}},
	description={auxiliary basal temperature},
	sort={A2Tbasetilde}
}
\newglossaryentry{T0}{
	type=physsymbols,
	name=\ensuremath{\Theta_0},
	description={initial shallow ice sheet internal temperature},
	sort={A2T0}
}
\newglossaryentry{T0tilde}{
	type=physsymbols,
	name=\ensuremath{\tilde{\Theta}_0},
	description={auxiliary initial temperature},
	sort={A2T0tilde}
}
\newglossaryentry{Text}{
	type=physsymbols,
	name={\normalfont\ensuremath{\Theta_{\textup{ext}}}},
	description={external auxiliary temperature},
	sort={A2Text}
}
\newglossaryentry{Tint}{
	type=physsymbols,
	name={\normalfont\ensuremath{\Theta_{\textup{int}}}},
	description={internal auxiliary temperature},
	sort={A2Tint}
}
\newglossaryentry{Taux}{
	type=physsymbols,
	name={\normalfont\ensuremath{\Theta_{\textup{aux}}}},
	description={auxiliary temperature defined in $\Omega\times (0,L)$},
	sort={A2Taux}
}
\newglossaryentry{Theta}{
	type=physsymbols,
	name=\ensuremath{\Theta},
	description={function defined in~\eqref{Theta-final-1}},
	sort={A2Theta}
}
\newglossaryentry{Thetatilde}{
	type=physsymbols,
	name=\ensuremath{\tilde{\Theta}},
	description={function defined in~\eqref{Theta-final-2}},
	sort={A2Thetatilde}
}
\newglossaryentry{qgeoperp}{
	type=physsymbols,
	name={\normalfont\ensuremath{q_\textup{geo}^\perp}},
	description={positive geothermal heat flux from bedrock to ice bulk},
	sort={A1qgeoperp}
}
\newglossaryentry{kappa}{
	type=physsymbols,
	name=\ensuremath{\kappa},
	description={ice heat conductivity},
	sort={A2kappa}
}
\newglossaryentry{A}{
	type=physsymbols,
	name=\ensuremath{A},
	description={ice softness},
	sort={A1AA}
}
\newglossaryentry{A0}{
	type=physsymbols,
	name={\normalfont\ensuremath{A_0}},
	description={positive parameter in the Arrhenius law},
	sort={A1AA0}
}
\newglossaryentry{Amin}{
	type=physsymbols,
	name={\normalfont\ensuremath{A_{\textup{min}}}},
	description={minimal ice softness},
	sort={A1AAmin}
}
\newglossaryentry{beta}{
	type=physsymbols,
	name={\normalfont\ensuremath{\beta}},
	description={positive parameter in the Arrhenius law},
	sort={A2beta}
}
\newglossaryentry{rho}{
	type=physsymbols,
	name=\ensuremath{\rho},
	description={ice mass density},
	sort={A2rho}
}
\newglossaryentry{g}{
	type=physsymbols,
	name=\ensuremath{g},
	description={gravitational acceleration},
	sort={A1g}
}
\newglossaryentry{p}{
	type=physsymbols,
	name=\ensuremath{p},
	description={Glen's power law index},
	sort={A1p}
}
\newglossaryentry{vz}{
	type=physsymbols,
	name=\ensuremath{v_{z}},
	description={vertical ice flow velocity},
	sort={A1vz}
}
\newglossaryentry{u}{
	type=physsymbols,
	name=\ensuremath{u},
	description={function appearing in the change of variable~\eqref{transfo}},
	sort={A1u}
}
\newglossaryentry{u0}{
	type=physsymbols,
	name=\ensuremath{u_0},
	description={initial datum for the transformed variational problem~\eqref{vi:ice}},
	sort={A1u0}
}
\newglossaryentry{mu1}{
	type=physsymbols,
	name=\ensuremath{\mu_1},
	description={essential infimum of the function $\mu$ defined},
	sort={A2mu1}
}
\newglossaryentry{mu}{
	type=physsymbols,
	name=\ensuremath{\mu},
	description={function defined in~\eqref{mu-def}},
	sort={A2mu}
}
\newglossaryentry{mu2}{
	type=physsymbols,
	name=\ensuremath{\mu_2},
	description={essential supremum of the function $\mu$},
	sort={A2mu2}
}
\newglossaryentry{c}{
	type=physsymbols,
	name=\ensuremath{c},
	description={ice specific heat},
	sort={A1c}
}
\newglossaryentry{C0}{
	type=physsymbols,
	name=\ensuremath{C_0},
	description={regularisation of the effective stress appearing in~\eqref{regularisation}},
	sort={A1C0}
}
\newglossaryentry{r0}{
	type=physsymbols,
	name=\ensuremath{r_0},
	description={diffuse interface parameter},
	sort={A1r0}
}
\newglossaryentry{L}{
	type=physsymbols,
	name=\ensuremath{L},
	description={positive constant associated with the maximum ice thickness},
	sort={A1L}
}
\newglossaryentry{xi}{
	type=physsymbols,
	name=\ensuremath{\xi},
	description={cut-off function used for defining adiabatic solutions},
	sort={A2xi}
}
\newglossaryentry{Dice}{
	type=physsymbols,
	name={\normalfont\ensuremath{D_{\textup{ice}}(t)}},
	description={portion of three-dimensional space occupied by ice at time $t$},
	sort={A1Dice}
}
\newglossaryentry{alpha}{
	type=physsymbols,
	name=\ensuremath{\alpha},
	description={constant introduced in~\eqref{alpha}},
	sort={A2alpha}
}
\newtheorem{theorem}{Theorem}[section]
\newtheorem{lemma}[theorem]{Lemma}

\theoremstyle{definition}

\newtheorem{remark}[theorem]{Remark}

\numberwithin{equation}{section}

\def\dd{\, \mathrm{d}}

\newcommand{\wsc}{\overset{\ast}{\rightharpoonup}}%

\newtheorem{innercustomgeneric}{\customgenericname}
\providecommand{\customgenericname}{}
\newcommand{\newcustomproblem}[2]{%
	\newenvironment{#1}[1]
	{%
		\renewcommand\customgenericname{#2}%
		\renewcommand\theinnercustomgeneric{##1}%
		\innercustomgeneric
	}
	{\endinnercustomgeneric}
}

\newcustomproblem{customprob}{\textbf{Problem}}
\newcustomproblem{customalgo}{Algorithm}
\newcommand*{\bqed}{\hfill\ensuremath{\blacksquare}}%

\allowdisplaybreaks

\makeatletter
\newcommand{\addresseshere}{%
	\enddoc@text\let\enddoc@text\relax
}
\makeatother

\begin{document}
	
	\today
	
	%%
	%% The title of the paper goes here.  Edit to your title.
	%%
	
	\title[Adiabatic flows in grounded shallow ice sheets]{Diffuse Adiabatic Flows in Thermally Coupled Grounded Shallow Ice Sheets: Modelling and Analysis}
	
	%%
	%% Now edit the following to give your name and address:
	%% 
	
	\author{Paolo Piersanti}
	\address{School of Science and Engineering, The Chinese University of Hong Kong (Shenzhen), 2001 Longxiang Blvd., Longgang District, Shenzhen, China}
	\email{ppiersanti@cuhk.edu.cn}
	
\begin{abstract}
In this article we propose a novel thermodynamical model which couples the evolution of the thickness of a grounded shallow ice sheet with the evolution of its internal temperature. Both the grounded shallow ice sheet surface elevation and the ice internal temperature are subjected to physical constraints. The equations governing the evolution of the grounded shallow ice sheet thickness are degenerate, and the ice internal temperature evolves in a moving domain.
First, we formally model the phenomenon under consideration by adopting strategies akin to those employed in the construction of diffuse-interface models. Second, we establish the existence of solutions for one such formal model by means of the penalty method, and we observe that the low regularity of the problem under consideration prevents us from obtaining a standard concept of solution.
\end{abstract}

\maketitle

\tableofcontents

\section{Introduction}
\label{Intro}

The mathematical modelling of ice sheets is fundamentally characterised by the interaction between a free boundary, governing the ice thickness, and physical fields, such as temperature, defined on the evolving domain itself.

In this article we propose and study a mathematical model describing the interaction between the evolution of the thickness of a grounded shallow ice sheet (from now on, simply \emph{shallow ice sheet}) and the evolution of the ice internal temperature within it.

\subsection*{Literature Review}

The rigorous mathematical study of models in Glaciology dates back, to our best knowledge, to 2002, when Calvo, D\'{i}az, Durany, Schiavi and V\'azquez~\cite{Diaz2002} published a paper discussing the evolution of the thickness of a shallow ice sheet as an obstacle problem. In their article, these authors only considered \emph{one spatial direction}, they assumed the basal velocity to be smooth, and assumed the bedrock to be flat. This formulation was also exploited to justify the generation of fast ice streams in ice sheets flowing along soft and deformable beds~\cite{Diaz2007}. In this direction, we also cite the related paper~\cite{Diaz1999} by D\'{i}az and his collaborators.

Ten years later, in 2012, Jouvet \& Bueler~\cite{JouvBuel2012} studied the steady (i.e., time-independent) version of the problem considered in~\cite{Diaz2002} where, this time, two spatial directions and a more general bedrock topography were taken into account.

In 2023, in~\cite{PT23}, the time-dependent version of the model considered in~\cite{JouvBuel2012} was mathematically analysed, under the assumption that the bedrock was flat. The latter assumption is physically realistic in the context of many ice sheets (e.g., the Greenland ice sheet~\cite{Bamber2001} and the Devon ice cap, described as a dome-shaped ice mass resting atop a bedrock plateau~\cite{DBGBS2004}). The elliptic bulk energy for the model considered in~\cite{JouvBuel2012} is expressed in terms of a tilted $p$-Laplacian, which is in general non-monotone. Requiring the bedrock to be flat restores the monotonicity and ensured the viability of the mathematical analysis of the time-dependent version of the model studied in~\cite{JouvBuel2012} by means of monotonicity methods~\cite{Brezis1972,Lions1969}.

Numerical studies in ice sheet dynamics were carried out by Hughes \& Ghattas and their collaborators in a series of papers. For instance, in~\cite{Ghattas12}, Hughes \& Ghattas and their collaborators proposed an infinite-dimensional adjoint-based inexact Gauss-Newton method for the solution of inverse problems governed by Stokes models of ice sheet flow with non-linear rheology and sliding law. In~\cite{Ghattas15a}, motivated by the need for efficient and accurate simulation of the dynamics of the polar ice sheets, Ghattas and his collaborators designed scalable solvers for the solution of non-linear incompressible Stokes equations. In~\cite{Ghattas15b}, Ghattas and his collaborators proposed efficient and scalable algorithms for this end-to-end, data-to-prediction process under the Gaussian approximation and in the context of modelling the flow of the Antarctic ice sheet and its effect on loss of grounded ice to the ocean. In~\cite{Ghattas16}, Hughes \& Ghattas and their collaborators studied inverse problems to recover the geothermal heat flux starting from ice flow observations.

Stemming from the analysis presented in~\cite{PT23}, numerical studies by Chawla, Holmes and Temam followed, some of which employed Physics-Informed Neural Networks to overcome the difficulties owing to the lack of regularity of solutions of the governing model~\cite{CH2025,CHT2024}.

Grounded ice sheets and sea ice are two related but physically different problems. The mathematical literature dealing with sea ice and glaciers has been growing rapidly since the early 2000s; in this direction we mention, for instance, the articles~\cite{Jouvet2015,Jouvet2015-2,Jouvet2014,Jouvet2013,Michel2014,Schoof2006,Schoof2006-2}, and the landmark paper by W.D. Hibler~\cite{Hibler1979} on sea ice modelling. The local-in-time well-posedness of Hibler's model has recently been established in the article by Liu, Thomas \& Titi~\cite{Titi2021}. The authors designed a regularizing scheme based on physical observations for deriving the sought local-in-time well-posedness.

In~\cite{Hieber2021}, Hieber and his collaborators investigated the local-in-time well-posedness of Hibler's model by means of a different regularisation approach. Precisely, the authors constructed the local strong solutions of the corresponding regularised system by investigating the analytic semi-group property of the parabolic operator of the regularised system in a bounded domain, and they also established the global well-posedness in time for initial data close to constant equilibria.
In this direction, we also mention the recent work by Hieber's research group~\cite{Brandt2022-3,Brandt2022-2,Brandt2022-1}.

In the recent paper~\cite{FRS2024}, Figalli, Ros-Oton \& Serra studied the phase transition of ice melting to water as a Stefan problem. The results established in~\cite{FRS2024} provide a refined understanding of the Stefan problem's singularities and answer some long-standing open questions in the field of free-boundary problems.

\subsection*{Novelties and main results}

The purpose of this paper is to extend the study initiated in~\cite{PT23} to the case where the evolution of the ice internal temperature is coupled with the evolution of the ice thickness. The \emph{main point of innovation} introduced here consists in considering the coupling between the ice thickness evolution and the internal ice temperature evolution. Since the ice thickness level is constrained to remain on or above the bedrock at all times, the variational problem describing the evolution of the ice thickness can be formulated as a \emph{time-dependent obstacle problem}.

On the one hand, the ice thickness of a shallow ice sheet evolves as a consequence of a manifold of factors like, for instance, the rate at which snow deposits, the rate at which melting occurs, as well as the velocity at which the glacier slides along the bedrock.

On the other hand, the ice internal temperature evolves as a result of the temperature of the surrounding environment, the heat irradiated through the bedrock surface, and the effective stress exerted by the ice mass.
According to Section~5.4 in~\cite{GreveBlatter2009}, the governing equations coupling the evolution of ice thickness and the evolution of ice internal temperature and the boundary conditions change if the ice internal temperature either drops below the \emph{pressure melting point} or reaches the pressure melting point, developing \emph{latent heat}.

In the context of this analysis, we consider the case where the ice bedrock is flat (as in~\cite{PT23}) and \emph{frozen}, in the sense that the ice internal temperature is constrained to remain below the pressure melting point. Also in this case, the variational problem associated with the evolution of the ice internal temperature can be formulated as a time-dependent obstacle problem.

The problem under consideration, in addition to being novel from both the modelling point of view and the analytical point of view, introduces challenges owing to the fact that the ice internal temperature is defined in the portion of space occupied by ice. This means that, in addition to having to deal with non-linearities associated with the presence of a \emph{free boundary}, for both the shallow ice sheet thickness and the ice internal temperature, substantial caution must be exercised as the governing equations for the ice internal temperature are posed over a \emph{moving domain}, where the motion of the domain boundary is expressed in terms of the ice thickness, which is one of the unknowns of the problem under consideration.
To summarise, the \textbf{main novelties} presented in this paper are the following:
\begin{itemize}
	\item[$(1)$] This paper introduces (viz. Section~\ref{Sec:1}) reduction techniques for transforming the original model coupling the evolution of the shallow ice sheet thickness with the evolution of the ice internal temperature (cf., e.g., Section~5.4 in~\cite{GreveBlatter2009}) to an equivalent model posed over a \emph{fixed} domain. These reduction techniques render the problem more tractable from the mathematical point of view at the cost of considering an approximation of the temperature evolution that vanishes near the ice surface. From the point of view of Physics, this feature characterises ice sheets covered with a continuum layer of debris. The latter feature of the thermal model, which is observable in the Miage ice sheet and the Karakorum ice sheet~\cite{MMDLST06,MBDDCKCS08,NB06,RHH21}, justifies the choice for the adjective \emph{adiabatic} in the title of this paper (cf. Remark~\ref{rem:3});
	\item[$(2)$] This paper establishes the existence of \emph{very weak solutions} of the initial boundary-value problem associated with the \emph{reduced model} obtained upon completion of item~$(1)$, coupling the evolution of the shallow ice sheet surface with the evolution of the ice internal temperature, in the case where both the unknown magnitudes are subjected to obeying or geometrical or physical constraints, i.e., the ice thickness is constrained to be greater than zero and the ice internal temperature is constrained to remain below the pressure melting point;
	\item[$(3)$] Beyond its glaciological application, this work introduces a new analytical framework for a class of coupled obstacle problems posed on moving domains.
\end{itemize}

\subsection*{The main result}
The main result of this paper is \textbf{Theorem~\ref{thm:4}}, where we establish the \emph{existence of very weak solutions} for the initial boundary-value problem coupling the evolution of the shallow ice sheet thickness with the evolution of its ice internal temperature in the case where the \emph{bedrock is frozen and flat}. In order to do so, we will need to improve the energy estimates in~\cite{PT23} by \emph{removing} the extra \emph{decrease assumption}~$(4.6)$ made there. In the context of the derivation of the rigorous concept of solution, in~\cite{PT23} extra regularity assumptions on the weak limit were resorted to. In this paper, we drop this extra regularity assumptions and we obtain an \emph{abstract} limit model. This conclusion is consistent with the results obtained in~\cite{PT23} where extra regularity had been assumed, while the solution concept aligns with the non-standard framework for evolutionary variational inequalities governed by pseudo-monotone operators~\cite{Pham2013}.

\subsection*{Structure of the paper}
This paper is divided into eight sections (including this one). In Section~\ref{Sec:1} we present a formal derivation of the model governing the evolution of the shallow ice sheet surface when the contribution of the ice internal temperature is taken into account.

In Section~\ref{Sec:1bis} we present a formal derivation of the model governing the ice temperature evolution, which takes the form of a \emph{moving boundary problem}, where the moving boundary is associated with the solution of the model presented in Section~\ref{Sec:1}.

In Section~\ref{Sec:2} we formulate the ``penalised'' version of the variational problem introduced in Section~\ref{Sec:1} and Section~\ref{Sec:1bis}, and we establish the existence of solutions for the time discretisation for this model. We also present new results that will be employed to study the \emph{ice softness term} coupling the model governing the evolution of the shallow ice sheet surface with the model governing the evolution of the ice internal temperature.

In Section~\ref{Sec:3} we establish the validity of suitable \emph{a priori} estimates that will allow us to resort to compactness methods.

In Section~\ref{Sec:3:bis} we establish other preparatory lemmas, that descend from the \emph{a priori} estimates derived in Section~\ref{Sec:3}.

In Section~\ref{Sec:3:ter}, we pass to the limit as the time step tends to zero in the discrete model introduced in Section~\ref{Sec:2}, and we establish the existence of solutions for the ``penalised'' version of the \emph{formal} obstacle problems introduced in Section~\ref{Sec:1} and Section~\ref{Sec:1bis}.

Finally, in Section~\ref{Sec:4}, we pass to the limit as the penalty parameter tends to zero in the ``penalised'' version of the \emph{formal} obstacle problems introduced in Section~\ref{Sec:1} and Section~\ref{Sec:1bis}, and we recover a \emph{rigorous concept of solution} for the model corresponding we formally derived at the end of Section~\ref{Sec:1} and Section~\ref{Sec:1bis}.

\subsection*{Notation}
For the sake of facilitating the reading of this paper, a Glossary was appended after the bibliographical references.

We consider the $N$-dimensional Euclidean space $(\mathbb{R}^N,\cdot)$ with its standard inner product. For an open subset $\omega$ of $\mathbb{R}^N$, the symbols $L^2(\omega)$, $H^m(\omega)$, and $H^m_0(\omega)$ (for $m \ge 1$) denote the standard Lebesgue and Sobolev spaces, while $\gls{Domega}$ represents the space of infinitely differentiable functions with compact support in $\omega$. The norm in a normed vector space $X$ is denoted by $\left\| \cdot \right\|_X$, its dual space is denoted by $\gls{Xprime}$, and the duality pairing between $X'$ and $X$ is denoted by $\langle \cdot, \cdot\rangle_{X',X}$. Boldface letters indicate spaces of vector-valued functions.

For a bounded open interval $I$ (see~\cite{Leoni2017}), the Lebesgue and Sobolev spaces of functions with values in a Banach space $X$ are denoted by $\gls{Lp0TX}$, $\gls{Wmp0TX}$ and $\gls{W1pq0TX}$, where $1 \le p \le \infty$ and $m\ge 1$. The space of continuous functions from the closure of a bounded interval $I$ to a Banach space $X$ is denoted by $\gls{C00TX}$. The respective norms of these spaces are $\left\|\cdot\right\|_{L^p(I;X)}$, $\left\|\cdot\right\|_{W^{m,p}(I;X)}$ and $\|\cdot\|_{\mathcal{C}^0(\overline{I};X)}$, respectively.

A \emph{Lipschitz domain in} $\mathbb{R}^N$ is defined as a bounded, connected open subset $\omega$ of $\mathbb{R}^N$ with a Lipschitz-continuous boundary $\partial\omega$, where $\omega$ lies locally on one side of its boundary (cf., e.g., Section~8.2 in~\cite{Ciarlet2025}).

In what follows, we let $\gls{Omega}$ be a Lipschitz domain in $\mathbb{R}^2$, and let $x=(x_1,x_2)$ denote a generic point in $\overline{\Omega}$. The outer unit normal vector field on $\partial\Omega$ is denoted by $\gls{nuOmega}$.
Let $0<\gls{T}<\infty$ be a given terminal time, and consider the time interval $(0,T)$, where $t\in (0,T)$ denotes an arbitrary time instant.

\section{Formal derivation of the shallow ice sheet surface evolution model}
\label{Sec:1}

The \emph{bedrock elevation} is described by the function $\gls{b}:\overline{\Omega} \to \mathbb{R}$. We assume that the bedrock topography remains constant over the observation period and, in agreement with~(5.85) in~\cite{GreveBlatter2009}, that $b\equiv 0$ in $\overline{\Omega}$ up to changing the coordinate system. The latter assumption is applicable to the Greenland ice sheet~\cite{JouvBuel2012} and the Devon ice cap~\cite{CH2025,CHT2024}.

The \emph{ice surface elevation} is given by $\gls{h}:[0,T]\times \overline{\Omega} \to \mathbb{R}$, which immediately implies that the following constraint has be taken into account:
\begin{equation}
	\label{constraint-h}
	h \ge 0 \quad\textup{ in }[0,T]\times \overline{\Omega}.
\end{equation}

This property frames the problem of ice thickness evolution as an obstacle problem, with the bedrock acting as the \emph{obstacle} and, for this particular choice of the framework, it results that the ice surface elevation coincides with the ice thickness.

We recall that in the Shallow Ice Approximation regime, the ice surface elevation is small in the bulk and might exhibit large slopes at the ice margins, leading to potential degeneracies in the governing equations~\cite{GreveBlatter2009,Hutter1983}.
The presence of this constraint introduces a \emph{free boundary} (see~\cite{Brezis1972,SalsaCaff2005,Fichera77}). For all, or almost every, $t \in (0,T)$, we define the set
\begin{equation*}
	\label{ice-region}
	\gls{Omega+}:=\{x \in \Omega; h(t,x) > 0\},
\end{equation*}
which represents the \emph{region of $\Omega$ covered by ice at time $t$}. The associated \emph{free boundary} is the set:
\begin{equation*}
	\label{free-boundary}
	\gls{Gammaft}:=\Omega \cap \partial\Omega_{t}^{+}.
\end{equation*}

The evolution of the ice thickness $h$ is driven by the \emph{surface-mass balance} $\gls{as}$ (accumulation/ablation rate) and the \emph{basal melting rate} $\gls{ab}$. The function $a_b$ is zero when the basal temperature is below the pressure melting point and positive otherwise. Typically, $a_s$ and $a_b$ depend on the horizontal position $x$ and the surface elevation $h(t,x)$, i.e., they are defined as follows:
\begin{equation*}
	\label{accumulation}
	\begin{aligned}
		a_s&:[0,T] \times \overline{\Omega} \times \mathbb{R}^+_{0} \to \mathbb{R},\\
		a_b&:[0,T] \times \overline{\Omega} \times \mathbb{R}^+_{0} \to \mathbb{R}^+_{0}.
	\end{aligned}
\end{equation*}

Since for the problem under consideration the temperature is constrained not to rise above the pressure melting point (cf., Section~5.4 in~\cite{GreveBlatter2009}), it results $a_b \equiv 0$, so that $\gls{a}:=a_s-a_b=a_s$. This function $a$ is assumed to satisfy the following properties (cf., e.g., \cite{GreveBlatter2009,JouvBuel2012,SchoofHewitt2013}):
\begin{gather}
	a\in\mathcal{C}^0([0,T]\times\overline{\Omega}\times\mathbb{R}^+_{0}),\label{a:continuous}\\
	a>0 \textup{ in accumulation areas within }\Omega_{t}^{+},\label{a:positive-1}\\
	a\ge 0 \textup{ in ablation areas within }\Omega_{t}^{+},\label{a:positive-2}\\
	a<0 \textup{ in }\gls{Omega-}:=\Omega\setminus(\Omega_{t}^{+} \cup \Gamma_{f,t}).\label{a:negative}
\end{gather}

Ice sheets are characterised by incompressible, non-Newtonian, gravity-driven flows~\cite{Fowler1997, GreveBlatter2009}, which transport ice from accumulation areas to ablation areas within $\Omega_{t}^{+}$.
The \emph{basal sliding velocity} is a given horizontal vector field $\gls{Ub}:\overline{\Omega}\to\mathbb{R}^2$. For a \emph{frozen} base (i.e., with bedrock temperature below the pressure melting point), it is reasonable to assume that $\bm{U}_b\equiv\bm{0}$ (cf., e.g., Section~5.4 in~\cite{GreveBlatter2009}). The \emph{horizontal ice flow velocity} is denoted by $\gls{U}:[0,T] \times \overline{\Omega} \times \mathbb{R} \to \mathbb{R}^2$, and the \emph{vertically integrated ice volume flux} is denoted and defined by:
\begin{equation}
	\label{flux}
	\gls{Q}:[0,T] \times \overline{\Omega}\to\mathbb{R}^2, \quad \bm{Q}:=\int_{0}^{h} \bm{U} \dd z.
\end{equation}

Following~\cite{JouvBuel2012}, we assume no ice flux into the ice-free region $\Omega_{t}^{-}$
\begin{equation*}
	\label{normal-flux}
	\bm{Q}\cdot\bm{\nu}_{\Gamma_{f,t}} =0, \quad \textup{ on }\Gamma_{f,t},
\end{equation*}
where $\gls{nuGammaft}$ denotes the \emph{outer unit normal vector to the free boundary $\Gamma_{f,t}$ pointing towards the region of $\Omega$ that is not covered with ice}. Consistently, we are in position to extend the volume flux by zero outside of $\Omega_{t}^{+}$:
\begin{equation}
	\label{outer-flux}
	\bm{Q}=\bm{0} \quad \textup{ in } \Omega_{t}^{-}.
\end{equation}

Furthermore, as in~\cite{JouvBuel2012}, we impose the boundary conditions
\begin{equation*}
	\label{BC}
	h(t,\cdot)=0 \textup{ on }\Gamma_{f,t} \qquad\textup{ and } \qquad h(t,\cdot)=0 \textup{ on } (0,T)\times\partial\Omega,
\end{equation*}
noting that $h(t,\cdot)$ can be extended by zero in $\Omega_{t}^{-}$.

The ice thickness evolution is governed by the \emph{ice thickness equation} (cf. equation~(5.55) in~\cite{GreveBlatter2009}):
\begin{equation}
	\label{sie}
	\dfrac{\partial h}{\partial t}= -\nabla \cdot \bm{Q} + a,\quad\textup{ in }\Omega_{t}^{+} \textup{ for a.a. }t\in (0,T).
\end{equation}

A proper description of the free boundary $\Gamma_{f,t}$ and the region $\Omega_{t}^{+}$ requires a weak formulation of the problem. To formally derive the associated boundary value problem, we first multiply~\eqref{sie} by $(v-h)$ for a sufficiently smooth test function $v$ satisfying $v \ge 0$ in $[0,T] \times \overline{\Omega}$, and then integrate over $\Omega$. Using the decomposition $\Omega = \Omega_{t}^{+} \sqcup \Omega_{t}^{-} \sqcup \Gamma_{f,t}$, applying the Green formula (cf., e.g., Theorem~8.2-2 in~\cite{Ciarlet2025}), incorporating conditions~\eqref{outer-flux}, resorting to the property that $h=0$ in $\Omega_{t}^{-}$ and -- thus -- that $\bm{Q}(t,\cdot)=\bm{0}$ in $\Omega_{t}^{-}$, resorting to the property that $\frac{\partial h}{\partial t}(t,\cdot)\ge 0$ in $\Omega_{t}^{-}$, and resorting to the properties of $a$ (viz.~\eqref{a:continuous}--\eqref{a:negative}) leads to
\begin{align*}
	&\int_{\Omega}\dfrac{\partial h}{\partial t} (v-h)\dd x-\int_{\Omega}\bm{Q}\cdot\nabla(v-h)\dd x\\
	&=\int_{\Omega_{t}^{+}}\dfrac{\partial h}{\partial t} (v-h)\dd x-\int_{\Omega_{t}^{+}}\bm{Q}\cdot\nabla(v-h)\dd x\\
	&+\int_{\Omega_{t}^{-}}\dfrac{\partial h}{\partial t}(v-h)\dd x-\int_{\Omega_{t}^{-}}\bm{Q}\cdot\nabla(v-h)\dd x\ge\int_{\Omega}a(v-h)\dd x,
\end{align*}
for a.a. $t\in (0,T)$.
We thus obtained the following weak variational formulation associated with~\eqref{sie}: \emph{Find $h \ge 0$ satisfying the variational inequalities}
\begin{equation}
	\label{weak-4}
	\int_{0}^{T} \int_{\Omega} \dfrac{\partial h}{\partial t} (v-h) \dd x\dd t -\int_{0}^{T} \int_{\Omega} \bm{Q} \cdot \nabla (v-h) \dd x \dd t \ge \int_{0}^{T}\int_{\Omega} a (v-h) \dd x\dd t,
\end{equation}
\emph{for all test functions $v$ with $v \ge 0$ a.e. in $(0,T) \times \Omega$, satisfying the boundary conditions}
$$
h=0\quad\textup{ on }(0,T)\times\partial\Omega,
$$
\emph{and satisfying the initial condition}
\begin{equation*}
	h(0,x) = h_0(x) \quad \textup{ for all }x\in\overline{\Omega},
\end{equation*}
\emph{where the initial datum $\gls{h0}=h(0)$ is given, non-negative, and not identically zero (cf.~\eqref{constraint-h}).}

Examples of initial values $h_0$ could be, for instance, the Bueler or Vialov profile~\cite{GreveBlatter2009}.
\begin{figure}[H]
	\centering
	\begin{subfigure}[t]{0.45\linewidth}
		\includegraphics[width=1.0\textwidth]{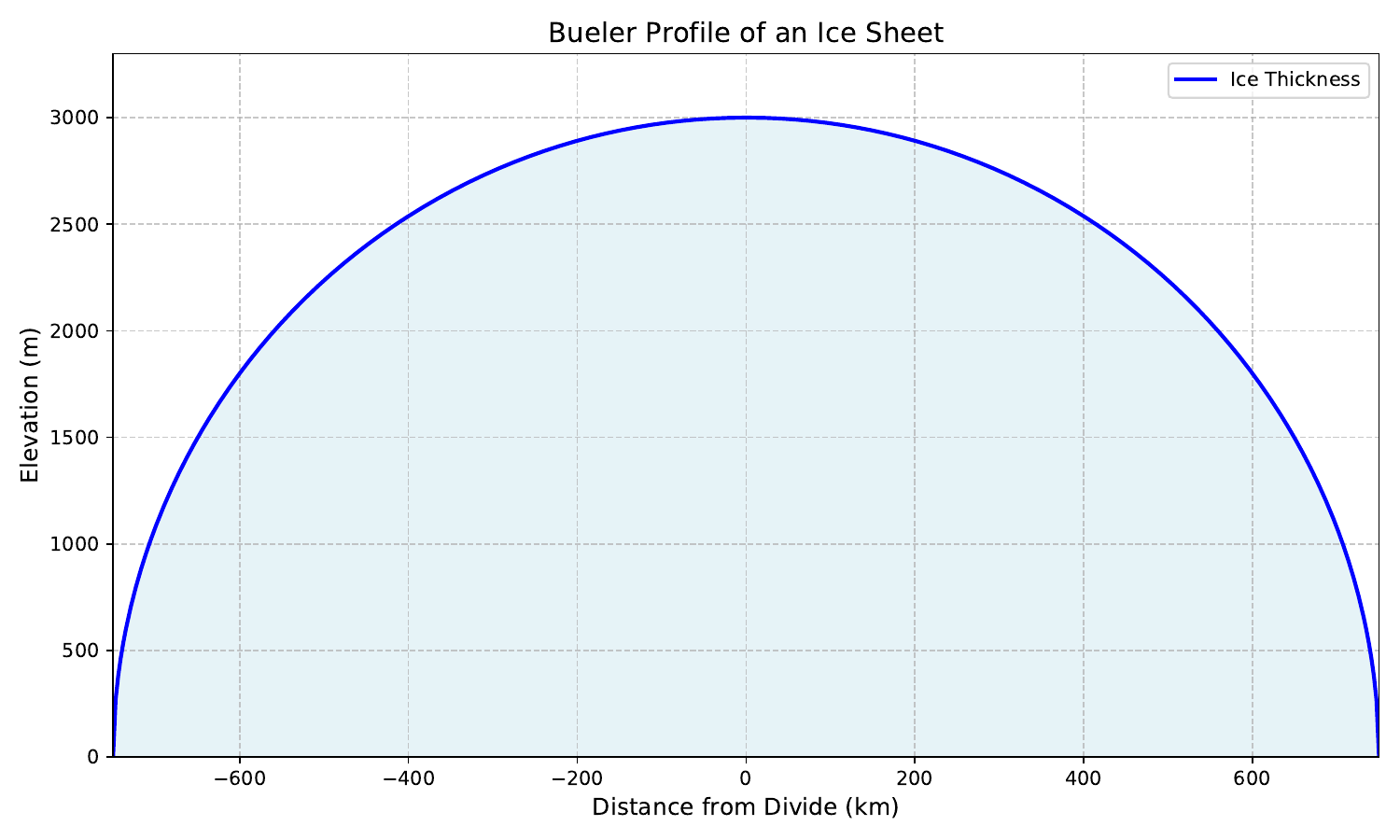}
		\subcaption{Bueler profile.}
	\end{subfigure}%
	\hspace{0.5cm}
	\begin{subfigure}[t]{0.45\linewidth}
		\includegraphics[width=1.0\textwidth]{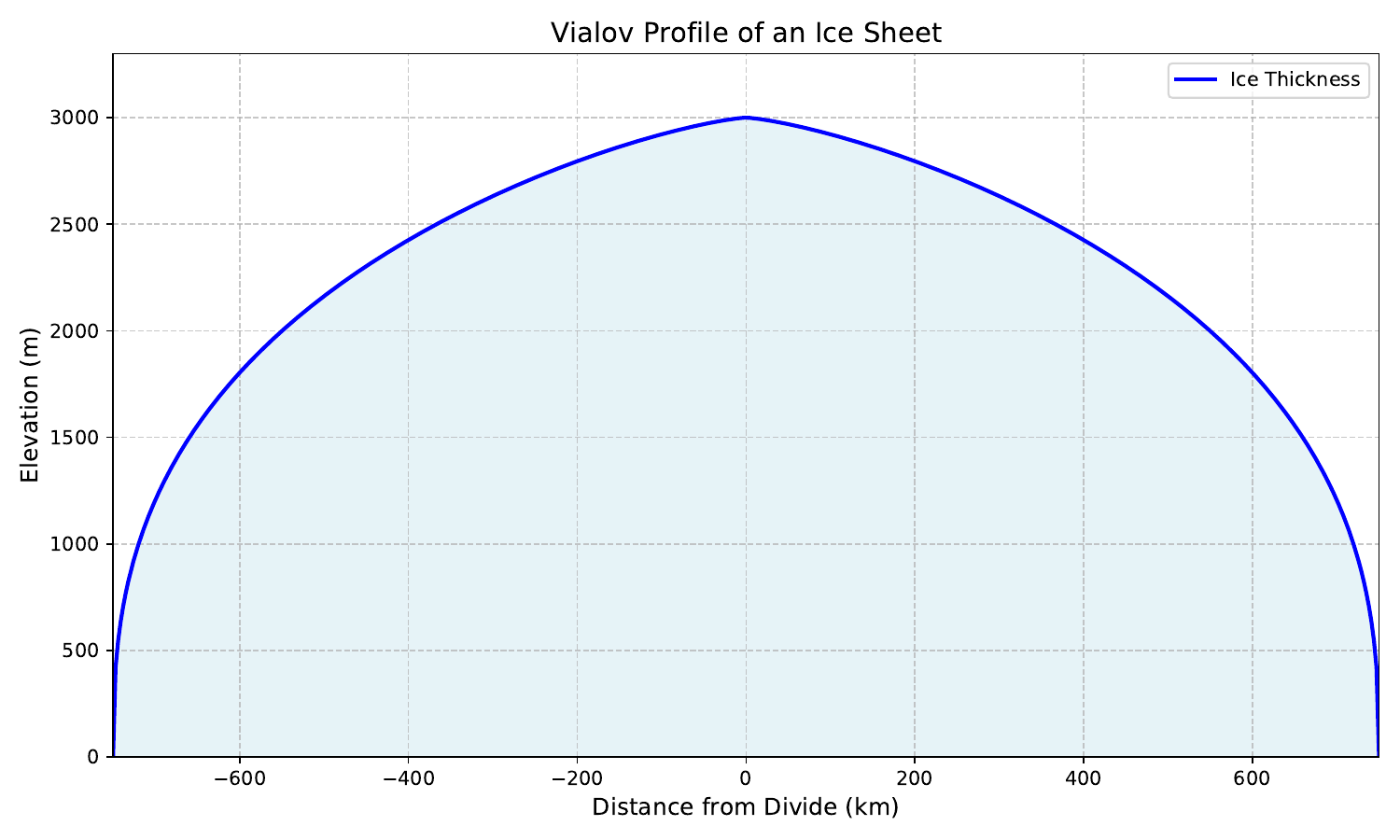}
		\subcaption{Vialov profile.}
	\end{subfigure}%
	\caption{Instances of Bueler profile and Vialov profile~\cite{GreveBlatter2009}.}
	\label{fig:BV}
\end{figure}

The precise definition of a solution, including the regularity requirements for $h$ and the data, will be addressed in Sections~\ref{Sec:2} and~\ref{Sec:3}.

Let $\gls{Tm}>0$ be a constant denoting \emph{a temperature sufficiently below the pressure melting point}\footnote{From the physical point of view, the pressure melting point represents a critical threshold which, when reached, latent heat is produced and the model describing the evolution of the ice surface elevation (viz., e.g., formula~(5.98) in~\cite{GreveBlatter2009}) changes. Since the thermal model we will consider in Section~\ref{Sec:1bis} (viz.~\eqref{eq:thermal}) is not designed to take into account upper bounds for the temperature, we must enforce this constraint via the variational inequality approach.}.
For each $t\in [0,T]$ and $x\in \overline{\Omega_{t}^{+}}$, we define the function
\begin{equation*}
	\gls{Tice}(t,x,\cdot):[0,h(t,x)]\to[0,\Theta_{\textup{m}}],
\end{equation*}
which represents the \emph{shallow ice sheet internal temperature at time $t$}.
We denote by $\gls{A}=A(x,z,\Theta_{\textup{ice}}(t,x,z))$ the \emph{ice softness}\footnote{The function $A$ is also known in the literature as \emph{rate factor}~\cite{Fowler2024,GreveBlatter2009}.} and we observe that a widely accepted expression for $A$ is given via the Arrhenius law (cf., e.g., Section~4.3 in~\cite{GreveBlatter2009}, or page~266 in~\cite{Fowler2024})
\begin{equation}
	\label{arrhenius}
	A(x,z,\Theta_{\textup{ice}}(t,x,z))=\max\left\{A_0\exp\left(-\beta\left(\Theta_{\textup{ice}}(t,x,z)-\Theta_{\textup{m}}\right)\right),A_{\textup{min}}\right\},
\end{equation}
for all $t\in[0,T]$, $x\in\overline{\Omega}$ and all $0\le z\le h(t,x)$, where $\gls{Amin}>0$ is the \emph{minimal ice softness}\footnote{In terrestrial ice sheets, temperatures range from about $200\degree\textup{K}$ (Antarctic cold regions) to $273\degree\textup{K}$. Even at the coldest natural conditions, $A$ remains positive but very small (e.g., $\sim 10^{-28}\,\mathrm{Pa}^{-3}\,\mathrm{s}^{-1}$ for $T=200\degree\textup{K}$)~\cite{JouvBuel2012}.}, and $\gls{A0}\ge A_{\textup{min}}$ and $\gls{beta}>0$ are positive parameters (cf., e.g. formula~(14) in~\cite{Fowler2024}).

We will see in the forthcoming construction of the model that the function $A$ will provide the coupling with the internal ice temperature in the equations governing the evolution of the shallow ice sheet thickness.

Ice exhibits viscous behaviour, with its viscosity characterised by the Glen power law~\cite{Glen1970} (see also equation (4.16) in~\cite{GreveBlatter2009}). This law incorporates the ice softness parameter $A(x,z,\Theta_{\textup{ice}}(t,x,z))$ and the \emph{Glen exponent} $p$, which satisfies $2.8 \le \gls{p} \le 5$. The range of values for $p$ is supported by laboratory experiments~\cite{GoldKohl2001}. Treating the ice softness as a function enables the coupling of the ice thickness equation with a thermodynamic model~\cite{GreveBlatter2009, Hooke2005}. From the point of view of ice modelling, the celebrated constitutive relations proposed by Hibler~\cite{Hibler1979}, itself inspired by the article of Coon~\cite{Coon1974}, are \emph{less accurate} in describing the horizontal ice flow velocity $\bm{U}$ than Glen's model, on the one hand. On the other hand, Hibler's approach achieves the goal of describing in the same instance the twofold nature of ice as a solid and as a fluid.

According to equation~{(5.84)} in~\cite{GreveBlatter2009}, the horizontal ice flow velocity $\bm{U}=(U_1,U_2)$ can be expressed in terms of the upper ice surface elevation $h$ via Glen's power law~\cite{Glen1970}
\begin{equation}
	\label{velocity}
	\bm{U}(t,x,z)=-2 (\rho g)^{p-1} \left(\int_{0}^{z} A(x,s,\Theta_{\textup{ice}}(t,x,s)) (h(t,x)-s)^{p-1} \dd s\right) |\nabla h(t,x)|^{p-2} \nabla h(t,x),
\end{equation}
for all $(t,x,z)\in [0,T]\times\overline{\Omega}\times [0,h(t,x)]$, where $\gls{rho}>0$ is a constant describing the \emph{ice mass density}, and $\gls{g}$ denotes the \emph{gravitational acceleration}.

We note that formula~\eqref{velocity} not only employs Glen's power law, but is also valid only within a specific shallow limit derived from the Stokes model (see, for example, \cite{SchoofHewitt2013}). The \emph{vertical ice flow velocity} (cf., e.g., formula~(5.72) in~\cite{GreveBlatter2009}) is given by:
\begin{equation}
	\label{vertical-velocity}
	\gls{vz}(t,x,z):=-\int_{0}^{z}\nabla\cdot\bm{U}(t,x,s)\dd s, \quad\textup{ for all } (t,x,z)\in [0,T]\times\overline{\Omega}\times [0,h(t,x)].
\end{equation}

Substituting expression~\eqref{velocity} into~\eqref{flux} gives:
\begin{equation}
	\label{flux-2}
	\begin{aligned}
		\bm{Q}&=-2 (\rho g)^{p-1}\left(\int_{0}^{h} \int_{0}^{z} A(\cdot,s,\Theta_{\textup{ice}}(\cdot,\cdot,s)) (h-s)^{p-1} \dd s \dd z\right) |\nabla h|^{p-2} \nabla h\\
		&=-2 (\rho g)^{p-1}\left(\int_{0}^{h} A(\cdot,s,\Theta_{\textup{ice}}(\cdot,\cdot,s)) (h-s)^{p} \dd s \right) |\nabla h|^{p-2} \nabla h,
	\end{aligned}
\end{equation}
where the latter equality is obtained by an integration by parts with respect to the variable $z$.

The weak formulation~\eqref{weak-4} constitutes a \emph{degenerate extension} of the $p$-Laplacian obstacle problem because the ice thickness $h$ approaches zero at the free boundary $\Gamma_{f,t}$ for each $t \in [0,T]$. Consequently, solutions to~\eqref{weak-4} exhibit infinite gradients at the ice margin $\Gamma_{f,t}$, as discussed in~\cite{Bueler2005,GreveBlatter2009,JouvBuel2012}. The boundary degeneracy was verified by means of laboratory experiments, the results of which are reported in~\cite{SayagWorster2013}.
A reformulation yielding a non-degenerate $p$-Laplacian structure is presented below (see~\eqref{weak-u}). Although this alternative formulation resolves the non-degeneracy, it would introduce a ``tilt'' that disrupts the monotonicity of the variational form. This is the very reason for which we limit ourselves to considering flat bedrocks in the context of this paper.

In order to overcome the difficulty arising as a result of the gradient degeneracy in the vicinity of the free boundary, we introduce the following \emph{change of variables}, originally suggested by P.-A.~Raviart in~\cite{Raviart1967} (see also~\cite{Diaz2002}):
\begin{equation}
	\label{transfo}
	h:=\gls{u}^{\frac{p-1}{2p}}.
\end{equation}

Observe that $h > 0$ in $[0,T] \times \overline{\Omega}$ if and only if $u > 0$ in $[0,T] \times \overline{\Omega}$, and that $h(t,x)=0$ in $[0,T] \times \overline{\Omega}$ if and only if $u=0$ in $[0,T]\times\overline{\Omega}$. As a result of the transformation~\eqref{transfo}, we \emph{first} obtain, formally,
\begin{equation}
	\label{dHdt}
	\dfrac{\partial h}{\partial t}=\dfrac{\partial}{\partial t}\left(|u|^{\frac{3p-1}{2p}-2} \, u\right),
\end{equation}
and, \emph{secondly}, we obtain
\begin{equation}
	\label{ice-soft}
	\begin{aligned}
		&\int_{0}^{h} A(x,s,\Theta_{\textup{ice}}(t,x,s)) (h-s)^p \dd s\\
		&=u^{\frac{(p+1)(p-1)}{2p}} \int_{0}^{1} A(x,u^{\frac{p-1}{2p}} \, s',\Theta_{\textup{ice}}(t,x,u^{\frac{p-1}{2p}}\,s')) (1-s')^p \dd s',
	\end{aligned}
\end{equation}
and, \emph{third}, we obtain:
\begin{equation}
	\label{grad}
	|\nabla h|^{p-2} \nabla h=\left(\dfrac{p-1}{2p}\right)^{p-1} u^{\frac{(-p-1)(p-1)}{2p}} |\nabla u|^{p-2} \nabla u.
\end{equation}

Define the function $\tilde{a}:[0,T] \times \overline{\Omega} \times \mathbb{R} \to \mathbb{R}$ by:
\begin{equation}
	\label{aNEW}
	\gls{atilde}(t,x,u):=a\left(t,x,u^{\frac{p-1}{2p}}\right).
\end{equation}

Inserting~\eqref{ice-soft}--\eqref{aNEW} into~\eqref{flux-2} gives:
\begin{equation}
	\label{flux-3}
	-\bm{Q}=2\left(\rho g \dfrac{p-1}{2p}\right)^{p-1}\left(\int_{0}^{1} A\left(x,u^{\frac{p-1}{2p}} s',\Theta_{\textup{ice}}(t,x,u^{\frac{p-1}{2p}}\,s')\right) (1-s')^p \dd s'\right)|\nabla u|^{p-2} \nabla u.
\end{equation}

For the sake of brevity, we define the function $\gls{mu}:\overline{\Omega} \times \mathbb{R} \to \mathbb{R}$ as a function of $u$ by:
\begin{equation}
	\label{mu-def}
	\begin{aligned}
		&\mu(x,u,\Theta_{\textup{ice}}(t,x,u)):=2\left(\rho g \dfrac{p-1}{2p}\right)^{p-1}\\
		&\quad\times\left(\int_{0}^{1} A\left(x,u^{\frac{p-1}{2p}} s',\Theta_{\textup{ice}}(t,x,u^{\frac{p-1}{2p}}\,s')\right) (1-s')^p \dd s'\right).
	\end{aligned}
\end{equation}

Since the ice softness $A$ is of the form~\eqref{arrhenius}, we obtain that
\begin{equation}
	\label{mu0}
	\gls{mu1}:=2\left(\rho g \dfrac{p-1}{2p}\right)^{p-1}\dfrac{A_\textup{min}}{p+1}\le\mu(x,u,\Theta_{\textup{ice}}(t,x,u))\le 2\left(\rho g \dfrac{p-1}{2p}\right)^{p-1}\dfrac{A_0}{p+1}=:\gls{mu2},
\end{equation}
for all $t\in [0,T]$ and all $x\in\overline{\Omega}$. Observe that plugging~\eqref{mu-def} into~\eqref{flux-3} gives:
\begin{equation}
	\label{flux-4}
	-\bm{Q}=\mu(x,u,\Theta_{\textup{ice}}(t,x,u)) |\nabla u|^{p-2} \nabla u.
\end{equation}

Plugging~\eqref{dHdt}--\eqref{aNEW} and~\eqref{flux-4} into~\eqref{weak-4} leads to the following weak formulation: \emph{Find} $u \ge 0$ \emph{satisfying the variational inequality}
\begin{equation}
	\label{vi:ice}
	\begin{aligned}
		&\int_{0}^{T} \int_{\Omega} \left(\dfrac{\partial}{\partial t}\left(|u|^{\frac{3p-1}{2p}-2} \, u\right)\right)(v-u) \dd x \dd t -\int_{0}^{T} \int_{\Omega} \mu(x,u,\Theta_{\textup{ice}}) |\nabla u|^{p-2} \nabla u \cdot \nabla(v-u) \dd x\dd t\\
		&\ge \int_{0}^{T} \int_{\Omega} \tilde{a} (v-u) \dd x \dd t,
	\end{aligned}
\end{equation}
\emph{for all test functions} $v$ \emph{such that $v \ge 0$ in $[0,T]\times\overline{\Omega}$, satisfying the following boundary conditions}
$$
u(t,\cdot)=0,\quad\textup{ on }\partial\Omega \textup{ for all }t\in[0,T],
$$
\emph{and satisfying the initial condition:}
$$
u(0,\cdot)=\gls{u0}=h_0^{\frac{2p}{p-1}},\quad\textup{ in }\overline{\Omega}.
$$

\section{Formal derivation of the ice internal temperature evolution model}
\label{Sec:1bis}

Let $\gls{c}>0$ be a constant denoting the \emph{ice specific heat}, and let $\gls{kappa}>0$ be a constant denoting the \emph{ice heat conductivity}. The internal ice temperature evolution is governed by the following advection--diffusion thermal equation (viz. equation~(5.105) in~\cite{GreveBlatter2009})
\begin{equation}
	\label{eq:thermal}
	\begin{aligned}
		&\rho c \left(\dfrac{\partial\Theta_{\textup{ice}}}{\partial t}+U_1(0,\cdot,\cdot)\dfrac{\partial\Theta_{\textup{ice}}}{\partial x_1}+U_2(0,\cdot,\cdot)\dfrac{\partial\Theta_{\textup{ice}}}{\partial x_2}+v_z(0,\cdot,\cdot)\dfrac{\partial\Theta_{\textup{ice}}}{\partial z}\right)\\
		&=\kappa\dfrac{\partial^2\Theta_{\textup{ice}}}{\partial z^2}+2(\rho g)^p A(x,z,\Theta_{\textup{ice}}) (h-z)^p |\nabla h|^p,
	\end{aligned}
\end{equation}
in $\Omega_{t}^{+}\times(0,h(t,x))$ for all $t\in (0,T)$. We observe that~\eqref{eq:thermal} is posed over a domain that depends on the ice thickness $h$, rendering the problem under consideration a \emph{moving boundary problem} (viz., e.g., \cite{CrankGupta1972,Ni2011}). The advection term is expressed in terms of the ice velocity, whose components are given in~\eqref{velocity} and~\eqref{vertical-velocity}.

We also assume, in agreement with Section~4.3.2 in~\cite{GreveBlatter2009}, that the viscosity has a finite limit as the effective stress tends to zero. This remark justifies the approximation:
\begin{equation}
	\label{regularisation}
	|\nabla h| \approx C_0, \quad \textup{ in } (0,T) \times\Omega_{t}^{+} \times (0, h(t,x)),
\end{equation}
for some $\gls{C0} > 0$. Conceptually, this constant plays a role analogous to the regularisation parameter $\sigma_0$ in the regularised Glen's flow law (cf. e.g., Section 4.3.2 in~\cite{GreveBlatter2009}), ensuring the stress term remains bounded. We note that while approximation~\eqref{regularisation} removes the spatial variability of the surface slope, it retains the essential vertical structure of the shear stress through the $(h-z)$ factor.

Since we are interested in studying the case where the bedrock is frozen, admissible ice internal temperatures vary in the following range:
\begin{equation}
	\label{const:temp}
	0\degree\textup{K}<\Theta_{\textup{ice}}(t,x,0)\le\Theta_{\textup{m}},\quad\textup{ for all } (t,x)\in [0,T]\times\overline{\Omega}.
\end{equation}

Let $\gls{Ts}:[0,T]\times\overline{\Omega}\to\mathbb{R}^+$ denote the \emph{surface ice temperature}, which is the temperature measured on the surface of the ice sheet, and whose unit measure is $\degree\textup{K}$. Measurements have shown that $\Theta_{\textup{surf}}$ can be well-approximated by the mean-annual surface air temperature, as long as the latter is not above $0\celsius$ (viz. page~67 in~\cite{GreveBlatter2009}). We thus require the following condition to hold:
\begin{equation}
	\label{bc:surf}
	\Theta_{\textup{ice}}(t,x,h(t,x))=\Theta_{\textup{surf}}(t,x),\quad\textup{ for all }(t,x)\in [0,T]\times\overline{\Omega_{t}^{+}}.
\end{equation}

Let us denote by $\gls{qgeoperp}$ a function in $L^2(\Omega)$ describing the \emph{geothermal heat flux} injected from the bedrock into the ice. This function is positive everywhere on the bedrock (viz., e.g., formula~(5.39) in~\cite{GreveBlatter2009}).

We denote by $\gls{nubase}$ the unit vector normal to the bedrock which points towards the portion of three-dimensional space that \emph{does not contain} the bedrock.
The surface ice temperature allows us to introduce a \emph{thermodynamical boundary condition} for $\Theta_{\textup{ice}}$ on the ice surface in contact with the air. For what concerns the ice surface in contact with the bedrock, the thermodynamical boundary condition that is to be considered in the case where the bedrock is frozen is given by (cf., e.g., \cite{GreveBlatter2009}):
\begin{equation}
	\label{bc:bedrock-1}
	\partial_{\bm{\nu}_{b}}\Theta_{\textup{ice}}(t,x,0)=\dfrac{q_{\textup{geo}}^\perp}{\kappa},\quad\textup{ for all }(t,x)\in [0,T]\times\overline{\Omega}.
\end{equation}

We observe that the partial differential equation governing the evolution of the internal ice temperature~\eqref{eq:thermal} is characterised by a coupling with the ice thickness that is \emph{stronger} than the coupling occurring in~\eqref{sie} when Glen's power law is in place (viz.~\eqref{flux-2}).

The initial condition the ice internal temperature has to satisfy takes the form
\begin{equation}
	\label{ic:thermal}
	\Theta_{\textup{ice}}(0,x,z)=\Theta_0(x,z), \quad\textup{ for all }(x,z)\in\overline{\Omega_{0}^{+}}\times [0,h_0(x)],
\end{equation}
where the \emph{initial shallow ice sheet internal temperature} \gls{T0} is given, and complies with~\eqref{const:temp} and~\eqref{bc:bedrock-1}.

The main difficulty in analysing~\eqref{eq:thermal} is due to the presence of a \emph{moving boundary}, represented by the ice surface $h$, and by the presence of a free boundary, represented by the set $\Omega_{t}^{+}$. In order to render this Partial Differential Equation more tractable from the analytical point of view, we make additional assumptions on the given data and we recast~\eqref{eq:thermal}--\eqref{ic:thermal} in a way that the temperature of the \emph{whole observed environment} is taken into account. The strategy we are going to employ is inspired by the \emph{Enthalpy Method}~\cite{Crank1984}.

Denote by $\gls{nuice}$ the outer unit normal vector applied along the surface $\{(x,h(t,x));x\in\overline{\Omega_{t}^{+}}\}$ and pointing towards to portion of space \emph{complementary to the one occupied by ice} at time $t\in[0,T]$. We require that there exists a positive constant $\gls{r0}$ such that:
\begin{equation}
	\label{r0}
	\textup{dist}(\partial\Omega_{t}^{+},\partial\Omega)>r_0,\quad\textup{ for all }t\in[0,T].
\end{equation}

Note that~\eqref{r0} introduces a \emph{restriction on the horizontal expansion} of the shallow ice sheet under consideration. More precisely, the assumption~\eqref{r0} means that $\textup{supp }h(t,\cdot)$ is compact in $\Omega$ for each $t\in [0,T]$. We denote by $L$ a positive constant, associated with the maximum ice surface elevation, that we will \emph{rigorously} determine in Section~\ref{Sec:2}.

Let $\gls{xi}=\xi(t,x,r)$ is a \emph{smooth cut-off function} varying between 0 and 1, which is equal to one for $0\le r\le \frac{r_0}{2}$ and is equal to zero for $\frac{3}{4}r_0\le r\le r_0$.

For a given $t\in[0,T]$ and $x\in\overline{\Omega}$, define an \emph{external auxiliary temperature} $\gls{Text}(t,x,\cdot):\{z\in\mathbb{R}^+\cup\{0\}; h(t,x)\le z\le L\}$, where we recall that $h(t,x)$ is identically zero for all $x\in\Omega_{t}^{-}$. For each $t\in [0,T]$, $x\in\overline{\Omega}$, and $0\le r\le r_0$, define:
\begin{equation}
	\label{theta-ext}
	\begin{aligned}
		&\Theta_{\textup{ext}}(t,(x,h(t,x))+r\bm{\nu}_{\textup{ice}}(t,x))\\
		&:=\xi(t,x,r)\left\{\Theta_{\textup{surf}}(t,x)+\left[\Theta_{\textup{surf}}(t,x)-\Theta_{\textup{ice}}(t,(x,h(t,x))-r\bm{\nu}_{\textup{ice}}(t,x))\right]\right\}.
	\end{aligned}
\end{equation}

Note that $\xi\equiv 1$ in a tubular neighbourhood of the ice surface $\{(x,h(t,x));x\in\overline{\Omega_{t}^{+}}\}$ of thickness $r_0$, and smoothly decreases to zero moving further away. Additionally, we observe that $\Theta_{\textup{ext}}(t,\cdot,\cdot)$ decreases moving far from the ice surface $\{(x,h(t,x));x\in\overline{\Omega_{t}^{+}}\}$.

Thanks to~\eqref{bc:surf}, it results that $\Theta_{\textup{ext}}$ satisfies the following conditions for all $(t,x)\in [0,T]\times\overline{\Omega_{t}^{+}}$:
\begin{gather}
	\Theta_{\textup{ext}}(t,x,h(t,x))=\Theta_{\textup{surf}}(t,x),\label{ice-surface}\\
	\partial_{\bm{\nu}_{\textup{ice}(t,x)}}\Theta_{\textup{ice}}(t,x,h(t,x))=\partial_{\bm{\nu}_{\textup{ice}(t,x)}}\Theta_{\textup{ext}}(t,x,h(t,x))=-\partial_{-\bm{\nu}_{\textup{ice}(t,x)}}\Theta_{\textup{ext}}(t,x,h(t,x)).\label{surf-decrease}
\end{gather}

Thanks to~\eqref{theta-ext}--\eqref{surf-decrease}, we infer that:
\begin{itemize}
	\item For each $t\in[0,T]$, the functions $\Theta_{\textup{ice}}(t,\cdot,\cdot)$ and $\Theta_{\textup{ext}}(t,\cdot,\cdot)$ are glued continuously along the surface $\{(x,h(t,x));x\in\overline{\Omega_{t}^{+}}\}$;
	\item For each $t\in[0,T]$, the normal derivatives of the functions $\Theta_{\textup{ice}}(t,\cdot,\cdot)$ and $\Theta_{\textup{ext}}(t,\cdot,\cdot)$ do not have jumps along the surface $\{(x,h(t,x));x\in\overline{\Omega_{t}^{+}}\}$;
	\item For each $t\in[0,T]$, the function $\Theta_{\textup{ext}}(t,\cdot,\cdot)$ has a controlled growth and decreases down to zero moving far from the ice surface. Note that the cut-off $\xi$ has indeed been introduced for this purpose.
\end{itemize}

Starting from the definition and properties of $\Theta_{\textup{ext}}$ listed beforehand (cf., \eqref{theta-ext}--\eqref{surf-decrease}), we construct an extension of $\Theta_{\textup{ext}}$ to the region containing ice, and we denote one such extension by $\gls{Tint}$.
The function $\Theta_{\textup{int}}$ is defined in a way that, in the region with ice, it is less or equal than $\Theta_{\textup{ice}}$ at each time. For each $t\in [0,T]$, for each $x\in\overline{\Omega_{t}^{+}}$, and for each $0\le r \le r_0$ we define $\Theta_{\textup{int}}$ close to the ice profile as follows
\begin{equation}
	\label{theta-int}
	\Theta_{\textup{int}}(t,(x,h(t,x))-r\bm{\nu}_{\textup{ice}(t,x)})=\xi(t,x,r)\Theta_{\textup{ice}}(t,(x,h(t,x))-r\bm{\nu}_{\textup{ice}(t,x)}),
\end{equation}
where $\xi=\xi(t,x,z)$ is the smooth cut-off function introduced beforehand, that varies between 0 and 1, and that is equal to one for $0\le r\le \frac{r_0}{2}$ and is equal to zero for $\frac{3}{4}r_0\le r\le r_0$.

\begin{remark}
	\label{rem:cutoff}
	We notice that this construction holds for ice sheets exhibiting vertical slopes at the ice margins (i.e., close to $\Gamma_{f,t}$) like, for instance, the Bueler and Vialov profiles. However, the construction can be adapted to more general ice surfaces by taking $\Theta_{\textup{int}}\equiv\Theta_{\textup{surf}}$ near the ice margins and applying another cut-off.
	\bqed
\end{remark}

Thanks to~\eqref{theta-int}, we are able to infer that $\Theta_{\textup{int}} \equiv 0$ in the interior of the ice region \emph{sufficiently far away} from the ice surface, so that it results:
\begin{equation}
	\label{theta-int-3}
	\Theta_{\textup{ice}}(t,x,z)\ge\Theta_{\textup{int}}(t,x,z),\quad\textup{ for all }t\in[0,T] \textup{ for all }x\in\overline{\Omega_{t}^{+}} \textup{ and for all }0\le z\le h(t,x).
\end{equation}

For each $t\in [0,T]$, we define the following \emph{auxiliary temperature}
\begin{equation*}
	\gls{Taux}(t,x,z):=
	\begin{cases}
		\Theta_{\textup{int}}(t,x,z)&,\quad\textup{ if } x\in\overline{\Omega_{t}^{+}} \textup{ and }0\le z\le h(t,x),\\
		\Theta_{\textup{ext}}(t,x,z)&,\quad\textup{ if } x\in\overline{\Omega_{t}^{-}} \textup{ or } z>h(t,x).
	\end{cases}
\end{equation*}

Thanks to~\eqref{surf-decrease}, it is then straightforward to observe that for each $t\in[0,T]$:
\begin{gather}
	\Theta_{\textup{aux}}(t,x,h(t,x))=\Theta_{\textup{surf}}(t,x), \textup{ for all }x\in\overline{\Omega_{t}^{+}},\label{th-aux-1}\\
	\begin{aligned}
		&\partial_{\bm{\nu}_{\textup{ice}(t,x)}}\Theta_{\textup{aux}}(t,x,h(t,x))=\partial_{\bm{\nu}_{\textup{ice}(t,x)}}\Theta_{\textup{ext}}(t,x,h(t,x))\\
		&=\partial_{\bm{\nu}_{\textup{ice}(t,x)}}\Theta_{\textup{ice}}(t,x,h(t,x)), \textup{ for all }x\in\overline{\Omega_{t}^{+}}.\label{th-aux-2}
	\end{aligned}
\end{gather}

The latter conditions mean that, at each $t\in [0,T]$, the function $\Theta_{\textup{aux}}$ is glued continuously along the curve $(x,h(t,x))$, $x\in\overline{\Omega_{t}^{+}}$, that the normal derivative of $\Theta_{\textup{aux}}$ has no jumps along the ice surface.

We now define the mapping $\gls{Theta}:[0,T]\times\overline{\Omega}\times[0,L]\to\mathbb{R}^+\cup\{0\}$ as follows
\begin{equation}
	\label{Theta-final-1}
	\Theta(t,x,z):=
	\begin{cases}
		\Theta_{\textup{ice}}(t,x,z)&, \quad\textup{ if } t\in[0,T], x\in\overline{\Omega_{t}^{+}} \textup{ and }0\le z\le h(t,x),\\
		\Theta_{\textup{ext}}(t,x,z)&,\quad\textup{ if } t\in[0,T], x\in\overline{\Omega_{t}^{-}} \textup{ or } z>h(t,x),
	\end{cases}
\end{equation}
and we define the mapping $\gls{Thetatilde}:[0,T]\times\overline{\Omega}\times[0,L]\to\mathbb{R}^+\cup\{0\}$ as follows:
\begin{equation}
	\label{Theta-final-2}
	\tilde{\Theta}(t,x,z):=\Theta(t,x,z)-\Theta_{\textup{aux}}(t,x,z),\quad\textup{ for all } (t,x,z)\in [0,T]\times\overline{\Omega}\times [0,L].
\end{equation}

We then define the initial condition for $\tilde{\Theta}$ as follows:
\begin{equation}
	\label{ic:thermal-2}
	\gls{T0tilde}(x,z):=\tilde{\Theta}(0,x,z)=
	\begin{cases}
		\Theta(0,x,z)-\Theta_{\textup{aux}}(0,x,z)&,\textup{ if }0<z<h_0(x),\\
		0&,\textup{ otherwise}.
	\end{cases}
\end{equation}

\begin{remark}
	\label{rem:2}
	Observe that one could forge an \emph{ad hoc} $\tilde{\Theta}_0$ that vanishes near the ice surface if $h_0$ describes, for instance, the shape of a Bueler profile or a Vialov profile.
	\bqed
\end{remark}

Thanks to the properties of $\Theta_{\textup{ext}}$ (see~\eqref{theta-ext}--\eqref{surf-decrease}), the properties of $\Theta_{\textup{int}}$ (see~\eqref{theta-int}--\eqref{theta-int-3}), and the properties of $\Theta_{\textup{aux}}$ (see~\eqref{th-aux-1}--\eqref{Theta-final-2}), it results:
\begin{gather}
	0\le\tilde{\Theta}(t,x,z)\le\Theta_{\textup{m}},\quad\textup{ for all }(t,x,z)\in [0,T]\times\overline{\Omega}\times [0,L],\label{Theta-final-3}\\
	\tilde{\Theta}(t,x,h(t,x))= 0,\quad\textup{ for all }(t,x)\in [0,T]\times\overline{\Omega_{t}^{+}},\label{Theta-final-4}\\
	\tilde{\Theta}(t,x,z)= 0,\quad\textup{ for all }(t,x,z)\in [0,T]\times\partial\Omega\times [0,L],\label{Theta-final-5}\\
	\tilde{\Theta}(t,x,L)= 0,\quad\textup{ for all }(t,x)\in [0,T]\times\overline{\Omega},\label{Theta-final-6}\\
	\partial_{\bm{\nu}_b}\tilde{\Theta}(t,x,0)= \dfrac{q_{\textup{geo}}^\perp}{\kappa},\quad\textup{ for all }(t,x)\in [0,T]\times\overline{\Omega},\label{Theta-final-6-bis}\\
	\partial_{\bm{\nu}_{\textup{ice}(t,x)}}\tilde{\Theta}(t,x,h(t,x))= 0,\quad\textup{ for all }(t,x)\in [0,T]\times\overline{\Omega_{t}^{+}}.\label{Theta-final-7}
\end{gather}

Note that the Neumann boundary condition~\eqref{Theta-final-6-bis} accounts for an error deriving from the tubular neighbourhood of width $r_0$ associated with $\Theta_{\textup{aux}}$. Since $r_0$ is small, it is licit to perform one such approximation.

In light of~\eqref{Theta-final-2}, we recast the left-hand side of the thermal equation~\eqref{eq:thermal} in terms of $\tilde{\Theta}$
\begin{equation}
	\label{eq:thermal-3}
	\begin{aligned}
		&\rho c\left(\dfrac{\partial\tilde{\Theta}}{\partial t}+U_1(0,\cdot,\cdot)\dfrac{\partial\tilde{\Theta}}{\partial x_1}+U_2(0,\cdot,\cdot)\dfrac{\partial\tilde{\Theta}}{\partial x_2}+v_z(0,\cdot,\cdot)\dfrac{\partial\tilde{\Theta}}{\partial z}\right)-\kappa\Delta\tilde{\Theta}\\
		&=2(\rho g C_0)^p A(x,z,\tilde{\Theta}) \left(\{h-z\}^{+}\right)^p,
	\end{aligned}
\end{equation}
for all $(t,x,z)\in [0,T]\times\overline{\Omega}\times[0,L]$.

\begin{remark}[Removal of the corrector near the ice surface]
	\label{rem:3}
	Rather than extending~\eqref{eq:thermal} rigorously to the fixed domain, we are going to define a new approximate model on the fixed cylinder $\Omega \times (0, L)$. The temperature field $\tilde{\Theta}$ satisfies~\eqref{eq:thermal-3} on the whole domain, with the source term supported only where ice is present. The moving boundary conditions are encoded in the auxiliary field $\Theta_{\textup{aux}}$, whose mismatch with the true solution is confined to a boundary layer of thickness $r_0$. For $r_0$ sufficiently small compared to the ice thickness, the bulk temperature is well-approximated. The dependence of the approximate model on the parameter $r_0$ still remains, as the initial condition is chosen in a way that it complies with the construction performed beforehand. The sharp-interface model is recovered, in the sense of formal asymptotics, in the limit $r_0 \to 0$, though a rigorous justification of this limit is beyond the scope of the present work.
	\bqed
\end{remark}

We now recover a suitable variational formulation associated with~\eqref{eq:thermal}, \eqref{Theta-final-3}--\eqref{Theta-final-7}, and the initial condition~\eqref{ic:thermal-2}. Before that, it is important to point out that the problem under consideration does not fall into the categories considered in the literature of moving interface problems. 

\begin{remark}[Classification of the variational problem]
	\label{rem:4}
	The problem under consideration is \emph{not} a Stefan problem~\cite{Crank1984}.
	In the prototypical Stefan problem, the moving boundary velocity appears \emph{directly} in the thermal boundary condition known as \emph{Stefan condition}.
	
	In the problem we are considering here, the surface motion comes from solving the shallow ice equation~\eqref{sie}, which is a mechanical model coupled with a thermodynamical model, and \emph{the Stefan condition is absent}. The surface boundary condition for temperature remains a Dirichlet or Neumann condition, but the location of that boundary evolves primarily as a result of snow accumulation or ablation.
	
	The problem here considered is also of a different category from the problems discussed in the literature connected with the Cahn--Hilliard equation (see, for instance, \cite{GGG17,GGM17,GGH22}). Indeed, in the problem we are considering here,only one phase is taken into account, in the sense that \emph{no model is prescribed in the portion of space complementary to the region occupied by ice}. In the problem considered here, the ice surface moves because mass is added or removed, or because ice flows at a rate depending on the ice temperature. In models of Cahn--Hilliard type, the interface is typically regarded as a known datum of the problem (cf., e.g., \cite{AM21-part1,AM21-part2,AGP25,CEGP23}). Besides, the choice of considering an elliptic equation over moving domains is crucial to derive the energy estimates leading to the sharp interface model as a result of a formal asymptotic analysis~\cite{AM21-part1}. In the model considered here, instead, the evolution of the ice surface (which plays a similar role to the one of the moving interfaces considered in~\cite{AM21-part1,AM21-part2,AGP25,CEGP23}) is an unknown of the problem that we find by solving the evolution equation~\eqref{vi:ice}.
	\bqed
\end{remark}

For each $t\in [0,T]$, define the open set $\gls{Dice}\subset\Omega\times (0,L)$ as the \emph{portion of three-dimensional space occupied by ice at time $t$}, namely:
\begin{equation*}
	D_{\textup{ice}}(t):=\{(x,z)\in\Omega\times (0,L);0< z< h(t,x)\}.
\end{equation*}

Let us denote by $(D_{\textup{ice}}(t))^{\textup{c}}$ the complementary of the set $D_{\textup{ice}}(t)$ in $\overline{\Omega}\times [0,L]$. Observe that, since $\tilde{\Theta}\equiv 0$ in $(D_{\textup{ice}}(t))^{\textup{c}}$, we can extend $U_1(0,\cdot,\cdot)$, $U_2(0,\cdot,\cdot)$ and $v_z(0,\cdot,\cdot)$ continuously in $(D_{\textup{ice}}(t))^{\textup{c}}$.

Consider a test function $\Xi:[0,T]\times\overline{\Omega}\times [0,L]\to[0,\Theta_{\textup{m}}]$ satisfying~\eqref{Theta-final-3}, \eqref{Theta-final-5}, \eqref{Theta-final-6}, and test the left-hand side of~\eqref{eq:thermal-3} at $(\Xi-\tilde{\Theta})$. On the one hand, the fact that $\tilde{\Theta}(t,\cdot,\cdot)\equiv 0$ in $(D_{\textup{ice}}(t))^{\textup{c}}$, the fact that $\frac{\partial\tilde{\Theta}}{\partial t}(t,\cdot)\ge 0$ in $(D_{\textup{ice}}(t))^{\textup{c}}$, and the fact that $\Xi(t,\cdot,\cdot)\ge 0$ in $(D_{\textup{ice}}(t))^{\textup{c}}$ modify~\eqref{eq:thermal-3} into:
\begin{align*}
	&\rho c\int_{0}^{L}\int_{\Omega}\left(\dfrac{\partial\tilde{\Theta}}{\partial t}+U_1(0,\cdot,\cdot)\dfrac{\partial\tilde{\Theta}}{\partial x_1}+U_2(0,\cdot,\cdot)\dfrac{\partial\tilde{\Theta}}{\partial x_2}+v_z(0,\cdot,\cdot)\dfrac{\partial\tilde{\Theta}}{\partial z}\right) (\Xi-\tilde{\Theta}) \dd x \dd z\\
	&\quad-\kappa\int_{0}^{L}\int_{\Omega}\Delta\tilde{\Theta} (\Xi-\tilde{\Theta}) \dd x \dd z\\
	&\ge\rho c\iint_{D_{\textup{ice}}(t)}\left(\dfrac{\partial\tilde{\Theta}}{\partial t}+U_1(0,\cdot,\cdot)\dfrac{\partial\tilde{\Theta}}{\partial x_1}+U_2(0,\cdot,\cdot)\dfrac{\partial\tilde{\Theta}}{\partial x_2}+v_z(0,\cdot,\cdot)\dfrac{\partial\tilde{\Theta}}{\partial z}\right) (\Xi-\tilde{\Theta}) \dd x \dd z\\
	&\quad-\kappa\iint_{D_{\textup{ice}}(t)}\Delta\tilde{\Theta} (\Xi-\tilde{\Theta}) \dd x \dd z\\
	&=2(\rho g C_0)^p\iint_{D_{\textup{ice}}(t)}A(\cdot,\cdot,\tilde{\Theta})(h-z)^p (\Xi-\tilde{\Theta})\dd x \dd z\\
	&=2(\rho g C_0)^p\int_{0}^{L}\int_{\Omega}A(\cdot,\cdot,\tilde{\Theta})(\{h-z\}^{+})^p (\Xi-\tilde{\Theta})\dd x \dd z.
\end{align*}

On the one hand, an application of the positiveness of $q_{\textup{geo}}^\perp$ gives that for all $t\in (0,T)$:
\begin{align*}
	&\rho c\int_{0}^{L}\int_{\Omega}\left(\dfrac{\partial\tilde{\Theta}}{\partial t}+U_1(0,\cdot,\cdot)\dfrac{\partial\tilde{\Theta}}{\partial x_1}+U_2(0,\cdot,\cdot)\dfrac{\partial\tilde{\Theta}}{\partial x_2}+v_z(0,\cdot,\cdot)\dfrac{\partial\tilde{\Theta}}{\partial z}\right) (\Xi-\tilde{\Theta}) \dd x \dd z\\
	&\quad-\kappa\int_{0}^{L}\int_{\Omega}\Delta\tilde{\Theta} (\Xi-\tilde{\Theta}) \dd x \dd z\\
	&=\rho c\iint_{D_{\textup{ice}}(t)}\left(\dfrac{\partial\tilde{\Theta}}{\partial t}+U_1(0,\cdot,\cdot)\dfrac{\partial\tilde{\Theta}}{\partial x_1}+U_2(0,\cdot,\cdot)\dfrac{\partial\tilde{\Theta}}{\partial x_2}+v_z(0,\cdot,\cdot)\dfrac{\partial\tilde{\Theta}}{\partial z}\right) (\Xi-\tilde{\Theta}) \dd x \dd z\\
	&\quad+\rho c\iint_{(D_{\textup{ice}}(t))^{\textup{c}}}\left(\dfrac{\partial\tilde{\Theta}}{\partial t}+U_1(0,\cdot,\cdot)\dfrac{\partial\tilde{\Theta}}{\partial x_1}+U_2(0,\cdot,\cdot)\dfrac{\partial\tilde{\Theta}}{\partial x_2}+v_z(0,\cdot,\cdot)\dfrac{\partial\tilde{\Theta}}{\partial z}\right) (\Xi-\tilde{\Theta}) \dd x \dd z\\
	&\quad-\kappa\iint_{D_{\textup{ice}}(t)}\Delta\tilde{\Theta} (\Xi-\tilde{\Theta}) \dd x \dd z-\kappa\iint_{(D_{\textup{ice}}(t))^{\textup{c}}}\Delta\tilde{\Theta} (\Xi-\tilde{\Theta}) \dd x \dd z\\
	&=\rho c\iint_{D_{\textup{ice}}(t)}\left(\dfrac{\partial\tilde{\Theta}}{\partial t}+U_1(0,\cdot,\cdot)\dfrac{\partial\tilde{\Theta}}{\partial x_1}+U_2(0,\cdot,\cdot)\dfrac{\partial\tilde{\Theta}}{\partial x_2}+v_z(0,\cdot,\cdot)\dfrac{\partial\tilde{\Theta}}{\partial z}\right) (\Xi-\tilde{\Theta}) \dd x \dd z\\
	&\quad+\rho c\iint_{(D_{\textup{ice}}(t))^{\textup{c}}}\left(\dfrac{\partial\tilde{\Theta}}{\partial t}+U_1(0,\cdot,\cdot)\dfrac{\partial\tilde{\Theta}}{\partial x_1}+U_2(0,\cdot,\cdot)\dfrac{\partial\tilde{\Theta}}{\partial x_2}+v_z(0,\cdot,\cdot)\dfrac{\partial\tilde{\Theta}}{\partial z}\right) (\Xi-\tilde{\Theta}) \dd x \dd z\\
	&\quad+\kappa\iint_{D_{\textup{ice}}(t)}\nabla\tilde{\Theta}\cdot\nabla(\Xi-\tilde{\Theta})\dd x \dd z+\kappa\iint_{(D_{\textup{ice}}(t))^{\textup{c}}}\underbrace{\nabla\tilde{\Theta}}_{=\bm{0} \textup{ by }\eqref{Theta-final-2}}\cdot\nabla(\Xi-\tilde{\Theta})\dd x \dd z\\
	&\quad+\int_{\Omega^{+}_t}q_{\textup{geo}}^\perp (\Xi(\cdot,\cdot,0)-\tilde{\Theta}(\cdot,\cdot,0))\dd x\\
	&\quad-\kappa\int_{\Omega^{+}_t}\underbrace{\nabla\tilde{\Theta}(\cdot,\cdot,h(\cdot,\cdot))\cdot\bm{\nu}_{\textup{ice}}}_{=0 \textup{ by }\eqref{Theta-final-7}}\left(\Xi(\cdot,\cdot,h(\cdot,\cdot))-\tilde{\Theta}(\cdot,\cdot,h(\cdot,\cdot))\right) \dd x\\
	&\quad-\kappa\int_{0}^{L}\int_{\partial\Omega}\nabla\tilde{\Theta}(\cdot,\cdot,z)\cdot\bm{\nu}_{\partial\Omega}(\underbrace{\Xi(\cdot,\cdot,z)-\tilde{\Theta}(\cdot,\cdot,z)}_{=0 \textup{ by }\eqref{Theta-final-5}})\dd\mathcal{H}^1\dd z\\
	&\quad-\kappa\int_{\Omega} \dfrac{\partial\tilde{\Theta}}{\partial z}(\cdot,\cdot, L)(\underbrace{\Xi(\cdot,\cdot, L)}_{=0 \textup{ by }\eqref{Theta-final-6}}-\underbrace{\tilde{\Theta}(\cdot,\cdot,L)}_{=0 \textup{ by }\eqref{Theta-final-6}})\dd x\\
	&\quad+\kappa\int_{\Omega^{-}_t\times\{0\}}\underbrace{\nabla\tilde{\Theta}(\cdot,\cdot,0)\cdot\bm{\nu}_b}_{=0 \textup{ by }\eqref{Theta-final-2}}\left(\Xi(\cdot,\cdot,0)-\tilde{\Theta}(\cdot,\cdot,0)\right) \dd x\\
	&\quad+\kappa\int_{\Omega^{+}_t}\underbrace{\nabla\tilde{\Theta}(\cdot,\cdot,h(\cdot,\cdot))\cdot\bm{\nu}_{\textup{ice}}}_{=0 \textup{ by }\eqref{Theta-final-7}}\left(\Xi(\cdot,\cdot,h(\cdot,\cdot))-\tilde{\Theta}(\cdot,\cdot,h(\cdot,\cdot))\right) \dd x\\
	&\le\rho c\int_{0}^{L}\int_{\Omega}\left(\dfrac{\partial\tilde{\Theta}}{\partial t}+U_1(0,\cdot,\cdot)\dfrac{\partial\tilde{\Theta}}{\partial x_1}+U_2(0,\cdot,\cdot)\dfrac{\partial\tilde{\Theta}}{\partial x_2}+v_z(0,\cdot,\cdot)\dfrac{\partial\tilde{\Theta}}{\partial z}\right) (\Xi-\tilde{\Theta}) \dd x \dd z\\
	&\quad+\kappa\int_{0}^{L}\int_{\Omega}\nabla\tilde{\Theta}\cdot\nabla(\Xi-\tilde{\Theta})\dd x \dd z+\int_{\Omega}q_{\textup{geo}}^\perp(\Xi(\cdot,\cdot,0)-\tilde{\Theta}(\cdot,\cdot,0))\dd x.
\end{align*}

In conclusion, the evolution of the ice temperature is a solution of the following variational problem: \emph{Find $\tilde{\Theta}:[0,T]\times\overline{\Omega}\times [0,L]\to[0,\Theta_{\textup{m}}]$ satisfying the variational inequalities
\begin{equation*}
	\begin{aligned}
		&\rho c\int_{0}^{L}\int_{\Omega}\left(\dfrac{\partial\tilde{\Theta}}{\partial t}+U_1(0,\cdot,\cdot)\dfrac{\partial\tilde{\Theta}}{\partial x_1}+U_2(0,\cdot,\cdot)\dfrac{\partial\tilde{\Theta}}{\partial x_2}+v_z(0,\cdot,\cdot)\dfrac{\partial\tilde{\Theta}}{\partial z}\right) (\Xi-\tilde{\Theta}) \dd x \dd z\\
		&\quad+\kappa\int_{0}^{L}\int_{\Omega}\nabla\tilde{\Theta}\cdot\nabla(\Xi-\tilde{\Theta})\dd x \dd z+\int_{\Omega}q_{\textup{geo}}^\perp(\Xi(\cdot,\cdot,0)-\tilde{\Theta}(\cdot,\cdot,0))\dd x\\
		&\ge 2(\rho g C_0)^p\int_{0}^{L}\int_{\Omega}A(\cdot,\cdot,\tilde{\Theta})\left(\left\{u^{\frac{p-1}{2p}}-z\right\}^{+}\right)^p (\Xi-\tilde{\Theta})\dd x \dd z,
	\end{aligned}
\end{equation*}
for all $\Xi:[0,T]\times\overline{\Omega}\times [0,L]\to\mathbb{R}$ such that $0\le\Xi\le\Theta_{\textup{m}}$, the boundary conditions
\begin{equation*}
	\begin{aligned}
		\tilde{\Theta}=0&,\quad\textup{ on }[0,T]\times\partial\Omega\times [0,L],\\
		\tilde{\Theta}(\cdot,\cdot,L)=0&,\quad\textup{ on }[0,T]\times\overline{\Omega},
	\end{aligned}
\end{equation*}
and the initial condition:
\begin{equation*}
	\tilde{\Theta}(0,\cdot,\cdot)=\tilde{\Theta}_0(\cdot,\cdot),\quad\textup{ in }\overline{\Omega}\times [0,L].
\end{equation*}
}

Finally, we are able to write down the formal variational formulation of the thermo-mechanical problem under consideration: \emph{Find $u:[0,T]\times\overline{\Omega}\to\mathbb{R}^{+}\cup\{0\}$ and $\tilde{\Theta}:[0,T]\times \overline{\Omega}\times [0,L]\to[0,\Theta_{\textup{m}}]$ satisfying the following variational inequalities}
\begin{gather}
	\begin{aligned}
		&\int_{0}^{T} \int_{\Omega} \left(\dfrac{\partial}{\partial t}(|u|^{\frac{3p-1}{2p}-2} \, u)\right)(v-u) \dd x \dd t\\
		&\quad -\int_{0}^{T} \int_{\Omega} \mu(x,u,\Theta_{\textup{ice}}) |\nabla u|^{p-2} \nabla u \cdot \nabla(v-u) \dd x\dd t\\
		&\ge \int_{0}^{T} \int_{\Omega} \tilde{a} (v-u) \dd x \dd t,
	\end{aligned}\label{weak-u}\\[3ex]
	\begin{aligned}
		&\rho c\int_{0}^{T}\int_{0}^{L}\int_{\Omega}\left(\dfrac{\partial\tilde{\Theta}}{\partial t}+U_1(0,\cdot,\cdot)\dfrac{\partial\tilde{\Theta}}{\partial x_1}+U_2(0,\cdot,\cdot)\dfrac{\partial\tilde{\Theta}}{\partial x_2}+v_z(0,\cdot,\cdot)\dfrac{\partial\tilde{\Theta}}{\partial z}\right) (\Xi-\tilde{\Theta}) \dd x \dd z \dd t\\
		&\quad+\kappa\int_{0}^{T}\int_{0}^{L}\int_{\Omega}\nabla\tilde{\Theta}\cdot\nabla(\Xi-\tilde{\Theta})\dd x \dd z \dd t+\int_{0}^{T}\int_{\Omega}q_{\textup{geo}}^\perp(\Xi-\tilde{\Theta})\dd x \dd t\\
		&\ge 2(\rho g C_0)^p\int_{0}^{T}\int_{0}^{L}\int_{\Omega}A(\cdot,\cdot,\tilde{\Theta})\left(\left\{u^{\frac{p-1}{2p}}-z\right\}^{+}\right)^p (\Xi-\tilde{\Theta})\dd x \dd z \dd t,
	\end{aligned}\label{vi:thermal}
\end{gather}
\emph{for all $v:[0,T]\times\overline{\Omega}\to\mathbb{R}^{+}\cup\{0\}$ and for all $\Xi:[0,T]\times\overline{\Omega}\times [0,L]\to[0,\Theta_{\textup{m}}]$, the boundary conditions}
\begin{gather}
	u=0,\quad\textup{ on } [0,T]\times\partial\Omega,\label{u:bdry}\\
	\tilde{\Theta}=0,\quad\textup{ on }[0,T]\times\partial\Omega\times [0,L],\label{theta:bdry:1}\\
	\tilde{\Theta}(\cdot,\cdot,L)=0,\quad\textup{ on }[0,T]\times\overline{\Omega},\label{theta:bdry:2}
\end{gather}
\emph{as well as the initial conditions}
\begin{gather}
	u(0,\cdot)=u_0(\cdot)=[h_0(\cdot)]^{\frac{2p}{p-1}},\quad\textup{ in }\overline{\Omega},\label{u:ic}\\
	\tilde{\Theta}(0,\cdot,\cdot)=\tilde{\Theta}_0(\cdot,\cdot),\quad\textup{ in }\overline{\Omega}\times [0,L],\label{theta:ic}
\end{gather}
\emph{where $h_0>0$ and $0\le\tilde{\Theta}_0\le\Theta_{\textup{m}}$ are given (viz. Remark~\ref{rem:2}).}

Once again, note that, at least for the time being, a rigorous concept of solution has not been defined yet. This task will be carried out in Section~\ref{Sec:2}, in which the function spaces where the solutions are going to be sought will be defined as well as the requirements a function has to meet in order to be regarded as a solution.

\section{Weak formulation of the unilateral boundary value problem}
\label{Sec:2}

Let $\Omega \subset \mathbb{R}^2$ be a Lipschitz domain. Let the parameter $p$ satisfy $2.8 \le p \le 5$, as suggested by experimental results (cf., e.g., \cite{Glen1970,GreveBlatter2009,JouvBuel2012,PT23}).
For simplicity, we adopt the following assumptions (viz. Sections~\ref{Sec:1} and~\ref{Sec:1bis}):
\begin{enumerate}[$(H1)$]
	\item The bedrock $b$ is flat in $\overline{\Omega}$;\label{H1}
	\item The function $\tilde{a}$ is independent of the ice sheet height and belongs to $W^{1,p,\infty}(0,T;\mathcal{C}^0(\overline{\Omega}))$;\footnote{Note that $(H\ref{H3})$ is a particular case of~\eqref{a:continuous}.}\label{H3}
	\item The initial ice velocity $(\bm{U}(0),v_z(0))$ is of class $\bm{\mathcal{C}}^0(\overline{\Omega}\times[0,L])$ and is non-identically zero.\label{H4}
\end{enumerate}

Note that assumption $(H\ref{H1})$ implies that $\bm{U}_b = \bm{0}$.
For brevity, define the number
\begin{equation}
	\label{alpha}
	\gls{alpha}:=\dfrac{3p-1}{2p},
\end{equation}
and note that $1 < \alpha < 2 < p+1$. By the Rellich--Kondra\v{s}ov theorem, the following chain of embeddings holds:
\begin{equation*}
	\label{emb}
	W_0^{1,p}(\Omega) \hookrightarrow\hookrightarrow \mathcal{C}^0(\overline{\Omega}) \hookrightarrow L^\alpha(\Omega).
\end{equation*}

Define the number $\gls{L}$ as the \emph{theoretical maximum height a shallow ice sheet can reach}. Several physical mechanisms impose certain bounds for the number $L$. Atmospherically, the saturation vapour pressure of water decreases exponentially with temperature via the Clausius--Clapeyron relation \cite{WH06}.
The Antarctic Ice Sheet does not exceed $5\,\text{km}$ in thickness~\cite{Fretwell2013}. Extrapolating these trends, an ice sheet growing substantially beyond $5\,\text{km}$ would encounter an atmosphere nearly devoid of moisture, making further snowfall-driven growth physically impossible.

Define the set
\begin{equation*}
	\label{Ksurf}
	\gls{Ksurf}:=\{v \in W_0^{1,p}(\Omega); v \ge 0 \textup{ in }\overline{\Omega}\},
\end{equation*}
and observe that no upper bound has been imposed, despite the number $L$ having been introduced precisely as a theoretical maximum thickness. The reason for this is twofold.

First, the ice surface elevation $h$ (and, consequently, the transformed variable $u$) is independent of the vertical coordinate $z$. Indeed, since $L$ denotes the height of the cylinder $\Omega\times (0,L)$ where the ice internal temperature is defined, it results that $L$ does not constrain the admissible values of $u$  mathematically.

Second, the \emph{physical} impossibility of an ice sheet exceeding the height $L$ means that the temperature model is not expected to produce meaningful values for $z>L$.

Imposing an upper bound on $K_\textup{surf}$ is thus unnecessary both for physical reasons and for mathematical consistency of the model.

Define the function space
\begin{equation*}
	\gls{V}:=\{\Xi\in H^1(\Omega\times(0,L));\Xi=0 \textup{ on }\partial\Omega\times (0,L) \cup \Omega\times\{L\}\},
\end{equation*}
and define the set
\begin{equation*}
	\label{Ktemp}
	\gls{Ktemp}:=\{\Xi\in V;0\le\Xi\le\Theta_{\textup{m}} \textup{ a.e. in }\Omega\times (0,L)\}.
\end{equation*}

The weak formulation of the unilateral boundary value problem and the definition of a solution are derived via a constructive proof based on the penalty method. The use of the penalty method for proving the existence of solutions to time-dependent contact problems was introduced by J.-L. Lions~\cite{Lions1969}. We now generalise a preparatory result, originally established in~\cite{PT23} for the space $L^2$.
\begin{lemma}
	\label{lem:1}
	Let $I$ be a bounded open interval in $\mathbb{R}$, and let $\mathscr{O} \subset \mathbb{R}^m$, with $m\ge 1$ an integer, be a bounded open set. Let $2\le q\le\infty$.
	Then, the following results hold:
	\begin{itemize}
		\item[$(a)$] The operator $\gls{neg-part}:L^q(\mathscr{O}) \to L^q(\mathscr{O})$ defined by
		$$
		f \in L^q(\mathscr{O}) \mapsto -\{f\}^{-}:=\min\{f,0\} \in L^q(\mathscr{O}),
		$$
		is monotone, bounded and Lipschitz continuous with Lipschitz constant equal to one;
		
		\item[$(b)$] The Nemitskii operator associated with $-\{\cdot\}^{-}$, which is defined by
		\begin{equation*}
			-\{f\}^{-}(t):=-\{f(t)\}^{-},\quad\textup{ for each }f\in \mathcal{C}^0(\overline{I};L^q(\mathscr{O})),
		\end{equation*}
		is such that
		\begin{equation*}
			-\{\cdot\}^{-}:\mathcal{C}^0(\overline{I};L^q(\mathscr{O}))\to\mathcal{C}^0(\overline{I};L^q(\mathscr{O}))
		\end{equation*}
		is continuous.
	\end{itemize}
\end{lemma}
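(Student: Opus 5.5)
The whole argument rests on one pointwise inequality for the scalar map $\phi(s):=\min\{s,0\}$ on $\mathbb{R}$. Since $\phi$ is non-decreasing and $1$-Lipschitz, for all $s_1,s_2\in\mathbb{R}$ we have
\begin{equation*}
(\phi(s_1)-\phi(s_2))(s_1-s_2)\ge 0,\qquad |\phi(s_1)-\phi(s_2)|\le |s_1-s_2|.
\end{equation*}
The Lipschitz bound follows from the case analysis on the signs of $s_1,s_2$. If both are non-positive, equality holds. If both are non-negative, the left-hand side is zero. If $s_1\le 0\le s_2$, then $|\phi(s_1)-\phi(s_2)|=|s_1|\le s_2-s_1$. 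Monotonicity is immediate because $\phi$ is non-decreasing.

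For part $(a)$, given $f,g\in L^q(\mathscr{O})$, the composition $\phi\circ f$ is measurable and satisfies $|\phi(f)|\le|f|$ pointwise. Hence $-\{f\}^-\in L^q(\mathscr{O})$ with $\|-\{f\}^-\|_{L^q}\le\|f\|_{L^q}$, which is boundedness: bounded sets map to bounded sets. Raising the pointwise Lipschitz bound to the power $q$ and integrating gives $\|-\{f\}^- +\{g\}^-\|_{L^q}\le\|f-g\|_{L^q}$; for $q=\infty$ one takes the essential supremum instead. So the Lipschitz constant is at most one, and it equals one because $\phi$ is the identity on non-positive functions (take $f,g\le 0$, $f\neq g$).

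For monotonicity, I interpret it in the duality sense appropriate to the setting, namely
\begin{equation*}
\int_{\mathscr{O}}(-\{f\}^- +\{g\}^-)(f-g)\,\mathrm{d}x\ge 0.
\end{equation*}
This integral is well defined: since $\mathscr{O}$ is bounded and $q\ge 2$, we have $L^q(\mathscr{O})\hookrightarrow L^2(\mathscr{O})$, so by H\"older both factors lie in $L^2$. Its non-negativity follows by integrating the pointwise monotonicity inequality. This is the only place where the hypotheses $q\ge2$ and $\mathscr{O}$ bounded are needed, and it explains how the $L^2$ result of~\cite{PT23} generalises.

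For part $(b)$, take $f\in\mathcal{C}^0(\overline{I};L^q(\mathscr{O}))$ and $t,s\in\overline{I}$. Part $(a)$ gives
\begin{equation*}
\|-\{f(t)\}^- +\{f(s)\}^-\|_{L^q}\le\|f(t)-f(s)\|_{L^q},
\end{equation*}
so $t\mapsto-\{f(t)\}^-$ is continuous and the Nemitskii operator is well defined into $\mathcal{C}^0(\overline{I};L^q(\mathscr{O}))$. Taking the supremum over $t$ in the analogous estimate for two functions $f,g$ yields
\begin{equation*}
\|-\{f\}^- +\{g\}^-\|_{\mathcal{C}^0(\overline{I};L^q)}\le\|f-g\|_{\mathcal{C}^0(\overline{I};L^q)}.
\end{equation*}
Hence the operator is even $1$-Lipschitz, and in particular continuous.

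I do not expect a genuine obstacle. The only point needing care is the precise meaning of ``monotone'' when $q>2$, which I would make explicit via the integral pairing above, together with the separate handling of $q=\infty$ in the norm estimates.
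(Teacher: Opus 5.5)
Your proof is correct and follows essentially the same route as the paper. Both use a pointwise $1$-Lipschitz bound for $s\mapsto\min\{s,0\}$, integrated (or passed to the essential supremum) to get boundedness and Lipschitz continuity on $L^q(\mathscr{O})$. Both get integrability of the monotonicity pairing from $q\ge 2$ and the boundedness of $\mathscr{O}$, and both prove part $(b)$ by taking the same estimate uniformly in $t\in\overline{I}$.

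There are two small differences. The paper integrates the slightly stronger pointwise inequality $\bigl((-\{f\}^{-})-(-\{g\}^{-})\bigr)(f-g)\ge(\{f\}^{-}-\{g\}^{-})^2$, which gives an $L^2$ lower bound rather than mere non-negativity. You, in turn, additionally check that the Lipschitz constant one is actually attained.
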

\begin{proof}
	Observe that, for all measurable functions $f:\mathscr{O}\to\mathbb{R}$ and all $g:\mathscr{O}\to\mathbb{R}$, the following estimate holds
	\begin{equation}
		\label{neg-part-pw-1}
		\left|\left(-\{f\}^{-}(x)\right)-\left(-\{g\}^{-}(x)\right)\right|=\left|\left(-\{f(x)\}^{-}\right)-\left(-\{g(x)\}^{-}\right)\right|\le |f(x)-g(x)|,
	\end{equation}
	for a.a. $x\in\mathscr{O}$. Additionally, it results that
	\begin{equation}
		\label{neg-part-pw-2}
		\left(\left(-\{f(x)\}^{-}\right)-\left(-\{g(x)\}^{-}\right)\right)(f(x)-g(x))\ge\left(\{f(x)\}^{-}-\{g(x)\}^{-}\right)^2,
	\end{equation}
	for a.a. $x\in\mathscr{O}$.
	The rest of the proof is divided into two parts, numbered $(i)$ and $(ii)$.
	
	$(i)$ \emph{Proof of $(a)$}. To establish the boundedness of $-\{\cdot\}^{-}$, let $\mathscr{F}\subset L^q(\mathscr{O})$ be a bounded set. If $2\le q<\infty$, we observe that~\eqref{neg-part-pw-1} gives:
	\begin{equation*}
		\left(\int_{\mathscr{O}}|-\{f\}^{-}|^q\dd x\right)^{1/q}\le\|f\|_{L^q(\mathscr{O})},\quad\textup{ for all }f\in\mathscr{F}.
	\end{equation*}
	
	If $q=\infty$, we obtain that:
	\begin{equation*}
		\sup_{x\in\mathscr{O}}|-\{f(x)\}^{-}|\le\|f\|_{L^\infty(\mathscr{O})},\quad\textup{ for all }f\in\mathscr{F}.
	\end{equation*}
	
	The Lipschitz continuity of $(-\{\cdot\}^{-})$ follows by~\eqref{neg-part-pw-1}; indeed:
	\begin{equation*}
		\left\|(-\{f\}^{-})-(-\{g\}^{-})\right\|_{L^q(\mathscr{O})}\le\|f-g\|_{L^q(\mathscr{O})}, \quad\textup{ for all }f,g\in L^q(\mathscr{O}).
	\end{equation*}
	
	To establish the monotonicity of $-\{\cdot\}^{-}$, let us first observe that if $2\le q\le \infty$ then the assumed boundedness of $\mathscr{O}$ allows us to infer that
	\begin{equation*}
		-\{f\}^{-}g\in L^1(\mathscr{O}),\quad\textup{ for all }f,g\in L^q(\mathscr{O}).
	\end{equation*}
	
	Combining the latter with~\eqref{neg-part-pw-2} gives
	\begin{equation*}
		\int_{\mathscr{O}}\left(\left(-\{f\}^{-}\right)-\left(-\{g\}^{-}\right)\right)(f-g)\dd x\ge\left\|\left(-\{f\}^{-}\right)-\left(-\{g\}^{-}\right)\right\|_{L^2(\mathscr{O})}^2\ge 0,
	\end{equation*}
	for all $f,g\in L^q(\mathscr{O})$, for all $2\le q\le\infty$.
	
	$(ii)$ \emph{Proof of $(b)$}. To begin with, we check that, given $2\le q<\infty$, if $f\in\mathcal{C}^0(\overline{I};L^q(\mathscr{O}))$ then $-\{f\}^{-}\in\mathcal{C}^0(\overline{I};L^q(\mathscr{O}))$. For each $s,t\in\overline{I}$, an application of~\eqref{neg-part-pw-1} gives:
	\begin{equation*}
		\begin{aligned}
			\left(\int_{\mathscr{O}}|(-\{f(t)\}^{-})-(-\{f(s)\}^{-})|^q\dd x\right)\le\left(\int_{\mathscr{O}}|f(t)-f(s)|^q\dd x\right)\to 0,\quad\textup{ as }s\to t.
		\end{aligned}
	\end{equation*}
	
	To check the continuity of the operator $-\{\cdot\}^{-}$ from $\mathcal{C}^0(\overline{I};L^q(\mathscr{O}))$ to itself, let $\{f_n\}_{n=1}^\infty$ be a sequence that converges to $f$ in $\mathcal{C}^0(\overline{I};L^q(\mathscr{O}))$ and observe that an application of~\eqref{neg-part-pw-1} gives:
	\begin{equation*}
		\max_{t\in\overline{I}}\left(\int_{\mathscr{O}}|(-\{f_n(t)\}^{-})-(-\{f(t)\}^{-})|^q\dd x\right)^{1/q}
		\le\max_{t\in\overline{I}}\left(\int_{\mathscr{O}}|f_n(t)-f(t)|^q\dd x\right)^{1/q}\to 0,\quad\textup{ as } n\to\infty.
	\end{equation*}
	
	The case $q=\infty$ follows by resorting to a similar strategy. To show that $f\in\mathcal{C}^0(\overline{I};L^\infty(\mathscr{O}))$ implies $-\{f\}^{-}\in\mathcal{C}^0(\overline{I};L^\infty(\mathscr{O}))$, it suffices to observe that for each $s,t\in\overline{I}$, an application of~\eqref{neg-part-pw-1} gives:
	\begin{equation*}
		\begin{aligned}
			\sup_{x\in\mathscr{O}}|(-\{f(t)(x)\}^{-})-(-\{f(s)(x)\}^{-})|\le \sup_{x\in\mathscr{O}}|f(t)(x)-f(s)(x)|\to 0,\quad\textup{ as }s\to t.
		\end{aligned}
	\end{equation*}
	
	To check the continuity of the operator $-\{\cdot\}^{-}$ from $\mathcal{C}^0(\overline{I};L^\infty(\mathscr{O}))$ to itself, let $\{f_n\}_{n=1}^\infty$ be a sequence that converges to $f$ in $\mathcal{C}^0(\overline{I};L^\infty(\mathscr{O}))$ and observe that an application of~\eqref{neg-part-pw-1} gives
	\begin{equation*}
		\max_{t\in\overline{I}}\left(\sup_{x\in\mathscr{O}}|(-\{f_n(t)(x)\}^{-})-(-\{f(t)(x)\}^{-})|\right)
		\le\max_{t\in\overline{I}}\left(\sup_{x\in\mathscr{O}}|f_n(t)-f(t)|\right)\to 0,\quad\textup{ as } n\to\infty,
	\end{equation*}
	and the proof is complete.
\end{proof}

\begin{remark}
	\label{rem:neg}
	The properties established in Lemma~\ref{lem:1} can be established verbatim for $\gls{pos-part}$.
	\bqed
\end{remark}

The next lemma shows that the mapping $t\in\mathbb{R} \mapsto t^\beta$, with $0<\beta<1$, is sub-additive.
\begin{lemma}[Lemma~3.2 in~\cite{PT23}]
	\label{lem:2-0}
	Let $0<\beta \le 1$. Then
	$$
	\left||x|^\beta - |y|^\beta\right| \le |x-y|^\beta,\quad\textup{ for all } x, y \in \mathbb{R}.
	$$
	\qed
\end{lemma}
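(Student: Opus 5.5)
The plan is to reduce the claim to an elementary inequality for non-negative reals and then split into cases according to the signs of $x$ and $y$. Since only $|x|$ and $|y|$ appear on the left-hand side, and the reverse triangle inequality gives $\bigl||x|-|y|\bigr|\le|x-y|$, it suffices to prove
\begin{equation*}
|a^\beta-b^\beta|\le |a-b|^\beta,\quad\textup{ for all } a,b\ge 0.
\end{equation*}
Indeed, once this holds, we can combine it with the monotonicity of $s\mapsto s^\beta$ on $[0,\infty)$:
\begin{equation*}
\left||x|^\beta-|y|^\beta\right|\le\bigl||x|-|y|\bigr|^\beta\le|x-y|^\beta.
\end{equation*}
The case $\beta=1$ is just the reverse triangle inequality, so from now on we take $0<\beta<1$.

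For the reduced inequality, by symmetry we may assume $a\ge b\ge 0$, and the claim becomes $a^\beta\le b^\beta+(a-b)^\beta$. This is exactly the subadditivity of $\phi(s):=s^\beta$ on $[0,\infty)$ applied to $s=b$ and $t=a-b$. To prove $\phi(s+t)\le\phi(s)+\phi(t)$ for $s,t\ge0$, first dispose of the trivial case $s+t=0$. Otherwise, normalise by setting $\lambda:=s/(s+t)\in[0,1]$, so that the inequality becomes
\begin{equation*}
1\le\lambda^\beta+(1-\lambda)^\beta.
\end{equation*}
This holds because $\lambda^\beta\ge\lambda$ and $(1-\lambda)^\beta\ge 1-\lambda$ for $\lambda\in[0,1]$ when $0<\beta\le 1$.

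An alternative route to the same conclusion uses concavity: $\phi$ is concave with $\phi(0)=0$, and this alone yields subadditivity. Either argument works.

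No step is a genuine obstacle here. The only points requiring care are the boundary cases: one of $a,b$ being zero, and $\beta=1$. Both are covered by the normalisation and by the weak inequalities $\lambda^\beta\ge\lambda$.
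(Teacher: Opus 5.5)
Your proof is correct and complete. The reduction via $\bigl||x|-|y|\bigr|\le|x-y|$ and the monotonicity of $s\mapsto s^\beta$, followed by the normalised subadditivity inequality $1\le\lambda^\beta+(1-\lambda)^\beta$, proves the claim for all $0<\beta\le 1$. The paper gives no proof of its own here, since the statement is quoted from \cite{PT23}; it does, however, introduce the lemma as the sub-additivity of $t\mapsto t^\beta$, which is exactly the mechanism you use.
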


The next lemma is a complement of Lemma~\ref{lem:2-0}.
\begin{lemma}[Lemma~3.3 in~\cite{PT23}]
	\label{lem:2}
	Let $\sigma\ge 1$. Then:
	$$
	|x-y|^\sigma \le \left|x^\sigma - y^\sigma\right|, \quad\textup{ for all } x,y \ge 0.
	$$
	\qed
\end{lemma}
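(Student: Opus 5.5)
The plan is to reduce the inequality $|x-y|^\sigma\le|x^\sigma-y^\sigma|$ to a one-variable statement by symmetry and homogeneity. Both sides are symmetric in $(x,y)$, so we may assume $x\ge y\ge 0$. If $y=0$ the two sides coincide and there is nothing to prove. If $x=y$ both sides vanish. It therefore remains to treat $x>y>0$, where the claim reads $(x-y)^\sigma\le x^\sigma-y^\sigma$.

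\textbf{Homogeneity reduction.} Next we use that both sides are positively homogeneous of degree $\sigma$. We divide by $x^\sigma>0$ and set $s:=y/x\in(0,1)$. The claim then becomes
$$
(1-s)^\sigma+s^\sigma\le 1,\quad\textup{ for all } s\in(0,1).
$$

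\textbf{The one-variable inequality.} This follows at once from $\sigma\ge 1$. Since $0<s<1$ and $0<1-s<1$, we have $s^\sigma\le s$ and $(1-s)^\sigma\le 1-s$, because $r\mapsto r^\sigma$ lies below the identity on $[0,1]$ when $\sigma\ge1$. Adding the two bounds gives $(1-s)^\sigma+s^\sigma\le s+(1-s)=1$. Multiplying back by $x^\sigma$ yields $(x-y)^\sigma+y^\sigma\le x^\sigma$, which is exactly the claim.

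An alternative route, which avoids the rescaling, is to set $\varphi(x):=x^\sigma-y^\sigma-(x-y)^\sigma$ for $x\ge y$ and note that $\varphi(y)=0$. Its derivative is $\varphi'(x)=\sigma\bigl(x^{\sigma-1}-(x-y)^{\sigma-1}\bigr)\ge 0$, since $r\mapsto r^{\sigma-1}$ is nondecreasing for $\sigma\ge1$. Hence $\varphi\ge0$ on $[y,\infty)$.

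There is no genuine obstacle here. The only points requiring care are the degenerate cases $y=0$ and $x=y$, which we handled separately, and the restriction to nonnegative arguments, which is essential. Indeed, $x^\sigma$ for negative $x$ and non-integer $\sigma$ is undefined, and for negative arguments the inequality can fail. The argument is dual to Lemma~\ref{lem:2-0}, which expresses the reverse inequality (subadditivity) for exponents in $(0,1]$.
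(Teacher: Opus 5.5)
Your proof is correct and complete. The reduction to $(1-s)^\sigma+s^\sigma\le 1$ via $r^\sigma\le r$ on $[0,1]$ is sound, and so is the alternative monotonicity argument with $\varphi$. The paper itself gives no proof of this lemma; it only imports the statement from Lemma~3.3 of \cite{PT23} and closes with \qed. So there is no in-text argument to compare with, and your standard self-contained argument fully supplies what is needed.
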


Thanks to Lemma~\ref{lem:2}, it is possible to establish the following result, which will allow us to recover the validity of the initial conditions for~\eqref{weak-u}.
\begin{lemma}[Lemma~3.4 in~\cite{PT23}]
	\label{lem:3}
	Let $\Omega$ be a Lipschitz domain in $\mathbb{R}^2$, let $0<T<\infty$ be given, let $1< \sigma <2$, and let $u \in L^\infty(0,T;L^\sigma(\Omega))$ be such that:
	$$
	\dfrac{\dd }{\dd t}\left(|u|^\frac{\sigma-2}{2}u\right) \in L^2(0,T;L^2(\Omega)).
	$$
	
	Then, we have that $\left(|u|^\frac{\sigma-2}{2} u\right) \in \mathcal{C}^0([0,T];L^2(\Omega))$, $|u| \in \mathcal{C}^0([0,T];L^\sigma(\Omega))$, and $u \in\mathcal{C}^0([0,T];L^\sigma(\Omega))$.
	\qed
\end{lemma}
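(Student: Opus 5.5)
Set $w:=|u|^{\frac{\sigma-2}{2}}u$. The plan is to get continuity of $w$ from standard vector-valued Sobolev theory, then transfer it to $|u|$ and $u$ through the explicit algebraic relations between $w$ and $u$, using Lemma~\ref{lem:2}.

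First, $w$ lies in $L^\infty(0,T;L^2(\Omega))$. Indeed $|w|^2=|u|^{\sigma}$ pointwise, so $\|w(t)\|_{L^2(\Omega)}^2=\|u(t)\|_{L^\sigma(\Omega)}^\sigma$ is essentially bounded. Together with $w'\in L^2(0,T;L^2(\Omega))$, this gives $w\in W^{1,2}(0,T;L^2(\Omega))$, which embeds continuously in $\mathcal{C}^0([0,T];L^2(\Omega))$. The embedding is the standard one (cf.~\cite{Leoni2017}), proved by writing $w(t)=w(s)+\int_s^t w'(\tau)\dd\tau$ for a.a.\ $s,t$. After modification on a null set, this yields the first claim.

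Second, for $|u|$, note that $|u|=|w|^{2/\sigma}$ with $2/\sigma>1$. Continuity of $t\mapsto|w(t)|$ in $L^2(\Omega)$ follows from $\big||a|-|b|\big|\le|a-b|$. Set $\gamma:=2/\sigma\in(1,2)$ and $a=|w(t)|$, $b=|w(s)|$, both non-negative. The mean value theorem gives
\begin{equation*}
|a^\gamma-b^\gamma|\le\gamma\max\{a,b\}^{\gamma-1}|a-b|.
\end{equation*}
I integrate the $\sigma$-th power and apply Hölder with exponents $2/(2-\sigma)$ and $2/\sigma$. Since $(\gamma-1)\sigma\cdot\frac{2}{2-\sigma}=2$, this gives
\begin{equation*}
\||u(t)|-|u(s)|\|_{L^\sigma}^\sigma\le C\,\big(\|w(t)\|_{L^2}^2+\|w(s)\|_{L^2}^2\big)^{\frac{2-\sigma}{2}}\|w(t)-w(s)\|_{L^2}^{\sigma}.
\end{equation*}
The right-hand side tends to zero by the first step, since $w$ is bounded in $\mathcal{C}^0([0,T];L^2(\Omega))$. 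This is the same exponent bookkeeping that Lemma~\ref{lem:2} is designed to handle; one can alternatively use Lemma~\ref{lem:2} in the reverse direction on $|u|^{\sigma/2}=|w|$.

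Third, for $u$ itself, write $u=\operatorname{sgn}(w)|w|^{2/\sigma}$. The map $r\mapsto|r|^{\gamma-1}r$ with $\gamma>1$ satisfies the same mean-value bound
\begin{equation*}
\big||a|^{\gamma-1}a-|b|^{\gamma-1}b\big|\le\gamma\max\{|a|,|b|\}^{\gamma-1}|a-b|
\end{equation*}
for all real $a,b$. The identical Hölder argument therefore gives $u\in\mathcal{C}^0([0,T];L^\sigma(\Omega))$, and I will carry out this step with signs directly.

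The main obstacle is the Hölder exponent check and making sure that the continuous representative of $w$ induces the representative of $u$. Since $u$ is a measurable function of $w$ pointwise in $(t,x)$, the representative obtained in the first step defines a representative of $u$ that agrees a.e.\ with the original one.
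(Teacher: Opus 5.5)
Your proof is correct. The exponent bookkeeping checks out: $(\gamma-1)\sigma=2-\sigma$, so the first H\"older factor is bounded by $\int_\Omega\max\{a,b\}^2\dd x\le\|a\|_{L^2(\Omega)}^2+\|b\|_{L^2(\Omega)}^2$, and $u=|w|^{\gamma-1}w$ holds exactly. Your first step, the embedding $W^{1,2}(0,T;L^2(\Omega))\hookrightarrow\mathcal{C}^0([0,T];L^2(\Omega))$, is the standard one. The difference from the paper lies in how you pass from $w$ to $|u|$ and $u$.

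The paper does not reproduce the proof; it quotes Lemma~3.4 of \cite{PT23}. It does, however, signal that the result comes from Lemma~\ref{lem:2}, applied directly with exponent $\sigma$ to $|u(t)|$ and $|u(s)|$:
\begin{equation*}
\int_\Omega\big||u(t)|-|u(s)|\big|^\sigma\dd x\le\int_\Omega\big||w(t)|^2-|w(s)|^2\big|\dd x\le\big(\|w(t)\|_{L^2(\Omega)}+\|w(s)\|_{L^2(\Omega)}\big)\|w(t)-w(s)\|_{L^2(\Omega)}.
\end{equation*}
This estimate needs no H\"older exponent matching and works for any $\sigma\ge1$. Its drawback is that it only sees $|u|$. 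Reaching $u$ itself needs an extra step, for instance one of the following.
\begin{itemize}
\item Apply the same estimate to $\{w\}^{\pm}$, using $\{u\}^{\pm}=(\{w\}^{\pm})^{2/\sigma}$ and the fact that $w\mapsto\{w\}^{\pm}$ is $1$-Lipschitz on $L^2(\Omega)$.
\item Write $u=|u|^{\frac{2-\sigma}{2}}w$ and combine Lemma~\ref{lem:2-0} with H\"older.
\end{itemize}

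Your mean-value argument on the odd map $r\mapsto|r|^{\gamma-1}r$ handles the sign in one stroke. It makes your second step a corollary of the third, via $\big||a|-|b|\big|\le|a-b|$. It also gives a modulus linear in $\|w(t)-w(s)\|_{L^2(\Omega)}$ rather than of order $\|w(t)-w(s)\|_{L^2(\Omega)}^{1/\sigma}$. Combined with $w\in W^{1,2}(0,T;L^2(\Omega))$, this even yields $\tfrac12$-H\"older continuity of $u$ into $L^\sigma(\Omega)$. The price is that you genuinely use $\sigma<2$, so that $\gamma>1$ and $2/(2-\sigma)<\infty$; this is within the hypotheses.

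One small correction to your remark: Lemma~\ref{lem:2} is used directly on $|u(t)|$ and $|u(s)|$ as above, not ``in the reverse direction'' on $|w|$.
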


The following lemma leverages Lemma~\ref{lem:3} for establishing finer convergence properties.
\begin{lemma}[Lemma~3.6 in~\cite{PT23}]
	\label{lem:5}
	Let $\Omega$ be a Lipschitz domain in $\mathbb{R}^2$, let $0<T<\infty$ be given, let $1< \sigma <2$.
	Let $\{u_k\}_{k=1}^\infty$ be such that:
	$$
	\left\{|u_k|^\frac{\sigma-2}{2} u_k\right\}_{k=1}^\infty \textup{ strongly converges in }\mathcal{C}^0([0,T];L^2(\Omega)).
	$$
	
	Then $\{|u_k|\}_{k=1}^\infty$ strongly converges in $\mathcal{C}^0([0,T];L^\sigma(\Omega))$.
	\qed
\end{lemma}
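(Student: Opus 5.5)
The plan is to pass through the power map $\phi(s):=|s|^{\frac{\sigma-2}{2}}s$ and its inverse on $[0,\infty)$. Set $w_k:=|u_k|^{\frac{\sigma-2}{2}}u_k$, so that $|w_k|=|u_k|^{\sigma/2}$ and hence $|u_k|=|w_k|^{2/\sigma}$ pointwise a.e. By hypothesis $w_k\to w$ in $\mathcal{C}^0([0,T];L^2(\Omega))$ for some $w$. Define the candidate limit $U:=|w|^{2/\sigma}$. The goal is to show
\[
\max_{t\in[0,T]}\bigl\||w_k(t)|^{2/\sigma}-|w(t)|^{2/\sigma}\bigr\|_{L^\sigma(\Omega)}\to 0 .
\]
Since $1<\sigma<2$, the exponent $\gamma:=2/\sigma$ lies in $(1,2)$. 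Lemma~\ref{lem:2-0} is therefore not applicable, because it requires an exponent at most one. Lemma~\ref{lem:2} points in the wrong direction for an upper bound.

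The key step is an elementary pointwise estimate for $\gamma>1$ and $x,y\ge 0$, obtained from the mean value theorem:
\[
\bigl|x^\gamma-y^\gamma\bigr|\le \gamma\,(x+y)^{\gamma-1}|x-y| .
\]
Apply it with $x=|w_k(t)|$, $y=|w(t)|$, use $\bigl||w_k|-|w|\bigr|\le|w_k-w|$, and raise to the power $\sigma$. This gives, a.e. in $\Omega$,
\[
\bigl||u_k(t)|-U(t)\bigr|^\sigma\le \gamma^\sigma (|w_k(t)|+|w(t)|)^{(\gamma-1)\sigma}|w_k(t)-w(t)|^\sigma .
\]
Here $(\gamma-1)\sigma=2-\sigma$. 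Apply H\"older's inequality with exponents $\frac{2}{2-\sigma}$ and $\frac{2}{\sigma}$, which are conjugate since $\frac{2-\sigma}{2}+\frac{\sigma}{2}=1$. This yields
\[
\bigl\||u_k(t)|-U(t)\bigr\|_{L^\sigma(\Omega)}^\sigma\le \gamma^\sigma\,\bigl\||w_k(t)|+|w(t)|\bigr\|_{L^2(\Omega)}^{2-\sigma}\,\|w_k(t)-w(t)\|_{L^2(\Omega)}^{\sigma}.
\]

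Taking the maximum over $t\in[0,T]$ concludes the argument. A sequence converging in $\mathcal{C}^0([0,T];L^2(\Omega))$ is bounded there, and $w$ lies in that space, so the first factor is bounded uniformly in $k$ and $t$. The second factor tends to zero uniformly in $t$.

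It remains to check that the limit $U$ belongs to $\mathcal{C}^0([0,T];L^\sigma(\Omega))$. Apply the same inequality with $w_k(t)$ replaced by $w(s)$ and let $s\to t$. Alternatively, note that each $|u_k|$ is continuous with values in $L^\sigma(\Omega)$, by the argument of Lemma~\ref{lem:3}, so the uniform limit is continuous. Either way, $\{|u_k|\}$ converges strongly to $U$ in $\mathcal{C}^0([0,T];L^\sigma(\Omega))$.

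The main obstacle is the H\"older bookkeeping, namely getting the exponents to close. It works precisely because $(\gamma-1)\sigma\cdot\frac{2}{2-\sigma}=2$, so only the available $L^2$ bounds are needed.
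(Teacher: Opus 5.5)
Your proof is correct. The pointwise bound you use is exactly Lemma~\ref{lem:9} with $r=2/\sigma$. The identity $(\gamma-1)\sigma=2-\sigma$ makes the H\"older pair $\bigl(\tfrac{2}{2-\sigma},\tfrac{2}{\sigma}\bigr)$ close. Applying the same inequality with $w(s)$ in place of $w_k(t)$ gives $U=|w|^{2/\sigma}\in\mathcal{C}^0([0,T];L^\sigma(\Omega))$. The fallback to ``the argument of Lemma~\ref{lem:3}'' is unnecessary, since that lemma's hypotheses are not available here, but your first option already suffices.

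The paper quotes this statement from~\cite{PT23} without proof. In the analogous Lemmas~\ref{lem:5:improved} and~\ref{lem:11} it takes a different route, which shows that your remark that Lemma~\ref{lem:2} ``points in the wrong direction'' is mistaken. Apply it with exponent $\sigma\ge 1$ (not $\gamma$) to $x=|w_k(t)|^{2/\sigma}$ and $y=|w(t)|^{2/\sigma}$. This yields the upper bound $\bigl||u_k(t)|-U(t)\bigr|^\sigma\le\bigl||w_k(t)|^2-|w(t)|^2\bigr|\le(|w_k(t)|+|w(t)|)\,|w_k(t)-w(t)|$. By Cauchy--Schwarz (the estimate in the proof of Lemma~\ref{lem:10}$(a)$ with $r=2$), it follows that $\||u_k(t)|-U(t)\|_{L^\sigma(\Omega)}^\sigma\le\||w_k(t)|+|w(t)|\|_{L^2(\Omega)}\|w_k(t)-w(t)\|_{L^2(\Omega)}$, uniformly in $t$.

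That route is two lines and needs no exponent bookkeeping. The paper also phrases it as a Cauchy-sequence argument rather than naming the limit. Your route costs the H\"older computation but gives a sharper modulus. You get $\||u_k(t)|-U(t)\|_{L^\sigma(\Omega)}\le\gamma\,\||w_k(t)|+|w(t)|\|_{L^2(\Omega)}^{(2-\sigma)/\sigma}\|w_k(t)-w(t)\|_{L^2(\Omega)}$, which is locally Lipschitz in $w_k$. The Lemma~\ref{lem:2} route only gives a H\"older-$\tfrac{1}{\sigma}$ dependence on $\|w_k(t)-w(t)\|_{L^2(\Omega)}$.
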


The proof establishing the existence of solutions relies on a compactness result proved by Dubinskii~\cite{Dubinski1965} (see also~\cite{BarretSuli2011} for improvements and corrections), as well as on other results presented by P.-A. Raviart~\cite{Raviart1970}, for which we give a different proof below.
\begin{lemma}
	\label{lem:7}
	Let $1< r <\infty$ and let $r'$ denote the H\"older conjugate exponent of $r$. The following inequalities hold
	\begin{align*}
		(|\xi|^{r-2}\xi-|\eta|^{r-2}\eta)\xi &\ge \dfrac{|\xi|^r-|\eta|^r}{r'},\\
		(|\xi|^{r-2}\xi-|\eta|^{r-2}\eta)(\xi-\eta)&\ge C \left(|\xi|^{(r-2)/2}\xi-|\eta|^{(r-2)/2}\eta\right)^2, \textup{ for some }C=C(r)>0,
	\end{align*}
	for all $\xi, \eta \in \mathbb{R}$.
\end{lemma}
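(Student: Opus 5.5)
The first inequality is a convexity statement. Set $\varphi(s):=|s|^r/r$, which is convex and $\mathcal{C}^1$ on $\mathbb{R}$ because $r>1$, with $\varphi'(s)=|s|^{r-2}s$. Convexity gives $\varphi(\eta)\ge\varphi(\xi)+\varphi'(\xi)(\eta-\xi)$, that is,
\begin{equation*}
|\xi|^{r-2}\xi\,\xi-|\xi|^{r-2}\xi\,\eta\ge\dfrac{|\xi|^r-|\eta|^r}{r}.
\end{equation*}
I will then write the left-hand side of the target as $|\xi|^r-|\eta|^{r-2}\eta\,\xi$. Young's inequality with exponents $r'$ and $r$ gives
\begin{equation*}
|\eta|^{r-2}\eta\,\xi\le\dfrac{|\eta|^{r}}{r'}+\dfrac{|\xi|^r}{r},
\end{equation*}
using $|\eta|^{(r-1)r'}=|\eta|^r$. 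Hence
\begin{equation*}
(|\xi|^{r-2}\xi-|\eta|^{r-2}\eta)\xi\ge |\xi|^r\Big(1-\dfrac{1}{r}\Big)-\dfrac{|\eta|^r}{r'}=\dfrac{|\xi|^r-|\eta|^r}{r'}.
\end{equation*}
So Young's inequality alone suffices, and the convexity step is not even needed.

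For the second inequality I will use homogeneity. Both sides are homogeneous of degree $r$ under $(\xi,\eta)\mapsto(\lambda\xi,\lambda\eta)$ with $\lambda>0$, and both are symmetric under swapping $\xi$ and $\eta$ and under $(\xi,\eta)\mapsto(-\xi,-\eta)$. If $\xi=\eta$ there is nothing to prove, and if $\eta=0$ the inequality reads $|\xi|^r\ge C|\xi|^r$, which holds for any $C\le1$. Otherwise I may normalise to $\xi=1$ and $\eta=t\in[-1,1)$, since by symmetry the variable of larger modulus can be taken equal to $1$. Writing $\psi(s):=|s|^{(r-2)/2}s$, the claim reduces to
\begin{equation*}
F(t):=\dfrac{(1-|t|^{r-2}t)(1-t)}{(1-\psi(t))^2}\ge C>0,\quad t\in[-1,1).
\end{equation*}

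The function $F$ is continuous and positive on $[-1,1)$, because both $s\mapsto|s|^{r-2}s$ and $\psi$ are strictly increasing, so numerator and denominator are positive. The only delicate point is therefore the limit $t\to1^-$, which I expect to be the main obstacle; the reduction to compactness is routine. Near $t=1$ I will Taylor expand: $1-t^{r-1}\sim(r-1)(1-t)$ and $1-\psi(t)=1-t^{r/2}\sim\frac{r}{2}(1-t)$. This gives
\begin{equation*}
F(t)\to\dfrac{4(r-1)}{r^2}>0,\quad t\to1^-.
\end{equation*}
Thus $F$ extends continuously to the compact interval $[-1,1]$ and is strictly positive there, so its minimum $C(r)>0$ provides the required constant.

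An alternative route that avoids compactness is to write $|\xi|^{r-2}\xi-|\eta|^{r-2}\eta=\int_0^1(r-1)|\eta+s(\xi-\eta)|^{r-2}\dd s\,(\xi-\eta)$, and similarly for $\psi$ with exponent $(r-2)/2$. One then compares $\int|\cdot|^{r-2}$ with $\big(\int|\cdot|^{(r-2)/2}\big)^2$ by the Cauchy--Schwarz inequality, which yields $C=4(r-1)/r^2$ directly. I would present this version if it goes through cleanly, since it gives an explicit constant.
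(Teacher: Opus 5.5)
Your proof is correct, and its main line is the paper's. For the first inequality you expand and apply Young's inequality with exponents $r,r'$, exactly as the paper does. The convexity inequality you display is true, but it bounds $|\xi|^{r-2}\xi(\xi-\eta)$ rather than the target. Written at the point $\eta$ instead, i.e. $\varphi(\xi)\ge\varphi(\eta)+\varphi'(\eta)(\xi-\eta)$, it is exactly the Young inequality you use. For the second inequality your main route also matches the paper: degree-zero homogeneity of the quotient, reduction to a compact set, the Taylor limit $4(r-1)/r^2$ at the diagonal, and a positive minimum by compactness. The only cosmetic difference is that you normalise the entry of larger modulus to $1$ and work on $[-1,1]$, whereas the paper works on the curve $|\xi|^r+|\eta|^r=1$.

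The alternative you sketch at the end is genuinely different, and it does go through. Set $\gamma(s):=\eta+s(\xi-\eta)$. Then $|\xi|^{r-2}\xi-|\eta|^{r-2}\eta=(r-1)(\xi-\eta)\int_0^1|\gamma(s)|^{r-2}\dd s$ and $\psi(\xi)-\psi(\eta)=\frac{r}{2}(\xi-\eta)\int_0^1|\gamma(s)|^{(r-2)/2}\dd s$. Cauchy--Schwarz on $(0,1)$ then gives the claim with $C=4(r-1)/r^2$.

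The one point to justify is the fundamental theorem of calculus when $1<r<2$, where the integrands are singular wherever $\gamma$ vanishes. This is harmless: $s\mapsto|s|^{r-2}s$ and $\psi$ are absolutely continuous on bounded intervals, with derivatives $(r-1)|s|^{r-2}$ and $\frac{r}{2}|s|^{(r-2)/2}$, which are locally integrable since $r-2>-1$.

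This route avoids compactness and the continuous-extension step, and it gives more than the paper's argument. The quotient is bounded below by $4(r-1)/r^2$ everywhere and tends to that value at the diagonal, so $4(r-1)/r^2$ is the optimal constant. The compactness proof, by contrast, only produces an unspecified $C(r)>0$.
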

\begin{proof}
	We begin with the first inequality. Expanding the left-hand side gives
	\begin{equation*}
		(|\xi|^{r-2}\xi-|\eta|^{r-2}\eta)\xi = |\xi|^r - |\eta|^{r-2}\eta\xi.
	\end{equation*}
	
	Hence the desired inequality is equivalent to
	\begin{equation*}
		|\xi|^r - |\eta|^{r-2}\eta\xi \ge \frac{|\xi|^r-|\eta|^r}{r'}
		\iff \frac{|\xi|^r}{r}+\frac{|\eta|^r}{r'} \ge |\eta|^{r-2}\eta\xi,
	\end{equation*}
	where we have used the identity $1-\frac{1}{r'}=\frac{1}{r}$. Now $|\eta|^{r-2}\eta\xi \le |\eta|^{r-1}|\xi|$, and by Young's inequality~\cite{Young1912} with exponents $r$ and $r'$,
	\begin{equation*}
		\frac{|\xi|^r}{r}+\frac{(|\eta|^{r-1})^{r'}}{r'} \ge |\xi||\eta|^{r-1}.
	\end{equation*}
	
	Since $(r-1)r'=r$, we have $(|\eta|^{r-1})^{r'}=|\eta|^r$, and the first inequality follows.
	
	The second inequality is due to Tartar\footnote{It appears that this inequality was originally established by Luc Tartar. See~\cite{Simon1978} for another proof.}. We now give a different proof for the second inequality.
	For $\xi\neq\eta$, define
	\begin{equation*}
		F(\xi,\eta)=\dfrac{\left(|\xi|^{r-2}\xi-|\eta|^{r-2}\eta\right)(\xi-\eta)}{\left(|\xi|^{\frac{r-2}{2}}\xi-|\eta|^{\frac{r-2}{2}}\eta\right)^{2}},
	\end{equation*}
	and observe that the denominator vanishes only when $\xi=\eta$ since the map $t\mapsto|t|^{(r-2)/2}t$ is strictly monotone. Hence $F$ is well defined and continuous on $\mathbb{R}^{2}\setminus\{(\xi,\eta):\xi=\eta\}$. The rest of the proof is divided into four steps, numbered $(i)$--$(iv)$.
	
	$(i)$ \emph{The mapping $F$ is homogeneous of degree zero}. For any $\lambda>0$, it results
	\begin{equation*}
		F(\lambda\xi,\lambda\eta)=\dfrac{\left(|\lambda\xi|^{r-2}\lambda\xi-|\lambda\eta|^{r-2}\lambda\eta\right)(\lambda\xi-\lambda\eta)}{\left(|\lambda\xi|^{\frac{r-2}{2}}\lambda\xi-|\lambda\eta|^{\frac{r-2}{2}}\lambda\eta\right)^{2}}=\dfrac{\lambda^{r}\left(|\xi|^{r-2}\xi-|\eta|^{r-2}\eta\right)(\xi-\eta)}{\lambda^{r}\left(|\xi|^{\frac{r-2}{2}}\xi-|\eta|^{\frac{r-2}{2}}\eta\right)^{2}}.
	\end{equation*}

	$(ii)$ \emph{Reduction of the problem to a compact set}.
	Thanks to the homogeneity of $F$ established in item $(i)$, the values of $F$ on $\mathbb{R}^{2}\setminus\{(0,0)\}$ are known by solely knowing the values of $F$ on the compact set
	\begin{equation*}
		S=\left\{(\xi,\eta):|\xi|^{r}+|\eta|^{r}=1\right\}.
	\end{equation*}
	
	Indeed, for $(\xi,\eta)\neq(0,0)$ let $m=(|\xi|^{r}+|\eta|^{r})^{1/r}>0$. Then, it results $(\xi/m,\eta/m)\in S$ and, thanks to item $(i)$, $F(\xi,\eta)=F(\xi/m,\eta/m)$.
	The sought inequality certainly holds for $(\xi,\eta)=(0,0)$. Therefore, it suffices to establish the inequality in $S$ and exploit the homogeneity of degree zero to conclude.
	
	$(iii)$ \emph{The mapping $F$ can be continuously extended to $S$}. On $S$ the denominator of $F$ is zero exactly at the two points where $\xi=\eta$. On $S$ the condition $\xi=\eta$ implies $2|\xi|^{r}=1$, i.e. $\xi=\eta=\pm a$ with $a=2^{-1/r}>0$.
	
	We now show that $F$ can be continuously extended to these two points, and that the extension attains positive values there.
	Because of the evenness $F(-\xi,-\eta)=F(\xi,\eta)$, it is enough to study the limit as $(\xi,\eta)\to(a,a)$ along $S$.
	
	By homogeneity, for any $\eta>0$ we have $F(\xi,\eta)=F(\xi/\eta,1)$. For points $(\xi,\eta)\in S$ sufficiently close to $(a,a)$, both coordinates are positive, so we may reduce the limit to the behaviour of
	\begin{equation*}
		H(\xi)=F(\xi,1)=\frac{(\xi^{r-1}-1)(\xi-1)}{(\xi^{r/2}-1)^{2}},\quad \textup{ for all }\xi>0 \textup{ such that } \xi\neq1.
	\end{equation*}
	
	As $(\xi,\eta)\to(a,a)$ along $S$, the ratio $\xi/\eta\to 1$, and the limit of $F$ on $S$ coincides with $\lim_{\xi\to 1}H(\xi)$.
	The Taylor expansion about $\xi_0=1$ of the first factor in the numerator of $H$ is given by:
	\begin{equation*}
		\xi^{r-1}-1=(r-1)(\xi-1)+O((\xi-1)^2).
	\end{equation*}
	
	The Taylor expansion about $\xi_0=1$ of the denominator of $H$ is given by:
	\begin{equation*}
		\xi^{r/2}-1=\dfrac{r}{2}(\xi-1)+O((\xi-1)^2).
	\end{equation*}
	
	Plugging these expressions into the expression for $H$ gives:
	\begin{equation*}
		H(\xi)=\dfrac{(r-1)(\xi-1)^2+O((\xi-1)^3)}{\frac{r^2}{4}(\xi-1)^2+O((\xi-1)^3)}\to\dfrac{4(r-1)}{r^2}>0\quad \textup{ as }\xi\to 1.
	\end{equation*}
	
	$(iv)$ \emph{Completion of the proof}. Since both maps $t\mapsto|t|^{r-2}t$ and $t\mapsto|t|^{(r-2)/2}t$ are strictly increasing for $r>1$, the numerator and denominator of $F$ share the sign of the term $(\xi-\eta)$. Consequently $F>0$ whenever $\xi\neq\eta$. The continuous extension of $F$ to the diagonal takes the value $\frac{4(r-1)}{r^2}>0$. Thus $F$ is everywhere strictly positive on the compact set $S$, and by continuity of the extension and compactness of $S$, the minimum of $F$ on $S$ exists and is strictly positive. We denote this minimum value by $C(r)$, and the proof is complete.
\end{proof}

We now establish two lemmas of general interest.
\begin{lemma}
	\label{lem:9}
	The following estimate holds for all $1\le r<\infty$:
	\begin{equation*}
		\label{est-1}
		|x^r-y^r|\le r(x+y)^{r-1}|x-y|,\quad\textup{ for all }0\le x,y<\infty.
	\end{equation*}
\end{lemma}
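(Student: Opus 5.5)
The plan is to apply the mean value theorem to $\phi(s)=s^r$ on $[0,\infty)$ and then bound the derivative at the intermediate point by the sum $x+y$. By symmetry of both sides in $x$ and $y$, I will assume without loss of generality that $0\le y\le x<\infty$. If $x=y$ both sides vanish. The case $r=1$ is trivial, since then the right-hand side equals $|x-y|$ and the left-hand side is the same quantity, so I will assume $r>1$ and $x>y$.

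For $r>1$ the function $\phi$ is continuous on $[y,x]$ and differentiable on $(y,x)$, with $\phi'(s)=rs^{r-1}$; this is valid even when $y=0$, because differentiability is only needed in the open interval. The mean value theorem then gives some $\zeta\in(y,x)$ with
\begin{equation*}
x^r-y^r=r\zeta^{r-1}(x-y).
\end{equation*}
Since $r-1>0$, the map $s\mapsto s^{r-1}$ is nondecreasing on $[0,\infty)$. Combining this with $0\le\zeta\le x\le x+y$ gives $\zeta^{r-1}\le (x+y)^{r-1}$. Taking absolute values then yields $|x^r-y^r|\le r(x+y)^{r-1}|x-y|$, which is the claimed estimate.

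An alternative, equally short route writes $x^r-y^r=\int_0^1\frac{\mathrm{d}}{\mathrm{d}\theta}(y+\theta(x-y))^r\,\mathrm{d}\theta$ and bounds the integrand pointwise by $r(x+y)^{r-1}|x-y|$. I would use the mean value version.

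There is no genuine obstacle here. The only points needing care are the degenerate endpoint cases ($r=1$, $y=0$, $x=y$) and checking that monotonicity of $s^{r-1}$ holds because $r\ge1$. The estimate is in fact slightly wasteful: $\max\{x,y\}^{r-1}$ would already suffice, but $(x+y)^{r-1}$ is what the statement asks for.
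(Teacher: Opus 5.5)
Your proof is correct and follows the same route as the paper: apply the mean value theorem to $s\mapsto s^r$ to get an intermediate point $\zeta$, then use $\zeta\le x+y$ together with the monotonicity of $s\mapsto s^{r-1}$. Your separate handling of the degenerate cases ($r=1$, $x=y$, $y=0$) is fine and slightly more careful than the paper's uniform treatment.
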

\begin{proof}
	The case $x=y$ is trivial, so we focus our attention on the case where $x\neq y$.
	Since the mapping $\xi\in\mathbb{R}^+\cup\{0\}\to\xi^r$ is continuous and differentiable, an application of the Mean Value Theorem (cf., e.g., Theorem~9.3-1 in~\cite{Ciarlet2025}) gives that there exists a number $\min\{x,y\}<\eta<\max\{x,y\}$ such that:
	\begin{equation*}
		x^r-y^r=r\eta^{r-1}(x-y).
	\end{equation*}
	
	Since $\eta<x+y$, the sought inequality immediately follows.
\end{proof}

\begin{lemma}
	\label{lem:10}
	Let $m\ge 1$ be an integer, let $\mathscr{O}$ be an open subset of $\mathbb{R}^m$, let $1\le r<\infty$, and let $\{f_n\}_{n=1}^\infty$ be a sequence of non-negative functions $f_n:\mathscr{O}\to\mathbb{R}^{+}\cup\{0\}$. Then, then following results hold:
	\begin{itemize}
		\item[$(a)$] If the sequence $\{f_n\}_{n=1}^\infty$ converges strongly in $L^r(\mathscr{O})$ as $n\to\infty$, then the sequence $\{f_n^r\}_{n=1}^\infty$ converges strongly in $L^1(\mathscr{O})$ as $n\to\infty$;
		\item[$(b)$] If the sequence $\{f_n\}_{n=1}^\infty$ converges strongly in $L^p(\mathscr{O})$ as $n\to\infty$, with $1\le p<\infty$, then the sequence $\{f_n^r\}_{n=1}^\infty$ converges strongly in $L^{p/r}(\mathscr{O})$ as $n\to\infty$ provided that $1\le r\le p$.
	\end{itemize}
\end{lemma}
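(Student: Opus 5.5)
Both parts reduce to a pointwise estimate from Lemma~\ref{lem:9}, followed by H\"older's inequality and the convergence of the norms of $f_n$. Let $f$ be the strong limit. Since $f_n\ge 0$ and a subsequence converges a.e., we have $f\ge 0$ a.e., so Lemma~\ref{lem:9} applies pointwise with $x=f_n(y)$ and $x'=f(y)$:
\begin{equation*}
|f_n^r-f^r|\le r(f_n+f)^{r-1}|f_n-f|,\quad\textup{ a.e. in }\mathscr{O}.
\end{equation*}
We will show that $f_n^r\to f^r$ in the relevant space.

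For part $(a)$, the case $r=1$ is trivial, so we take $r>1$. We integrate the pointwise bound and use H\"older's inequality with exponents $r'=\frac{r}{r-1}$ and $r$:
\begin{equation*}
\|f_n^r-f^r\|_{L^1(\mathscr{O})}\le r\|f_n+f\|_{L^r(\mathscr{O})}^{r-1}\|f_n-f\|_{L^r(\mathscr{O})}.
\end{equation*}
Strong convergence in $L^r$ bounds $\|f_n+f\|_{L^r}$ uniformly in $n$, and the last factor tends to zero. Hence $f_n^r\to f^r$ in $L^1(\mathscr{O})$.

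For part $(b)$, set $s=p/r\ge 1$. We raise the pointwise bound to the power $s$ and apply H\"older with the exponents $\frac{r}{r-1}$ and $r$ to the product $(f_n+f)^{(r-1)s}|f_n-f|^s$. This works because $(r-1)s\cdot\frac{r}{r-1}=p$ and $s\cdot r=p$, which gives
\begin{equation*}
\|f_n^r-f^r\|_{L^{p/r}(\mathscr{O})}\le r\|f_n+f\|_{L^p(\mathscr{O})}^{r-1}\|f_n-f\|_{L^p(\mathscr{O})}\to 0.
\end{equation*}
When $r=1$ the statement is just the hypothesis. Part $(a)$ is the special case $p=r$ of $(b)$, so we may prove $(b)$ once and deduce $(a)$ from it.

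The only delicate points are two. First, the limit $f$ must be non-negative for Lemma~\ref{lem:9} to apply; this follows by extracting an a.e.\ convergent subsequence. Second, the H\"older exponents must be chosen so that both factors land in $L^p$, as checked above. Neither is a real obstacle, and no boundedness of $\mathscr{O}$ is needed.
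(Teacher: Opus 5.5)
Your proof is correct. Part $(a)$ coincides with the paper's argument: Lemma~\ref{lem:9} pointwise, then H\"older's inequality with exponents $r'$ and $r$.

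Part $(b)$ follows a different route from the paper's, which never repeats the H\"older computation. The paper applies Lemma~\ref{lem:2} with $\sigma=p/r\ge 1$, $x=f_n^r$ and $y=f^r$. This gives the pointwise bound $|f_n^r-f^r|^{p/r}\le|f_n^p-f^p|$. The paper then invokes part $(a)$ with exponent $p$ in place of $r$, which yields $f_n^p\to f^p$ in $L^1(\mathscr{O})$. So the paper proves $(a)$ first and reduces $(b)$ to it, while you prove $(b)$ directly and recover $(a)$ as the case $p=r$.

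The paper's reduction is shorter and reuses an inequality already stated, but it only yields
\[
\|f_n^r-f^r\|_{L^{p/r}(\mathscr{O})}\le\bigl(p\,\|f_n+f\|_{L^p(\mathscr{O})}^{p-1}\|f_n-f\|_{L^p(\mathscr{O})}\bigr)^{r/p}.
\]
This is a H\"older-type modulus of order $r/p\le 1$. Your estimate
\[
\|f_n^r-f^r\|_{L^{p/r}(\mathscr{O})}\le r\,\|f_n+f\|_{L^p(\mathscr{O})}^{r-1}\|f_n-f\|_{L^p(\mathscr{O})}
\]
is linear in $\|f_n-f\|_{L^p(\mathscr{O})}$. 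It therefore gives a local Lipschitz bound for $f\mapsto f^r$ from $L^p$ into $L^{p/r}$ on non-negative functions, and it uses only Lemma~\ref{lem:9}.
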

\begin{proof}
	The proof is divided into two parts, numbered $(i)$ and $(ii)$.
	
	$(i)$ \emph{Proof of $(a)$}. Let us denote the limit of the sequence $\{f_n\}_{n=1}^\infty$ in $L^r(\mathscr{O})$ by $f$, and observe that $f\ge 0$ a.e. in $\mathscr{O}$. If $r=1$ the conclusion is trivial. If $r>1$, an application of Lemma~\ref{lem:9} gives:
	\begin{equation*}
		\int_{\mathscr{O}}|f_n^r-f^r|\le r\int_{\mathscr{O}}(f_n+f)^{r-1}|f_n-f|\dd x.
	\end{equation*}
	
	It is straightforward to observe that the sequence $\left\{(f_n+f)^{r-1}\right\}_{n=1}^\infty$ is bounded in $L^{r'}(\mathscr{O})$, where $r'$ is the H\"{o}lder conjugate exponent, independently of $n$. An application of H\"older's inequality gives:
	\begin{equation*}
		\int_{\mathscr{O}}(f_n+f)^{r-1}|f_n-f|\dd x\le\left(\int_{\mathscr{O}}(f_n+f)^r\dd x\right)^\frac{r-1}{r}\left(\int_{\mathscr{O}}|f_n-f|^r\dd x\right)^\frac{1}{r}.
	\end{equation*}
	
	Observing that the right-hand side of the latter is the product between a factor that is bounded independently of $n$ and a factor that tends to zero as $n\to\infty$ the conclusion follows.
	
	$(ii)$ \emph{Proof of $(b)$}. Let us denote the limit of the sequence $\{f_n\}_{n=1}^\infty$ in $L^p(\mathscr{O})$ by $f$, and observe that $f\ge 0$ a.e. in $\mathscr{O}$. Since $p/r\ge 1$, an application of Lemma~\ref{lem:2} and item~$(a)$ gives
	\begin{equation*}
		\int_{\mathscr{O}}|f_n^r-f^r|^{p/r}\dd x\le\int_{\mathscr{O}}|f_n^p-f^p|\dd x\to 0,\quad\textup{ as }n\to\infty,
	\end{equation*}
	and the proof is complete.
\end{proof}

We now generalise the conclusion in Lemma~\ref{lem:5}.
\begin{lemma}
	\label{lem:5:improved}
	Let $0<T<\infty$, let $\Omega$ be a Lipschitz domain in $\mathbb{R}^2$, let $2.8\le p\le 5$ and let $\alpha=\frac{3p-1}{2p}$. Let $\{u_n\}_{n=1}^\infty$ be a sequence of functions $u_n:(0,T)\to W_0^{1,p}(\Omega)$ be such that $\left\{|u_n|^\frac{\alpha-2}{2}u_n\right\}_{n=1}^\infty$ converges strongly in $L^r(0,T;L^{2p/\alpha}(\Omega))$,
	for all $1\le r<\infty$. Then, the sequence $\{|u_n|\}_{n=1}^\infty$ converges strongly in $L^p(0,T;L^p(\Omega))$.
\end{lemma}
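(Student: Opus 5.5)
The plan is to reduce the statement to Lemma~\ref{lem:10}$(b)$, applied on the space-time cylinder $\mathscr{O}:=(0,T)\times\Omega\subset\mathbb{R}^3$. Set $w_n:=|u_n|^{\frac{\alpha-2}{2}}u_n$. Pointwise,
\begin{equation*}
|w_n|=|u_n|^{\frac{\alpha}{2}}, \qquad\textup{so that}\qquad |u_n|=|w_n|^{\frac{2}{\alpha}}.
\end{equation*}
Since $1<\alpha<2$, the exponent $\sigma:=\frac{2}{\alpha}$ lies in $(1,2)$; in particular $\sigma\ge 1$, which is what Lemma~\ref{lem:10} requires.

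\textbf{Step 1: choice of the time exponent.} The hypothesis holds for every $1\le r<\infty$, so I take $r=\frac{2p}{\alpha}$. By Fubini,
\begin{equation*}
L^{2p/\alpha}(0,T;L^{2p/\alpha}(\Omega))=L^{2p/\alpha}(\mathscr{O}),
\end{equation*}
hence $\{w_n\}_{n=1}^\infty$ converges strongly in $L^{2p/\alpha}(\mathscr{O})$ to some $w$.

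\textbf{Step 2: passage to absolute values.} The map $f\mapsto|f|$ is $1$-Lipschitz pointwise, because $||f|-|g||\le|f-g|$; this is the same mechanism as~\eqref{neg-part-pw-1} in Lemma~\ref{lem:1}. Therefore $\{|w_n|\}_{n=1}^\infty$ converges strongly to $|w|$ in $L^{2p/\alpha}(\mathscr{O})$, and these are non-negative functions.

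\textbf{Step 3: application of Lemma~\ref{lem:10}$(b)$.} I apply it on $\mathscr{O}$ (here $m=3$) with the integrability exponent $\frac{2p}{\alpha}$ in place of $p$ and with power $\frac{2}{\alpha}$ in place of $r$. The admissibility condition reads $1\le\frac{2}{\alpha}\le\frac{2p}{\alpha}$, which holds since $\alpha<2$ and $p\ge1$. The lemma then gives that
\begin{equation*}
\left\{|w_n|^{2/\alpha}\right\}_{n=1}^\infty=\{|u_n|\}_{n=1}^\infty
\end{equation*}
converges strongly in $L^{q}(\mathscr{O})$ with
\begin{equation*}
q=\frac{2p/\alpha}{2/\alpha}=p .
\end{equation*}
The limit is $|w|^{2/\alpha}$. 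Finally, $L^p(\mathscr{O})=L^p(0,T;L^p(\Omega))$, which is the claim.

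\textbf{Expected obstacle.} The only delicate point is the bookkeeping in Step~3. One must check that the power $\frac{2}{\alpha}$ lies in $[1,\frac{2p}{\alpha}]$, and that the freedom in $r$ is used to make the time and space exponents coincide. Once the exponents match, Lemma~\ref{lem:10} applies on the product domain directly, with no need for a separate vector-valued argument. The standing assumptions that $u_n(t)\in W_0^{1,p}(\Omega)$ and that $\Omega$ is Lipschitz are not needed beyond measurability.
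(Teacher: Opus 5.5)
Your proof is correct and is essentially the paper's argument. The paper also works on $(0,T)\times\Omega$ with the time exponent $2p/\alpha$, but instead of citing Lemma~\ref{lem:10}$(b)$ it unpacks that lemma into its two ingredients: Lemma~\ref{lem:2} with $\sigma=p$, giving $\bigl||u_n|-|u_m|\bigr|^p\le\bigl||u_n|^p-|u_m|^p\bigr|$, and Lemma~\ref{lem:10}$(a)$ with exponent $2p/\alpha$, from which it concludes via a Cauchy-sequence estimate; your version additionally makes explicit the passage from $w_n$ to $|w_n|$ and identifies the limit as $|w|^{2/\alpha}$.
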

\begin{proof}
	We show that the sequence $\{|u_n|\}_{n=1}^\infty$ is a Cauchy sequence in $L^p(0,T;L^p(\Omega))$. To this aim, observe that an application of Lemma~\ref{lem:2} gives:
	\begin{equation*}
		\begin{aligned}
			&\int_{0}^{T}\int_{\Omega}\left||u_n|-|u_m|\right|^p\dd x\dd t
			=\int_{0}^{T}\int_{\Omega}\left|\left||u_n|^ \frac{\alpha-2}{2}u_n\right|^\frac{2}{\alpha}-\left||u_m|^\frac{\alpha-2}{2}u_m\right|^\frac{2}{\alpha}\right|^p\dd x\dd t\\
			&\le\int_{0}^{T}\int_{\Omega}\left|\left||u_n|^\frac{\alpha-2}{2}u_n\right|^\frac{2p}{\alpha}-\left||u_m|^\frac{\alpha-2}{2}u_m\right|^\frac{2p}{\alpha}\right|\dd x\dd t.
		\end{aligned}
	\end{equation*}
	
	Since $\left\{|u_n|^\frac{\alpha-2}{2}u_n\right\}_{n=1}^\infty$ strongly converges in $L^r(0,T;L^{2p/\alpha}(\Omega))$ for each $1\le r<\infty$, an application of Lemma~\ref{lem:10}$(a)$ with $r=2p/\alpha>4$ gives that the sequence $\left\{\left||u_n|^\frac{\alpha-2}{2}u_n\right|^\frac{2p}{\alpha}\right\}_{n=1}^\infty$ strongly converges in $L^1((0,T)\times\Omega)$ and so in $L^1(0,T;L^1(\Omega))$ (Theorem~8.28 of~\cite{Leoni2017}). Since, once again thanks to Theorem~8.28 of~\cite{Leoni2017}, it is possible to identify $L^p(0,T;L^p(\Omega))$ with $L^p((0,T)\times\Omega)$, the conclusion follows.
\end{proof}

Let us now study the convergence of the sequence $\left\{|u_n|^\frac{p-1}{2p}\right\}_{n=1}^\infty$, for $2.8\le p\le 5$.
\begin{lemma}
	\label{lem:11}
	Let $0<T<\infty$, let $\Omega$ be a Lipschitz domain in $\mathbb{R}^2$, let $2.8\le p\le 5$. Let $\{u_n\}_{n=1}^\infty$ be a sequence of functions $u_n:(0,T)\to W_0^{1,p}(\Omega)$ be such that $\{|u_n|\}_{n=1}^\infty$ converges strongly in $L^p(0,T;L^p(\Omega))$ to a function $u$. Then, the sequence $\left\{|u_n|^\frac{p-1}{2p}\right\}_{n=1}^\infty$ converges strongly to $|u|^\frac{p-1}{2p}$ in $L^{2p^2/(p-1)}(0,T;L^{2p^2/(p-1)}(\Omega))$.
\end{lemma}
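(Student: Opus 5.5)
Let $\beta:=\frac{p-1}{2p}\in(0,1)$ and $s:=\frac{2p^2}{p-1}$, so that $\beta s=p$. The plan is to reduce the $L^s$ convergence of $|u_n|^\beta$ to the assumed $L^p$ convergence of $|u_n|$, using only the sub-additivity inequality of Lemma~\ref{lem:2-0}.

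First I identify the limit. Since $|u_n|\ge 0$ and $|u_n|\to u$ in $L^p((0,T)\times\Omega)$, a subsequence converges almost everywhere, so $u\ge 0$ a.e. and $|u|=u$. Here $L^p(0,T;L^p(\Omega))$ is identified with $L^p((0,T)\times\Omega)$ via Theorem~8.28 of~\cite{Leoni2017}, exactly as in the proof of Lemma~\ref{lem:5:improved}; the same identification applies to $L^{s}(0,T;L^{s}(\Omega))$. In particular $|u|^\beta\in L^s$, because
\begin{equation*}
\int_0^T\int_\Omega |u|^{\beta s}\dd x\dd t=\|u\|^p_{L^p}<\infty.
\end{equation*}

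Second, I apply Lemma~\ref{lem:2-0} pointwise with exponent $\beta\le 1$, which gives
\begin{equation*}
\left||u_n|^\beta-|u|^\beta\right|\le \left||u_n|-|u|\right|^\beta\quad\textup{ a.e. in }(0,T)\times\Omega.
\end{equation*}
Raising this to the power $s$ and integrating yields
\begin{equation*}
\int_0^T\int_\Omega\left||u_n|^\beta-|u|^\beta\right|^{s}\dd x\dd t\le\int_0^T\int_\Omega\left||u_n|-|u|\right|^{p}\dd x\dd t .
\end{equation*}
Finally, $\left||u_n|-|u|\right|=\left||u_n|-u\right|$, so the right-hand side tends to zero by hypothesis. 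This proves strong convergence in $L^s((0,T)\times\Omega)$, and hence in $L^{s}(0,T;L^{s}(\Omega))$.

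There is no real obstacle here. The only points needing care are the measure-theoretic identification of the Bochner space with the space-time Lebesgue space, and checking that the limit $u$ is non-negative, so that $|u|$ coincides with $u$ and the statement's $|u|^\beta$ is the right limit. Measurability of $|u_n|^\beta$ is immediate because it is a continuous function of a measurable function.
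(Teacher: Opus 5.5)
Your proof is correct, but it places the exponent differently from the paper. The paper applies Lemma~\ref{lem:2} with $\sigma=s=\frac{2p^2}{p-1}$ to the non-negative numbers $|u_n|^{\beta}$ and $|u|^{\beta}$. This bounds the integrand by $\bigl||u_n|^p-|u|^p\bigr|$, so the paper then has to invoke Lemma~\ref{lem:10}$(a)$ (mean value theorem plus H\"older, using the $L^p$-boundedness of the convergent sequence) to get $|u_n|^p\to|u|^p$ in $L^1$.

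You instead use the $\beta$-H\"older continuity of $t\mapsto t^\beta$ from Lemma~\ref{lem:2-0}. This bounds the integrand by $\bigl||u_n|-|u|\bigr|^p$, which is directly the quantity in the hypothesis, so Lemma~\ref{lem:10} is not needed at all.

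The two elementary inequalities are really the same fact: Lemma~\ref{lem:2} with $\sigma=1/\beta=\frac{2p}{p-1}$ instead of $\sigma=s$ reproduces your bound. Your choice buys a shorter argument and the constant-free estimate $\bigl\||u_n|^\beta-|u|^\beta\bigr\|_{L^s}\le\bigl\||u_n|-u\bigr\|_{L^p}^{\beta}$. The paper's chain only gives $\bigl\||u_n|^\beta-|u|^\beta\bigr\|_{L^s}^s\le p\,\bigl\||u_n|+u\bigr\|_{L^p}^{p-1}\bigl\||u_n|-u\bigr\|_{L^p}$, a weaker modulus that depends on the size of the sequence.

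Your check that $u\ge 0$ is fine but not necessary, since $\bigl||u_n|-|u|\bigr|\le\bigl||u_n|-u\bigr|$ pointwise by the reverse triangle inequality.
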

\begin{proof}
	We adopt the same strategy as Lemma~\ref{lem:5:improved}.
	Since $2p^2/(p-1)>8$, an application of Lemma~\ref{lem:2} gives:
	\begin{equation*}
		\int_{0}^{T}\int_{\Omega}\left||u_n|^\frac{p-1}{2p}-|u|^\frac{p-1}{2p}\right|^\frac{2p^2}{p-1}\dd x\dd t
		\le\int_{0}^{T}\int_{\Omega}\left||u_n|^p-|u|^p\right|\dd x\dd t.
	\end{equation*}
	
	The conclusion follows as a result of an application of Lemma~\ref{lem:10}$(a)$ and Theorem~8.28 of~\cite{Leoni2017}.
\end{proof}

Thanks to the continuity of the positive part $\{\cdot\}^{+}$, an application of Lemma~\ref{lem:1}$(a)$ (which clearly holds for $\{\cdot\}^{+}$ as well in light of Remark~\ref{rem:neg}) gives that if $\{|u_n|\}_{n=1}^\infty$ converges strongly in $L^p(0,T;L^p(\Omega))$ then:
\begin{equation*}
	\left\{\left\{|u_n|^\frac{p-1}{2p}-z\right\}^+\right\}_{n=1}^\infty \textup{ converges strongly in }L^\frac{2p^2}{p-1}((0,T)\times\Omega\times (0,L)).
\end{equation*}

Therefore, an application of Lemma~\ref{lem:10}$(b)$ (observe that $\frac{5}{2}\le\frac{2p}{p-1}\le \frac{28}{9}$ if $2.8\le p\le 5$) gives that if $\{|u_n|\}_{n=1}^\infty$ converges strongly in $L^p(0,T;L^p(\Omega))$ then:
\begin{equation}
	\label{conv-rhs-temp}
	\left\{\left(\left\{|u_n|^\frac{p-1}{2p}-z\right\}^+\right)^p\right\}_{n=1}^\infty \textup{ converges strongly in }L^\frac{2p}{p-1}((0,T)\times\Omega\times (0,L)).
\end{equation}

For the purpose of studying the right-hand side in~\eqref{vi:thermal}, we establish the following results of general interest.
\begin{lemma}
	\label{lem:V'}
	Let $\mathscr{O}$ be a bounded open set in $\mathbb{R}^m$ for some integer $m\ge 1$, let $2.8\le p\le 5$ and let $\frac{10}{3}\le q\le\frac{28}{5}$. Let $\sigma\in\mathbb{R}$ be defined by:
	\begin{equation*}
		\dfrac{1}{\sigma}=\dfrac{1}{q}+\dfrac{p-1}{2p}.
	\end{equation*}
	
	Then, it results $q'\le\sigma\le 2$ and $2\le\sigma'\le q$ and the following chain of embeddings holds:
	\begin{equation*}
		L^q(\mathscr{O})\hookrightarrow L^{\sigma'}(\mathscr{O})\hookrightarrow L^2(\mathscr{O})\hookrightarrow L^\sigma(\mathscr{O})\hookrightarrow L^{q'}(\mathscr{O}).
	\end{equation*}
\end{lemma}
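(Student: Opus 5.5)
The plan is to reduce the statement to two elementary inequalities on the exponents and then to invoke the standard inclusion of Lebesgue spaces on sets of finite measure. Throughout I will write $\theta:=\frac{p-1}{2p}=\frac12-\frac{1}{2p}$. Since $p\mapsto\theta$ is increasing, the constraint $2.8\le p\le 5$ gives $\frac{9}{28}\le\theta\le\frac{2}{5}$. Since $q\mapsto\frac1q$ is decreasing, the constraint $\frac{10}{3}\le q\le\frac{28}{5}$ gives $\frac{5}{28}\le\frac1q\le\frac{3}{10}$.

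First I will show $\sigma\le 2$, which is equivalent to $\frac1\sigma=\frac1q+\theta\ge\frac12$. Taking the two lower bounds,
\begin{equation*}
\frac1q+\theta\ge\frac{5}{28}+\frac{9}{28}=\frac12 .
\end{equation*}
Next I will show $\sigma\ge q'$, which is equivalent to $\frac1\sigma\le 1-\frac1q$, that is, $\frac2q+\theta\le 1$. Taking the two upper bounds,
\begin{equation*}
\frac2q+\theta\le\frac{6}{10}+\frac{2}{5}=1 .
\end{equation*}
Both inequalities are sharp exactly at the endpoint pairs: $(p,q)=(2.8,\tfrac{28}{5})$ for the first and $(5,\tfrac{10}{3})$ for the second. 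This shows the hypotheses are tailored precisely so that these two bounds hold.

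Note also that $\frac1\sigma\le\frac{3}{10}+\frac25<1$, so $\sigma>1$ and the conjugate exponent $\sigma'$ is well defined and finite. Because conjugation $s\mapsto s'$ is decreasing on $(1,\infty)$, the inequalities $q'\le\sigma\le 2$ give $2\le\sigma'\le q''=q$.

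At this point the exponents are ordered as $q\ge\sigma'\ge 2\ge\sigma\ge q'\ge 1$. Since $\mathscr{O}$ is bounded, it has finite measure. For $1\le b\le a\le\infty$, H\"older's inequality gives
\begin{equation*}
\|f\|_{L^b(\mathscr{O})}\le|\mathscr{O}|^{\frac1b-\frac1a}\|f\|_{L^a(\mathscr{O})},
\end{equation*}
so $L^a(\mathscr{O})\hookrightarrow L^b(\mathscr{O})$ continuously. Applying this successively to the ordered exponents yields the claimed chain of embeddings.

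There is no genuine obstacle here. The only step requiring care is the arithmetic with the endpoint values and checking that $\sigma>1$, so that $\sigma'$ makes sense.
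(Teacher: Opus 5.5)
Your proposal is correct and follows essentially the same route as the paper: you bound $1/\sigma$ using the extreme values of $p$ and $q$ to get $q'\le\sigma\le 2$, pass to $\sigma'$ via the monotonicity of conjugation, and conclude with H\"older's inequality on the bounded set $\mathscr{O}$. The only cosmetic difference is that the paper first eliminates $p$, writing $\frac{5q}{2q+5}\le\sigma\le\frac{28q}{9q+28}$, and then solves for $q$, whereas you substitute both endpoint values directly in reciprocal form; your explicit check that $\sigma>1$ is a point the paper leaves implicit.
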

\begin{proof}
	Observe that the definition of $\sigma$ gives that:
	\begin{equation*}
		\sigma\ge\dfrac{5q}{2q+5}.
	\end{equation*}
	
	A simple calculation gives:
	\begin{equation*}
		\dfrac{5q}{2q+5}\ge\dfrac{q}{q-1} \quad\textup{ if and only if }q\ge\dfrac{10}{3}.
	\end{equation*}
	
	Again from the definition of $\sigma$, we obtain that:
	\begin{equation*}
		\sigma\le\dfrac{28q}{9q+28}.
	\end{equation*}
	
	Another simple calculation gives:
	\begin{equation*}
		\dfrac{28q}{9q+28}\le 2 \quad\textup{ if and only if }q\le\dfrac{28}{5}.
	\end{equation*}
	
	In conclusion, we have shown that $q'\le\sigma\le 2$. The remaining inequalities for $\sigma'$ immediately follow from the fact that the mapping $\sigma>1\mapsto\frac{\sigma}{\sigma-1}$ is decreasing. The claimed chain of embeddings follows by H\"older's inequality.
\end{proof}

We establish some preliminary results of independent interest for the exponential mapping defined over Bochner spaces, that will be used to study the function $A$ introduced in~\eqref{arrhenius}.
\begin{lemma}
	\label{lemma:exp:1}
	Let $m\ge1$ be an integer, let $\mathscr{O}\subset\mathbb{R}^m$ be open and bounded, and let $1\le q\le\infty$. For each function $f\in L^1(\mathscr{O})$ such that $f(x)\le 0$ for a.a. $x\in\mathscr{O}$, we define the \emph{exponential at} $f$ by:
	\begin{equation*}
		\gls{expf}(x):=e^{f(x)},\quad\textup{ for a.a. }x\in\mathscr{O}.
	\end{equation*}
	
	Consider a sequence $\{f_n\}_{n=1}^\infty$ of non-positive real valued functions $f_n:\mathscr{O}\to\mathbb{R}^{-}\cup\{0\}$ such that $f_n\to f$ in $L^q(\mathscr{O})$ as $n\to\infty$.
	Then, it results:
	\begin{equation*}
		\exp(f_n)\to\exp(f),\quad\textup{ in }L^q(\mathscr{O}) \textup{ as }n\to\infty.
	\end{equation*}
\end{lemma}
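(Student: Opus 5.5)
The plan rests on the elementary pointwise fact that the exponential is $1$-Lipschitz on the half-line $(-\infty,0]$. For $a,b\le 0$, the Mean Value Theorem gives some $\eta$ between $a$ and $b$, hence $\eta\le 0$, with
\begin{equation*}
	|e^{a}-e^{b}|=e^{\eta}|a-b|\le|a-b|.
\end{equation*}
The first step is to check that the limit $f$ is non-positive a.e., so that this inequality can be applied with $a=f_n(x)$ and $b=f(x)$. Convergence in $L^q(\mathscr{O})$ implies convergence in measure, so a subsequence converges a.e. Since each $f_n\le 0$ a.e., the a.e. limit satisfies $f\le 0$ a.e. in $\mathscr{O}$.

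Next, we note that $\exp(f)$ and $\exp(f_n)$ are well-defined measurable functions with values in $(0,1]$, so they belong to $L^\infty(\mathscr{O})$. Because $\mathscr{O}$ is bounded, they also belong to $L^q(\mathscr{O})$ for every $1\le q\le\infty$. We also observe that $f\in L^q(\mathscr{O})\subset L^1(\mathscr{O})$ by boundedness of $\mathscr{O}$, so $\exp(f)$ is defined in the sense of the statement.

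The final step is to integrate the pointwise estimate $|e^{f_n(x)}-e^{f(x)}|\le|f_n(x)-f(x)|$, which holds for a.a. $x\in\mathscr{O}$. For $1\le q<\infty$, raising to the power $q$ and integrating gives
\begin{equation*}
	\|\exp(f_n)-\exp(f)\|_{L^q(\mathscr{O})}\le\|f_n-f\|_{L^q(\mathscr{O})}\to 0.
\end{equation*}
For $q=\infty$, taking the essential supremum gives the same bound. This mirrors the structure of the proof of Lemma~\ref{lem:1}: a pointwise Lipschitz bound, of the same type as \eqref{neg-part-pw-1}, is transferred directly to the norms.

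None of these steps is a real obstacle. The only subtle point is establishing $f\le 0$ a.e., which is needed because the exponential is not globally Lipschitz. This is handled by the subsequence argument in the first step. Alternatively, one could avoid that argument by replacing $f$ with $\min\{f,0\}$, using the Lipschitz property from Lemma~\ref{lem:1}, but the subsequence argument is cleaner.
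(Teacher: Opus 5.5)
Your proposal is correct and follows essentially the same route as the paper: the Mean Value Theorem gives a pointwise $1$-Lipschitz bound for the exponential on non-positive arguments, and this bound is transferred directly to the $L^q(\mathscr{O})$ norm for $1\le q<\infty$ and to the essential supremum for $q=\infty$. Your explicit check that the limit $f$ is non-positive a.e., via an a.e.-convergent subsequence, is a step the paper leaves implicit in this lemma; it only carries out that check in Lemma~\ref{lemma:exp-2}, by a different argument based on the Lipschitz continuity of $-\{\cdot\}^{-}$ and uniqueness of the limit.
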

\begin{proof}
	Let us consider the case where $1\le q<\infty$. The fact that $f_n\le 0$ a.e. in $\mathscr{O}$ for all integers $n\ge 1$ implies that:
	\begin{equation*}
		0\le\exp(f_n)\le 1,\quad\textup{ a.e. in }\mathscr{O}\textup{ for all integers }n\ge 1.
	\end{equation*}
	
	An application of the Mean Value Theorem (cf., e.g., Theorem~9.3-1 in~\cite{Ciarlet2025}) gives
	\begin{equation}
		\label{lip:exp}
		\left(\int_{\mathscr{O}}|\exp(f_n)-\exp(f)|^q \dd x\right)^{1/q}\le\left(\int_{\mathscr{O}}|f_n-f|^q \dd x\right)^{1/q},
	\end{equation}
	and the right-hand side of the latter converges to zero as $n\to\infty$ in light of the assumed convergence.
	
	For the case where $q=\infty$, it suffices to observe that an application of the Mean Value Theorem (cf., e.g., Theorem~9.3-1 in~\cite{Ciarlet2025}) gives
	\begin{equation*}
		\|\exp(f_n)-\exp(f)\|_{L^\infty(\mathscr{O})}\le\|f_n-f\|_{L^\infty(\mathscr{O})} \to 0,
	\end{equation*}
	as $n\to\infty$ in light of the assumed convergence. The proof is complete.
\end{proof}

For the purpose of studying the function $\mu$ in~\eqref{mu-def:2}, we establish the following results of general interest.
The next result complements Lemma~\ref{lemma:exp:1}.
\begin{lemma}
	\label{lemma:exp-2}
	Let $I$ be a bounded open interval in $\mathbb{R}$, let $\mathscr{O}$ be a bounded open set in $\mathbb{R}^m$, for some integer $m\ge 1$. Assume that $1\le q\le\infty$. Let $\{f_n\}_{n=1}^\infty$ be a sequence in $\mathcal{C}^0(\overline{I};L^q(\mathscr{O}))$ such that:
	\begin{itemize}
		\item[$(a)$] For each $t\in\overline{I}$ it results $f_n(t)\le 0$ a.e. in $\mathscr{O}$,
		\item[$(b)$] The sequence $\{f_n\}_{n=1}^\infty$ converges in $\mathcal{C}^0(\overline{I};L^q(\mathscr{O}))$ to a function $f\in\mathcal{C}^0(\overline{I};L^q(\mathscr{O}))$.
	\end{itemize}
	
	Then, we obtain that $\{\exp(f_n)\}_{n=1}^\infty$ converges in $\mathcal{C}^0(\overline{I};L^q(\mathscr{O}))$ to $\exp(f)\in\mathcal{C}^0(\overline{I};L^q(\mathscr{O}))$.
\end{lemma}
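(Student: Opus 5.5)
The plan is to reduce everything to the pointwise Lipschitz bound $|e^a-e^b|\le|a-b|$ for $a,b\le 0$. This is exactly the estimate~\eqref{lip:exp} used in the proof of Lemma~\ref{lemma:exp:1}, applied now at each fixed time. Two things have to be shown: that $\exp(f)$ and each $\exp(f_n)$ belong to $\mathcal{C}^0(\overline{I};L^q(\mathscr{O}))$, and that the convergence holds uniformly in $t$.

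The first preliminary step is to check that the limit $f$ is non-positive. For each $t\in\overline{I}$, $f_n(t)\to f(t)$ in $L^q(\mathscr{O})$. Passing to an a.e.-convergent subsequence (for $q<\infty$), or directly (for $q=\infty$), gives $f(t)\le 0$ a.e. in $\mathscr{O}$. Hence $\exp(f(t))$ is well defined and satisfies $0\le \exp(f(t))\le 1$. Since $\mathscr{O}$ is bounded, $\exp(f(t))\in L^q(\mathscr{O})$ for every $1\le q\le\infty$; the same holds for each $f_n(t)$.

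Next comes time continuity. Fix $s,t\in\overline{I}$. Both $f(t)$ and $f(s)$ are non-positive, so the Mean Value Theorem gives the pointwise bound
\begin{equation*}
|\exp(f(t))(x)-\exp(f(s))(x)|\le|f(t)(x)-f(s)(x)|,\quad\textup{ for a.a. }x\in\mathscr{O}.
\end{equation*}
Taking the $L^q(\mathscr{O})$ norm, or the essential supremum if $q=\infty$, yields $\|\exp(f(t))-\exp(f(s))\|_{L^q(\mathscr{O})}\le\|f(t)-f(s)\|_{L^q(\mathscr{O})}\to 0$ as $s\to t$. Therefore $\exp(f)\in\mathcal{C}^0(\overline{I};L^q(\mathscr{O}))$, and the identical argument shows $\exp(f_n)\in\mathcal{C}^0(\overline{I};L^q(\mathscr{O}))$.

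Finally, for the uniform convergence, apply the same pointwise bound at each $t$ with $f_n(t)$ and $f(t)$, both non-positive. This is precisely~\eqref{lip:exp} at fixed time:
\begin{equation*}
\max_{t\in\overline{I}}\|\exp(f_n(t))-\exp(f(t))\|_{L^q(\mathscr{O})}\le\max_{t\in\overline{I}}\|f_n(t)-f(t)\|_{L^q(\mathscr{O})}\to 0,
\end{equation*}
by assumption~$(b)$.

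The only step requiring any care is the non-positivity of the limit $f(t)$. The Lipschitz bound with constant one holds only on $(-\infty,0]$, and hypothesis~$(a)$ concerns the $f_n$ alone, so this must be justified by the subsequence argument above. Once it is in place, the rest mirrors the structure of the proof of Lemma~\ref{lem:1}$(b)$ verbatim.
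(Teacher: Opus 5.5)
Your proposal is correct and follows essentially the same route as the paper: establish $f(t)\le 0$ a.e., then use the pointwise bound $|e^a-e^b|\le|a-b|$ on $(-\infty,0]$ both for continuity in time and for the uniform-in-$t$ convergence. The only minor difference is in proving $f(t)\le 0$: you pass to an a.e.-convergent subsequence, whereas the paper uses the Lipschitz continuity of $-\{\cdot\}^{-}$ (Lemma~\ref{lem:1}) and uniqueness of limits in $L^q(\mathscr{O})$ to get $f(t)=-\{f(t)\}^{-}$; both arguments are valid.
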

\begin{proof}
	Let us consider the case where $1\le q<\infty$; the case $q=\infty$ follows by the same argument with $\|\cdot\|_{L^\infty(\mathscr{O})}$ in place of $\|\cdot\|_{L^q(\mathscr{O})}$. The proof is divided into four parts, numbered $(i)$--$(iv)$.
	
	$(i)$ \emph{Non-positiveness of the limit.} We first show that $f(t)\le 0$ a.e. in $\mathscr{O}$ for each $t\in\overline{I}$. By Lemma~\ref{lem:1}$(a)$, the operator $-\{\cdot\}^{-}$ is Lipschitz continuous on $L^q(\mathscr{O})$. Since $f_n(t)\le 0$ a.e. in $\mathscr{O}$ for each $t\in\overline{I}$ by assumption $(a)$, we have $f_n(t)=-\{f_n(t)\}^{-}$ a.e. in $\mathscr{O}$ for each $t\in\overline{I}$. Consequently,
	\begin{equation*}
		\begin{aligned}
			\max_{t\in\overline{I}}&\left(\int_{\mathscr{O}}|f_n(t)-(-\{f(t)\}^{-})|^q\dd x\right)^{1/q}
			=\max_{t\in\overline{I}}\left(\int_{\mathscr{O}}|(-\{f_n(t)\}^{-})-(-\{f(t)\}^{-})|^q\dd x\right)^{1/q}\\
			&\le\max_{t\in\overline{I}}\left(\int_{\mathscr{O}}|f_n(t)-f(t)|^q\dd x\right)^{1/q}\to 0,
		\end{aligned}
	\end{equation*}
	as $n\to\infty$. Hence, for each $t\in\overline{I}$, the uniqueness of the limit in $L^q(\mathscr{O})$ gives $f(t)=-\{f(t)\}^{-}=\min\{f(t),0\}\le 0$ a.e. in $\mathscr{O}$.
	
	$(ii)$ \emph{A pointwise estimate.} Since $f_n(t),f(t)\le 0$ a.e.\ in $\mathscr{O}$ for all $t\in\overline{I}$, the exponential function is evaluated on non-positive arguments. Applying the Mean Value Theorem (cf., e.g., Theorem~9.3-1 in~\cite{Ciarlet2025}) to $\xi\mapsto\exp(\xi)$ on the interval between $f_n(t)(x)$ and $f(t)(x)$ yields, for all $t\in\overline{I}$ and for a.a.\ $x\in\mathscr{O}$
	\begin{equation*}
		|\exp(f_n(t)(x))-\exp(f(t)(x))| = e^{\xi_{t,x}}\,|f_n(t)(x)-f(t)(x)| \le |f_n(t)(x)-f(t)(x)|,
	\end{equation*}
	where $\xi_{t,x}\le 0$ and therefore $e^{\xi_{t,x}}\le 1$.
	
	Applying the same argument as in the estimate above gives, for any $s,t\in\overline{I}$
	\begin{equation*}
		\left\|\exp(f_n(t))-\exp(f_n(s))\right\|_{L^q(\mathscr{O})}\le\|f_n(t)-f_n(s)\|_{L^q(\mathscr{O})}\to 0,
	\end{equation*}
	as $s\to t$, thus showing that $\{\exp(f_n)\}_{n=1}^\infty\subset\mathcal{C}^0(\overline{I};L^q(\mathscr{O}))$. The same argument applies to $\exp(f)$.
	
	$(iii)$ \emph{From pointwise to $L^q(\mathscr{O})$ norm.} Using the monotonicity of $\xi\mapsto\xi^q$ on $[0,\infty)$, raising the pointwise estimate to the power $q$ gives:
	\begin{equation*}
		|\exp(f_n(t)(x))-\exp(f(t)(x))|^q \le |f_n(t)(x)-f(t)(x)|^q,\quad\textup{ for all }t\in\overline{I}\textup{ and for a.a. }x\in\mathscr{O}.
	\end{equation*}
	
	Integrating over $\mathscr{O}$ and using the monotonicity of $\xi\mapsto\xi^{1/q}$ gives:
	\begin{equation*}
		\left(\int_{\mathscr{O}}|\exp(f_n(t))-\exp(f(t))|^q\dd x\right)^{1/q}
		\le\left(\int_{\mathscr{O}}|f_n(t)-f(t)|^q\dd x\right)^{1/q}.
	\end{equation*}
	
	$(iv)$ \emph{Convergence in $\mathcal{C}^0(\overline{I};L^q(\mathscr{O}))$.} Taking the maximum over $t\in\overline{I}$ and using assumption $(b)$ gives
	\begin{equation*}
		\max_{t\in\overline{I}}\left(\int_{\mathscr{O}}|\exp(f_n(t))-\exp(f(t))|^q\dd x\right)^{1/q}
		\le\max_{t\in\overline{I}}\left(\int_{\mathscr{O}}|f_n(t)-f(t)|^q\dd x\right)^{1/q}\to 0,
	\end{equation*}
	as $n\to\infty$. Finally, the fact that $\exp(f)\in\mathcal{C}^0(\overline{I};L^q(\mathscr{O}))$ follows from the boundedness of $\mathscr{O}$ and the estimate $0\le\exp(f(t))\le 1$ a.e. in $\mathscr{O}$ for all $t\in\overline{I}$, which guarantees $\exp(f(t))\in L^q(\mathscr{O})$ for each $t\in\overline{I}$, together with the same Lipschitz estimate applied to $f$ itself to establish continuity at $t\in\overline{I}$.
	
	The proof for $q=\infty$ is analogous, replacing the $L^q(\mathscr{O})$ norm with the essential supremum.
\end{proof}

The next results establishes useful properties for averages in Bochner spaces.
\begin{lemma}
	\label{lem:avg}
	Let $I$ be a bounded open interval in $\mathbb{R}$, let $\mathscr{O}_1$ be a bounded open set in $\mathbb{R}^{m_1}$ and let $\mathscr{O}_2$ be an open set in $\mathbb{R}^{m_2}$, for some integer $m_1,m_2\ge 1$. Assume that $1\le q\le\infty$.
	Let $\{f_n\}_{n=1}^\infty$ be a sequence in $L^q(I;L^q(\mathscr{O}_1\times\mathscr{O}_2))$ such that $\{f_n\}_{n=1}^\infty$ converges in $L^q(I;L^q(\mathscr{O}_1\times\mathscr{O}_2))$.
	Then, it results that the sequence
	\begin{equation*}
		\left\{\int_{\mathscr{O}_1}f_n\dd x\right\}_{n=1}^\infty
	\end{equation*}
	converges in $L^q(I;L^q(\mathscr{O}_2))$.
\end{lemma}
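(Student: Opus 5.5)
The plan is to show that the averaging map
\begin{equation*}
	\mathcal{A}:L^q(I;L^q(\mathscr{O}_1\times\mathscr{O}_2))\to L^q(I;L^q(\mathscr{O}_2)),\qquad (\mathcal{A}g)(t)(y):=\int_{\mathscr{O}_1}g(t)(x,y)\dd x,
\end{equation*}
is a bounded linear operator. Then the image of a convergent (hence Cauchy) sequence converges, with limit $\mathcal{A}f$, where $f$ is the limit of $\{f_n\}_{n=1}^\infty$. Linearity is immediate, so the whole argument reduces to the single estimate
\begin{equation*}
	\|\mathcal{A}g\|_{L^q(I;L^q(\mathscr{O}_2))}\le |\mathscr{O}_1|^{1/q'}\,\|g\|_{L^q(I;L^q(\mathscr{O}_1\times\mathscr{O}_2))},
\end{equation*}
applied to $g=f_n-f$.

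The first step is measurability. For $1\le q<\infty$ I will use Theorem~8.28 of~\cite{Leoni2017}, exactly as in the proofs of Lemmas~\ref{lem:5:improved} and~\ref{lem:11}, to identify $L^q(I;L^q(\mathscr{O}_1\times\mathscr{O}_2))$ with $L^q(I\times\mathscr{O}_1\times\mathscr{O}_2)$ and $L^q(I;L^q(\mathscr{O}_2))$ with $L^q(I\times\mathscr{O}_2)$. Since $\mathscr{O}_1$ is bounded, $L^q(\mathscr{O}_1)\hookrightarrow L^1(\mathscr{O}_1)$. Fubini--Tonelli then shows that for a.a.\ $(t,y)$ the map $x\mapsto g(t,x,y)$ is integrable and that $(t,y)\mapsto\int_{\mathscr{O}_1}g\dd x$ is measurable.

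The second step is the bound itself. H\"older's inequality in $x$ gives
\begin{equation*}
	\left|\int_{\mathscr{O}_1}g(t,x,y)\dd x\right|^q\le|\mathscr{O}_1|^{q-1}\int_{\mathscr{O}_1}|g(t,x,y)|^q\dd x .
\end{equation*}
Integrating over $I\times\mathscr{O}_2$ and applying Tonelli yields the estimate above. For $q=\infty$ the argument is simpler: $|\int_{\mathscr{O}_1}g\dd x|\le|\mathscr{O}_1|\,\|g\|_{L^\infty}$ a.e., so the constant is $|\mathscr{O}_1|$.

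The main obstacle is the identification step for $q=\infty$. There $L^\infty(I;L^\infty)$ as a Bochner space is not the same as $L^\infty(I\times\cdots)$. I would handle this by working with strongly measurable $g$ and invoking the pointwise bound for a.a.\ $t$. Since $g(t)\mapsto\int_{\mathscr{O}_1}g(t)\dd x$ is bounded linear from $L^\infty(\mathscr{O}_1\times\mathscr{O}_2)$ to $L^\infty(\mathscr{O}_2)$, composing it with a strongly measurable function preserves strong measurability, which settles the matter. The same observation also gives an alternative proof for $q<\infty$ that avoids Fubini altogether.
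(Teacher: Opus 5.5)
Your proposal is correct and follows essentially the same route as the paper. For $q<\infty$ both use H\"older's inequality in $x$ (giving the factor $|\mathscr{O}_1|^{q-1}$) together with Tonelli/Fubini and the identification of Theorem~8.28 in~\cite{Leoni2017}, applied to $f_n-f$; for $q=\infty$ both use the pointwise bound by $|\mathscr{O}_1|\,\|f_n(t)-f(t)\|_{L^\infty(\mathscr{O}_1\times\mathscr{O}_2)}$. Your observation for $q=\infty$, that strong measurability is preserved under the bounded linear averaging map, is a welcome refinement, since the paper cites Theorem~8.28 of~\cite{Leoni2017} there without noting that the Bochner space $L^\infty(I;L^\infty)$ is not $L^\infty$ on the product (though defining that map on $L^q(\mathscr{O}_1\times\mathscr{O}_2)$ still uses Tonelli, so Fubini is not avoided altogether).
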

\begin{proof}
	Let us consider the case where $q$ is finite. We first verify that if $f\in L^q(I;L^q(\mathscr{O}_1\times\mathscr{O}_2))$ then $\int_{\mathscr{O}_1}f\dd x\in L^q(I;L^q(\mathscr{O}_2))$. To this aim, we observe that an application of H\"older's inequality gives
	\begin{equation*}
		\int_{I}\int_{\mathscr{O}_2} \left|\int_{\mathscr{O}_1} f \dd x\right|^q \dd z \dd t\le |\mathscr{O}_1|^{q-1}\int_{I}\int_{\mathscr{O}_2}\int_{\mathscr{O}_1}|f|^q\dd x\dd z\dd t,
	\end{equation*}
	and the conclusion follows directly from the Fubini--Tonelli theorem (cf., e.g., \cite{Ciarlet2025}).
	
	To complete the proof, consider a sequence $\{f_n\}_{n=1}^\infty$ that converges to a function $f$ in $L^q(I;L^q(\mathscr{O}_1\times\mathscr{O}_2))$, and observe that
	\begin{equation*}
		\begin{aligned}
			&\int_{I}\int_{\mathscr{O}_2}\left|\int_{\mathscr{O}_1}f_n \dd x-\int_{\mathscr{O}_1} f\dd x\right|^q \dd z \dd t
			\le|\mathscr{O}_1|^{q-1}\int_{I}\int_{\mathscr{O}_2}\int_{\mathscr{O}_1}|f_n-f|^q \dd x\dd z\dd t\\
			&=|\mathscr{O}_1|^{q-1}\int_{I}\int_{\mathscr{O}_1\times\mathscr{O}_2}|f_n-f|^q \dd x\dd z\dd t,
		\end{aligned}
	\end{equation*}
	as $n\to\infty$, where the estimates holds once again thanks to H\"older's inequality and Theorem~8.28 in~\cite{Leoni2017}.
	
	If $q=\infty$, for a.a. $t\in I$ and for a.a. $z\in\mathscr{O}_2$, another application of the H\"older inequality and Theorem~8.28 in~\cite{Leoni2017} give:
	\begin{equation*}
		\left|\int_{\mathscr{O}_1}(f_n(t)(x,z)-f(t)(x,z))\dd x\right|\le |\mathscr{O}_1|\|f_n(t)-f(t)\|_{L^\infty(\mathscr{O}_1\times\mathscr{O}_2)}.
	\end{equation*}
	
	Passing to the essential supremum over $I$ in the estimate above gives:
	\begin{equation*}
		\sup_{t\in I}\left\|\int_{\mathscr{O}_1}(f_n(t)(x,z)-f(t)(x,z))\dd x\right\|_{L^\infty(\mathscr{O}_2)}\le |\mathscr{O}_1|\sup_{t\in I}\|f_n(t)-f(t)\|_{L^\infty(\mathscr{O}_1\times\mathscr{O}_2)}.
	\end{equation*}
	
	The assumed convergence for the sequence $\{f_n\}_{n=1}^\infty$ in turn gives that the right-hand side of the estimate above tends to zero as $n\to\infty$, completing the proof.
\end{proof}

\begin{lemma}
	\label{lem:avg-2}
	Let $I$ be a bounded open interval in $\mathbb{R}$, let $\mathscr{O}_1$ be a bounded open set in $\mathbb{R}^{m_1}$ and let $\mathscr{O}_2$ be an open set in $\mathbb{R}^{m_2}$, for some integer $m_1,m_2\ge 1$. Assume that $1\le q\le\infty$. Let $\{f_n\}_{n=1}^\infty$ be a sequence in $\mathcal{C}^0(\overline{I};L^q(\mathscr{O}_1\times\mathscr{O}_2))$ such that $\{f_n\}_{n=1}^\infty$ converges in $\mathcal{C}^0(\overline{I};L^q(\mathscr{O}_1\times\mathscr{O}_2))$. Then, it results that the sequence
	\begin{equation*}
		\left\{\int_{\mathscr{O}_1}f_n\dd x\right\}_{n=1}^\infty
	\end{equation*}
	converges in $\mathcal{C}^0(\overline{I};L^q(\mathscr{O}_2))$.
\end{lemma}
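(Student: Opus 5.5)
The plan is to repeat the argument of Lemma~\ref{lem:avg}, with the time-integrated norm replaced by the maximum over $\overline{I}$. Define the averaging operator $M f := \int_{\mathscr{O}_1} f \dd x$. The key observation is that $M$ is a bounded linear operator from $L^q(\mathscr{O}_1\times\mathscr{O}_2)$ to $L^q(\mathscr{O}_2)$. For $1\le q<\infty$, H\"older's inequality in the $x$-variable, together with the Fubini--Tonelli theorem, gives
\begin{equation*}
\|M g\|_{L^q(\mathscr{O}_2)}\le |\mathscr{O}_1|^{\frac{q-1}{q}}\,\|g\|_{L^q(\mathscr{O}_1\times\mathscr{O}_2)},\quad\textup{ for all } g\in L^q(\mathscr{O}_1\times\mathscr{O}_2).
\end{equation*}
This is exactly the pointwise-in-time version of the estimate used in the proof of Lemma~\ref{lem:avg}. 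For $q=\infty$ the same bound holds with constant $|\mathscr{O}_1|$, since $|M g(z)|\le |\mathscr{O}_1|\,\|g\|_{L^\infty}$ for a.a.\ $z\in\mathscr{O}_2$. Here the boundedness of $\mathscr{O}_1$ is essential, whereas $\mathscr{O}_2$ may be unbounded.

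The first step is to check that $M$ maps $\mathcal{C}^0(\overline{I};L^q(\mathscr{O}_1\times\mathscr{O}_2))$ into $\mathcal{C}^0(\overline{I};L^q(\mathscr{O}_2))$. For $f$ in the former space and $s,t\in\overline{I}$, linearity and the bound above give
\begin{equation*}
\|M f(t)-M f(s)\|_{L^q(\mathscr{O}_2)}\le C\,\|f(t)-f(s)\|_{L^q(\mathscr{O}_1\times\mathscr{O}_2)},
\end{equation*}
with $C=|\mathscr{O}_1|^{(q-1)/q}$, or $C=|\mathscr{O}_1|$ when $q=\infty$. The right-hand side tends to zero as $s\to t$. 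The only point needing care is measurability: for each fixed $t$, $M f(t)$ is a well-defined element of $L^q(\mathscr{O}_2)$. This follows from Fubini--Tonelli applied to $f(t)\in L^q\subset L^1_{\textup{loc}}$; note that on $\mathscr{O}_1\times K$, with $K$ bounded, $L^q\subset L^1$.

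The second step is to let $f$ denote the limit of $\{f_n\}_{n=1}^\infty$ and take the maximum over $t\in\overline{I}$ in the same estimate applied to $f_n(t)-f(t)$:
\begin{equation*}
\max_{t\in\overline{I}}\|M f_n(t)-M f(t)\|_{L^q(\mathscr{O}_2)}\le C\max_{t\in\overline{I}}\|f_n(t)-f(t)\|_{L^q(\mathscr{O}_1\times\mathscr{O}_2)}\to 0.
\end{equation*}
This proves convergence of $\{M f_n\}_{n=1}^\infty$ to $M f$ in $\mathcal{C}^0(\overline{I};L^q(\mathscr{O}_2))$. There is no real obstacle; the only mildly delicate point is the case $q=\infty$, which is handled in the same way as the last part of the proof of Lemma~\ref{lem:avg}, with the supremum over $t$ replaced by the maximum over $\overline{I}$.
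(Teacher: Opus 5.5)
Your proposal is correct and follows essentially the same route as the paper. The paper also applies the H\"older bound $\|\int_{\mathscr{O}_1}g\,\mathrm{d}x\|_{L^q(\mathscr{O}_2)}\le|\mathscr{O}_1|^{1-1/q}\|g\|_{L^q(\mathscr{O}_1\times\mathscr{O}_2)}$ (constant $|\mathscr{O}_1|$ when $q=\infty$), first to $f(t)-f(s)$ for continuity in time and then to $f_n(t)-f(t)$ with a maximum over $\overline{I}$; your Fubini--Tonelli remark on the well-definedness of $\int_{\mathscr{O}_1}f(t)\,\mathrm{d}x$ is a small detail the paper leaves implicit.
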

\begin{proof}
	Let us first consider the case where $q$ is finite. We first verify that if $f\in\mathcal{C}^0(\overline{I};L^q(\mathscr{O}_1\times\mathscr{O}_2))$ then $\int_{\mathscr{O}_1}f_n\dd x\in\mathcal{C}^0(\overline{I};L^q(\mathscr{O}_2))$. To this aim, we observe that for each $s,t\in\overline{I}$, it results that an application of the H\"older inequality gives
	\begin{equation*}
		\int_{\mathscr{O}_2}\left|\int_{\mathscr{O}_1}f(s)\dd x-\int_{\mathscr{O}_1}f(t)\dd x\right|^q\dd z\le|\mathscr{O}_1|^{q-1}\int_{\mathscr{O}_2}\int_{\mathscr{O}_1}|f(s)-f(t)|^q\dd x\dd z\to 0,
	\end{equation*}
	when $s\to t$. To complete the proof, consider a Cauchy sequence $\{f_n\}_{n=1}^\infty$ that converges to $f$ in $\mathcal{C}^0(\overline{I};L^q(\mathscr{O}_1\times\mathscr{O}_2))$, and evaluate
	\begin{equation*}
		\max_{t\in\overline{I}}\left\|\int_{\mathscr{O}_1}f_n(t)\dd x-\int_{\mathscr{O}_1}f(t)\dd x\right\|_{L^q(\mathscr{O}_2)}\le|\mathscr{O}_1|^{1-\frac{1}{q}}\max_{t\in\overline{I}}\left(\int_{\mathscr{O}_2}\int_{\mathscr{O}_1}|f_n(t)-f(t)|^q\dd x\dd z\right)^{1/q}\to 0,
	\end{equation*}
	as $n\to\infty$, where the estimates holds once again thanks to H\"older's inequality.
	
	If $q=\infty$, for all $t\in\overline{I}$ and for a.a. $z\in\mathscr{O}_2$, another application of the H\"older inequality and Theorem~8.28 in~\cite{Leoni2017} give:
	\begin{equation*}
		\left|\int_{\mathscr{O}_1}(f_n(t)(x,z)-f(t)(x,z))\dd x\right|\le |\mathscr{O}_1|\|f_n(t)-f(t)\|_{L^\infty(\mathscr{O}_1\times\mathscr{O}_2)}.
	\end{equation*}
	
	taking the maximum over $\overline{I}$ in the estimate above gives:
	\begin{equation*}
		\max_{t\in\overline{I}}\left\|\int_{\mathscr{O}_1}(f_n(t)(x,z)-f(t)(x,z))\dd x\right\|_{L^\infty(\mathscr{O}_2)}\le |\mathscr{O}_1|\max_{t\in\overline{I}}\|f_n(t)-f(t)\|_{L^\infty(\mathscr{O}_1\times\mathscr{O}_2)}.
	\end{equation*}
	
	The assumed convergence for the sequence $\{f_n\}_{n=1}^\infty$ in turn gives that the right-hand side of the estimate above tends to zero as $n\to\infty$, completing the proof.
\end{proof}

\begin{remark}
	\label{rem:lem:avg}
	Following a strategy similar to the one presented for establishing Lemma~\ref{lem:avg}, one can show that if $\{f_n\}_{n=1}^\infty$ converges in $L^q(I;L^q(\mathscr{O}_1))$ then the sequence $\left\{\int_{\mathscr{O}_1}f_n\dd x\right\}_{n=1}^\infty$ converges in $L^q(I)$.
	
	Likewise, if $\{f_n\}_{n=1}^\infty$ converges in $\mathcal{C}^0(\overline{I};L^q(\mathscr{O}_1))$ then the sequence $\left\{\int_{\mathscr{O}_1}f_n\dd x\right\}_{n=1}^\infty$ converges in $\mathcal{C}^0(\overline{I})$.
	\bqed
\end{remark}

Let us now recall a useful result to analyse the spatial gradient of functions defined over Sobolev--Bochner spaces.
\begin{lemma}
	\label{th:gradBochner}
	Let $N\ge 1$ be an integer, let $\mathscr{O}$ be an open set in $\mathbb{R}^N$, let $0<T\le\infty$ and let $1<p<\infty$.
	Let $\vec{v} \in L^p(0,T;W^{1,p}(\mathscr{O}))$. For each integer $1\le i\le N$, it is possible to define the mapping $\widetilde{\partial_i \vec{v}}:(0,T) \to L^p(\mathscr{O})$, which is given by $\widetilde{\partial_i \vec{v}}(t):=\partial_i(\vec{v}(t))$, for a.a. $t\in(0,T)$.
	
	In particular, if $\{\vec{w}_n\}_{n=1}^\infty$ is a sequence such that $\vec{w}_n\rightharpoonup\vec{w}$ in $L^p(0,T;W^{1,p}(\mathscr{O}))$ as $n\to\infty$, it results:
	\begin{equation*}
		\widetilde{\partial_i\vec{w}_n}\rightharpoonup\widetilde{\partial_i\vec{w}},\quad\textup{ in }L^p(0,T;L^p(\mathscr{O})) \textup{ as }n\to\infty.
	\end{equation*}
\end{lemma}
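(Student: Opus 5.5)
The plan is to reduce everything to properties of the space-time functions $\vec v$ and $\vec w_n$, treated as genuine functions on $(0,T)\times\mathscr{O}$. The tool is the identification $L^p(0,T;L^p(\mathscr{O}))\simeq L^p((0,T)\times\mathscr{O})$ from Theorem~8.28 in~\cite{Leoni2017}, which the paper has already used in the proof of Lemma~\ref{lem:5:improved}.

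\emph{Definition of $\widetilde{\partial_i\vec v}$.} For $\vec v\in L^p(0,T;W^{1,p}(\mathscr{O}))$, the map $\vec v(t)\mapsto\partial_i(\vec v(t))$ is the composition of $\vec v$ with the operator $\partial_i:W^{1,p}(\mathscr{O})\to L^p(\mathscr{O})$, which is bounded and linear with norm at most one.
\begin{itemize}
\item \emph{Measurability.} A bounded linear operator maps strongly measurable functions to strongly measurable functions: it sends simple functions to simple functions and preserves a.e. limits. Hence $\widetilde{\partial_i\vec v}$ is strongly measurable from $(0,T)$ into $L^p(\mathscr{O})$.
\item \emph{Integrability.} Since $\|\partial_i(\vec v(t))\|_{L^p(\mathscr{O})}\le\|\vec v(t)\|_{W^{1,p}(\mathscr{O})}$ for a.a. $t$, we get $\widetilde{\partial_i\vec v}\in L^p(0,T;L^p(\mathscr{O}))$.
\item \emph{Well-definedness.} If $\vec v=\vec v'$ a.e. in $t$, then $\widetilde{\partial_i\vec v}=\widetilde{\partial_i\vec v'}$ a.e., so the map does not depend on the representative.
\end{itemize}
Consequently, $\vec v\mapsto\widetilde{\partial_i\vec v}$ is a bounded linear operator $\mathcal{T}_i:L^p(0,T;W^{1,p}(\mathscr{O}))\to L^p(0,T;L^p(\mathscr{O}))$ with $\|\mathcal{T}_i\|\le 1$.

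\emph{Weak convergence.} A bounded linear operator between Banach spaces is weak-to-weak continuous. Indeed, for any $\phi\in(L^p(0,T;L^p(\mathscr{O})))'$, the composition $\phi\circ\mathcal{T}_i$ belongs to $(L^p(0,T;W^{1,p}(\mathscr{O})))'$. Therefore $\vec w_n\rightharpoonup\vec w$ gives
\[
\phi(\mathcal{T}_i\vec w_n)\to\phi(\mathcal{T}_i\vec w),
\]
which is exactly $\widetilde{\partial_i\vec w_n}\rightharpoonup\widetilde{\partial_i\vec w}$. This argument never requires identifying the dual spaces explicitly. In particular, the Radon--Nikodym property of $L^p$ with $1<p<\infty$ is not needed, although it would give the concrete dual $L^{p'}(0,T;L^{p'}(\mathscr{O}))$ if one wanted it.

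\emph{Optional identification with a distributional derivative.} It is also worth checking that $\widetilde{\partial_i\vec v}$ coincides, under the identification above, with the distributional derivative $\partial_{x_i}$ of $\vec v$ viewed as a function on $(0,T)\times\mathscr{O}$. For this one tests against $\varphi(t)\psi(x)$ with $\varphi\in\mathcal{D}(0,T)$ and $\psi\in\mathcal{D}(\mathscr{O})$, and uses the Bochner-integral identity
\[
\int_0^T\varphi\,\langle\partial_i\vec v(t),\psi\rangle\dd t=-\int_0^T\varphi\,\langle\vec v(t),\partial_i\psi\rangle\dd t.
\]
One then concludes by density of tensor products in $\mathcal{D}((0,T)\times\mathscr{O})$.

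\emph{Main obstacle.} The only delicate point is the measurability and representative issue in the first step, namely making sure that $\widetilde{\partial_i\vec v}$ is a well-defined element of the Bochner space. Once $\mathcal{T}_i$ is known to be a bounded linear operator, the convergence claim follows immediately from the weak-to-weak continuity of such operators.
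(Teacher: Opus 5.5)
Your proof is correct, but it takes a different route from the paper's.

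\textbf{The paper's route.} It proves the bound $\|\widetilde{\partial_i\vec{v}}\|_{L^p(0,T;L^p(\mathscr{O}))}\le\|\vec{v}\|_{L^p(0,T;W^{1,p}(\mathscr{O}))}$, but uses it only to deduce that $\{\widetilde{\partial_i\vec{w}_n}\}_{n=1}^\infty$ is bounded. It then tests against $\vec{\varphi}\in\mathcal{D}(0,T;\mathcal{D}(\mathscr{O}))$ and moves $\partial_i$ onto the test function by the definition of the weak derivative. The right-hand side converges by the weak convergence of $\vec{w}_n$. The proof concludes by the density of $\mathcal{D}(0,T;\mathcal{D}(\mathscr{O}))$ in $L^{p'}(0,T;L^{p'}(\mathscr{O}))$ combined with that boundedness.

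\textbf{Your route.} You observe that $\vec{v}\mapsto\widetilde{\partial_i\vec{v}}$ is itself a bounded linear operator $\mathcal{T}_i$ between the two Bochner spaces, and then invoke weak-to-weak continuity.

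\textbf{What your route buys.} It is shorter, and it needs less:
\begin{itemize}
\item no identification of $(L^p(0,T;L^p(\mathscr{O})))'$ with $L^{p'}(0,T;L^{p'}(\mathscr{O}))$, which the paper uses tacitly when it tests only against $L^{p'}$ functions;
\item no density of smooth tensor-type test functions;
\item no ``bounded plus convergent on a dense set'' principle.
\end{itemize}
It also explicitly settles the strong measurability of $\widetilde{\partial_i\vec{v}}$, which the paper takes for granted.

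\textbf{What the paper's route buys.} It identifies the limit through the integration-by-parts identity. So it shows at the same time that $\widetilde{\partial_i\vec{w}}$ is the distributional $x_i$-derivative of $\vec{w}$ on $(0,T)\times\mathscr{O}$, the point you relegate to an optional remark. It is also the template that still works when one has only a priori bounds on $\partial_i\vec{w}_n$ and weak convergence of $\vec{w}_n$ in a weaker space such as $L^p(0,T;L^p(\mathscr{O}))$. In that setting there is no bounded linear operator on the convergence space to which your weak-continuity argument could be applied.
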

\begin{proof}
	To begin with, let us observe that for each $\vec{v}\in L^p(0,T;W^{1,p}(\mathscr{O}))$ it results:
	\begin{equation}
		\label{grad:bound-1}
		\int_0^T \int_{\mathscr{O}} \left|\widetilde{\partial_i \vec{v}}(t)\right|^p\dd x \dd t=\int_0^T \int_{\mathscr{O}} \left|\partial_i (\vec{v}(t))\right|^p\dd x \dd t\le \|\vec{v}\|_{L^p(0,T;W^{1,p}(\mathscr{O}))}^p.
	\end{equation}
	
	Let $\{\vec{w}_n\}_{n=1}^\infty$ be a sequence such that $\vec{w}_n\rightharpoonup\vec{w}$ in $L^p(0,T;W^{1,p}(\mathscr{O}))$ as $n\to\infty$. In light of~\eqref{grad:bound-1}, it thus results that the sequence $\{\widetilde{\partial_i\vec{w}_n}\}_{n=1}^\infty$ is bounded in $L^p(0,T;L^p(\mathscr{O}))$ for each $1\le i\le N$ independently of $n$.
	
	Let $\vec{\varphi}\in\mathcal{D}(0,T;\mathcal{D}(\mathscr{O}))$ be given, and recall that $L^{p'}(0,T;L^{p'}(\mathscr{O}))\cong L^{p'}((0,T)\times\mathscr{O})$ (Theorem~8.28 in~\cite{Leoni2017}). Let $\phi\in L^{p'}((0,T)\times\mathscr{O})$ be such that $\phi(t,x)=\vec{\varphi}(t)(x)$. An application of the definition of weak derivative in Bochner spaces gives
	\begin{equation*}
		\int_{0}^{T}\int_{\mathscr{O}}\partial_i(\vec{w}_n(t))\vec{\varphi}(t)\dd x\dd t=-\int_{0}^{T}\int_{\mathscr{O}}\vec{w}_n(t)\partial_i\phi(t,\cdot)\dd x\dd t,
	\end{equation*}
	and we observe that the right-hand side verifies the convergence
	\begin{equation*}
		-\int_{0}^{T}\int_{\mathscr{O}}\vec{w}_n(t)\partial_i\phi(t,\cdot)\dd x\dd t\to-\int_{0}^{T}\int_{\mathscr{O}}\vec{w}(t)\partial_i(\vec{\varphi}(t))\dd x\dd t,
	\end{equation*}
	as $n\to\infty$. The proof is complete upon observing that $\mathcal{D}(0,T;\mathcal{D}(\mathscr{O}))$ is dense in $L^{p'}(0,T;L^{p'}(\mathscr{O}))$.
\end{proof}

The next preliminary result we recall, is a generalised integration-by-parts formula, whose proof hinges on the Lebesgue theorem (cf., e.g., Theorem~2.11-3 of~\cite{PGCLCSHA}).
\begin{lemma}[Lemma~1.2 of~\cite{Raviart1970}]
	\label{lem:8}
	Let $\Omega$ be a Lipschitz domain in $\mathbb{R}^2$, and let $T>0$.
	Let $\alpha$ and $p$ be two real numbers greater than $1$, and let $v$ be a function such that
	\begin{align*}
		v \in L^p(0,T;W_0^{1,p}(\Omega)) \cap \mathcal{C}^0([0,T];L^\alpha(\Omega)),\\
		\dfrac{\dd }{\dd t}(|v|^{\alpha-2} v) \in L^{p'}(0,T;W^{-1,p'}(\Omega)).
	\end{align*}
	
	Then, the following formula holds
	$$
	\int_{0}^{T}\left\langle \dfrac{\dd }{\dd t}(|v|^{\alpha-2} v), v \right\rangle \dd t = \dfrac{\|v(T)\|_{L^\alpha(\Omega)}^\alpha}{\alpha'} - \dfrac{\|v(0)\|_{L^\alpha(\Omega)}^\alpha}{\alpha'},
	$$
	where $\langle \cdot, \cdot\rangle$ denotes the duality between $W^{-1,p'}(\Omega)$ and $W_0^{1,p}(\Omega)$.
	\qed
\end{lemma}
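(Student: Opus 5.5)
The plan is to prove the formula first for functions that are regular in time and then pass to the limit by regularisation. The key pointwise tool is convexity. Set $\Phi(s):=\frac{|s|^\alpha}{\alpha}$, so that $\Phi'(s)=|s|^{\alpha-2}s=:\beta(s)$. Let $\Phi^*$ be its Legendre conjugate, $\Phi^*(\sigma)=\frac{|\sigma|^{\alpha'}}{\alpha'}$. Fenchel's identity gives $\Phi^*(\beta(s))=s\beta(s)-\Phi(s)=\frac{|s|^\alpha}{\alpha'}$. The claimed formula therefore reads
\begin{equation*}
\int_0^T\left\langle \frac{\dd}{\dd t}w, v\right\rangle\dd t=\int_\Omega \Phi^*(w(T))\dd x-\int_\Omega\Phi^*(w(0))\dd x,\qquad w:=\beta(v),
\end{equation*}
which is the classical chain rule for the convex functional $w\mapsto\int_\Omega\Phi^*(w)$, with $v=(\Phi^*)'(w)$. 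Note that $w\in\mathcal{C}^0([0,T];L^{\alpha'}(\Omega))$, since $v\in\mathcal{C}^0([0,T];L^\alpha(\Omega))$ and $\beta$ maps $L^\alpha$ continuously into $L^{\alpha'}$.

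For the discrete version I would use a time-shift argument (Steklov/difference quotients), which avoids mollifying $v$ and $w$ consistently. Fix $h>0$. The subgradient inequality for $\Phi^*$ gives, pointwise,
\begin{equation*}
\Phi^*(w(t+h))-\Phi^*(w(t))\ge v(t)\bigl(w(t+h)-w(t)\bigr),
\end{equation*}
and the symmetric inequality
\begin{equation*}
\Phi^*(w(t+h))-\Phi^*(w(t))\le v(t+h)\bigl(w(t+h)-w(t)\bigr).
\end{equation*}
I would integrate these over $\Omega$ and over $t\in(0,T-h)$, and then divide by $h$. The left-hand side becomes $\frac1h\int_{T-h}^T E(t)\dd t-\frac1h\int_0^h E(t)\dd t$, where $E(t):=\int_\Omega\Phi^*(w(t))\dd x=\frac{1}{\alpha'}\|v(t)\|^\alpha_{L^\alpha(\Omega)}$. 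This converges to $E(T)-E(0)$ by the continuity of $v$ in $L^\alpha(\Omega)$.

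On the right-hand sides, I would use that $\frac{w(t+h)-w(t)}{h}$, viewed as an element of $W^{-1,p'}(\Omega)$, equals $\frac1h\int_t^{t+h}w'(s)\dd s$. This holds because $w'\in L^{p'}(0,T;W^{-1,p'}(\Omega))$ and $w\in\mathcal{C}^0$ with values in $L^{\alpha'}$; the two identifications agree in $\mathcal{D}'(\Omega)$. This Steklov average converges strongly to $w'$ in $L^{p'}(0,T-\delta;W^{-1,p'}(\Omega))$. The shifted functions $v(\cdot)$ and $v(\cdot+h)$ converge strongly to $v$ in $L^p(0,T-\delta;W_0^{1,p}(\Omega))$ by continuity of translations in Bochner spaces. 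Hence both the upper and the lower bounds tend to $\int_0^T\langle w',v\rangle\dd t$; a small tail near $T$ is handled by letting $\delta\to0$ via absolute continuity of the integral. The two inequalities then squeeze the result.

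\textbf{Main obstacle.} The delicate point is justifying that the pointwise product $v(t)\,(w(t+h)-w(t))$, integrated over $\Omega$, coincides with the duality pairing $\langle w(t+h)-w(t),v(t)\rangle$. A priori $w(t+h)-w(t)$ lies in $L^{\alpha'}$ and $v(t)$ in $L^\alpha\cap W_0^{1,p}$, so the integral is finite. The task is to check consistency of the $L^{\alpha'}$–$L^\alpha$ pairing with the $W^{-1,p'}$–$W^{1,p}_0$ pairing on functions lying in both spaces. I would prove this by approximating $v(t)$ in $W_0^{1,p}$ by functions in $\mathcal{D}(\Omega)$. Since $W_0^{1,p}(\Omega)\hookrightarrow\mathcal{C}^0(\overline{\Omega})$ for $p>2$, the approximation also converges in $L^\alpha$, and the two pairings agree on $\mathcal{D}(\Omega)$. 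This is precisely where the Lebesgue dominated convergence theorem enters, in passing to the limit inside the integrated inequalities.
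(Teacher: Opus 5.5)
Your argument is correct for $p>2$, the only case the paper uses. It supplies something the paper does not: the paper gives no argument for this lemma, only quoting Lemma~1.2 of Raviart and remarking that its proof rests on Lebesgue's theorem.

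Your convexity sandwich is the time-continuous counterpart of the mechanism the paper uses at the discrete level. Your two subgradient inequalities for $\Phi^*$ are exactly the first inequality of Lemma~\ref{lem:7} (with $r=\alpha$), applied once to $(\xi,\eta)=(v(t+h),v(t))$ and once to $(\xi,\eta)=(v(t),v(t+h))$. This is the same inequality behind the discrete energy estimates in Theorems~\ref{thm:2} and~\ref{thm:3}.

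The advantage of your route over a regularisation argument is that you never need to mollify $v$ in time compatibly with the nonlinearity $s\mapsto|s|^{\alpha-2}s$, which does not commute with convolution. All you use is:
\begin{itemize}
\item continuity of $t\mapsto\|v(t)\|_{L^\alpha(\Omega)}$;
\item strong convergence of Steklov averages of $w'$ in $L^{p'}(0,T-\delta;W^{-1,p'}(\Omega))$;
\item continuity of translations in $L^{p}(0,T-\delta;W_0^{1,p}(\Omega))$;
\item the uniform tail bound $\|h^{-1}\int_t^{t+h}w'(s)\dd s\|_{L^{p'}(T-\delta,T-h;W^{-1,p'}(\Omega))}\le\|w'\|_{L^{p'}(T-\delta,T;W^{-1,p'}(\Omega))}$.
\end{itemize}
You handle each of these correctly.

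The one point to repair is the range of $p$. The lemma is stated for every $p>1$, but your consistency-of-pairings step invokes $W_0^{1,p}(\Omega)\hookrightarrow\mathcal{C}^0(\overline{\Omega})$, which requires $p>2$. Sobolev's embedding into $L^\alpha(\Omega)$ also covers $p=2$, and $p<2$ with $\alpha\le 2p/(2-p)$.

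For $p<2$ and $\alpha>2p/(2-p)$, however, approximants of $v(t)$ in $W_0^{1,p}(\Omega)$ need not converge in $L^\alpha(\Omega)$. You must then show that $\mathcal{D}(\Omega)$ is dense in $W_0^{1,p}(\Omega)\cap L^\alpha(\Omega)$ for the sum norm. Smooth truncation does this:
\begin{itemize}
\item Take $\theta$ smooth and bounded, with $\theta(s)=s$ on $[-1,1]$ and $0\le\theta'\le 1$, and set $\theta_M(s):=M\theta(s/M)$.
\item Let $\phi_n\in\mathcal{D}(\Omega)$ with $\phi_n\to v(t)$ in $W_0^{1,p}(\Omega)$ and a.e. Then $\theta_M\circ\phi_n\in\mathcal{D}(\Omega)$ converges to $\theta_M(v(t))$ both in $W^{1,p}(\Omega)$ and in $L^\alpha(\Omega)$.
\item In turn, $\theta_M(v(t))\to v(t)$ in both norms as $M\to\infty$.
\end{itemize}
Both limits follow from dominated convergence. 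This general case, not the $p>2$ embedding argument, is where a dominated-convergence step is genuinely needed in your density argument.
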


We now state the well-known Dubinskii's lemma (cf., e.g., Theorem~1 of~\cite{Dubinski1965}, Theorem~2.1 of~\cite{BarretSuli2011}, Theorem~3 in~\cite{CL12} or Theorem~1 in~\cite{CJL14}).
\begin{lemma}
	\label{Dub}
	Let $A_0$ and $A_1$ be Banach spaces such that $A_0 \hookrightarrow A_1$. Let $S \subset A_0$ be such that $\lambda S \subset S$, for all $\lambda \in \mathbb{R}$.
	
	Assume that the set $S$ is endowed with the definite homogeneous gauge $M:S\to \mathbb{R}$, that satisfies the following properties:
	\begin{itemize}
		\item $M(v) \ge 0$, for all $v \in S$,
		\item $M(\lambda v)=|\lambda| M(v)$, for all $v \in S$ and all $\lambda \in \mathbb{R}$.
	\end{itemize}
	
	Assume that the set $\mathscr{M}:=\{v \in S;M(v)\le 1\}$ is relatively compact in $A_0$. Consider the semi-normed set
	$$
	Y:=\left\{u \in L^1_{\textup{loc}}(0,T;A_1); \int_{0}^{T} [M(u(t))]^{q_0} \dd t < \infty \textup{ and } \int_{0}^{T} \left\|\dfrac{\dd u}{\dd t}(t)\right\|_{A_1}^{q_1}\dd t < \infty\right\},
	$$
	with $1 \le q_0 \le \infty$ and $1 \le q_1 \le \infty$, and the pairs $(q_0,q_1)=(1,\infty)$ and $(q_0,q_1)=(\infty,1)$ cannot be attained.
	Then, it results that:
	\begin{itemize}
		\item[$(a)$] $Y$ is relatively compact in $L^{q_0}(0,T;A_0)$ when $1\le q_0<\infty$;
		\item[$(b)$] $Y$ is relatively compact in $\mathcal{C}^0([0,T];A_0)$ when $q_0=\infty$.
	\end{itemize}
	\qed
\end{lemma}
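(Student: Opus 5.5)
The statement is Dubinskii's compactness lemma. I would prove it in the Aubin--Lions--Simon style, but with a compactness modulus in place of Ehrling's inequality, since $S$ is neither a linear space nor a convex set.

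\emph{Step 1: a modulus of continuity on compact cones.} For $R>0$ let $K_R$ be the $A_0$-closure of $\{\lambda v;\ v\in\mathscr{M},\ |\lambda|\le R\}$. Since $\mathscr{M}$ is relatively compact in $A_0$, the set $K_R$ is compact in $A_0$, and so is $K_R-K_R$. By homogeneity of $M$, any $v\in S$ with $M(v)\le R$ lies in $K_R$: if $M(v)>0$ write $v=M(v)\,(v/M(v))$; if $M(v)=0$ then $\lambda v\in\mathscr{M}$ for every $\lambda$, and relative compactness forces $v=0$. I claim there is $\omega_R$ with $\omega_R(s)\to0$ as $s\to0^+$ such that $\|w\|_{A_0}\le\omega_R(\|w\|_{A_1})$ for all $w\in K_R-K_R$. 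Suppose not. Then there are $w_n$ with $\|w_n\|_{A_1}\to0$ and $\|w_n\|_{A_0}\ge\varepsilon$. Extract a subsequence converging in $A_0$ to some $w$. The embedding $A_0\hookrightarrow A_1$ is continuous and injective, so $w=0$, which is a contradiction. This replaces Ehrling's inequality, which would need $u(t+h)-u(t)\in S$; that is not available here.

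\emph{Step 2, case $(a)$, $q_0<\infty$.} Take a bounded family in $Y$. I would verify a Kolmogorov--Riesz--Simon type criterion in $L^{q_0}(0,T;A_0)$, consisting of uniform smallness of time translates plus approximation by functions valued in a fixed compact set.
\begin{itemize}
\item Time translates: for $q_1>1$, Hölder gives $\|u(t+h)-u(t)\|_{A_1}\le h^{1-1/q_1}\|u'\|_{L^{q_1}(0,T;A_1)}$. For $q_1=1$ the translates are still uniformly small in $L^1(0,T-h;A_1)$ with bound $h\|u'\|_{L^1}$; this is where $(q_0,q_1)=(\infty,1)$ must be excluded for case $(b)$.
\item Splitting in time: fix $R$. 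On the set $\{t: M(u(t))\le R,\ M(u(t+h))\le R\}$ Step 1 gives $\|u(t+h)-u(t)\|_{A_0}\le\omega_R(\|u(t+h)-u(t)\|_{A_1})$. The complementary set has measure at most $2R^{-q_0}\int_0^T M(u)^{q_0}\dd t$ by Chebyshev.
\item Control on the bad set: the contribution of the complementary set must be small in $L^{q_0}(A_0)$, uniformly over the family. This is the delicate point, and it is exactly what the Barrett--Süli correction addresses. I would cut off: replace $u$ by $u\,\mathbf{1}_{\{M(u)\le R\}}$, show this is close to $u$ in $L^{q_0}(0,T;A_0)$, and obtain the compact-valued approximation there from step functions built on a finite $\varepsilon$-net of $K_R$.
\end{itemize}
Combining these pieces yields relative compactness in $L^{q_0}(0,T;A_0)$.

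\emph{Step 3, case $(b)$, $q_0=\infty$.} Set $R=\operatorname{ess\,sup}M(u)$. For a.e. $t$ we have $u(t)\in K_R$. Since $u\in W^{1,q_1}(0,T;A_1)\subset\mathcal{C}^0([0,T];A_1)$ and $K_R$ is compact in $A_0$, hence closed in $A_1$, this holds for every $t$. Step 1 together with the $A_1$-Hölder estimate (here $q_1>1$) gives uniform equicontinuity in $A_0$. Arzelà--Ascoli, with values in the compact set $K_R$, then concludes.

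\emph{Main obstacle.} I expect the main difficulty to be the bad-set estimate in Step 2, because $M(u(t))$ is controlled only in $L^{q_0}$ and not in $L^\infty$. I would handle it first through the cut-off and truncation argument described there.
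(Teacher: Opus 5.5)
The paper gives no proof of this lemma; it only cites Dubinskii and Barrett--S\"uli, so your proposal has to stand on its own. Steps~1 and~3 are sound (in Step~3, take $R$ as a bound over the whole family). Step~2, however, has a genuine gap exactly where you flag it, and the tools you set up there cannot close it.

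On the bad set $B_h=\{t:\ M(u(t))>R\ \text{or}\ M(u(t+h))>R\}$ the only information you use is $\|u(t)\|_{A_0}\le C\,M(u(t))$. Chebyshev bounds $|B_h|$, but it does not bound $\int_{B_h}M(u)^{q_0}\,\mathrm{d}t$. A family that is merely bounded in $L^{q_0}$ can concentrate (e.g.\ $M(u_n(t))^{q_0}\approx n\mathbf{1}_{[0,1/n]}(t)$), so small measure buys nothing. Your cut-off has the same problem: $\|u-u\mathbf{1}_{\{M(u)\le R\}}\|_{L^{q_0}(0,T;A_0)}^{q_0}=\int_{\{M(u)>R\}}\|u\|_{A_0}^{q_0}\,\mathrm{d}t$ is uniformly small only if $\|u(\cdot)\|_{A_0}^{q_0}$ is equi-integrable, and your plan never proves that. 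By localising the modulus to $K_R$ you discard the homogeneity of $M$, which is precisely the ingredient needed here.

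The missing step is a scale-invariant version of your Step~1. Take $R=1$. For $v,w\in S$ with $m=M(v)+M(w)>0$ you have $v/m,\,w/m\in\mathscr M\subset K_1$, so your modulus yields, for every $\eta>0$,
\[
\|v-w\|_{A_0}\le \eta\bigl(M(v)+M(w)\bigr)+C_\eta\|v-w\|_{A_1}\qquad\text{for all } v,w\in S.
\]
This is the Barrett--S\"uli substitute for Ehrling's lemma.
\begin{itemize}
\item With $v=u(t+h)$ and $w=u(t)$ it gives $\|\tau_hu-u\|_{L^{q_0}(0,T-h;A_0)}\le 2\eta\|M(u)\|_{L^{q_0}(0,T)}+C_\eta\|\tau_hu-u\|_{L^{q_0}(0,T-h;A_1)}$, with no good/bad splitting at all.
\item The last term is uniformly small: interpolate your $A_1$ translate bound with the uniform bound in $\mathcal{C}^0([0,T];A_1)$, which follows from $u\in L^1(0,T;A_1)$ and $u'\in L^{q_1}(0,T;A_1)$.
\item With $w=0$ the same inequality gives $\|u(t)\|_{A_0}\le\eta M(u(t))+C_\eta\sup_s\|u(s)\|_{A_1}$, hence the equi-integrability of $\|u\|_{A_0}^{q_0}$. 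Only then is your cut-off uniformly close to $u$.
\item Relative compactness of $\{\int_{t_1}^{t_2}u\,\mathrm{d}t\}$ in $A_0$ then follows from Mazur's theorem applied to $\overline{\mathrm{conv}}(K_R)$. Together with the translate estimate, this completes Simon's criterion.
\end{itemize}
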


Finally, we establish a sharp generalised Poincar\'{e}--Friedrichs inequality for the space $V$, giving an explicit construction for the constant.
\begin{lemma}
	\label{PFV}
	The following generalised Poincar\'{e}--Friedrichs inequality holds:
	\begin{equation*}
		\|\Xi\|_{L^2(\Omega\times (0,L))}\le\sqrt{2}L\|\nabla\Xi\|_{\bm{L}^2(\Omega\times (0,L))},\quad\textup{ for all }\Xi\in V.
	\end{equation*}
\end{lemma}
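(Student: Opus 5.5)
The plan is to use only the homogeneous Dirichlet condition on the top face $\Omega\times\{L\}$, integrating along vertical segments. By density, I would first argue for smooth functions: the set of $\Xi\in\mathcal{C}^1(\overline{\Omega}\times[0,L])$ vanishing near $\Omega\times\{L\}$ (and near $\partial\Omega\times(0,L)$) is dense in $V$. This is standard for a Lipschitz cylinder with Dirichlet data on part of the boundary. Alternatively, one can avoid density by invoking the absolutely-continuous-on-lines characterisation of $H^1$ functions: for a.a. $x\in\Omega$, the map $z\mapsto\Xi(x,z)$ is absolutely continuous on $[0,L]$, its classical derivative equals $\partial_z\Xi(x,\cdot)$, and the trace condition gives $\Xi(x,L)=0$ for a.a. $x$.

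For such $x$ and each $z\in(0,L)$, I would write
\begin{equation*}
\Xi(x,z)=-\int_z^L\partial_z\Xi(x,s)\dd s .
\end{equation*}
Cauchy--Schwarz then yields
\begin{equation*}
|\Xi(x,z)|^2\le (L-z)\int_0^L|\partial_z\Xi(x,s)|^2\dd s .
\end{equation*}
Integrating in $z$ over $(0,L)$ gives the factor $\int_0^L(L-z)\dd z=L^2/2$, and integrating in $x$ over $\Omega$ (Fubini--Tonelli) gives
\begin{equation*}
\|\Xi\|^2_{L^2(\Omega\times(0,L))}\le \frac{L^2}{2}\,\|\partial_z\Xi\|^2_{L^2(\Omega\times(0,L))}\le \frac{L^2}{2}\,\|\nabla\Xi\|^2_{\bm{L}^2(\Omega\times(0,L))}.
\end{equation*}
Since $L/\sqrt2\le\sqrt2 L$, the claimed inequality with constant $\sqrt{2}L$ follows a fortiori. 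If one prefers not to track the factor $1/2$, the cruder bound $|\Xi(x,z)|^2\le L\int_0^L|\partial_z\Xi|^2$ gives the constant $L$, which is also at most $\sqrt2L$. In the density route, the extension from smooth functions to all of $V$ is immediate, because both sides are continuous in the $H^1$ norm.

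The only delicate point is justifying the pointwise representation for a general element of $V$, namely that the trace on $\Omega\times\{L\}$ coincides with the a.e. limit $\lim_{z\to L}\Xi(x,z)$ along vertical lines. I would handle this through the density argument. Near the top face, the cylinder is locally a product of $\Omega$ with an interval, so truncating with a cutoff in $z$ and mollifying horizontally after an extension by zero across $\partial\Omega$ (permitted because $\Xi=0$ on the lateral boundary) produces the required smooth approximants. That step is routine, so I expect no real obstacle: the constant is explicit and the argument is one-dimensional.
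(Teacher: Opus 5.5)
Your argument is correct, and it takes a different route from the paper's. The paper reflects $\Xi$ evenly across $\{z=0\}$ and observes that $\hat{\Xi}\in H^1_0(\Omega\times(-L,L))$, with $\|\hat{\Xi}\|=\sqrt{2}\|\Xi\|$ and $\|\nabla\hat{\Xi}\|=\sqrt{2}\|\nabla\Xi\|$. It then applies the standard Poincar\'{e}--Friedrichs inequality for $H^1_0$ on the doubled cylinder, which has height $2L$, with constant $2L/\sqrt{2}=\sqrt{2}L$. The reflection is needed only because that black-box theorem requires a Dirichlet condition on the whole boundary, whereas elements of $V$ are free at $z=0$.

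You instead prove the mixed-boundary inequality directly, integrating down from the top face and using only $\Xi(\cdot,L)=0$. This is self-contained and never uses the lateral condition. It also gives the sharper bound $\|\Xi\|\le (L/\sqrt{2})\|\partial_z\Xi\|$. The paper's constant is exactly your Cauchy--Schwarz computation carried out on an interval of length $2L$, so the reflection costs a factor $2$. What the paper's route buys in exchange is that the line-wise justification is delegated to the $H^1_0$ theory, i.e.\ density of $\mathcal{D}$. That is precisely the point you flag as delicate.

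On that point, the construction you sketch does not quite produce approximants vanishing near $z=L$. Mollifying horizontally creates no vanishing at the top. Multiplying by a cutoff $\chi_\varepsilon(z)$ requires showing $\varepsilon^{-1}\|\Xi\|_{L^2(\Omega\times(L-\varepsilon,L))}\to 0$, which is a Hardy-type form of the very estimate you are proving. There are two clean fixes.
\begin{itemize}
\item Extend $\Xi$ by zero to $\Omega\times(0,2L)$. The extension stays in $H^1$ by the Green formula, because the trace on the flat face $\Omega\times\{L\}$ vanishes and test functions in $\mathcal{D}(\Omega\times(0,2L))$ see no other boundary. By the ACL property, for a.a.\ $x$ the map $z\mapsto\Xi(x,z)$ is absolutely continuous on compact subintervals of $(0,2L)$ and vanishes a.e.\ on $(L,2L)$, hence at $z=L$. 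This yields your representation $\Xi(x,z)=-\int_z^L\partial_z\Xi(x,s)\dd s$ for $z\in(0,L)$.
\item Use the paper's reflection. Approximate $\hat{\Xi}$ in $H^1_0(\Omega\times(-L,L))$ by elements of $\mathcal{D}(\Omega\times(-L,L))$ and restrict them to $\Omega\times(0,L)$. This gives exactly the dense class you want.
\end{itemize}
With either fix your proof is complete.
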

\begin{proof}
	We reflect the the Lipschitz domain $\Omega\times (0,L)$ across the plane $\{z=0\}$ by even extension. As a result, with each $\Xi\in V$, we can associate the function $\hat{\Xi}\in H^1_0(\Omega\times (-L,L))$ defined by:
	\begin{equation*}
		\hat{\Xi}(x,z):=
		\begin{cases}
			\Xi(x,z)&,\textup{ if }(x,z)\in\Omega\times (0,L),\\
			\Xi(x,-z)&,\textup{ if }(x,z)\in\Omega\times (-L,0).
		\end{cases}
	\end{equation*}
	
	Recall that, in the special case where $\overline{\Omega}$ can be regarded as a surface in $\mathbb{R}^3$ with zero torsion, the best constant for Theorem~9.7 in~\cite{Brez11} is given by $\sqrt{2}$. Therefore, an application of the Poincar\'{e}--Friedrichs inequality (cf., e.g., Theorem~8.3-3$(a)$ in~\cite{Ciarlet2025}) gives
	\begin{equation*}
		\sqrt{2}\|\Xi\|_{L^2(\Omega\times (0,L))}=\|\hat{\Xi}\|_{L^2(\Omega\times (-L,L))}\le\sqrt{2}L\|\nabla\hat{\Xi}\|_{\bm{L}^2(\Omega\times (-L,L))}=2L\|\nabla\Xi\|_{\bm{L}^2(\Omega\times (0,L))},
	\end{equation*}
	from which the completion of the proof immediately follows.
\end{proof}

The coupling term $\mu$ defined in~\eqref{mu-def} takes the following form in the context of the framework we are adopting,
\begin{equation}
	\label{mu-def:2}
	\begin{aligned}
		&\mu(\tilde{\Theta}):=2\left(\rho g \dfrac{p-1}{2p}\right)^{p-1}\\
		&\quad\times\int_{0}^{1}\left(\max\left\{A_0 \exp\left(-\dfrac{\beta\sqrt{L}}{|\Omega|^{1/2}}\int_{\Omega}\left\{\tilde{\Theta}(t,x,s'L)-\Theta_{\textup{m}}\right\}^{-}\dd x\right),A_{\textup{min}}\right\}^{+}\right) (1-s')^p \dd s',
	\end{aligned}
\end{equation}
and the bounds~\eqref{mu0} continue to hold. The simplification proposed in~\eqref{mu-def:2} will be critical to establish the validity of the energy estimates in Theorem~\ref{thm:2}.

We now state the penalised problem, which is suggested by the formal model~\eqref{weak-u}--\eqref{theta:ic}. Observe that the validity of the initial conditions makes sense thanks to Lemma~\ref{lem:3}. In all what follows, we denote by $\gls{elle}$ a \emph{positive penalty parameter} meant to approach zero.

\begin{customprob}{$\mathcal{P}_\ell$}
	\label{Pkappa}
	Find function $\gls{uell}$ and $\gls{Tell}$ satisfying
	\begin{align*}
		u_\ell &\in L^\infty(0,T;W_0^{1,p}(\Omega)),\\
		\dfrac{\dd }{\dd t}\left(|u_\ell|^{\frac{\alpha-2}{2}}u_\ell\right) &\in L^2(0,T;L^2(\Omega)),\\
		\dfrac{\dd }{\dd t}\left(|u_\ell|^{\alpha-2}u_\ell\right) &\in L^\infty(0,T;W^{-1,p'}(\Omega)),\\
		\tilde{\Theta}_\ell&\in L^\infty(0,T;V),\\
		\dfrac{\dd \tilde{\Theta}_\ell}{\dd t} &\in L^2(0,T;L^2(\Omega\times (0,L))),
	\end{align*}
	satisfying the variational equations
	\begin{equation}
		\label{penalty-u}
		\int_{\Omega} \dfrac{\dd }{\dd t}(|u_\ell|^{\alpha-2} u_\ell) v \dd x
		+\mu(\tilde{\Theta}_\ell)\int_{\Omega}|\nabla u_\ell|^{p-2}\nabla u_\ell \cdot \nabla v \dd x -\dfrac{\mu(\tilde{\Theta}_\ell)}{\ell}\int_{\Omega} \{u_\ell\}^{-} v\dd x
		= \int_{\Omega} \tilde{a}(t,x) v \dd x,
	\end{equation}
	in the sense of distributions on $(0,T)$ for all $v \in W_0^{1,p}(\Omega)$, the variational equations
	\begin{equation}
		\label{penalty-thetatilde}
		\begin{aligned}
			&\rho c\int_{0}^{L}\int_{\Omega}\left(\dfrac{\dd \tilde{\Theta}_\ell}{\dd t}+U_1(0)\dfrac{\partial\tilde{\Theta}_\ell}{\partial x_1}+U_2(0)\dfrac{\partial\tilde{\Theta}_\ell}{\partial x_2}+v_z(0)\dfrac{\partial\tilde{\Theta}_\ell}{\partial z}\right) \Xi \dd x \dd z\\
			&\quad+\kappa\int_{0}^{L}\int_{\Omega}\nabla\tilde{\Theta}_\ell\cdot\nabla\Xi\dd x \dd z-\dfrac{1}{\ell}\int_{0}^{L}\int_{\Omega}\{\tilde{\Theta}_\ell\}^{-}\Xi \dd x \dd z+\dfrac{1}{\ell}\int_{0}^{L}\int_{\Omega}\{\tilde{\Theta}_\ell-\Theta_{\textup{m}}\}^{+}\Xi \dd x \dd z\\
			&= 2(\rho g C_0)^p\int_{0}^{L}\int_{\Omega}A(\cdot,\cdot,\tilde{\Theta}_\ell)\left(\left\{|u_\ell|^{\frac{p-1}{2p}}-z\right\}^{+}\right)^p \Xi\dd x \dd z-\int_{\Omega}q_{\textup{geo}}^\perp\Xi\dd x,
		\end{aligned}
	\end{equation}
	in the sense of distributions on $(0,T)$ for all $\Xi\in V$, as well as the following initial conditions
	\begin{equation*}
		\begin{aligned}
			u_\ell(0)=u_0&,\quad\textup{ a.e. in }\Omega,\\
			\tilde{\Theta}_\ell(0)=\tilde{\Theta}_0&,\quad\textup{ a.e. in }\Omega\times (0,L),
		\end{aligned}
	\end{equation*}
	for some prescribed non-zero $u_0 \in K_{\textup{surf}} \cap \mathcal{C}^1(\overline{\Omega})$ and $\tilde{\Theta}_0\in K_{\textup{temp}}$.
	\bqed
\end{customprob}

We first establish the existence of solutions for Problem~\ref{Pkappa} by means of a semi-discrete scheme. Define the operator $-\Delta_b:V\to V'$ by:
\begin{equation*}
	\langle-\Delta_b\Xi,\Theta\rangle_{V',V}:=\int_{\Omega\times (0,L)}\nabla\Xi\cdot\nabla\Theta\dd x\dd z+\int_{\Omega}\dfrac{q_{\textup{geo}^\perp}}{\kappa}\Theta\dd x,
\end{equation*}
for all $\Xi,\Theta\in V$. To establish the monotonicity of $-\Delta_b$, observe that for each $\Xi,\Theta\in V$
\begin{equation*}
	\langle-\Delta_b\Xi-(-\Delta_b\Theta),\Xi-\Theta\rangle_{V',V}=\int_{0}^{L}\int_{\Omega}|\nabla(\Xi-\Theta)|^2\dd x\dd z\ge \dfrac{\|\Xi-\Theta\|_{V}^2}{1+2L^2},
\end{equation*}
where the last estimate is due to Lemma~\ref{PFV}. The same strategy can be employed to establish the coerciveness of the operator $-\Delta_b$.

To begin with, we divide the interval $[0,T]$ into $\gls{Nodes}$ sub-intervals of length $\gls{k}:=T/N$. This gives rise to $N+1$ equally-spaced nodes in the interval $[0,T]$.
For each $0 \le n \le N-1$, consider the following semi-discretisation of Problem~\ref{Pkappa}.
\begin{customprob}{$\mathcal{P}_\ell^{n+1}$}
	\label{Pkappa:seq}
	Given $u_{\ell,k}^{n} \in W_0^{1,p}(\Omega)$ and $\tilde{\Theta}_{\ell,k}^{n}\in V$, find a function $\gls{un+1}\in W_0^{1,p}(\Omega)$ and a function $\gls{Tn+1}\in V$ that satisfy the following variational equations:
	\begin{gather}
		\begin{aligned}
			&\dfrac{1}{k}\left\{|u_{\ell,k}^{n+1}|^{\alpha-2} u_{\ell,k}^{n+1} - |u_{\ell,k}^{n}|^{\alpha-2}u_{\ell,k}^{n}\right\}-\mu(\tilde{\Theta}_{\ell,k}^{n+1})\nabla \cdot \left(|\nabla u_{\ell,k}^{n+1}|^{p-2} \nabla u_{\ell,k}^{n+1}\right)
			-\dfrac{\mu(\tilde{\Theta}_{\ell,k}^{n+1})}{\ell}\{u_{\ell,k}^{n+1}\}^{-}\\
			&=\dfrac{1}{k} \int_{nk}^{(n+1)k} \tilde{a}(t) \dd t,\quad \textup{ in }W^{-1,p'}(\Omega),
		\end{aligned}\label{penalty:seq}\\[2ex]
		\begin{aligned}
			&\rho c\dfrac{\tilde{\Theta}_{\ell,k}^{n+1}-\tilde{\Theta}_{\ell,k}^{n}}{k}+\rho c (\bm{U}(0),v_z(0))\cdot\nabla\tilde{\Theta}_{\ell,k}^{n+1}+\kappa(-\Delta_b)\tilde{\Theta}_{\ell,k}^{n+1}
			-\dfrac{\{\tilde{\Theta}_{\ell,k}^{n+1}\}^{-}}{\ell}+\dfrac{\{\tilde{\Theta}_{\ell,k}^{n+1}-\Theta_{\textup{m}}\}^{+}}{\ell}\\
			&=2(\rho g C_0)^p A(\tilde{\Theta}_{\ell,k}^{n})\left(\left\{|u_{\ell,k}^{n}|^{\frac{p-1}{2p}}-z\right\}^{+}\right)^p,\quad \textup{ in }V',
		\end{aligned}\label{FD:temp}
	\end{gather}
	where $u_{\ell,k}^{0}:=u_0 \in K_{\textup{surf}}$ and $\tilde{\Theta}_{\ell,k}^{0}:=\tilde{\Theta}_0\in K_{\textup{temp}}$ are the prescribed element appearing in the initial conditions for Problem~\ref{Pkappa}.
	\bqed
\end{customprob}

The next lemma establishes a regularity result for the right-hand side of~\eqref{FD:temp}.
\begin{lemma}
	\label{lem:hlkn}
	For each integer $n=0,\dots,N-1$, define:
	\begin{equation*}
		h_{\ell,k}^n:=2(\rho g C_0)^p A(\tilde{\Theta}_{\ell,k}^{n})\left(\left\{|u_{\ell,k}^{n}|^{\frac{p-1}{2p}}-z\right\}^{+}\right)^p.
	\end{equation*}
	
	Then, it results that $h_{\ell,k}^n\in L^\infty(\Omega\times (0,L))$ and, \emph{a fortiori}, in $V'$.
\end{lemma}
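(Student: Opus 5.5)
We bound the two factors of $h_{\ell,k}^n$ separately in $L^\infty(\Omega\times(0,L))$ and then use that $L^\infty(\Omega\times(0,L))$ embeds continuously into $V'$. The whole argument rests on the two inputs of the recursion, $u_{\ell,k}^n\in W_0^{1,p}(\Omega)$ and $\tilde{\Theta}_{\ell,k}^n\in V$, which are available for every $n=0,\dots,N-1$: for $n=0$ by the choice of the initial data, and for $n\ge1$ as solutions of Problem~\ref{Pkappa:seq} at the previous step.

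\emph{The ice softness factor.} In the framework adopted here, $A(\tilde{\Theta}_{\ell,k}^n)$ is the Arrhenius-type quantity entering~\eqref{mu-def:2}, namely the maximum between $A_{\textup{min}}$ and $A_0$ times an exponential whose argument is non-positive. The non-positivity holds because it is $-\beta\sqrt{L}|\Omega|^{-1/2}$ times the integral of a negative part. That integral is finite: $\tilde{\Theta}_{\ell,k}^n\in V\subset L^2(\Omega\times(0,L))$ and $\Omega$ is bounded, so its traces at fixed $z$ are integrable for a.a.\ $z$ by Fubini. Since the exponential is evaluated on non-positive arguments, it takes values in $[0,1]$, exactly as in the proof of Lemma~\ref{lemma:exp:1}. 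Hence
\begin{equation*}
A_{\textup{min}}\le A(\tilde{\Theta}_{\ell,k}^n)\le A_0\quad\textup{ a.e. in }\Omega\times(0,L).
\end{equation*}
If one instead reads $A$ pointwise through~\eqref{arrhenius}, the same upper bound is obtained as long as the exponent is non-positive. That requires $\tilde{\Theta}\le\Theta_{\textup{m}}$, so it is safest to adopt the averaged form above, which is consistent with~\eqref{mu-def:2}.

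\emph{The geometric factor.} Since $\Omega\subset\mathbb{R}^2$ and $p>2$, Morrey's embedding (the chain $W_0^{1,p}(\Omega)\hookrightarrow\mathcal{C}^0(\overline{\Omega})$ recalled after~\eqref{alpha}) gives $u_{\ell,k}^n\in\mathcal{C}^0(\overline{\Omega})$ with $\|u_{\ell,k}^n\|_{\mathcal{C}^0(\overline{\Omega})}\le C\|u_{\ell,k}^n\|_{W_0^{1,p}(\Omega)}$. For $z\ge0$ we have pointwise
\begin{equation*}
0\le\left\{|u_{\ell,k}^n|^{\frac{p-1}{2p}}-z\right\}^+\le|u_{\ell,k}^n|^{\frac{p-1}{2p}},
\end{equation*}
so the $p$-th power is bounded by $\|u_{\ell,k}^n\|_{\mathcal{C}^0(\overline{\Omega})}^{(p-1)/2}$. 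It is measurable on $\Omega\times(0,L)$ as a composition of continuous maps.

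\emph{Conclusion.} Multiplying the two bounds gives
\begin{equation*}
\|h_{\ell,k}^n\|_{L^\infty(\Omega\times(0,L))}\le 2(\rho gC_0)^pA_0\|u_{\ell,k}^n\|_{\mathcal{C}^0(\overline{\Omega})}^{(p-1)/2}<\infty.
\end{equation*}
For the dual embedding, take any $\Xi\in V$. Since $\Omega\times(0,L)$ is bounded,
\begin{equation*}
\left|\int h_{\ell,k}^n\Xi\right|\le\|h_{\ell,k}^n\|_{L^\infty}|\Omega\times(0,L)|^{1/2}\|\Xi\|_{L^2}\le C\|h_{\ell,k}^n\|_{L^\infty}\|\Xi\|_V,
\end{equation*}
so $h_{\ell,k}^n\in V'$; Lemma~\ref{PFV} even lets one use the gradient seminorm on the right.

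The only delicate point is the boundedness of $A(\tilde{\Theta}_{\ell,k}^n)$. The penalised iterates $\tilde{\Theta}_{\ell,k}^n$ need not lie in $[0,\Theta_{\textup{m}}]$, so the pointwise exponential could a priori blow up. The averaged, negative-part form of the exponent in~\eqref{mu-def:2} is exactly what keeps the argument non-positive, and I would state this identification of $A$ explicitly at the start of the proof.
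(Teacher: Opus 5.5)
Your proposal is correct in structure and matches the paper's proof: you bound the geometric factor through $W_0^{1,p}(\Omega)\hookrightarrow\mathcal{C}^0(\overline{\Omega})$ (the paper phrases this as continuity on $\overline{\Omega}\times[0,L]$), you bound the softness factor uniformly, and you embed the result into $V'$. Your $V'$ step, pairing against $\|\Xi\|_{L^2}$, is simpler than the paper's route through $V\hookrightarrow L^6$ and $L^{6/5}\hookrightarrow V'$, and it is fully sufficient.

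The real difference is the softness factor. The paper just cites~\eqref{arrhenius}. Read literally, that formula bounds $A$ only if $\tilde{\Theta}_{\ell,k}^n$ is bounded \emph{below}, and nothing gives such a bound for penalised iterates that merely lie in $V$. So you are right to flag this. Your aside has the sign reversed, though: $-\beta(\tilde{\Theta}-\Theta_{\textup{m}})\le 0$ holds iff $\tilde{\Theta}\ge\Theta_{\textup{m}}$, not $\le$.

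Importing the $x$-averaged exponent of~\eqref{mu-def:2} is not the right repair. That averaging is introduced only for $\mu$. Using it in the thermal source would make $A$ a function of $z$ alone, which changes $h_{\ell,k}^n$.

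What the paper actually uses for the thermal source is the pointwise form $\max\{A_0\exp(-\beta\{\tilde{\Theta}-\Theta_{\textup{m}}\}^{-}),A_{\textup{min}}\}$. This is the Nemytskii operator~\eqref{arrhenius-2}, used in Lemma~\ref{A-mu-properties}$(c)$ and Theorem~\ref{rhs-conv}. Your non-positive-exponent argument applies to this form verbatim. It gives $A_{\textup{min}}\le A(\tilde{\Theta}_{\ell,k}^n)\le A_0$ a.e. with no Fubini step, and with that substitution your proof is complete.
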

\begin{proof}
	Since $u_{\ell,k}^{n}\in W^{1,p}_0(\Omega)$, $\Omega$ is a Lipschitz domain in $\mathbb{R}^2$ and $p>2$, the continuity of the positive part (Lemma~\ref{lem:1} and Remark~\ref{rem:neg}) and the continuity of the mappings $t\in\mathbb{R}^+\cup\{0\}\mapsto t^\frac{p-1}{2p}$ and $t\in\mathbb{R}^+\cup\{0\}\mapsto t^p$ give:
	\begin{equation*}
		\left(\left\{|u_{\ell,k}^{n}|^\frac{p-1}{2p}-z\right\}^{+}\right)^p\in\mathcal{C}^0(\overline{\Omega}\times[0,L]).
	\end{equation*}
	
	The definition of $A$ in~\eqref{arrhenius} in turn gives that $A(\tilde{\Theta}_{\ell,k}^{n})\in L^\infty(\Omega\times (0,L))$, thus verifying that $h_{\ell,k}^{n}\in L^\infty(\Omega\times (0,L))$. Thanks to the Sobolev embedding theorem (cf., e.g., Theorem~8.4-1 in~\cite{Ciarlet2025}) and to the density of $V$ in $L^6(\Omega\times (0,L))$, we obtain that $L^{6/5}(\Omega\times (0,L))\hookrightarrow V'$. The fact that $h_{\ell,k}^{n}\in L^\infty(\Omega\times (0,L))$ is sufficient to deduce that $h_{\ell,k}^{n}\in V'$.
\end{proof}

The following existence-and-uniqueness result can be established.
\begin{theorem}
	\label{thm:1}
	Let $T>0$, $\Omega \subset \mathbb{R}^2$ and $p$ be as in Section~\ref{Sec:2} and let $\alpha$ be as in~\eqref{alpha}. Let $\ell>0$ be given, let $N \ge 1$ be an integer, and define $k:=T/N$. Assume that~$(H\ref{H1})$--$(H\ref{H4})$ hold and that the positive constant $L$ satisfies the following upper bound:
	\begin{equation}
		\label{L}
		L\le\dfrac{\sqrt{2}\kappa}{4\rho c\|(\bm{U}(0),v_z(0))\|_{\bm{\mathcal{C}}^0(\overline{\Omega}\times[0,L])}}.
	\end{equation}
	
	Then, for each integer $0 \le n \le N-1$, it results that Problem~\ref{Pkappa:seq} admits a unique solution $(u_{\ell,k}^{n+1},\tilde{\Theta}_{\ell,k}^{n+1}) \in W_0^{1,p}(\Omega)\times V$.
\end{theorem}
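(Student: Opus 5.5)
Observe first the structure of Problem~\ref{Pkappa:seq}: the temperature equation~\eqref{FD:temp} involves on its right-hand side only the data of step $n$, namely $h_{\ell,k}^n$ from Lemma~\ref{lem:hlkn}, so it decouples from the equation for $u$. The plan is therefore sequential. We first solve~\eqref{FD:temp} for $\tilde{\Theta}_{\ell,k}^{n+1}\in V$. With $\tilde{\Theta}_{\ell,k}^{n+1}$ fixed, the scalar $\mu(\tilde{\Theta}_{\ell,k}^{n+1})$ defined in~\eqref{mu-def:2} is a constant lying in $[\mu_1,\mu_2]$, and we then solve~\eqref{penalty:seq} for $u_{\ell,k}^{n+1}\in W_0^{1,p}(\Omega)$. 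Both steps rely on the theory of strictly monotone, hemicontinuous, coercive operators (Minty--Browder). Uniqueness follows from strict monotonicity, and the full statement follows by induction on $n$.

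\emph{Temperature step.} We define $B:V\to V'$ by
\[
\langle B\Theta,\Xi\rangle:=\tfrac{\rho c}{k}\int\Theta\Xi+\rho c\int(\bm{U}(0),v_z(0))\cdot\nabla\Theta\,\Xi+\kappa\langle-\Delta_b\Theta,\Xi\rangle-\tfrac1\ell\int\{\Theta\}^{-}\Xi+\tfrac1\ell\int\{\Theta-\Theta_{\textup{m}}\}^{+}\Xi,
\]
and we move the affine part $\int q_{\textup{geo}}^\perp\Xi$ of $-\Delta_b$ into the right-hand side, which then lies in $V'$ by Lemma~\ref{lem:hlkn} together with a trace bound for $q_{\textup{geo}}^\perp\in L^2(\Omega)$. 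By Lemma~\ref{lem:1} and Remark~\ref{rem:neg}, both penalty terms are monotone and Lipschitz. The delicate point, and the main obstacle, is the advection term, which is not monotone. I would control it using Cauchy--Schwarz and Lemma~\ref{PFV}:
\[
\Big|\rho c\int(\bm{U}(0),v_z(0))\cdot\nabla w\,w\Big|\le\rho c\|(\bm{U}(0),v_z(0))\|_{\bm{\mathcal{C}}^0}\sqrt2L\|\nabla w\|^2_{\bm{L}^2}\le\tfrac{\kappa}{2}\|\nabla w\|_{\bm{L}^2}^2,
\]
where the last inequality is exactly what~\eqref{L} provides (up to tracking constants). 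Hence $\langle B\Theta-B\Psi,\Theta-\Psi\rangle\ge\frac\kappa2\|\nabla(\Theta-\Psi)\|^2_{\bm{L}^2}$, which gives strict monotonicity and, with $\Psi=0$, coercivity once more by Lemma~\ref{PFV}. Hemicontinuity holds because every term is Lipschitz in $\Theta$. Minty--Browder then yields existence and uniqueness of $\tilde{\Theta}_{\ell,k}^{n+1}$.

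\emph{Surface step.} For fixed $\mu:=\mu(\tilde{\Theta}_{\ell,k}^{n+1})\ge\mu_1>0$, we consider $\mathcal{A}:W_0^{1,p}(\Omega)\to W^{-1,p'}(\Omega)$ given by
\[
\mathcal{A}u:=\tfrac1k|u|^{\alpha-2}u-\mu\Delta_pu-\tfrac{\mu}{\ell}\{u\}^{-}.
\]
The term $|u|^{\alpha-2}u$ maps $W_0^{1,p}(\Omega)$ continuously into $W^{-1,p'}(\Omega)$ via $W_0^{1,p}(\Omega)\hookrightarrow\mathcal{C}^0(\overline\Omega)$, and it is monotone by Lemma~\ref{lem:7}. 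The $p$-Laplacian is strictly monotone, hemicontinuous and coercive, with $\langle-\Delta_pu,u\rangle=\|\nabla u\|_{L^p}^p$. The penalty term is monotone by Lemma~\ref{lem:1}. The right-hand side
\[
\tfrac1k|u^n_{\ell,k}|^{\alpha-2}u^n_{\ell,k}+\tfrac1k\int_{nk}^{(n+1)k}\tilde a\dd t
\]
belongs to $W^{-1,p'}(\Omega)$ by $(H\ref{H3})$. Minty--Browder gives existence, and strict monotonicity of $-\Delta_p$ (in the gradient) together with the Poincaré inequality gives uniqueness. Since neither step feeds back into the other at the same time level, this completes the argument at step $n+1$.
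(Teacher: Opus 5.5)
Your proposal is correct and follows the paper's proof essentially step for step. The thermal equation~\eqref{FD:temp} is solved first by Minty--Browder, with the non-monotone advection term absorbed into half of the diffusion via Lemma~\ref{PFV} and~\eqref{L}; then~\eqref{penalty:seq} is solved with the frozen constant $\mu(\tilde{\Theta}_{\ell,k}^{n+1})\in[\mu_1,\mu_2]$ by the same monotonicity argument. The only cosmetic difference is that you move the geothermal trace term to the right-hand side and read off coercivity from the monotonicity estimate against $\Psi=0$, whereas the paper keeps that term in the operator and absorbs it with Young's inequality and the trace constant.
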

\begin{proof}
	The proof is divided into two parts, numbered $(i)$ and $(ii)$.
	
	$(i)$ \emph{The variational equations~\eqref{FD:temp} admit a unique solution $\tilde{\Theta}_{\ell,k}^{n+1}\in V$.}
	Consider the operator $\tilde{T}_\ell:V\to V'$ defined by:
	\begin{equation*}
		\tilde{T}_\ell\Xi:=\dfrac{\rho c}{k} \Xi +\rho c (\bm{U}(0),v_z(0))\cdot\nabla\Xi+\kappa(-\Delta_b)\Xi-\dfrac{\{\Xi\}^{-}}{\ell}+\dfrac{\{\Xi-\Theta_{\textup{m}}\}^{+}}{\ell}.
	\end{equation*}
	
	The operator $\tilde{T}_\ell$ is hemi-continuous, as each of its terms is hemi-continuous; note in particular that the advection term is linear, the penalty terms are Lipschitz continuous by Lemma~\ref{lem:1} and Remark~\ref{rem:neg}.
	
	Since the advection term is not, in general, monotone, we discuss the strict monotonicity of the operator $\tilde{T}_\ell$ separately. Given $\Theta,\Xi\in V$, an application of Lemma~\ref{lem:1}, Remark~\ref{rem:neg}, Lemma~\ref{PFV} and~\eqref{L} give:
	\begin{equation*}
		\begin{aligned}
			&\langle\tilde{T}_\ell\Theta-\tilde{T}_\ell\Xi,\Theta-\Xi\rangle_{V',V}=\dfrac{\rho c}{k}\int_{\Omega\times (0,L)}|\Theta-\Xi|^2\dd z\dd x\\
			&\quad+\rho c\int_{\Omega\times(0,L)}((\bm{U}(0),v_z(0))\cdot\nabla(\Theta-\Xi))(\Theta-\Xi)\dd z\dd x\\
			&\quad+\kappa\int_{\Omega\times (0,L)}|\nabla(\Theta-\Xi)|^2\dd z\dd x+\dfrac{1}{\ell}\int_{\Omega\times (0,L)}\left((-\{\Theta\}^{-})-(-\{\Xi\}^{-})\right)(\Theta-\Xi)\dd z\dd x\\
			&\quad+\dfrac{1}{\ell}\int_{\Omega\times (0,L)}\left(\{\Theta-\Theta_{\textup{m}}\}^{+}-\{\Xi-\Theta_{\textup{m}}\}^{+}\right)(\Theta-\Xi)\dd z\dd x\\
			&\ge\dfrac{\rho c}{k}\|\Theta-\Xi\|_{L^2(\Omega\times (0,L))}^2+\left(\kappa-\sqrt{2}\rho c L\|(\bm{U}(0),v_z(0))\|_{\bm{\mathcal{C}}^{0}(\overline{\Omega})}\right)\|\nabla(\Theta-\Xi)\|_{\bm{L}^2(\Omega\times(0,L))}^2\\
			&\ge\min\left\{\dfrac{\rho c}{k},\dfrac{\kappa}{2}\right\}\|\Theta-\Xi\|_{V}^2.
		\end{aligned}
	\end{equation*}
	
	Let us show that the operator $\tilde{T}_\ell$ is coercive. An application of Young's inequality~\cite{Young1912} and the positiveness of $\Theta_{\textup{m}}$ give:
	\begin{equation}
		\label{ex:un-step-1}
		\begin{aligned}
			&\langle \tilde{T}_\ell \Xi, \Xi \rangle_{V',V}\ge \dfrac{\rho c}{k}\|\Xi\|_{L^2(\Omega\times (0,L))}^2+ \rho c\int_{\Omega\times (0,L)}((\bm{U}(0),v_z(0))\cdot\nabla\Xi)\Xi\dd x\dd z\\
			&\quad+\kappa\int_{\Omega\times (0,L)}|\nabla\Xi|^2\dd x \dd z+\int_{\Omega}q_{\textup{geo}}^\perp\Xi\dd x+\dfrac{\|\{\Xi\}^{-}\|_{L^2(\Omega\times (0,L))}^2}{\ell}+\dfrac{\|\{\Xi-\Theta_{\textup{m}}\}^{+}\|_{L^2(\Omega\times (0,L))}^2}{\ell}\\
			&\ge \dfrac{\rho c}{k} \|\Xi\|_{L^2(\Omega\times (0,L))}^2+\rho c \int_{\Omega\times (0,L)}((\bm{U}(0),v_z(0))\cdot\nabla\Xi)\Xi\dd x\dd z\\
			&\quad+\kappa\int_{\Omega\times (0,L)}|\nabla\Xi|^2\dd x \dd z+\dfrac{\|\{\Xi\}^{-}\|_{L^2(\Omega\times (0,L))}^2}{\ell}+\dfrac{\|\{\Xi-\Theta_{\textup{m}}\}^{+}\|_{L^2(\Omega\times (0,L))}^2}{\ell}\\
			&\quad-\dfrac{\hat{C}_{\textup{tr}}^2}{2\min\left\{\dfrac{\rho c}{k},\dfrac{\kappa}{2}\right\}}\int_{\Omega}|q_{\textup{geo}}^\perp|^2\dd x-\dfrac{\min\left\{\dfrac{\rho c}{k},\dfrac{\kappa}{2}\right\}}{2}\|\Xi\|_{V}^2,
		\end{aligned}
	\end{equation}
	where $\gls{Ctr}>0$ is the \emph{continuity constant of the trace operator $\textup{tr}:V\to\Omega\times\{0\}$} (cf., e.g., Chapter~18 in~\cite{Leoni2017}).
	
	An application of the Cauchy--Schwarz inequality, Lemma~\ref{PFV} and~\eqref{L} gives:
	\begin{equation}
		\label{ex:un-step-2}
		\begin{aligned}
			&\rho c\int_{\Omega\times (0,L)}((\bm{U}(0),v_z(0))\cdot\nabla\Xi)\Xi\dd x\dd z\\
			&\ge-\sqrt{2}L\rho c\|(\bm{U}(0),v_z(0))\|_{\bm{\mathcal{C}}^0(\overline{\Omega}\times[0,L])}\|\nabla\Xi\|_{\bm{L}^2(\Omega\times (0,L))}^2
			\ge-\dfrac{\kappa}{2}\|\nabla\Xi\|_{\bm{L}^2(\Omega\times (0,L))}^2.
		\end{aligned}
	\end{equation}
	
	Combining~\eqref{ex:un-step-1} and~\eqref{ex:un-step-2} gives
	\begin{equation*}
		\langle\tilde{T}_\ell \Xi, \Xi \rangle_{V',V}\ge\dfrac{1}{2}\min\left\{\dfrac{\rho c}{k},\dfrac{\kappa}{2}\right\}\|\Xi\|_{V}^2-\dfrac{\hat{C}_{\textup{tr}}^2}{2\min\left\{\dfrac{\rho c}{k},\dfrac{\kappa}{2}\right\}}\int_{\Omega}|q_{\textup{geo}}^\perp|^2\dd x,
	\end{equation*}
	so that $\langle \tilde{T}_\ell \Xi, \Xi \rangle_{V',V}/\|\Xi\|_{V}\to\infty$ as $\|\Xi\|_{V}\to\infty$ and the sought coerciveness is established. The latter conclusion and the fact that $h_{\ell,k}^{n}\in V'$ (Lemma~\ref{lem:hlkn}) put us in position to apply the Minty--Browder theorem (cf., e.g., Theorem~12.5-3 in~\cite{Ciarlet2025}) so as to infer the existence of a unique element $\tilde{\Theta}_{\ell,k}^{n+1}\in V$ satisfying~\eqref{FD:temp}.
	
	$(ii)$ \emph{The variational equations~\eqref{penalty:seq} admit a unique solution $u_{\ell,k}^{n+1}\in W^{1,p}_0(\Omega)$.} Observe that, thanks to item~$(i)$, the existence and uniqueness for $\tilde{\Theta}_{\ell,k}^{n+1}$ has already been established.
	This puts us in position to consider the operator $A_\ell :W_0^{1,p}(\Omega) \to W^{-1,p'}(\Omega)$ defined by:
	\begin{equation*}
		\label{Akappa}
		A_\ell(v):=\dfrac{|v|^{\alpha-2} v}{k} -\mu(\tilde{\Theta}_{\ell,k}^{n+1})\nabla \cdot \left(|\nabla v|^{p-2} \nabla v\right)-\dfrac{\mu(\tilde{\Theta}_{\ell,k}^{n+1})}{\ell}\{v\}^{-},\quad\textup{ for all }v\in W_0^{1,p}(\Omega).
	\end{equation*}
	
	The operator $A_\ell$ is hemi-continuous, as each of its terms is hemi-continuous. By Lemma~\ref{lem:1} and Lemma~\ref{lem:7}, the operator $A_\ell$ is strictly monotone.
	Finally, an application of the Poincar\'e--Friedrichs inequality (cf., e.g., Theorem~8.3-3$(a)$ in~\cite{Ciarlet2025}) and the monotonicity of the negative part operator (Lemma~\ref{lem:1}) give:
	\begin{equation*}
		\dfrac{\langle A_\ell v, v \rangle_{W^{-1,p'}(\Omega), W_0^{1,p}(\Omega)}}{\|v\|_{W^{1,p}(\Omega)}}\ge\dfrac{k^{-1}\|v\|_{L^\alpha(\Omega)}^\alpha + \mu_1 \|\nabla v\|_{\bm{L}^p(\Omega)}^p}{\|v\|_{W^{1,p}(\Omega)}}\ge \dfrac{\mu_1}{\gls{cP}^p} \|v\|_{W^{1,p}(\Omega)}^{p-1}=\dfrac{\mu_1 p}{(\textup{diam }\Omega)^p}\|v\|_{W^{1,p}(\Omega)}^{p-1}.
	\end{equation*}
	
	Observing that the term on the right diverges as $\|v\|_{W^{1,p}(\Omega)} \to \infty$, we obtain that the operator $A_\ell$ is coercive.
	An application of the Minty--Browder theorem (cf., e.g., Theorem~12.5-3 in~\cite{Ciarlet2025}) ensures that the numerical scheme in~\eqref{penalty:seq} admits a unique solution $u_{\ell,k}^{n+1} \in W_0^{1,p}(\Omega)$ for~\eqref{penalty:seq}.
	
	The proof is complete.
\end{proof}

Given an integer $N\in\mathbb{N}$, and recalling that $k=T/N$, define the mapping $\gls{right-shift-u}:(0,T) \to W_0^{1,p}(\Omega)$ by
\begin{equation}
	\label{Pil}
	\Pi_k \bm{u}_{\ell,k}(t):=u_{\ell,k}^{n+1}, \quad\textup{ if } n k < t \le (n+1) k,
\end{equation}
and define the mapping $\gls{right-shift-T}:(0,T) \to V$ by:
\begin{equation}
	\label{PilT}
	\Pi_k \tilde{\bm{\Theta}}_{\ell,k}(t):=\tilde{\Theta}_{\ell,k}^{n+1}, \quad\textup{ if } n k < t \le (n+1) k.
\end{equation}

It is also helpful to consider the left shift by $k$ of the mapping $\Pi_k\bm{u}_{\ell,k}$, that we denote and define by
\begin{equation}
	\label{PilL}
	\gls{left-shift-u}(t):=\Pi_k\bm{u}_{\ell,k}(t-k), \quad\textup{ if } n k < t \le (n+1) k,
\end{equation}
where we let $\Pi_k\bm{u}_{\ell,k}(t)=u_0$ if $t\le 0$.

In the same fashion, we denote and define the \emph{left shift by $k$ of the mapping $\Pi_k\tilde{\bm{\Theta}}_{\ell,k}$} by
\begin{equation}
	\label{PilTL}
	\gls{left-shift-T}(t):=\Pi_k\tilde{\bm{\Theta}}_{\ell,k}(t-k), \quad\textup{ if } n k < t \le (n+1) k,
\end{equation}
where we let $\Pi_k\tilde{\bm{\Theta}}_{\ell,k}(t)=\tilde{\Theta}_0$ if $t\le 0$.

Given $\bm{v}_{\ell,k}=\{v_{\ell,k}^{n}\}_{n=0}^{N}$, the function $\gls{Dk}(\Pi_k \bm{v}_{\ell,k}):(0,T) \to W_0^{1,p}(\Omega)$ is defined by
\begin{equation*}
	\label{finite-difference}
	D_k(\Pi_k \bm{v}_{\ell,k})(t):=\dfrac{v_{\ell,k}^{n+1}-v_{\ell,k}^{n}}{k},\quad\textup{ for all } n k < t \le (n+1)k, \quad 0 \le n \le N-1.
\end{equation*}

We now recall a variant of the Aubin--Lions--Simon compactness theorem, established by Dreher \& J\"{u}ngel~\cite{DJ12}, which is helpful for studying finite-difference equations.
\begin{lemma}[Theorem~1 in~\cite{DJ12}]
	\label{ALS:dis}
	Let $X_0$, $X_1$ and $X_2$ be Banach spaces such that $X_0\hookrightarrow\hookrightarrow X_1\hookrightarrow X_2$, let $0<T<\infty$ and let $N\ge 1$ be an arbitrary integer. Let either $1\le p<\infty$ and $r=1$, or $p=\infty$ and $r>1$. Let $k:=T/N$ and consider the vector $\bm{v}_k:=\{v_k^{n}\}_{n=0}^{N}$ of elements of $X_0$, and define the function $\Pi_k\bm{v}_k:(0,T)\to X_0$ by:
	\begin{equation*}
		(\Pi_k\bm{v}_k)(t):=v_k^{n+1},\quad\textup{ for a.a. }nk<t\le(n+1)k.
	\end{equation*}
	
	Assume that there exists a constant $C>0$ independent of $k$ such that:
	\begin{equation*}
		k\sum_{n=0}^{N-1}\left\|\dfrac{v_k^{n+1}-v_k^{n}}{k}\right\|_{X_2}^r+k\sum_{n=0}^{N}\|v_k^{n}\|_{X_0}^p\le C.
	\end{equation*}
	
	Then, it results that:
	\begin{itemize}
		\item[$(a)$] If $1\le p<\infty$, the sequence $\{\Pi_k\bm{v}_k\}_{N=1}^\infty$ is relatively compact in $L^p(0,T;X_1)$;
		\item[$(b)$] If $p=\infty$, the sequence $\{\Pi_k\bm{v}_k\}_{N=1}^\infty$ is relatively compact in $L^q(0,T;X_1)$ for each $1\le q<\infty$ and the limit belongs to $\mathcal{C}^0([0,T];X_1)$.
	\end{itemize}
	\qed
\end{lemma}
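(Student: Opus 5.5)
The plan is to reduce the statement to the classical Aubin--Lions--Simon compactness criterion, Simon's Theorem~5 in J.~Simon, \emph{Compact sets in the space $L^p(0,T;B)$} (not among the references above, so it is assumed here). That criterion says: if $F$ is bounded in $L^p(0,T;X_0)$ with $X_0\hookrightarrow\hookrightarrow X_1\hookrightarrow X_2$, and
\[
\|\tau_h f-f\|_{L^p(0,T-h;X_2)}\to 0 \quad\textup{as } h\to 0, \textup{ uniformly for } f\in F,
\]
where $\tau_h f(t)=f(t+h)$, then $F$ is relatively compact in $L^p(0,T;X_1)$ for $p<\infty$. For $p=\infty$ one gets relative compactness in every $L^q(0,T;X_1)$ with $q<\infty$. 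The $L^p(0,T;X_0)$ bound is immediate, since $\|\Pi_k\bm v_k\|^p_{L^p(0,T;X_0)}=k\sum_{n=1}^N\|v_k^n\|^p_{X_0}\le C$, so everything rests on translation estimates for the step function $\Pi_k\bm v_k$ in $X_2$.

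\emph{Case $1\le p<\infty$, $r=1$.} The hypothesis gives $\sum_{n=0}^{N-1}\|v_k^{n+1}-v_k^n\|_{X_2}\le C$, so $\Pi_k\bm v_k$ has uniformly bounded total variation in $X_2$. I first derive a uniform $L^\infty(0,T;X_2)$ bound. From $k\sum_n\|v_k^n\|_{X_0}^p\le C$ there is an index $n_0$ with $\|v_k^{n_0}\|_{X_2}\le c\|v_k^{n_0}\|_{X_0}\le c(C/T)^{1/p}$. Telescoping then gives $\sup_n\|v_k^n\|_{X_2}\le M$, with $M$ independent of $k$.

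Next I estimate translates. For $0<h<T$, the function $\tau_h\Pi_k\bm v_k-\Pi_k\bm v_k$ is nonzero only on intervals of length at most $h$ preceding each node whose jump lies within distance $h$. Hence
\[
\int_0^{T-h}\|\tau_h\Pi_k\bm v_k-\Pi_k\bm v_k\|_{X_2}\dd t\le h\sum_n\|v_k^{n+1}-v_k^n\|_{X_2}\le Ch.
\]
Interpolating with the $L^\infty$ bound, $\|\cdot\|^p_{X_2}\le(2M)^{p-1}\|\cdot\|_{X_2}$, yields $\|\tau_h\Pi_k\bm v_k-\Pi_k\bm v_k\|^p_{L^p(0,T-h;X_2)}\le(2M)^{p-1}Ch\to0$ uniformly in $k$. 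Simon's theorem then gives~$(a)$.

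\emph{Case $p=\infty$, $r>1$.} Here $\Pi_k\bm v_k$ is bounded in $L^\infty(0,T;X_0)$. By H\"older's inequality, for integers $m<n$,
\[
\|v_k^n-v_k^m\|_{X_2}\le k\sum_{j=m}^{n-1}\|D_k\|_{X_2}\le \bigl((n-m)k\bigr)^{1/r'}C^{1/r}.
\]
So $\|\Pi_k\bm v_k(t+h)-\Pi_k\bm v_k(t)\|_{X_2}\le C^{1/r}(h+k)^{1/r'}$. For $h\ge k$ this is $\lesssim h^{1/r'}$. For $h<k$, the translate differs from $\Pi_k\bm v_k$ only on a set of measure at most $hN$, where it equals a single jump of size at most $C^{1/r}k^{1/r'}$. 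In both cases the $L^q(0,T-h;X_2)$ norm of the translation difference tends to zero uniformly in $k$, and Simon's theorem gives relative compactness in $L^q(0,T;X_1)$ for each $q<\infty$.

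For the continuity of the limit, I compare $\Pi_k\bm v_k$ with its piecewise-linear interpolant $\Lambda\bm v_k$. Their difference is at most $C^{1/r}k^{1/r'}$ in $L^\infty(0,T;X_2)$, and $\Lambda\bm v_k$ is uniformly H\"older of exponent $1/r'$ with values in $X_2$. Hence any limit $v$ satisfies $\|v(t)-v(s)\|_{X_2}\le C^{1/r}|t-s|^{1/r'}$ a.e.; together with $v\in L^\infty(0,T;X_0)$, Ehrling's lemma $\|w\|_{X_1}\le\varepsilon\|w\|_{X_0}+C_\varepsilon\|w\|_{X_2}$ upgrades this to $v\in\mathcal C^0([0,T];X_1)$.

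The main obstacle is the case $p<\infty$, $r=1$. The hypothesis gives only $L^1$-in-time control of the discrete derivative, while Simon's criterion requires translates small in $L^p(X_2)$. The step that resolves this is the uniform $L^\infty(X_2)$ bound obtained from the BV estimate plus one controlled node value; if that interpolation argument fails, the fallback would be proving compactness in $L^1(0,T;X_1)$ and then upgrading to $L^p$ via a uniform integrability argument.
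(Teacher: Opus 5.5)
The paper gives no proof of this lemma. It is imported from Dreher and J\"ungel~\cite{DJ12}, so your proposal supplies an argument the paper leaves to the literature.

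Your route is correct and is the standard mechanism behind discrete Aubin--Lions lemmas. You control time translates of the step function in $X_2$ uniformly, apply Simon's translation criterion, and use a H\"older bound on the piecewise-linear interpolant for continuity of the limit. In case $(a)$ the two decisive observations are right:
\begin{itemize}
\item the uniform $L^\infty(0,T;X_2)$ bound, from the total-variation bound plus a node $n_0$ with $\|v_k^{n_0}\|_{X_0}^p\le C/T$;
\item the interpolation $\|w\|_{X_2}^p\le(2M)^{p-1}\|w\|_{X_2}$, which turns the $O(h)$ translation estimate in $L^1(0,T-h;X_2)$ into one in $L^p(0,T-h;X_2)$.
\end{itemize}
So the uniform-integrability fallback is not needed.

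The compactness half of $(b)$ is in fact a corollary of $(a)$ with $p=q$. Set $M_0:=\sup_{N,n}\|v_k^n\|_{X_0}$. Then $k\sum_{n=0}^N\|v_k^n\|_{X_0}^q\le 2TM_0^q$, and H\"older gives $\sum_n\|v_k^{n+1}-v_k^n\|_{X_2}\le T^{1/r'}C^{1/r}$. Hence your split $h<k$ versus $h\ge k$ is unnecessary, and $r>1$ is used only for continuity of the limit.

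Compared with the paper's bare citation, your argument makes the lemma self-contained, modulo Simon's theorem, which is equally external to the paper's bibliography.

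One step needs repair. For a general Banach space $X_0$ you cannot claim $v\in L^\infty(0,T;X_0)$ for the limit, because the $X_1$-limit of an $X_0$-bounded sequence need not lie in $X_0$. For example, take $X_0=\mathcal{C}^1([0,1])$, $X_1=X_2=\mathcal{C}^0([0,1])$, and time-independent data $v_k^n=g_N$ with $\|g_N\|_{\mathcal{C}^1([0,1])}\le 2$ and $g_N\to|x-\frac12|$ uniformly. The fix is to apply Ehrling's inequality to the approximants before passing to the limit:
\[
\|\Pi_k\bm{v}_k(t)-\Pi_k\bm{v}_k(s)\|_{X_1}\le 2\varepsilon M_0+C_\varepsilon C^{1/r}(|t-s|+k)^{1/r'}.
\]
Letting $N\to\infty$ along a subsequence converging a.e.\ in $X_1$ shows that $v$ is uniformly continuous into $X_1$ on a set of full measure. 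Hence $v$ has a representative in $\mathcal{C}^0([0,T];X_1)$. In the paper's own application $X_0=V$ is a Hilbert space, so there your claim does hold by weak-star compactness.
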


We also provide the statement for the discrete version of Lemma~\ref{Dub}, originally established by P.-A. Raviart in~\cite{Raviart1967} and then refined by Chen, J\"{u}ngel \& Liu in~\cite{CJL14}.
\begin{lemma}[Lemma~1.4 of~\cite{Raviart1970} and Theorem~2 in~\cite{CJL14}]
	\label{Dub:dis}
	Let $A_0$ and $A_1$ be Banach spaces such that $A_0 \hookrightarrow A_1$, let $0<T<\infty$ and let $N\ge 1$ be an arbitrary integer.
	Let $S \subset A_0$ be such that $\lambda S \subset S$, for all $\lambda \in \mathbb{R}$.
	Assume that the set $S$ is endowed with the definite homogeneous gauge $M:S\to \mathbb{R}$, having the following properties:
	\begin{itemize}
		\item $M(v) \ge 0$, for all $v \in S$,
		\item $M(\lambda v)=|\lambda| M(v)$, for all $v \in S$ and all $\lambda \in \mathbb{R}$.
	\end{itemize}
	
	Assume that the set $\mathscr{M}:=\{v \in S;M(v)\le 1\}$ is relatively compact in $A_0$.
	Let $k:=T/N$, and let $\bm{v}_k:=\{v_k^{n}\}_{n=0}^{N}$ be a vector of elements in $S$. Assume that there exists a constant $C>0$ independent of $N$ such that
	$$
	k\sum_{n=0}^{N} [M(v_k^{n})]^{q_0} \le C,
	$$
	and
	$$
	k\sum_{n=0}^{N-1}\left\|\dfrac{v_k^{n+1}-v_k^{n}}{k}\right\|_{A_1}^{q_1} \le C,
	$$
	where $1 \le q_0 \le \infty$ and $1 \le q_1 \le \infty$, and the pairs $(q_0,q_1)=(1,\infty)$ and $(q_0,q_1)=(\infty,1)$ cannot be attained. Define the function $\Pi_k\bm{v}_k:(0,T)\to A_0$ by:
	\begin{equation*}
		(\Pi_k\bm{v}_k)(t):=v_k^{n+1},\quad\textup{ for a.a. }nk<t\le(n+1)k.
	\end{equation*}
	
	Then, it results that:
	\begin{itemize}
		\item[$(a)$] If $1\le q_0<\infty$, the sequence $\{\Pi_k\bm{v}_k\}_{N=1}^\infty$ is relatively compact in $L^{q_0}(0,T;A_0)$;
		\item[$(b)$] If $q_0=\infty$, the sequence $\{\Pi_k\bm{v}_k\}_{N=1}^\infty$ is relatively compact in $L^q(0,T;A_0)$ for each $1\le q<\infty$ and the limit belongs to $\mathcal{C}^0([0,T];A_0)$.
	\end{itemize}
	\qed
\end{lemma}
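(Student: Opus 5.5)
The plan is to reduce the statement to a pointwise-compactness argument in $A_1$, and then to upgrade the convergence to $A_0$ by means of the gauge $M$. Since $S$ is only a cone and not a linear space, I will avoid interpolating between $v_k^n$ and $v_k^{n+1}$ (a convex combination need not lie in $S$) and work directly with the step functions $\Pi_k\bm v_k$.

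\emph{Step 1: a gauge bound and an Ehrling-type inequality.} Since $\mathscr M$ is relatively compact in $A_0$, it is bounded, so there is $C_M$ with $\|v\|_{A_0}\le C_M M(v)$ for all $v\in S$; this uses homogeneity, after normalising $v/M(v)$. Arguing by contradiction with normalised sequences in $\mathscr M$, together with the injectivity of $A_0\hookrightarrow A_1$, I would derive: for every $\varepsilon>0$ there is $C_\varepsilon$ such that
\begin{equation*}
\|v\|_{A_0}\le\varepsilon M(v)+C_\varepsilon\|v\|_{A_1},\qquad v\in S.
\end{equation*}

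\emph{Step 2: compactness in $A_1$ and a.e. convergence in $A_0$.} The difference-quotient bound in $A_1$ controls time translates. For a shift $h=jk$, telescoping gives
\begin{equation*}
\int_0^{T-h}\|\Pi_k\bm v_k(t+h)-\Pi_k\bm v_k(t)\|_{A_1}\,\mathrm{d}t\le C\,h^{\theta},
\end{equation*}
with $\theta>0$ depending on $q_1$; this step requires $q_1>1$ or $q_0<\infty$, which is exactly why the two excluded pairs are excluded. Moreover, the time averages $\int_{t_1}^{t_2}\Pi_k\bm v_k\,\mathrm{d}t$ are sums of elements of $S$ with $M$ bounded in $L^{q_0}$; these are relatively compact in $A_1$ because each summand lies in a scaled copy of $\overline{\mathscr M}$, compact in $A_0$. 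By Simon's criterion one then gets relative compactness of $\{\Pi_k\bm v_k\}$ in $L^1(0,T;A_1)$. Passing to a subsequence that converges for a.a.\ $t$ in $A_1$, at every $t$ where $\liminf M(\Pi_k\bm v_k(t))<\infty$ (a.e., by Fatou), the values lie in a compact subset of $A_0$ along a further subsequence. Hence the $A_1$-limit is also the $A_0$-limit at that point, by injectivity. The subsequence issue, which depends on $t$, is handled by applying the Ehrling inequality of Step 1 with the bound on $M$ instead of extracting.

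\emph{Step 3: convergence in $L^{q_0}(0,T;A_0)$, the main obstacle.} For $q_0<\infty$, I would combine the Ehrling inequality with the $L^{q_0}$ bound on $M$. For differences of two members $v,w$ of the sequence, I would write $\|v-w\|_{A_0}\le\|v-g\|_{A_0}+\|g-w\|_{A_0}$ through the common a.e.\ limit $g$. I would then control $\|v-g\|_{A_0}^{q_0}$ via Step 1 applied to $v$ and $g$ separately, on the set where $M$ exceeds a threshold. The genuine difficulty is equi-integrability of $\|\Pi_k\bm v_k\|_{A_0}^{q_0}$: an $L^{q_0}$ bound on $M$ alone does not give it. The remedy I would try first is to use the time-translate estimate of Step 2 in $A_1$ together with Step 1, to show that the sequence is Cauchy in $L^{q_0}(0,T;A_0)$ up to an error $\varepsilon\sup_k\|M(\Pi_k\bm v_k)\|_{L^{q_0}}$. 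This mirrors Dubinskii's original argument (Lemma~\ref{Dub}), and Chen--J\"ungel--Liu's discrete version is where I would check that the shift by $k$ in the piecewise-constant setting causes no loss.

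For $q_0=\infty$, $M(\Pi_k\bm v_k(t))$ is uniformly bounded, so Step 2 and dominated convergence give convergence in $L^q(0,T;A_0)$ for every finite $q$. The time-H\"older estimate in $A_1$ then passes to the limit, yielding continuity in $A_1$. This upgrades to continuity in $A_0$ because the limit takes values in a fixed compact subset of $A_0$, on which the $A_0$ and $A_1$ topologies coincide.
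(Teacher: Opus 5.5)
The paper does not prove this lemma; it quotes it from \cite{Raviart1970} and \cite{CJL14}, so your sketch has to stand on its own. Step~1 is correct. The $L^1(0,T;A_1)$ compactness of Step~2 is correct up to one repair. Your treatment of case~$(b)$ works: there all values lie in the fixed compact set $C\,\overline{\mathscr{M}}$ of $A_0$, on which the $A_0$- and $A_1$-topologies coincide. Hence a.e.\ convergence in $A_1$ is a.e.\ convergence in $A_0$, and dominated convergence together with the H\"older estimate in $A_1$ finish the job. The repair concerns the time averages. An average $\int_{t_1}^{t_2}\Pi_k\bm{v}_k\,\mathrm{d}t$ is a sum of a number of terms that grows with $N$, so ``each summand lies in a scaled copy of $\overline{\mathscr{M}}$'' does not give compactness. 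Instead, write it as $\Lambda$ times a convex combination of elements of $\mathscr{M}$, with $\Lambda\le\int_0^T M(\Pi_k\bm{v}_k)\,\mathrm{d}t$ bounded, and invoke Mazur's theorem (the closed convex hull of a compact set is compact).

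The genuine gap is case~$(a)$: you never pass from $A_1$ to $A_0$. The inequality of Step~1 controls $\|v\|_{A_0}$ for a single $v\in S$, but $v-w\notin S$ in general, and the limit $g$ need not belong to $S$ at all. Applying Step~1 ``to $v$ and $g$ separately'' therefore yields only a bound, never smallness of $\|v-g\|_{A_0}$. The same objection invalidates your claim at the end of Step~2 that the $t$-dependent extraction can be avoided. It also means the Cauchy estimate ``up to $\varepsilon\sup_k\|M(\Pi_k\bm{v}_k)\|_{L^{q_0}}$'' in Step~3 does not follow from anything you have established.

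The missing ingredient is the two-point Ehrling inequality on the cone,
\begin{equation*}
\|v-w\|_{A_0}\le\varepsilon\bigl(M(v)+M(w)\bigr)+C_\varepsilon\|v-w\|_{A_1},\qquad v,w\in S.
\end{equation*}
It follows from the same contradiction argument once you normalise the \emph{pair} by $M(v)+M(w)$, so that both components lie in $\mathscr{M}$ and a common subsequence converges in $A_0$. Apply it pointwise in $t$ to two members of the sequence and integrate. This gives $\|\Pi_k\bm{v}_k-\Pi_{k'}\bm{v}_{k'}\|_{L^{q_0}(0,T;A_0)}\le 2\varepsilon C^{1/q_0}+C_\varepsilon\|\Pi_k\bm{v}_k-\Pi_{k'}\bm{v}_{k'}\|_{L^{q_0}(0,T;A_1)}$, so no equi-integrability in $A_0$ is needed.

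What you need instead is compactness in $L^{q_0}(0,T;A_1)$, not just in $L^1(0,T;A_1)$. This follows from the uniform bound $\max_n\|v_k^n\|_{A_1}\le C'$. To get it, telescope from an index $m$ with $M(v_k^m)^{q_0}\le C/T$, using $\|v\|_{A_0}\le C_M M(v)$ and the $L^1$ bound on the difference quotients. Interpolating this bound with your $L^1$ translation estimate then controls translates in $L^{q_0}(0,T-h;A_1)$. Incidentally, that $L^1$ translation estimate holds with $\theta=1$ for every $q_1\ge 1$, so it is not what excludes the two pairs.
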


\begin{remark}
	\label{rem:left-shift}
	The conclusion of Lemma~\ref{ALS:dis} and Lemma~\ref{Dub:dis} also holds for the left shift of the piecewise constant-in-time functions $\Pi_k \bm{v}_k$, upon considering a suitable extension (viz., e.g., \eqref{PilL} and~\eqref{PilTL}) (cf., e.g., Theorem~2 in~\cite{CJL14}).
	\bqed
\end{remark}

\section{A priori estimates}
\label{Sec:3}

In this section we discuss the convergence of the sequences $\{\Pi_k \bm{u}_\ell\}_{N=1}^\infty$ and $\{\Pi_k \tilde{\bm{\Theta}}_\ell\}_{N=1}^\infty$ as $N \to\infty$. To this aim, we need to establish some \emph{a priori} estimates. While the recovery of the energy estimates for~\eqref{penalty:seq} follows the strategy of Theorem~4.2 in~\cite{PT23}, the presence of the temperature in~\eqref{penalty:seq} and the form of the equations~\eqref{FD:temp} require additional care.
\begin{theorem}
	\label{thm:2}
	Let $T>0$, $\Omega \subset \mathbb{R}^2$ and $p$ be as in Section~\ref{Sec:2} and let $\alpha$ be as in~\eqref{alpha}. Let $\ell>0$ be given and intended to go to zero, let $N \ge 1$ be an integer, and define $k:=T/N$.
	Assume that~($H$\ref{H1})--($H$\ref{H4}) hold and that the positive constant $L$ satisfies the upper bound~\eqref{L}.
	
	Then, it results:
	\begin{gather}
		\left\{\max_{0\le n\le N}\|u_{\ell,k}^{n}\|_{L^\alpha(\Omega)}\right\}_{N=1}^\infty \textup{ is bounded independently of }N\textup{ and }\ell,\label{est:1}\\
		\left\{k \sum_{n=0}^{N} \|u_{\ell,k}^{n}\|_{W^{1,p}(\Omega)}^p\right\}_{N=1}^\infty \textup{ is bounded independently of }N\textup{ and }\ell,\label{est:2}\\
		\left\{\max_{0\le n\le N} \left\||u_{\ell,k}^{n}|^\frac{\alpha-2}{2} u_{\ell,k}^{n}\right\|_{L^2(\Omega)}\right\}_{N=1}^\infty \textup{ is bounded independently of }N\textup{ and }\ell,\label{est:3:1}\\
		\begin{aligned}
			&\left\{k \sum_{n=0}^{N-1} \left\|\dfrac{|u_{\ell,k}^{n+1}|^{\frac{\alpha-2}{2}}u_{\ell,k}^{n+1} -|u_{\ell,k}^{n}|^{\frac{\alpha-2}{2}}u_{\ell,k}^{n}}{k}\right\|_{L^2(\Omega)}^2\right\}_{N=1}^\infty\\
			&\qquad\textup{ is bounded independently of }N\textup{ and }\ell,
		\end{aligned}\label{est:3}\\
		\left\{\max_{0\le n\le N} \|u_{\ell,k}^{n}\|_{W^{1,p}(\Omega)}\right\}_{N=1}^\infty \textup{ is bounded independently of }N\textup{ and }\ell,\label{est:4}\\
		\begin{aligned}
			&\left\{\max_{0\le n\le N-1} \left\|\dfrac{|u_{\ell,k}^{n+1}|^{\alpha-2}u_{\ell,k}^{n+1} - |u_{\ell,k}^{n}|^{\alpha-2}u_{\ell,k}^{n}}{k}\right\|_{W^{-1,p'}(\Omega)}\right\}_{N=1}^\infty\\
			&\qquad\textup{ is bounded independently of }N,
		\end{aligned}\label{est:5}\\
		\left\{\max_{0\le n\le N} \left\||u_{\ell,k}^{n}|^{\alpha-2} u_{\ell,k}^{n}\right\|_{L^{\alpha'}(\Omega)}\right\}_{N=1}^\infty \textup{ is bounded independently of }N\textup{ and }\ell,\label{est:5:1}\\
		\left\{\max_{0\le n\le N}\|\tilde{\Theta}_{\ell,k}^{n}\|_{L^2(\Omega\times (0,L))}\right\}_{N=1}^\infty \textup{ is bounded independently of }N\textup{ and }\ell,\label{est:6}\\
		\left\{k \sum_{n=0}^{N} \|\tilde{\Theta}_{\ell,k}^{n}\|_{V}^2\right\}_{N=1}^\infty \textup{ is bounded independently of }N\textup{ and }\ell,\label{est:7}\\
		\left\{\max_{0\le n\le N}\|\tilde{\Theta}_{\ell,k}^{n}\|_{V}\right\}_{N=1}^\infty \textup{ is bounded independently of }N\textup{ and }\ell,\label{est:8}\\
		\left\{k\sum_{n=0}^{N-1}\left\|\dfrac{\tilde{\Theta}_{\ell,k}^{n+1}-\tilde{\Theta}_{\ell,k}^{n}}{k}\right\|_{L^2(\Omega\times (0,L))}^2\right\}_{N=1}^\infty \textup{ is bounded independently of }N\textup{ and }\ell.\label{est:9}
	\end{gather}
\end{theorem}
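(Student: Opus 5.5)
The plan is to derive all the estimates by testing the two discrete equations with suitable functions. The key observation is that the scheme is only weakly coupled. In~\eqref{FD:temp} the right-hand side $h_{\ell,k}^n$ involves only the previous iterate $u_{\ell,k}^n$. In~\eqref{penalty:seq} the temperature enters only through the scalar $\mu(\tilde{\Theta}_{\ell,k}^{n+1})$, and by~\eqref{mu0} and~\eqref{mu-def:2} this scalar lies in $[\mu_1,\mu_2]$ regardless of the temperature. The ice-thickness estimates can therefore be obtained first, uniformly in the temperature. The temperature estimates then only need a uniform bound on $h_{\ell,k}^n$, which follows from the bound on $u_{\ell,k}^n$.

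\emph{Step 1: the bounds \eqref{est:1}, \eqref{est:2}, \eqref{est:5:1}.} Test~\eqref{penalty:seq} with $v=u_{\ell,k}^{n+1}$. The first inequality of Lemma~\ref{lem:7} with $r=\alpha$ controls the time-difference term from below by $(\|u^{n+1}\|_{L^\alpha}^\alpha-\|u^n\|_{L^\alpha}^\alpha)/(\alpha' k)$. The penalty term satisfies $-\{u\}^{-}u=(\{u\}^{-})^2\ge0$, so it has a good sign. The diffusion term is at least $\mu_1\|\nabla u^{n+1}\|_{L^p}^p$. For the right-hand side, (H\ref{H3}) bounds $\|\frac1k\int\tilde a\|_{\mathcal C^0}$ uniformly, and Young's inequality with Poincar\'e absorbs it into $\frac{\mu_1}{2}\|\nabla u^{n+1}\|^p$ plus a constant. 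Summing in $n$ then yields~\eqref{est:1}, \eqref{est:2} and \eqref{est:5:1}, uniformly in $N$ and $\ell$.

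\emph{Step 2: the bounds \eqref{est:3:1}, \eqref{est:3}, \eqref{est:4}, \eqref{est:5}.} Test~\eqref{penalty:seq} with $v=u^{n+1}-u^n$. The second inequality of Lemma~\ref{lem:7}, combined with the identity $|\xi|^{(\alpha-2)/2}\xi$-structure, gives $Ck\|D_k(|u|^{\frac{\alpha-2}{2}}u)\|_{L^2}^2$. The $p$-Laplacian term is handled by convexity, $|\nabla u^{n+1}|^{p-2}\nabla u^{n+1}\cdot\nabla(u^{n+1}-u^n)\ge\frac1p(|\nabla u^{n+1}|^p-|\nabla u^n|^p)$. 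Likewise the penalty term gives $\frac{1}{2\ell}(\|\{u^{n+1}\}^-\|^2-\|\{u^n\}^-\|^2)$, which telescopes; since $u_0\ge0$, the initial penalty vanishes.

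Here lies the \textbf{main obstacle}. Because $\mu(\tilde\Theta^{n+1})$ varies with $n$, these terms do not telescope directly. Writing $\mu^{n+1}a_{n+1}-\mu^{n+1}a_n=(\mu^{n+1}a_{n+1}-\mu^na_n)-(\mu^{n+1}-\mu^n)a_n$ leaves a remainder $|\mu^{n+1}-\mu^n|\,\|\nabla u^n\|^p$. To control it, I would show $|\mu^{n+1}-\mu^n|\le C\|\tilde\Theta^{n+1}-\tilde\Theta^n\|_{L^2}$. This is where the averaged form~\eqref{mu-def:2} matters: the exponential is Lipschitz on nonpositive arguments (Lemma~\ref{lemma:exp:1}), and averaging over $\Omega$ turns the Lipschitz bound into a scalar one (Lemma~\ref{lem:avg}, Remark~\ref{rem:lem:avg}). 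The remainder is then bounded using $k\|D_k\tilde\Theta\|_{L^2}$, whose square-summability comes from Step 4, together with a discrete Gronwall argument on $\max_n\|\nabla u^n\|^p$.

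If that coupling proves too delicate, the fallback is to first derive~\eqref{est:9} in a form independent of $u$ (Step 3 only needs \eqref{est:1}), and then close the argument for $u$. The source term $\int\tilde a(u^{n+1}-u^n)$ is handled by discrete summation by parts using $\partial_t\tilde a\in L^p(0,T;\mathcal C^0)$. This yields~\eqref{est:3} and~\eqref{est:4}, and~\eqref{est:3:1} follows from~\eqref{est:1}. Finally,~\eqref{est:5} is read off from the equation. The dependence on $N$ alone is acknowledged here, since the penalty term $\ell^{-1}\{u\}^-$ is bounded in $W^{-1,p'}$ only for fixed $\ell$, by~\eqref{est:4}.

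\emph{Steps 3--4: the temperature bounds \eqref{est:6}--\eqref{est:9}.} From~\eqref{est:4} and the embedding $W^{1,p}_0\hookrightarrow\mathcal C^0$, the quantities $|u^n|^{\frac{p-1}{2p}}$ are uniformly bounded, so $\|h_{\ell,k}^n\|_{L^\infty}\le C$ by Lemma~\ref{lem:hlkn}, with $A\le A_0$. Testing~\eqref{FD:temp} with $\tilde\Theta^{n+1}$, the advection term is absorbed into $\frac\kappa2\|\nabla\cdot\|^2$ as in~\eqref{ex:un-step-2} via~\eqref{L} and Lemma~\ref{PFV}, the penalties are nonnegative, and the trace term with $q_{\textup{geo}}^\perp$ is treated by Young's inequality. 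Summing gives~\eqref{est:6} and~\eqref{est:7}. Testing instead with $\tilde\Theta^{n+1}-\tilde\Theta^n$ gives $\rho c k\|D_k\tilde\Theta\|^2$ plus telescoping terms $\frac\kappa2\|\nabla\tilde\Theta\|^2$ and the convex penalty potentials; the latter vanish initially because $\tilde\Theta_0\in K_{\textup{temp}}$. The advection term is bounded by $\frac{\rho c}{4}k\|D_k\tilde\Theta\|^2+Ck\|\nabla\tilde\Theta^{n+1}\|^2$, using~\eqref{est:7}. Together these give~\eqref{est:8} and~\eqref{est:9}.
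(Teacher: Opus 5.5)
\textbf{Gap: the estimates are closed in a circular order, and the proposed fallback does not break the cycle.}

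In your main line, Step~2 needs \eqref{est:9} to control the remainder $|\mu(\tilde{\Theta}_{\ell,k}^{n+1})-\mu(\tilde{\Theta}_{\ell,k}^{n})|\,\|\nabla u_{\ell,k}^{n}\|^p$. But Steps~3--4 obtain \eqref{est:9} from the bound $\|h_{\ell,k}^n\|_{L^\infty}\le C$, and you derive that bound from \eqref{est:4}, which is an output of Step~2. Your opening remark that the ice-thickness estimates can be obtained first, uniformly in the temperature, is therefore not true for \eqref{est:3} and \eqref{est:4}. These depend on the temperature precisely through the variation of $\mu$, as you note yourself.

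The fallback claims that Step~3 only needs \eqref{est:1}, and this fails. Since $A\le A_0$, one has
$\|h_{\ell,k}^n\|_{L^2(\Omega\times(0,L))}^2\le 4(\rho g C_0)^{2p}A_0^2L\,\|u_{\ell,k}^n\|_{L^{p-1}(\Omega)}^{p-1}$.
Moreover $p-1\ge 1.8>\alpha$, because $\alpha=\frac{3p-1}{2p}<\frac32$. So an $L^\alpha$ bound on $u_{\ell,k}^n$ does not control the source term of \eqref{FD:temp}, in $L^2$ or in $V'$, over the range $2.8\le p\le 5$.

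\textbf{Repair.} Feed Steps~3--4 with \eqref{est:2} from Step~1, not with \eqref{est:1} or \eqref{est:4}. Since $\|u\|_{L^{p-1}}^{p-1}\le C(1+\|u\|_{W^{1,p}}^p)$, you get $k\sum_n\|h_{\ell,k}^n\|_{L^2}^2\le C$ uniformly in $N$ and $\ell$. This is all those steps need: apply Young's inequality against $\tilde{\Theta}_{\ell,k}^{n+1}$ in Step~3, and against $\frac{\rho c}{4}k\|D_k(\Pi_k\tilde{\bm{\Theta}}_{\ell,k})\|^2$ in Step~4. The order then becomes Step~1, Steps~3--4, Step~2, which is exactly the paper's order.

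\textbf{Your Step~2 after the repair.} With that order, your Gronwall argument closes, with one correction. With the factor $\mu(\tilde{\Theta}_{\ell,k}^{n+1})$ in front, the penalty term does not telescope either, so it must be included in the weighted energy
$F_n:=\mu(\tilde{\Theta}_{\ell,k}^{n})\bigl(\frac1p\|\nabla u_{\ell,k}^n\|^p+\frac{1}{2\ell}\|\{u_{\ell,k}^n\}^{-}\|^2\bigr)$.
One then gets $F_s\le C+\sum_{n<s}b_nF_n$ with $b_n=C\|\tilde{\Theta}_{\ell,k}^{n+1}-\tilde{\Theta}_{\ell,k}^{n}\|_{L^2}$. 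The resulting multiplicative factor $\exp(\sum_n b_n)$ is bounded by Cauchy--Schwarz and \eqref{est:9}.

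\textbf{Comparison with the paper.} The paper avoids Gronwall entirely. It divides \eqref{penalty:seq} by $\mu(\tilde{\Theta}_{\ell,k}^{n+1})$ before testing with $u_{\ell,k}^{n+1}-u_{\ell,k}^{n}$. Then:
\begin{itemize}
\item the $p$-Laplacian and penalty terms telescope exactly;
\item the time-difference term is bounded below using $\mu\le\mu_2$;
\item the variation of $\mu$ is moved into the source term $\tilde a_k^n/\mu(\tilde{\Theta}_{\ell,k}^{n+1})$.
\end{itemize}
There, discrete summation by parts produces differences of $1/\mu$. These are controlled additively by \eqref{Lip:est-1}, Young's inequality, \eqref{est:2} and \eqref{est:9}. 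Both routes rest on the same key ingredient: $\mu$ is Lipschitz in $\tilde{\Theta}$ with respect to $L^2$, thanks to the averaged form \eqref{mu-def:2}, and \eqref{est:9} holds independently of \eqref{est:4}.
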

\begin{proof}
	The proof is divided into seven steps, numbered $(i)$--$(vii)$.
	
	$(i)$ \emph{Proof of~\eqref{est:1} and~\eqref{est:2}.}
	Multiply~\eqref{penalty:seq} by $u_{\ell,k}^{n+1}$ in the sense of the duality between $W^{-1,p'}(\Omega)$ and $W_0^{1,p}(\Omega)$, obtaining:
	\begin{equation*}
		\begin{aligned}
			&\dfrac{1}{k} \int_{\Omega}\left(|u_{\ell,k}^{n+1}|^{\alpha-2} u_{\ell,k}^{n+1} -|u_{\ell,k}^{n}|^{\alpha-2} u_{\ell,k}^{n}\right) u_{\ell,k}^{n+1} \dd x
			+\mu(\tilde{\Theta}_{\ell,k}^{n+1})\int_{\Omega}|\nabla u_{\ell,k}^{n+1}|^{p-2} \nabla u_{\ell,k}^{n+1} \cdot \nabla u_{\ell,k}^{n+1}\dd x\\
			&\quad-\dfrac{\mu(\tilde{\Theta}_{\ell,k}^{n+1})}{\ell} \int_{\Omega} \{u_{\ell,k}^{n+1}\}^{-} u_{\ell,k}^{n+1} \dd x =\dfrac{1}{k}\int_{\Omega} \left(\int_{nk}^{(n+1)k} \tilde{a}(t) \dd t\right) u_{\ell,k}^{n+1} \dd x.
		\end{aligned}
	\end{equation*}
	
	Using the first estimate in Lemma~\ref{lem:7} with $r=\alpha$, $\xi=u_{\ell,k}^{n+1}$ and $\eta=u_{\ell,k}^{n}$, the Poincar\'e--Friedrichs inequality (cf., e.g., Theorem~8.3-3$(a)$ in~\cite{Ciarlet2025}), Bochner's theorem (cf., e.g., Theorem~8.9 of~\cite{Leoni2017}), H\"{o}lder's inequality, Young's inequality (cf., e.g., \cite{Young1912} and page~92 in~\cite{Brez11}), the fact that $p>2$, and~\eqref{mu0} we obtain:
	\begin{equation*}
		\begin{aligned}
			&\dfrac{1}{\alpha' k}(\|u_{\ell,k}^{n+1}\|_{L^\alpha(\Omega)}^\alpha - \|u_{\ell,k}^{n}\|_{L^\alpha(\Omega)}^\alpha)+\dfrac{\mu_1 p}{p+(\textup{diam }\Omega)^p}\|u_{\ell,k}^{n+1}\|_{W^{1,p}(\Omega)}^p+\dfrac{\mu_1}{\ell}\|\{u_{\ell,k}^{n+1}\}^{-}\|_{L^2(\Omega)}^2\\
			&\le\left(\dfrac{\mu_1 p}{2(p+(\textup{diam }\Omega)^p)}\right)^{-\frac{1}{p-1}}k^{p'/p}\left(\dfrac{1}{k^{p'}}\int_{nk}^{(n+1)k} \|\tilde{a}(t)\|_{W^{-1,p'}(\Omega)}^{p'}\dd t\right)+\dfrac{\mu_1 p}{2(p+(\textup{diam }\Omega)^p)}\|u_{\ell,k}^{n+1}\|_{W^{1,p}(\Omega)}^p.
		\end{aligned}
	\end{equation*}
	
	Multiplying both sides of the latter estimate by $(k \alpha')$, and summing over $0 \le n \le s-1$, where $1 \le s \le N$, gives
	\begin{equation*}
		\begin{aligned}
			&\sum_{n=0}^{s-1} \left\{\|u_{\ell,k}^{n+1}\|_{L^\alpha(\Omega)}^\alpha-\|u_{\ell,k}^{n}\|_{L^\alpha(\Omega)}^\alpha+\dfrac{\alpha' k \mu_1 p}{2(p+(\textup{diam }\Omega)^p)}\|u_{\ell,k}^{n+1}\|_{W^{1,p}(\Omega)}^p+\dfrac{\alpha' k \mu_1}{\ell}\|\{u_{\ell,k}^{n+1}\}^{-}\|_{L^2(\Omega)}^2\right\}\\
			&\le\alpha'\left(\dfrac{\mu_1 p}{2(p+(\textup{diam }\Omega)^p)}\right)^{-\frac{1}{p-1}}\sum_{n=0}^{s-1} \left(\int_{nk}^{(n+1)k} \|\tilde{a}(t)\|_{W^{-1,p'}(\Omega)}^{p'} \dd t\right),
		\end{aligned}
	\end{equation*}
	and we finally obtain:
	\begin{equation*}
		\begin{aligned}
			&\|u_{\ell,k}^{s}\|_{L^\alpha(\Omega)}^\alpha +\dfrac{\alpha' k \mu_1 p}{2(p+(\textup{diam }\Omega)^p)} \sum_{n=0}^{s-1} \|u_{\ell,k}^{n+1}\|_{W^{1,p}(\Omega)}^p
			+\dfrac{\alpha' k \mu_1}{\ell} \sum_{n=0}^{s-1} \|\{u_{\ell,k}^{n+1}\}^{-}\|_{L^2(\Omega)}^2\\
			&\le\alpha'\left(\dfrac{\mu_1 p}{2(p+(\textup{diam }\Omega)^p)}\right)^{-\frac{1}{p-1}}\sum_{n=0}^{s-1} \left(\int_{nk}^{(n+1)k} \|\tilde{a}(t)\|_{W^{-1,p'}(\Omega)}^{p'} \dd t\right)+\|u_0\|_{L^\alpha(\Omega)}^\alpha,
		\end{aligned}
	\end{equation*}
	that establishes~\eqref{est:1} and~\eqref{est:2}. In particular, the estimates~\eqref{est:1} and~\eqref{est:2} and the fact that $u_0\in K_{\textup{surf}}$ imply that there exists a constant independent of $N$ and $\ell$ such that:
	\begin{equation}
		\label{penalty-est-1}
		\begin{aligned}
			&\dfrac{k}{\ell}\sum_{n=0}^N\|\{u_{\ell,k}^{n}\}^{-}\|_{L^2(\Omega)}^2\le\dfrac{1}{\mu_1}\left(\dfrac{\mu_1 p}{2(p+(\textup{diam }\Omega)^p)}\right)^{-\frac{1}{p-1}}\sum_{n=0}^{s-1} \left(\int_{nk}^{(n+1)k} \|\tilde{a}(t)\|_{W^{-1,p'}(\Omega)}^{p'} \dd t\right)\\
			&\quad+\dfrac{\|u_0\|_{L^\alpha(\Omega)}^\alpha}{\mu_1 \alpha'}.
		\end{aligned}
	\end{equation}
	
	For the sake of brevity, we denote by $C_1(k)$ the following constant:
	\begin{equation}
		\label{C1}
		C_1(k):=\dfrac{1}{\mu_1}\left(\dfrac{\mu_1 p}{2(p+(\textup{diam }\Omega)^p)}\right)^{-\frac{1}{p-1}}\sum_{n=0}^{s-1} \left(\int_{nk}^{(n+1)k} \|\tilde{a}(t)\|_{W^{-1,p'}(\Omega)}^{p'} \dd t\right)+\dfrac{\|u_0\|_{L^\alpha(\Omega)}^\alpha}{\mu_1\alpha'}.
	\end{equation}
	
	$(ii)$\emph{Proof of~\eqref{est:3:1}}.
	By~\eqref{est:1}, for each $0 \le n \le N$, we have that:
	\begin{equation*}
		\|u_{\ell,k}^{n}\|_{L^\alpha(\Omega)}^\alpha =\int_{\Omega} \left||u_{\ell,k}^{n}|^\frac{\alpha-2}{2} u_{\ell,k}^{n}\right|^2 \dd x =
		\left\||u_{\ell,k}^{n}|^\frac{\alpha-2}{2} u_{\ell,k}^{n}\right\|_{L^2(\Omega)}^2.
	\end{equation*}
	
	Since the left-hand side of the previous identity is bounded independently of $N$ and $\ell$, we immediately infer that the sequence
	$$
	\left\{\max_{0\le n\le N} \left\||u_{\ell,k}^{n}|^\frac{\alpha-2}{2} u_{\ell,k}^{n}\right\|_{L^2(\Omega)}\right\}_{N=1}^\infty,
	$$
	is bounded by $C_1$ defined in~\eqref{C1}, thus establishing the estimate~\eqref{est:3:1}.
	
	$(iii)$\emph{Proof of~\eqref{est:6} and~\eqref{est:7}}. Multiply~\eqref{FD:temp} by $\tilde{\Theta}_{\ell,k}^{n+1}$ in the sense of the duality between $V'$ and $V$, obtaining:
	\begin{equation*}
		\begin{aligned}
			&\dfrac{\rho c}{k}\|\tilde{\Theta}_{\ell,k}^{n+1}\|_{L^2(\Omega\times (0,L))}^2-\dfrac{\rho c}{k}\int_{\Omega\times (0,L)}\tilde{\Theta}_{\ell,k}^{n+1}\tilde{\Theta}_{\ell,k}^{n} \dd x\dd z+\rho c\int_{\Omega\times (0,L)}\left((\bm{U}(0),v_z(0))\cdot\nabla\tilde{\Theta}_{\ell,k}^{n+1}\right)\tilde{\Theta}_{\ell,k}^{n+1} \dd x\dd z\\
			&\quad+\kappa\int_{\Omega\times (0,L)}|\nabla\tilde{\Theta}_{\ell,k}^{n+1}|^2\dd x\dd z+\dfrac{\|\{\tilde{\Theta}_{\ell,k}^{n+1}\}^{-}\|_{L^2(\Omega\times(0,L))}^2}{\ell}+\dfrac{1}{\ell}\int_{\Omega\times (0,L)}\{\tilde{\Theta}_{\ell,k}^{n+1}-\Theta_{\textup{m}}\}^{+} \tilde{\Theta}_{\ell,k}^{n+1}\dd x\dd z\\
			&=2(\rho g C_0)^p\int_{\Omega\times (0,L)}A(\tilde{\Theta}_{\ell,k}^{n})\left(\left\{|u_{\ell,k}^{n}|^\frac{p-1}{2p}-z\right\}^{+}\right)^p\tilde{\Theta}_{\ell,k}^{n+1}\dd x \dd z-\int_{\Omega}q_{\textup{geo}}^\perp \tilde{\Theta}_{\ell,k}^{n+1}\dd x.
		\end{aligned}
	\end{equation*}
	
	An application of the Young inequality~\cite{Young1912}, Lemma~\ref{PFV}, the Cauchy--Schwarz inequality~\cite{Brez11}, \eqref{arrhenius}, \eqref{L}, assumption~$(H\ref{H4})$, the positiveness of $\Theta_{\textup{m}}$ the Sobolev embedding theorem (cf., e.g., Theorem~8.4-1 in~\cite{Ciarlet2025}), Lemma~\ref{lem:2}, estimate~\eqref{est:5} to the equation above gives:
	\begin{equation}
		\label{est-theta-0}
		\begin{aligned}
			&\dfrac{\rho c}{2k}\|\tilde{\Theta}_{\ell,k}^{n+1}\|_{L^2(\Omega\times(0,L))}^2-\dfrac{\rho c}{2k}\|\tilde{\Theta}_{\ell,k}^{n}\|_{L^2(\Omega\times(0,L))}^2+\dfrac{\kappa}{2}\|\nabla\tilde{\Theta}_{\ell,k}^{n+1}\|_{\bm{L}^2(\Omega\times(0,L))}^2\\
			&\le\dfrac{\rho c}{2k}\|\tilde{\Theta}_{\ell,k}^{n+1}\|_{L^2(\Omega\times(0,L))}^2-\dfrac{\rho c}{2k}\|\tilde{\Theta}_{\ell,k}^{n}\|_{L^2(\Omega\times(0,L))}^2\\
			&\quad+\left(\kappa-\rho c\sqrt{2}L\|(\bm{U}(0),v_z(0))\|_{\bm{\mathcal{C}}^0(\overline{\Omega}\times[0,L])}\right)\|\nabla\tilde{\Theta}_{\ell,k}^{n+1}\|_{\bm{L}^2(\Omega\times (0,L))}^2\\
			&\quad+\dfrac{\|\{\tilde{\Theta}_{\ell,k}^{n+1}\}^{-}\|_{L^2(\Omega\times (0,L))}^2}{\ell}+\dfrac{\|\{\tilde{\Theta}_{\ell,k}^{n+1}-\Theta_{\textup{m}}\}^{+}\|_{L^2(\Omega\times (0,L))}^2}{\ell}\\
			&\quad+\underbrace{\dfrac{1}{\ell}\int_{\Omega\times (0,L)}\{\tilde{\Theta}_{\ell,k}^{n+1}-\Theta_{\textup{m}}\}^{+}\Theta_{\textup{m}}\dd x\dd z}_{\ge 0}\\
			&\le\dfrac{16 L^3 A_0^2(\rho g C_0)^{2p}}{\kappa}\|u_{\ell,k}^{n}\|_{L^{p-1}(\Omega)}^{p-1}+\dfrac{\kappa}{4}\|\nabla\tilde{\Theta}_{\ell,k}^{n+1}\|_{\bm{L}^2(\Omega\times (0,L))}^2+\dfrac{2(1+2L^2)}{\kappa}\hat{C}_{\textup{tr}}^2\|q_{\textup{geo}}^\perp\|_{L^2(\Omega)}^2.
		\end{aligned}
	\end{equation}
	
	As a result, we obtain the estimate:
	\begin{equation}
		\label{est-theta-1}
		\begin{aligned}
			&\dfrac{\rho c}{2k}\|\tilde{\Theta}_{\ell,k}^{n+1}\|_{L^2(\Omega\times(0,L))}^2-\dfrac{\rho c}{2k}\|\tilde{\Theta}_{\ell,k}^{n}\|_{L^2(\Omega\times(0,L))}^2+\dfrac{\kappa}{4}\|\nabla\tilde{\Theta}_{\ell,k}^{n+1}\|_{\bm{L}^2(\Omega\times(0,L))}^2\\
			&\le\dfrac{16 L^3 A_0^2(\rho g C_0)^{2p}}{\kappa}\|u_{\ell,k}^{n}\|_{L^{p-1}(\Omega)}^{p-1}+\dfrac{2(1+2L^2)}{\kappa}\hat{C}_{\textup{tr}}^2\|q_{\textup{geo}}^\perp\|_{L^2(\Omega)}^2.
		\end{aligned}
	\end{equation}
	
	Multiply~\eqref{est-theta-1} by $k$ and sum over $n=0,\dots, s-1$, for all $1\le s\le N$. The fact that $k:=T/N$ and an application of Young's inequality~\cite{Young1912}, H\"{o}lder's inequality and~\eqref{est:2} give
	\begin{equation*}
		\begin{aligned}
			&\dfrac{\rho c}{2}\left(\|\tilde{\Theta}_{\ell,k}^{s}\|_{L^2(\Omega\times(0,L))}^2-\|\tilde{\Theta}_0\|_{L^2(\Omega\times(0,L))}^2\right)+\dfrac{\kappa}{4}\sum_{n=1}^{s}k\|\nabla\tilde{\Theta}_{\ell,k}^{n}\|_{\bm{L}^2(\Omega\times(0,L))}^2\\
			&\le\dfrac{8L c_{P,L}^2A_0^2(\rho g C_0)^{2p}}{\kappa}\sum_{n=0}^{s-1}k\|u_{\ell,k}^{n}\|_{L^{p-1}(\Omega)}^{p-1}+\dfrac{2(1+2L^2)T}{\kappa}\hat{C}_{\textup{tr}}^2\|q_{\textup{geo}}^\perp\|_{L^2(\Omega)}^2\\
			&\le \dfrac{16 L^3 |\Omega|^{1/p} A_0^2(\rho g C_0)^{2p}}{\kappa p'}\sum_{n=0}^{s-1}k\|u_{\ell,k}^{n}\|_{W^{1,p}(\Omega)}^p+\dfrac{16 L^3 |\Omega|^{1/p} A_0^2(\rho g C_0)^{2p} T}{\kappa p}\\
			&\quad+\dfrac{2(1+2L^2)T}{\kappa}\hat{C}_{\textup{tr}}^2\|q_{\textup{geo}}^\perp\|_{L^2(\Omega)}^2,
		\end{aligned}
	\end{equation*}
	and the estimate~\eqref{est:6} is thus established. Estimate~\eqref{est:7} immediately follows by Lemma~\ref{PFV}. In particular, from~\eqref{est-theta-0} and the estimates~\eqref{est:6} and~\eqref{est:7}, we infer that there exists a constant $C>0$ independent of $N$ and $\ell$ such that:
	\begin{gather}
		\dfrac{k}{\ell}\sum_{n=0}^N\|\{\tilde{\Theta}_{\ell,k}^{n}\}^{-}\|_{L^2(\Omega\times (0,L))}^2\le C,\label{penalty-est-2}\\
		\dfrac{k}{\ell}\sum_{n=0}^N\|\{\tilde{\Theta}_{\ell,k}^{n+1}-\Theta_{\textup{m}}\}^{+}\|_{L^2(\Omega\times (0,L))}^2\le C.\label{penalty-est-3}
	\end{gather}
	
	$(iv)$\emph{Proof of~\eqref{est:8} and~\eqref{est:9}}. Multiply~\eqref{FD:temp} by $(\tilde{\Theta}_{\ell,k}^{n+1}-\tilde{\Theta}_{\ell,k}^{n})$ in the sense of the duality between $V'$ and $V$, obtaining:
	\begin{equation*}
		\begin{aligned}
			&\dfrac{\rho c}{k}\|\tilde{\Theta}_{\ell,k}^{n+1}-\tilde{\Theta}_{\ell,k}^{n}\|_{L^2(\Omega\times(0,L))}^2+\kappa\int_{\Omega\times (0,L)}\nabla\tilde{\Theta}_{\ell,k}^{n+1}\cdot\nabla(\tilde{\Theta}_{\ell,k}^{n+1}-\tilde{\Theta}_{\ell,k}^{n})\dd x\dd z\\
			&\quad+\rho c\int_{\Omega\times (0,L)}\left((\bm{U}(0),v_z(0))\cdot\nabla\tilde{\Theta}_{\ell,k}^{n+1}\right)(\tilde{\Theta}_{\ell,k}^{n+1}-\tilde{\Theta}_{\ell,k}^{n}) \dd x\dd z+\int_{\Omega}q_{\textup{geo}}^\perp (\tilde{\Theta}_{\ell,k}^{n+1}-\tilde{\Theta}_{\ell,k}^{n})\dd x\\
			&\quad-\dfrac{1}{\ell}\int_{\Omega\times (0,L)}\{\tilde{\Theta}_{\ell,k}^{n+1}\}^{-}(\tilde{\Theta}_{\ell,k}^{n+1}-\tilde{\Theta}_{\ell,k}^{n})\dd x\dd z+\dfrac{1}{\ell}\int_{\Omega\times (0,L)}\{\tilde{\Theta}_{\ell,k}^{n+1}-\Theta_{\textup{m}}\}^{+}(\tilde{\Theta}_{\ell,k}^{n+1}-\tilde{\Theta}_{\ell,k}^{n})\dd x\dd z\\
			&=2(\rho g C_0)^p\int_{\Omega\times (0,L)}A(\tilde{\Theta}_{\ell,k}^{n})\left(\left\{|u_{\ell,k}^{n}|^\frac{p-1}{2p}-z\right\}^{+}\right)^p(\tilde{\Theta}_{\ell,k}^{n+1}-\tilde{\Theta}_{\ell,k}^{n})\dd x \dd z.
		\end{aligned}
	\end{equation*}
	
	An application of Young's inequality~\cite{Young1912} allows us to estimate the previous identity as follows:
	\begin{equation}
		\label{est-theta-2}
		\begin{aligned}
			&\rho ck\left\|\dfrac{\tilde{\Theta}_{\ell,k}^{n+1}-\tilde{\Theta}_{\ell,k}^{n}}{k}\right\|_{L^2(\Omega\times(0,L))}^2+\dfrac{\kappa}{2}\|\nabla\tilde{\Theta}_{\ell,k}^{n+1}\|_{\bm{L}^2(\Omega\times (0,L))}^2-\dfrac{\kappa}{2}\|\nabla\tilde{\Theta}_{\ell,k}^{n}\|_{\bm{L}^2(\Omega\times (0,L))}^2\\
			&\quad-\rho c \|(\bm{U}(0),v_z(0))\|_{\bm{\mathcal{C}}^0(\overline{\Omega}\times[0,L])}^2\left(k\|\nabla\tilde{\Theta}_{\ell,k}^{n+1}\|_{\bm{L}^2(\Omega\times (0,L))}^2\right)-\dfrac{\rho ck}{4}\left\|\dfrac{\tilde{\Theta}_{\ell,k}^{n+1}-\tilde{\Theta}_{\ell,k}^{n}}{k}\right\|_{L^2(\Omega\times (0,L))}^2\\
			&\quad-\dfrac{1}{\ell}\int_{\Omega\times (0,L)}\{\tilde{\Theta}_{\ell,k}^{n+1}\}^{-}(\tilde{\Theta}_{\ell,k}^{n+1}-\tilde{\Theta}_{\ell,k}^{n})\dd x\dd z\\
			&\quad+\dfrac{1}{\ell}\int_{\Omega\times (0,L)}\{\tilde{\Theta}_{\ell,k}^{n+1}-\Theta_{\textup{m}}\}^{+}(\tilde{\Theta}_{\ell,k}^{n+1}-\tilde{\Theta}_{\ell,k}^{n})\dd x\dd z\\
			&\le \dfrac{4(\rho g C_0)^{2p}A_0^2 L k}{\rho c} \|u_{\ell,k}^{n}\|_{L^{p-1}(\Omega)}^{p-1}+\dfrac{\rho ck}{4}\left\|\dfrac{\tilde{\Theta}_{\ell,k}^{n+1}-\tilde{\Theta}_{\ell,k}^{n}}{k}\right\|_{L^2(\Omega\times (0,L))}^2\\
			&\quad-\int_{\Omega}q_{\textup{geo}}^\perp (\tilde{\Theta}_{\ell,k}^{n+1}-\tilde{\Theta}_{\ell,k}^{n})\dd x.
		\end{aligned}
	\end{equation}
	
	Let us now estimate the penalty terms in~\eqref{est-theta-2}. To begin with, we observe that an application of Young's inequality~\cite{Young1912} gives:
	\begin{equation}
		\label{penalty-theta-1}
		\begin{aligned}
			&-\int_{\Omega\times (0,L)}\{\tilde{\Theta}_{\ell,k}^{n+1}\}^{-}(\tilde{\Theta}_{\ell,k}^{n+1}-\tilde{\Theta}_{\ell,k}^{n})\dd x\dd z\\
			&\ge\int_{\Omega\times (0,L)}\left|\{\tilde{\Theta}_{\ell,k}^{n+1}\}^{-}\right|^2\dd x\dd z-\int_{\Omega\times (0,L)}\{\tilde{\Theta}_{\ell,k}^{n+1}\}^{-}\{\tilde{\Theta}_{\ell,k}^{n}\}^{-}\dd x\dd z\\
			&\ge\dfrac{1}{2}\|\{\tilde{\Theta}_{\ell,k}^{n+1}\}^{-}\|_{L^2(\Omega\times (0,L))}^2-\dfrac{1}{2}\|\{\tilde{\Theta}_{\ell,k}^{n}\}^{-}\|_{L^2(\Omega\times (0,L))}^2.
		\end{aligned}
	\end{equation}
	
	Summing~\eqref{penalty-theta-1} over $n=0,\dots,s-1$, for all $1\le s\le N$ gives
	\begin{equation}
		\label{penalty-theta-2}
		\begin{aligned}
			&-\sum_{n=0}^{s-1}\int_{\Omega\times (0,L)}\{\tilde{\Theta}_{\ell,k}^{n+1}\}^{-}(\tilde{\Theta}_{\ell,k}^{n+1}-\tilde{\Theta}_{\ell,k}^{n})\dd x\dd z\\
			&\ge\dfrac{1}{2}\|\{\tilde{\Theta}_{\ell,k}^{s}\}^{-}\|_{L^2(\Omega\times (0,L))}^2-\dfrac{1}{2}\|\{\tilde{\Theta}_0\}^{-}\|_{L^2(\Omega\times (0,L))}^2=\dfrac{1}{2}\|\{\tilde{\Theta}_{\ell,k}^{s}\}^{-}\|_{L^2(\Omega\times (0,L))}^2\ge 0,
		\end{aligned}
	\end{equation}
	being $\tilde{\Theta}_0\in K_{\textup{temp}}$. For the second, and last, penalty term we observe that another application of the Young inequality~\cite{Young1912} gives:
	\begin{equation}
		\label{penalty-theta-3}
		\begin{aligned}
			&\int_{\Omega\times (0,L)}\{\tilde{\Theta}_{\ell,k}^{n+1}-\Theta_{\textup{m}}\}^{+}(\tilde{\Theta}_{\ell,k}^{n+1}-\tilde{\Theta}_{\ell,k}^{n})\dd x\dd z\\
			&=\int_{\Omega\times (0,L)}\{\tilde{\Theta}_{\ell,k}^{n+1}-\Theta_{\textup{m}}\}^{+}\left((\tilde{\Theta}_{\ell,k}^{n+1}-\Theta_{\textup{m}})-(\tilde{\Theta}_{\ell,k}^{n}-\Theta_{\textup{m}})\right)\dd x\dd z\\
			&\ge\|\{\tilde{\Theta}_{\ell,k}^{n+1}-\Theta_{\textup{m}}\}^{+}\|_{L^2(\Omega\times (0,L))}^2-\int_{\Omega\times (0,L)}\{\tilde{\Theta}_{\ell,k}^{n+1}-\Theta_{\textup{m}}\}^{+}\{\tilde{\Theta}_{\ell,k}^{n}-\Theta_{\textup{m}}\}^{+}\dd x\dd z\\
			&\ge\dfrac{1}{2}\|\{\tilde{\Theta}_{\ell,k}^{n+1}-\Theta_{\textup{m}}\}^{+}\|_{L^2(\Omega\times (0,L))}^2-\dfrac{1}{2}\|\{\tilde{\Theta}_{\ell,k}^{n}-\Theta_{\textup{m}}\}^{+}\|_{L^2(\Omega\times (0,L))}^2.
		\end{aligned}
	\end{equation}
	
	Summing~\eqref{penalty-theta-3} over $n=0,\dots, s-1$, for all $1\le s\le N$, and recalling that $\tilde{\Theta}_0\in K_{\textup{temp}}$ gives:
	\begin{equation}
		\label{penalty-theta-4}
		\begin{aligned}
			&\sum_{n=0}^{s-1}\int_{\Omega\times (0,L)}\{\tilde{\Theta}_{\ell,k}^{n+1}-\Theta_{\textup{m}}\}^{+}(\tilde{\Theta}_{\ell,k}^{n+1}-\tilde{\Theta}_{\ell,k}^{n})\dd x\dd z\\
			&\ge\dfrac{1}{2}\|\{\tilde{\Theta}_{\ell,k}^{s}-\Theta_{\textup{m}}\}^{+}\|_{L^2(\Omega\times (0,L))}^2-\dfrac{1}{2}\|\{\tilde{\Theta}_0-\Theta_{\textup{m}}\}^{+}\|_{L^2(\Omega\times (0,L))}^2\\
			&=\dfrac{1}{2}\|\{\tilde{\Theta}_{\ell,k}^{s}-\Theta_{\textup{m}}\}^{+}\|_{L^2(\Omega\times (0,L))}^2\ge0.
		\end{aligned}
	\end{equation}
	
	Summing~\eqref{est-theta-2} over $n=0,\dots,s-1$, for all $1\le s\le N$, and applying the estimates~\eqref{penalty-theta-2} and~\eqref{penalty-theta-4} gives:
	\begin{equation*}
		\begin{aligned}
			&\dfrac{\rho c}{2}\sum_{n=0}^{s-1}k\left\|\dfrac{\tilde{\Theta}_{\ell,k}^{n+1}-\tilde{\Theta}_{\ell,k}^{n}}{k}\right\|_{L^2(\Omega\times(0,L))}^2+\dfrac{\kappa}{2}\|\nabla\tilde{\Theta}_{\ell,k}^{s}\|_{\bm{L}^2(\Omega\times (0,L))}^2-\dfrac{\kappa}{2}\|\nabla\tilde{\Theta}_0\|_{\bm{L}^2(\Omega\times (0,L))}^2\\
			&\le \rho c \|(\bm{U}(0),v_z(0))\|_{\bm{\mathcal{C}}^0(\overline{\Omega}\times[0,L])}^2\sum_{n=0}^{s-1}k\|\nabla\tilde{\Theta}_{\ell,k}^{n+1}\|_{\bm{L}^2(\Omega\times (0,L))}^2+\dfrac{4(\rho g C_0)^{2p} A_0^2 L}{\rho c}\sum_{n=0}^{s-1}k\|u_{\ell,k}^{n}\|_{L^{p-1}(\Omega)}^{p-1}\\
			&\quad-\int_{\Omega}q_{\textup{geo}}^\perp (\tilde{\Theta}_{\ell,k}^{s}-\tilde{\Theta}_0)\dd x\\
			&\le \rho c \|(\bm{U}(0),v_z(0))\|_{\bm{\mathcal{C}}^0(\overline{\Omega}\times[0,L])}^2\sum_{n=0}^{s-1}k\|\nabla\tilde{\Theta}_{\ell,k}^{n+1}\|_{\bm{L}^2(\Omega\times (0,L))}^2\\
			&\quad+4(\rho g C_0)^{2p}A_0^2 L \sum_{n=0}^{s-1}k \|u_{\ell,k}^{n}\|_{L^{p-1}(\Omega)}^{p-1}+\dfrac{2\hat{C}_{\textup{tr}}^2(1+2L^2)}{\kappa}\|q_{\textup{geo}}^\perp\|_{L^2(\Omega)}^2\\
			&\quad+\dfrac{\kappa}{4}\|\nabla\tilde{\Theta}_{\ell,k}^s\|_{\bm{L}^2(\Omega\times (0,L))}^2+\dfrac{\kappa}{4}\|\nabla\tilde{\Theta}_0\|_{\bm{L}^2(\Omega\times (0,L))}^2\\
			&\le \rho c \|(\bm{U}(0),v_z(0))\|_{\bm{\mathcal{C}}^0(\overline{\Omega}\times[0,L])}^2\sum_{n=0}^{s-1}k\|\nabla\tilde{\Theta}_{\ell,k}^{n+1}\|_{\bm{L}^2(\Omega\times (0,L))}^2\\
			&\quad+\dfrac{4(\rho g C_0)^{2p}A_0^2 L |\Omega|^{1/p}}{p'} \sum_{n=0}^{s-1}k \|u_{\ell,k}^{n}\|_{W^{1,p}(\Omega)}^p+\dfrac{4(\rho g C_0)^{2p}A_0^2 L |\Omega|^{1/p} T}{p}\\
			&\quad+\dfrac{2\hat{C}_{\textup{tr}}^2(1+2L^2)}{\kappa}\|q_{\textup{geo}}^\perp\|_{L^2(\Omega)}^2+\dfrac{\kappa}{4}\|\nabla\tilde{\Theta}_{\ell,k}^s\|_{\bm{L}^2(\Omega\times (0,L))}^2+\dfrac{\kappa}{4}\|\nabla\tilde{\Theta}_0\|_{\bm{L}^2(\Omega\times (0,L))}^2.
		\end{aligned}
	\end{equation*}
	
	A rearrangement in the chain of estimates above gives:
	\begin{equation}
		\label{est-theta-3}
		\begin{aligned}
			&\dfrac{\rho c}{2}\sum_{n=0}^{s-1}k\left\|\dfrac{\tilde{\Theta}_{\ell,k}^{n+1}-\tilde{\Theta}_{\ell,k}^{n}}{k}\right\|_{L^2(\Omega\times(0,L))}^2+\dfrac{\kappa}{4}\|\nabla\tilde{\Theta}_{\ell,k}^{s}\|_{\bm{L}^2(\Omega\times (0,L))}^2\\
			&\le \rho c \|(\bm{U}(0),v_z(0))\|_{\bm{\mathcal{C}}^0(\overline{\Omega}\times[0,L])}^2\sum_{n=0}^{s-1}k\|\nabla\tilde{\Theta}_{\ell,k}^{n+1}\|_{\bm{L}^2(\Omega\times (0,L))}^2\\
			&\quad+\dfrac{4(\rho g C_0)^{2p}A_0^2 L |\Omega|^{1/p}}{p'} \sum_{n=0}^{s-1}k \|u_{\ell,k}^{n}\|_{W^{1,p}(\Omega)}^p+\dfrac{4(\rho g C_0)^{2p}A_0^2 L |\Omega|^{1/p} T}{p}\\
			&\quad+\dfrac{2\hat{C}_{\textup{tr}}^2(1+2L^2)}{\kappa}\|q_{\textup{geo}}^\perp\|_{L^2(\Omega)}^2+\dfrac{3\kappa}{4}\|\nabla\tilde{\Theta}_0\|_{\bm{L}^2(\Omega\times (0,L))}^2.
		\end{aligned}
	\end{equation}
	
	Since, thanks to~\eqref{est:2} and~\eqref{est:7}, the right-hand side of~\eqref{est-theta-3} is bounded independently of $N$, we infer that the estimates~\eqref{est:8} and~\eqref{est:9} hold upon applying Lemma~\ref{PFV}.
	
	$(v)$\emph{Proof of~\eqref{est:3} and~\eqref{est:4}}. Multiply~\eqref{penalty:seq} by $(u_{\ell,k}^{n+1}-u_{\ell,k}^{n})$ in the sense of the duality between $W^{-1,p'}(\Omega)$ and $W_0^{1,p}(\Omega)$ and divide by $\mu(\tilde{\Theta}_{\ell,k}^{n+1})>0$, obtaining:
	\begin{equation}
		\label{prelim:1}
		\begin{aligned}
			&\dfrac{1}{k\mu(\tilde{\Theta}_{\ell,k}^{n+1})}\int_{\Omega}\left(|u_{\ell,k}^{n+1}|^{\alpha-2} u_{\ell,k}^{n+1} -|u_{\ell,k}^{n}|^{\alpha-2} u_{\ell,k}^{n}\right) (u_{\ell,k}^{n+1} - u_{\ell,k}^{n})\dd x\\
			&\quad+\int_{\Omega}|\nabla u_{\ell,k}^{n+1}|^{p-2} \nabla u_{\ell,k}^{n+1} \cdot \nabla (u_{\ell,k}^{n+1}-u_{\ell,k}^{n})\dd x\\
			&\quad-\dfrac{1}{\ell} \int_{\Omega} \{u_{\ell,k}^{n+1}\}^{-}(u_{\ell,k}^{n+1}-u_{\ell,k}^{n})\dd x =\dfrac{1}{\mu(\tilde{\Theta}_{\ell,k}^{n+1})}\int_{\Omega}\left(\dfrac{1}{k}\int_{nk}^{(n+1)k}\tilde{a}(t)\dd t\right) (u_{\ell,k}^{n+1}-u_{\ell,k}^{n}) \dd x.
		\end{aligned}
	\end{equation}
	
	For the sake of brevity, for each $0\le n\le N-1$ define:
	\begin{equation*}
		\label{atilde}
		\tilde{a}_k^{n} := \dfrac{1}{k} \int_{nk}^{(n+1)k} \tilde{a}(t) \dd t.
	\end{equation*}
	
	For each $1 \le s \le N$, the following identity holds:
	\begin{equation}
		\label{prelim:2}
		\begin{aligned}
			&\sum_{n=0}^{s-1} \int_{\Omega}\dfrac{\tilde{a}_k^{n}}{\mu(\tilde{\Theta}_{\ell,k}^{n+1})} (u_{\ell,k}^{n+1}-u_{\ell,k}^{n}) \dd x\\
			&=-\sum_{n=0}^{s-2}\int_{\Omega}\left(\dfrac{\tilde{a}_k^{n+1}}{\mu(\tilde{\Theta}_{\ell,k}^{n+2})}-\dfrac{\tilde{a}_k^{n}}{\mu(\tilde{\Theta}_{\ell,k}^{n+1})}\right)u_{\ell,k}^{n+1} \dd x+\int_{\Omega}\dfrac{\tilde{a}_k^{s-1}}{\mu(\tilde{\Theta}_{\ell,k}^{s})} u_{\ell,k}^{s} \dd x -\int_{\Omega}\dfrac{\tilde{a}_k^0}{\mu(\tilde{\Theta}_{\ell,k}^{1})} u_0 \dd x\\
			&=-\sum_{n=0}^{s-2}\dfrac{1}{\mu(\tilde{\Theta}_{\ell,k}^{n+2})}\int_{\Omega}\left(\tilde{a}_k^{n+1}-\tilde{a}_k^{n}\right)u_{\ell,k}^{n+1} \dd x+\int_{\Omega}\dfrac{\tilde{a}_k^{s-1}}{\mu(\tilde{\Theta}_{\ell,k}^{s})} u_{\ell,k}^{s} \dd x -\int_{\Omega}\dfrac{\tilde{a}_k^0}{\mu(\tilde{\Theta}_{\ell,k}^{1})} u_0 \dd x\\
			&\quad-\sum_{n=0}^{s-2}\int_{\Omega}\left(\dfrac{\tilde{a}_k^{n}}{\mu(\tilde{\Theta}_{\ell,k}^{n+2})}-\dfrac{\tilde{a}_k^{n}}{\mu(\tilde{\Theta}_{\ell,k}^{n+1})}\right)u_{\ell,k}^{n+1} \dd x.
		\end{aligned}
	\end{equation}
	
	For what concerns the second term on the right-hand side of~\eqref{prelim:2}, the following estimate clearly holds thanks to H\"{o}lder's inequality, assumption $(H\ref{H3})$ and item~$(i)$ established beforehand:
	\begin{equation}
		\label{prelim:2-1}
		\left|\int_{\Omega}\dfrac{\tilde{a}_k^{s-1}}{\mu(\tilde{\Theta}_{\ell,k}^{s})} u_{\ell,k}^{s} \dd x\right|\le\dfrac{\|\tilde{a}\|_{\mathcal{C}^0([0,T];L^{\alpha'}(\Omega))}}{\mu_1}\left(\max_{0\le n\le N}\|u_{\ell,k}^n\|_{L^\alpha(\Omega)}\right).
	\end{equation}
	
	For what concerns the third term on the right-hand side of~\eqref{prelim:2}, the following estimate also holds thanks to H\"{o}lder's inequality, assumption $(H\ref{H3})$ and~\eqref{est:1} established beforehand:
	\begin{equation}
		\label{prelim:2-2}
		\left|\int_{\Omega}\dfrac{\tilde{a}_k^{0}}{\mu(\tilde{\Theta}_{\ell,k}^{1})} u_0 \dd x\right|\le\dfrac{\|\tilde{a}\|_{\mathcal{C}^0([0,T];L^{\alpha'}(\Omega))}}{\mu_1}\left(\max_{0\le n\le N}\|u_{\ell,k}^n\|_{L^\alpha(\Omega)}\right).
	\end{equation}
	
	For what concerns the first term on the right-hand side of~\eqref{prelim:2}, observe that for each $n=0,\dots, s-2$ the following identity holds in $\mathcal{C}^0(\overline{\Omega})$:
	\begin{equation*}
		\begin{aligned}
			&\tilde{a}_k^{n+1}-\tilde{a}_k^{n}=\dfrac{1}{k}\int_{nk}^{(n+1)k}(\tilde{a}(t+k)-\tilde{a}(t))\dd t=\dfrac{1}{k}\int_{nk}^{(n+1)k}\int_{t}^{t+k}\dfrac{\dd \tilde{a}}{\dd s}(s)\dd s\dd t\\
			&=\dfrac{1}{k}\int_{nk}^{(n+1)k}\int_{nk}^{s}\dfrac{\dd\tilde{a}}{\dd s}(s)\dd t\dd s+\dfrac{1}{k}\int_{(n+1)k}^{(n+2)k}\int_{s-k}^{(n+1)k}\dfrac{\dd\tilde{a}}{\dd s}(s)\dd t\dd s\\
			&=\dfrac{1}{k}\int_{nk}^{(n+1)k}\dfrac{\dd\tilde{a}}{\dd s}(s) (s-nk)\dd s+\dfrac{1}{k}\int_{(n+1)k}^{(n+2)k}\dfrac{\dd\tilde{a}}{\dd s}(s)((n+2)k-s)\dd s.
		\end{aligned}
	\end{equation*}
	
	Taking the norm $\|\cdot\|_{W^{-1,p'}(\Omega)}$ in the identity above and exploiting $(H\ref{H3})$ gives:
	\begin{equation*}
		\|\tilde{a}_k^{n+1}-\tilde{a}_k^{n}\|_{W^{-1,p'}(\Omega)}\le 2k\left\|\dfrac{\dd\tilde{a}}{\dd t}\right\|_{L^\infty(0,T;W^{-1,p'}(\Omega))}.
	\end{equation*}
	
	Another application of $(H\ref{H3})$ and of the estimate above allows us to obtain the following estimate for the first term on the right-hand side of~\eqref{prelim:2}:
	\begin{equation}
		\label{prelim:2-2-bis}
		\left|\sum_{n=0}^{s-2}\dfrac{1}{\mu(\tilde{\Theta}_{\ell,k}^{n+2})}\int_{\Omega}\left(\tilde{a}_k^{n+1}-\tilde{a}_k^{n}\right)u_{\ell,k}^{n+1} \dd x\right|
		\le \dfrac{2k}{\mu_1}\left\|\dfrac{\dd\tilde{a}}{\dd t}\right\|_{L^\infty(0,T;W^{-1,p'}(\Omega))}\sum_{n=0}^{s-2}\|u_{\ell,k}^{n+1}\|_{W^{1,p}(\Omega)}.
	\end{equation}
	
	An application of Young's inequality~\cite{Young1912} to the right-hand side of the estimate above gives
	\begin{equation}
		\label{prelim:2-3}
		\begin{aligned}
			&k\sum_{n=0}^{s-2}\|u_{\ell,k}^{n+1}\|_{W^{1,p}(\Omega)}\le\sum_{n=0}^{s-2}\left\{\dfrac{k\|u_{\ell,k}^{n+1}\|_{W^{1,p}(\Omega)}^p}{p}+\dfrac{k}{p'}\right\}\\
			&\le\dfrac{k}{p}\sum_{n=0}^{s-2}\|u_{\ell,k}^{n+1}\|_{W^{1,p}(\Omega)}^p+\dfrac{T}{p'},
		\end{aligned}
	\end{equation}
	and, thanks to~\eqref{est:2} and~\eqref{prelim:2-3}, we obtain that
	\begin{equation}
		\label{prelim:2-4}
		\left\{k\sum_{n=0}^{N-2}\|u_{\ell,k}^{n+1}\|_{W^{1,p}(\Omega)}\right\}_{N=1}^\infty \textup{ is bounded independently of }N \textup{ and }\ell,
	\end{equation}
	thus showing that~\eqref{prelim:2-2-bis} is bounded independently of $N$ and $\ell$.
	
	Finally, we estimate the last term on the right-hand side of~\eqref{prelim:2}. To begin with, let us observe that for each $n=0,\dots, N-1$, an application of Lemma~\ref{lem:1}$(a)$, the Mean Value Theorem (cf., e.g., Theorem~9.3-1 in~\cite{Ciarlet2025}) and H\"{o}lder's inequality give:
	\begin{equation}
		\label{Lip:est-1}
		\begin{aligned}
			&|\mu(\tilde{\Theta}_{\ell,k}^{n+1})-\mu(\tilde{\Theta}_{\ell,k}^{n})|\\
			&\le 2A_0\left(\rho g\dfrac{p-1}{2p}\right)^{p-1}\int_{0}^{1}\bigg|\exp\left(-\dfrac{\beta\sqrt{L}}{|\Omega|^{1/2}}\int_{\Omega}\{\tilde{\Theta}_{\ell,k}^{n+1}(x,s'L)-\Theta_{\textup{m}}\}^{-}\dd x\right)\\
			&\quad-\exp\left(-\dfrac{\beta\sqrt{L}}{|\Omega|^{1/2}}\int_{\Omega}\{\tilde{\Theta}_{\ell,k}^{n}(x,s'L)-\Theta_{\textup{m}}\}^{-}\dd x\right)\bigg|\dd s'\\
			&\le 2A_0\left(\rho g\dfrac{p-1}{2p}\right)^{p-1}\dfrac{\beta}{\sqrt{L|\Omega|}}\int_{0}^{L}\int_{\Omega}\left|\{\tilde{\Theta}_{\ell,k}^{n+1}-\Theta_{\textup{m}}\}^{-}-\{\tilde{\Theta}_{\ell,k}^{n}-\Theta_{\textup{m}}\}^{-}\right|\dd x\dd z\\
			&\le 2A_0\left(\rho g\dfrac{p-1}{2p}\right)^{p-1}\dfrac{\beta}{\sqrt{L|\Omega|}}\int_{0}^{L}\int_{\Omega}|\tilde{\Theta}_{\ell,k}^{n+1}-\tilde{\Theta}_{\ell,k}^{n}|\dd x\dd z\\
			&\le 2A_0\beta\left(\rho g\dfrac{p-1}{2p}\right)^{p-1}\|\tilde{\Theta}_{\ell,k}^{n+1}-\tilde{\Theta}_{\ell,k}^{n}\|_{L^2(\Omega\times(0,L))}.
		\end{aligned}
	\end{equation}
	
	An application of Young's inequality~\cite{Young1912} and~\eqref{Lip:est-1} give:
	\begin{equation}
		\label{prelim:2-5}
		\begin{aligned}
			&\left|\sum_{n=0}^{s-2}\int_{\Omega}\left(\dfrac{\tilde{a}_k^{n}}{\mu(\tilde{\Theta}_{\ell,k}^{n+2})}-\dfrac{\tilde{a}_k^{n}}{\mu(\tilde{\Theta}_{\ell,k}^{n+1})}\right)u_{\ell,k}^{n+1} \dd x\right|\\
			&\le\sum_{n=0}^{s-2}\|\tilde{a}_k^n\|_{W^{-1,p'}(\Omega)}\|u_{\ell,k}^{n+1}\|_{W^{1,p}(\Omega)}\left|\dfrac{\mu(\tilde{\Theta}_{\ell,k}^{n+2})-\mu(\tilde{\Theta}_{\ell,k}^{n+1})}{\mu(\tilde{\Theta}_{\ell,k}^{n+2})\mu(\tilde{\Theta}_{\ell,k}^{n+1})}\right|\\
			&\le\dfrac{2A_0\beta}{\mu_1^2}\left(\rho g\dfrac{p-1}{2p}\right)^{p-1}\sum_{n=0}^{s-2}\|\tilde{a}_k^n\|_{W^{-1,p'}(\Omega)}\|u_{\ell,k}^{n+1}\|_{W^{1,p}(\Omega)}\|\tilde{\Theta}_{\ell,k}^{n+1}-\tilde{\Theta}_{\ell,k}^{n}\|_{L^2(\Omega\times(0,L))}\\
			&\le \dfrac{4\beta^2 A_0^2}{\rho c\mu_1^3} \left(\rho g\dfrac{p-1}{2p}\right)^{2p-2}\|\tilde{a}\|_{L^\infty(0,T;W^{-1,p'}(\Omega))}^2 \sum_{n=0}^{s-2}k\|u_{\ell,k}^{n+1}\|_{W^{1,p}(\Omega)}^2\\
			&\quad+\dfrac{\rho c}{4\mu_1}\sum_{n=0}^{s-2}k\left\|\dfrac{\tilde{\Theta}_{\ell,k}^{n+1}-\tilde{\Theta}_{\ell,k}^{n}}{k}\right\|_{L^2(\Omega\times (0,L))}^2\\
			&\le \dfrac{8 \beta^2 A_0^2}{p \rho c\mu_1^3} \left(\rho g\dfrac{p-1}{2p}\right)^{2p-2}\|\tilde{a}\|_{L^\infty(0,T;W^{-1,p'}(\Omega))}^2\sum_{n=0}^{s-2}k\|u_{\ell,k}^{n+1}\|_{W^{1,p}(\Omega)}^p\\
			&\quad+\dfrac{4(p-2)\beta^2 A_0^2T}{p \rho c\mu_1^3} \left(\rho g\dfrac{p-1}{2p}\right)^{2p-2}\|\tilde{a}\|_{L^\infty(0,T;W^{-1,p'}(\Omega))}^2\\
			&\quad+\dfrac{\rho c}{4\mu_1}\sum_{n=0}^{s-2}k\left\|\dfrac{\tilde{\Theta}_{\ell,k}^{n+1}-\tilde{\Theta}_{\ell,k}^{n}}{k}\right\|_{L^2(\Omega\times (0,L))}^2.
		\end{aligned}
	\end{equation}
	
	For any $0\le n \le N-1$, we have that an application of the Young's inequality~\cite{Young1912} gives:
	\begin{equation*}
		\begin{aligned}
			&\dfrac{1}{\ell}\int_{\Omega} \{u_{\ell,\kappa}^{n+1}\}^{-}(u_{\ell,\kappa}^{n+1}-u_{\ell,\kappa}^{n}) \dd x
			=\dfrac{1}{\ell}\int_{\Omega} \left(-\left|\{u_{\ell,\kappa}^{n+1}\}^{-}\right|^2-\{u_{\ell,\kappa}^{n+1}\}^{-}\left(\{u_{\ell,\kappa}^{n}\}^{+}-\{u_{\ell,\kappa}^{n}\}^{-}\right)\right) \dd x\\
			&\le -\dfrac{1}{\ell}\int_{\Omega} \left|\{u_{\ell,\kappa}^{n+1}\}^{-}\right|^2 \dd x
			+\dfrac{1}{\ell}\int_{\Omega} \{u_{\ell,\kappa}^{n+1}\}^{-} \{u_{\ell,\kappa}^{n}\}^{-} \dd x\\
			& \le -\dfrac{1}{\ell}\int_{\Omega} \left|\{u_{\ell,\kappa}^{n+1}\}^{-}\right|^2 \dd x
			+\dfrac{1}{2\ell}\int_{\Omega} \left|\{u_{\ell,\kappa}^{n+1}\}^{-}\right|^2+\left|\{u_{\ell,\kappa}^{n}\}^{-}\right|^2 \dd x\\
			&= -\dfrac{1}{2\ell}\int_{\Omega} \left|\{u_{\ell,\kappa}^{n+1}\}^{-}\right|^2 \dd x
			+\dfrac{1}{2\ell}\int_{\Omega} \left|\{u_{\ell,\kappa}^{n}\}^{-}\right|^2 \dd x.
		\end{aligned}
	\end{equation*}
	
	For any given $1 \le s \le N$, we thus have that the previous estimate gives:
	\begin{equation*}
		\begin{aligned}
			&\dfrac{1}{\ell}\sum_{n=0}^{s-1}\int_{\Omega} \{u_{\ell,\kappa}^{n+1}\}^{-}(u_{\ell,\kappa}^{n+1}-u_{\ell,\kappa}^{n}) \dd x
			\le -\dfrac{1}{2\ell} \sum_{n=0}^{s-1} \int_{\Omega} \left|\{u_{\ell,\kappa}^{n+1}\}^{-}\right|^2 - \left|\{u_{\ell,\kappa}^{n}\}^{-}\right|^2 \dd x\\
			&=-\dfrac{1}{2\ell}\int_{\Omega} \left|\{u_{\ell,\kappa}^{s}\}^{-}\right|^2 - \left|\{u_{\ell,\kappa}^{0}\}^{-}\right|^2 \dd x
			=-\dfrac{1}{2\ell}\int_{\Omega} \left|\{u_{\ell,\kappa}^{s}\}^{-}\right|^2 - \left|\{u_0\}^{-}\right|^2 \dd x\\
			&=-\dfrac{1}{2\ell}\int_{\Omega} \left|\{u_{\ell,\kappa}^{s}\}^{-}\right|^2 \dd x \le 0,
		\end{aligned}
	\end{equation*}
	where the last equality directly follows from the fact that $u_0 \in K_{\textup{surf}}$.
	To summarise, we have shown that:
	\begin{equation}
		\label{SC1}
		\dfrac{1}{\ell}\sum_{n=0}^{s-1}\int_{\Omega} \{u_{\ell,\kappa}^{n+1}\}^{-}(u_{\ell,\kappa}^{n+1}-u_{\ell,\kappa}^{n}) \dd x \le 0.
	\end{equation}
	
	Summing~\eqref{prelim:1} over $0 \le n \le s-1$, with $1\le s \le N$, applying Lemma~\ref{lem:7}, exploiting~\eqref{prelim:2-1}--\eqref{prelim:2-5},~\eqref{SC1}, Young's inequality~\cite{Young1912}, the Poincar\'e--Friedrichs inequality (cf., e.g. Theorem~8.3-3$(a)$ in~\cite{Ciarlet2025}) and~\eqref{mu0} gives:
	\begin{equation*}
		\begin{aligned}
			&\dfrac{4(\alpha-1)k}{\alpha^2 \mu_2} \sum_{n=0}^{s-1} \left\|\dfrac{|u_{\ell,k}^{n+1}|^\frac{\alpha-2}{2}u_{\ell,k}^{n+1}-|u_{\ell,k}^{n}|^\frac{\alpha-2}{2}u_{\ell,k}^{n}}{k}\right\|_{L^2(\Omega)}^2
			+\dfrac{1}{p+(\textup{diam }\Omega)^p}\left(\|u_{\ell,k}^{s}\|_{W^{1,p}(\Omega)}^p-\|u_0\|_{W^{1,p}(\Omega)}^p\right)\\
			&\le \dfrac{4(\alpha-1)k}{\alpha^2 \mu_2} \sum_{n=0}^{s-1} \left\|\dfrac{|u_{\ell,k}^{n+1}|^\frac{\alpha-2}{2}u_{\ell,k}^{n+1}-|u_{\ell,k}^{n}|^\frac{\alpha-2}{2}u_{\ell,k}^{n}}{k}\right\|_{L^2(\Omega)}^2+\dfrac{1}{p}\sum_{n=0}^{s-1} \left\{\|\nabla u_{\ell,k}^{n+1}\|_{L^p(\Omega)}^p - \|\nabla u_{\ell,k}^{n}\|_{L^p(\Omega)}^p\right\}\\
			&\le \dfrac{2\|\tilde{a}\|_{\mathcal{C}^0([0,T];L^{\alpha'}(\Omega))}}{\mu_1}\left(\max_{0\le n\le N}\|u_{\ell,k}^{n}\|_{L^\alpha(\Omega)}\right)\\
			&\quad+\dfrac{2}{\mu_1 p}\left\|\dfrac{\dd\tilde{a}}{\dd t}\right\|_{L^\infty(0,T;W^{-1,p'}(\Omega))}\left(\dfrac{pT}{p'}+\sum_{n=0}^{s-2}k\|u_{\ell,k}^{n+1}\|_{W^{1,p}(\Omega)}^p\right)\\
			&\quad+\dfrac{8\beta^2A_0^2}{p\rho c\mu_1^3}\left(\rho g \dfrac{p-1}{2p}\right)^{2p-2}\|\tilde{a}\|_{L^\infty(0,T;W^{-1,p'}(\Omega))}^2\sum_{n=0}^{s-2}k\|u_{\ell,k}^{n+1}\|_{W^{1,p}(\Omega)}^p\\
			&\quad+\dfrac{4(p-2)\beta^2A_0^2 T}{p\rho c\mu_1^3}\left(\rho g \dfrac{p-1}{2p}\right)^{2p-2}\|\tilde{a}\|_{L^\infty(0,T;W^{-1,p'}(\Omega))}^2 +\dfrac{\rho c}{4\mu_1}\sum_{n=0}^{s-2}k\left\|\dfrac{\tilde{\Theta}_{\ell,k}^{n+1}-\tilde{\Theta}_{\ell,k}^{n}}{k}\right\|_{L^2(\Omega\times (0,L))}^2.
		\end{aligned}
	\end{equation*}
	
	An application of~\eqref{est:1}, \eqref{est:2} and~\eqref{est:9} in turn implies the validity of estimates~\eqref{est:3} and ~\eqref{est:4}.
	
	$(vi)$\emph{Proof of~\eqref{est:5}}. In order to establish the estimate~\eqref{est:5}, we exploit the boundedness of the $p$-Laplace operator (cf., e.g., Section~12.6 in~\cite{{Ciarlet2025}}), the validity of the embedding $L^\alpha(\Omega)\hookrightarrow W^{-1,p'}(\Omega)$, and~\eqref{est:4} so as to be in a position to evaluate
	\begin{equation}
		\label{prelim:6}
		\begin{aligned}
			&\left\|\dfrac{|u_{\ell,k}^{n+1}|^{\alpha-2}u_{\ell,k}^{n+1}-|u_{\ell,k}^{n}|^{\alpha-2}u_{\ell,k}^{n}}{k}\right\|_{W^{-1,p'}(\Omega)}\\
			&\le \mu_2\left\|\nabla \cdot \left(|\nabla u_{\ell,k}^{n+1}|^{p-2} \nabla u_{\ell,k}^{n+1}\right)\right\|_{W^{-1,p'}(\Omega)}
			+\dfrac{\mu_2}{\ell}\|\{u_{\ell,k}^{n+1}\}^{-}\|_{W^{-1,p'}(\Omega)}+\|\tilde{a}\|_{W^{1,p}(0,T;\mathcal{C}^0(\overline{\Omega}))}\\
			&\le C\left(1+\dfrac{1}{\ell}\right), \quad\textup{ for some } C>0 \textup{ independent of } N,
		\end{aligned}
	\end{equation}
	for all $0 \le n \le N-1$, thus establishing the estimate~\eqref{est:5}.
	
	$(vii)$\emph{Proof of~\eqref{est:5:1}}. By~\eqref{est:1}, for each $0\le n \le N$, we have that:
	\begin{equation*}
		\begin{aligned}
			&\|u_{\ell,k}^{n}\|_{L^\alpha(\Omega)} =\left(\int_{\Omega} |u_{\ell,k}^{n}|^\alpha \dd x\right)^{1/\alpha}
			=\left(\int_{\Omega} \left||u_{\ell,k}^{n}|^{\alpha-2}u_{\ell,k}^{n}\right|^\frac{\alpha}{\alpha-1} \dd x\right)^{1/\alpha}\\
			&=\left(\int_{\Omega} \left||u_{\ell,k}^{n}|^{\alpha-2}u_{\ell,k}^{n}\right|^{\alpha'} \dd x\right)^\frac{1}{\alpha' (\alpha-1)}
			=\left\||u_{\ell,k}^{n}|^{\alpha-2} u_{\ell,k}^{n}\right\|_{L^{\alpha'}(\Omega)}^{\alpha'/\alpha}.
		\end{aligned}
	\end{equation*}
	
	Since the first term is bounded independently of $N$ and $\ell$, we immediately infer that the sequence
	\begin{equation}
		\label{chi-kappa}
		\left\{\max_{0\le n\le N} \left\||u_{\ell,k}^{n}|^{\alpha-2} u_{\ell,k}^{n}\right\|_{L^{\alpha'}(\Omega)}\right\}_{N=1}^\infty,
	\end{equation}
	is bounded independently of $N$ and $\ell$, thus establishing~\eqref{est:5:1}. The proof is complete.
\end{proof}

The proof of Theorem~\ref{thm:2} can be summarised by means of the following diagram.
\begin{figure}[ht]
	\centering
	\begin{tikzpicture}[
		scale=1.2,
		mystep/.style={rectangle, draw=black, fill=blue!12, rounded corners=4pt,
			minimum width=2cm, minimum height=0.7cm, align=center,
			font=\footnotesize\sffamily},
		est/.style={rectangle, draw=black, fill=green!10, rounded corners=2pt,
			minimum width=2.4cm, minimum height=0.65cm, align=center,
			font=\footnotesize\ttfamily},
		arrow/.style={->, >=stealth, thick, shorten >=2pt, shorten <=2pt},
		]
		
		\node[mystep] (i)   at (0,   6)    {$(i)$};
		\node[mystep] (ii)  at (0, 4.5)  {$(ii)$};
		\node[mystep] (iii) at (2.5,  4.5)  {$(iii)$};
		\node[mystep] (iv)  at (2.5,  3)    {$(iv)$};
		\node[mystep] (v)   at (0,   1.5)   {$(v)$};
		\node[mystep] (vi)  at (-2.5, 0)    {$(vi)$};
		\node[mystep] (vii) at (2.5,  0)    {$(vii)$};
		
		\node[est] (e1)   at (-4,   6)    {\eqref{est:1}};
		\node[est] (e2)   at (4,    6)    {\eqref{est:2}};
		\node[est] (e31)  at (-4,   4.5)  {\eqref{est:3:1}};
		\node[est] (e6)   at (5.5,  4.5)  {\eqref{est:6}};
		\node[est] (e7)   at (5.5,  3)    {\eqref{est:7}};
		\node[est] (e8)   at (5.5,  1.5)  {\eqref{est:8}};
		\node[est] (e9)   at (3,    1.5)  {\eqref{est:9}};
		\node[est] (e3)  at (-2.5, 2.7)  {\eqref{est:3}};
		\node[est] (e4)  at (-2.5, 1.5)  {\eqref{est:4}};
		\node[est] (e5)   at (-5.5, 0)    {\eqref{est:5}};
		\node[est] (e51)  at (5.5,  0)    {\eqref{est:5:1}};
		
		\draw[arrow] (i)   -- (e1);
		\draw[arrow] (i)   -- (e2);
		\draw[arrow] (ii)  -- (e31);
		\draw[arrow] (iii) -- (e6);
		\draw[arrow] (iii) -- (e7);
		\draw[arrow] (iv)  -- (e8);
		\draw[arrow] (iv)  -- (e9);
		\draw[arrow] (v)   -- (e3);
		\draw[arrow] (v)   -- (e4);
		\draw[arrow] (vi)  -- (e5);
		\draw[arrow] (vii) -- (e51);
		
		\draw[arrow, dashed, color=red!70] (e1)  -- (ii);
		\draw[arrow, dashed, color=red!70] (e1)  -- (v);
		\draw[arrow, dashed, color=red!70] (e1)  -- (vii);
		\draw[arrow, dashed, color=red!70] (e2)  -- (iii);
		\draw[arrow, dashed, color=red!70] (e2)  -- (iv);
		\draw[arrow, dashed, color=red!70] (e2)  -- (v);
		\draw[arrow, dashed, color=red!70] (e2)  -- (v);
		\draw[arrow, dashed, color=red!70] (e7)  -- (iv);
		\draw[arrow, dashed, color=red!70] (e9)  -- (v);
		\draw[arrow, dashed, color=red!70] (e4) -- (vi);
		
	\end{tikzpicture}
	\caption{Dependency graph for the proof of Theorem~\ref{thm:2}. Solid arrows indicate which estimate is proved by each step; dashed arrows indicate which previously proved estimate is used as an input.}
	\label{fig:dependency}
\end{figure}

For each $\ell>0$, define the operator $B_\ell:W_0^{1,p}(\Omega)\to W^{-1,p'}(\Omega)$ by
\begin{equation}
	\label{Bkappa}
	\langle B_\ell(u),v\rangle_{W^{-1,p'}(\Omega), W_0^{1,p}(\Omega)}:=\int_{\Omega}|\nabla u|^{p-2}\nabla u\cdot\nabla v\dd x,
\end{equation}
for all $u,v\in W_0^{1,p}(\Omega)$.

The \emph{a priori} estimates~\eqref{est:1}--\eqref{est:9} can be summarised as follows:
\begin{equation}
	\label{estimates}
	\begin{aligned}
		\{\Pi_k \bm{u}_{\ell,k}\}_{N=1}^\infty &\textup{ is bounded in } L^\infty(0,T;W_0^{1,p}(\Omega)) \textup{ independently of }N \textup{ and }\ell,\\
		\{|\Pi_k \bm{u}_{\ell,k}|^\frac{\alpha-2}{2}\Pi_k \bm{u}_{\ell,k}\}_{N=1}^\infty &\textup{ is bounded in } L^\infty(0,T;L^2(\Omega))\textup{ independently of }N \textup{ and }\ell,\\
		\{D_k(|\Pi_k \bm{u}_{\ell,k}|^\frac{\alpha-2}{2}\Pi_k \bm{u}_{\ell,k})\}_{N=1}^\infty &\textup{ is bounded in } L^2(0,T;L^2(\Omega))\textup{ independently of }N \textup{ and }\ell,\\
		\{|\Pi_k \bm{u}_{\ell,k}|^{\alpha-2} \Pi_k \bm{u}_{\ell,k}\}_{N=1}^\infty &\textup{ is bounded in } L^\infty(0,T;L^{\alpha'}(\Omega))\textup{ independently of }N \textup{ and }\ell,\\
		\{D_k(|\Pi_k \bm{u}_{\ell,k}|^{\alpha-2}\Pi_k \bm{u}_{\ell,k})\}_{N=1}^\infty &\textup{ is bounded in } L^\infty(0,T;W^{-1,p'}(\Omega))\textup{ independently of }N,\\
		\{\Pi_k\tilde{\bm{\Theta}}_{\ell, k}\}_{N=1}^\infty&\textup{ is bounded in } L^\infty(0,T;V) \textup{ independently of }N \textup{ and }\ell,\\
		\{-\Delta_b(\Pi_k\tilde{\bm{\Theta}}_{\ell, k})\}_{N=1}^\infty&\textup{ is bounded in } L^\infty(0,T;V') \textup{ independently of }N \textup{ and }\ell,\\
		\{B_\ell(\Pi_k \bm{u}_{\ell,k})\}_{N=1}^\infty &\textup{ is bounded in } L^\infty(0,T;W^{-1,p'}(\Omega))\textup{ independently of }N \textup{ and }\ell,\\
		\{D_k(\Pi_k\tilde{\bm{\Theta}}_{\ell, k})\}_{N=1}^\infty &\textup{ is bounded in } L^2(0,T;L^2(\Omega\times (0,L)))\textup{ independently of }N \textup{ and }\ell.
	\end{aligned}
\end{equation}

Observe, in particular, that the third last boundedness derives from the fact that, for each $\Xi\in V$ such that $\|\Xi\|_V=1$, the following estimates hold for a.a. $t\in (0,T)$
\begin{equation*}
	\left|\langle-\Delta_b(\Pi_k\tilde{\bm{\Theta}}_{\ell, k}(t)),\Xi\rangle_{V',V}\right|\le\|\Pi_k\tilde{\bm{\Theta}}_{\ell, k}\|_{L^\infty(0,T;V)}+\hat{C}_{\textup{tr}}\|q_{\textup{geo}}^\perp\|_{L^2(\Omega)},
\end{equation*}
while the second last boundedness descends from~\eqref{prelim:6}.

\section{Other preliminary lemmas}
\label{Sec:3:bis}

Let us now show that, thanks to the discrete version of Dubinskii's lemma (Lemma~\ref{Dub:dis}) applies to the sequence $\left\{|\Lambda_k \bm{u}_{\ell,k}|^\frac{p-1}{2p}\right\}_{N=1}^\infty$. As an application of the Banach--Alaoglu--Bourbaki theorem (cf.,e.g., Theorem~3.6 of~\cite{Brez11}), up to passing to a suitable subsequence, it results $\Lambda_k\bm{u}_{\ell,k}\wsc u_\ell$ in $L^\infty(0,T;W_0^{1,p}(\Omega))$ as $N\to\infty$.
\begin{lemma}
	\label{Dub:appl-1}
	Consider the sequence $\{\Lambda_k \bm{u}_{\ell,k}\}_{N=1}^\infty$ in~\eqref{PilL}. Then, up to passing to a suitable subsequence, it results that
	\begin{equation*}
		\begin{aligned}
			\Lambda_k \bm{u}_{\ell,k}\wsc u_\ell,&\quad\textup{ in }L^\infty(0,T;W_0^{1,p}(\Omega))\text{ as }N\to\infty,\\
			|\Lambda_k \bm{u}_{\ell,k}|^\frac{\alpha-2}{2} \Lambda_k \bm{u}_{\ell,k}\to |u_\ell|^\frac{\alpha-2}{2} u_\ell,&\quad\textup{ in } L^{q_0}(0,T;L^\frac{2p}{\alpha}(\Omega)) \textup{ as } N\to\infty \textup{ for all }1\le q_0<\infty.
		\end{aligned}
	\end{equation*}
\end{lemma}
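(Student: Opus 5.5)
The first convergence comes directly from the a priori estimates. By~\eqref{est:4}, together with $u_0\in K_{\textup{surf}}\cap\mathcal{C}^1(\overline{\Omega})$ for the first time slab, the family $\{\Lambda_k\bm{u}_{\ell,k}\}_{N=1}^\infty$ is bounded in $L^\infty(0,T;W_0^{1,p}(\Omega))$. This space is the dual of the separable space $L^1(0,T;W^{-1,p'}(\Omega))$. The Banach--Alaoglu--Bourbaki theorem therefore gives a weak-star convergent subsequence, and we call its limit $u_\ell$.

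For the strong convergence, we apply the discrete Dubinskii lemma (Lemma~\ref{Dub:dis}), in its left-shift form from Remark~\ref{rem:left-shift}, to the vectors $v_k^n:=|u_{\ell,k}^n|^{\frac{\alpha-2}{2}}u_{\ell,k}^n$. We take $A_1:=L^2(\Omega)$ and $A_0:=L^{2p/\alpha}(\Omega)$. The embedding $A_0\hookrightarrow A_1$ holds because $2p/\alpha>2$ and $\Omega$ is bounded. For $S$ we take the cone $\{|w|^{\frac{\alpha-2}{2}}w;\ w\in W_0^{1,p}(\Omega)\}$, which is invariant under multiplication by any $\lambda\in\mathbb{R}$. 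On $S$ we define the gauge
\begin{equation*}
M(v):=\left\||v|^{\frac{2-\alpha}{\alpha}}v\right\|_{W^{1,p}(\Omega)}^{\alpha/2},
\end{equation*}
which is $\|w\|_{W^{1,p}(\Omega)}^{\alpha/2}$ when $v=|w|^{\frac{\alpha-2}{2}}w$. It is non-negative, vanishes only at $v=0$, and satisfies $M(\lambda v)=|\lambda|M(v)$ because $w\mapsto|w|^{\frac{\alpha-2}{2}}w$ is positively homogeneous of degree $\alpha/2$.

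The two quantitative hypotheses of Lemma~\ref{Dub:dis} then follow from Theorem~\ref{thm:2}. Estimate~\eqref{est:4} gives $\max_n M(v_k^n)\le C$ uniformly in $N$ and $\ell$, so $q_0=\infty$ is admissible. Estimate~\eqref{est:3} bounds $k\sum_n\|(v_k^{n+1}-v_k^n)/k\|_{L^2(\Omega)}^2$, so $q_1=2$. The excluded pair $(\infty,1)$ is avoided, and part~$(b)$ of Lemma~\ref{Dub:dis} yields relative compactness in $L^{q_0}(0,T;L^{2p/\alpha}(\Omega))$ for every $1\le q_0<\infty$.

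The step I expect to be the main obstacle is the relative compactness of $\mathscr{M}=\{v\in S;\ M(v)\le1\}$ in $A_0$. Such a $v$ has the form $|w|^{\frac{\alpha-2}{2}}w$ with $\|w\|_{W^{1,p}(\Omega)}\le1$. Take any sequence of this form. By the Rellich--Kondra\v{s}ov theorem, $W_0^{1,p}(\Omega)\hookrightarrow\hookrightarrow\mathcal{C}^0(\overline{\Omega})$ since $p>2$, so a subsequence $w_j$ converges uniformly to some $w$. The map $t\mapsto|t|^{\frac{\alpha-2}{2}}t$ is continuous, indeed H\"older continuous of exponent $\alpha/2$ on bounded sets (compare Lemma~\ref{lem:2-0}). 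Hence $|w_j|^{\frac{\alpha-2}{2}}w_j\to|w|^{\frac{\alpha-2}{2}}w$ uniformly, and therefore in $L^{2p/\alpha}(\Omega)$.

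It remains to identify the limit. Let $\chi$ denote the strong limit of $|\Lambda_k\bm{u}_{\ell,k}|^{\frac{\alpha-2}{2}}\Lambda_k\bm{u}_{\ell,k}$ along a further subsequence; we may also assume the convergence holds a.e. in $(0,T)\times\Omega$. The inverse map $s\mapsto|s|^{\frac{2-\alpha}{\alpha}}s$ is continuous, and Lemma~\ref{lem:5:improved} gives strong convergence of $\{|\Lambda_k\bm{u}_{\ell,k}|\}$ in $L^p$. Together these show that $\Lambda_k\bm{u}_{\ell,k}$ converges a.e. to $|\chi|^{\frac{2-\alpha}{\alpha}}\chi$. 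Dominated convergence then upgrades this to strong convergence in $L^p((0,T)\times\Omega)$. Uniqueness of distributional limits, compared with the weak-star limit $u_\ell$, gives $|\chi|^{\frac{2-\alpha}{\alpha}}\chi=u_\ell$, that is, $\chi=|u_\ell|^{\frac{\alpha-2}{2}}u_\ell$, which completes the proof.
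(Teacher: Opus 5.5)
Your proof is correct. Its skeleton is the paper's: Banach--Alaoglu, then Lemma~\ref{Dub:dis} with $A_0=L^{2p/\alpha}(\Omega)$, $A_1=L^2(\Omega)$, the same cone and $q_1=2$. Using the full $W^{1,p}$ norm in the gauge, and $q_0=\infty$ with part~$(b)$, are cosmetic changes. The real differences are in the two technical sub-steps.

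\emph{Relative compactness of $\mathscr{M}$ in $L^{2p/\alpha}(\Omega)$.} The paper uses only $W_0^{1,p}(\Omega)\hookrightarrow\hookrightarrow L^p(\Omega)$. It extracts a weak limit in $L^{2p/\alpha}(\Omega)$, identifies it by a monotonicity argument, obtains convergence of norms through Lemma~\ref{lem:2-0}, and concludes by uniform convexity. You instead use the compact embedding into $\mathcal{C}^0(\overline{\Omega})$ and the continuity of $t\mapsto|t|^{\frac{\alpha-2}{2}}t$. This is shorter and entirely elementary.

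\emph{Identification of the limit.} The paper applies a Minty-type argument to the monotone Nemytskii operator $w\mapsto|w|^{\frac{\alpha-2}{2}}w$ from $L^p$ into $L^{p'}$. It combines the weak convergence of $\Lambda_k\bm{u}_{\ell,k}$ with the strong convergence of its image. You invert the power map along an a.e.\ convergent subsequence and use dominated convergence. This gives the extra information that $\Lambda_k\bm{u}_{\ell,k}$ itself converges strongly.

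Two small points on that last step. First, Lemma~\ref{lem:5:improved} is superfluous for the a.e.\ convergence, since continuity of the inverse map already suffices. Second, you should name the dominating function. The constant given by \eqref{est:4} together with $W_0^{1,p}(\Omega)\hookrightarrow\mathcal{C}^0(\overline{\Omega})$ works; alternatively, use Lemma~\ref{lem:5:improved} combined with Vitali's theorem.

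The price of your route is its reliance on the Morrey embedding, i.e.\ on $p>2=\dim\Omega$. The paper's argument, based on compactness into $L^p$ plus monotonicity, avoids this. In the present range $2.8\le p\le 5$ the difference costs nothing.
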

\begin{proof}
	By virtue of \eqref{estimates} and the fact that $\Lambda_k$ differs from $\Pi_k$ only by a time translation with time-step $k$, the sequence $\{\Lambda_k \bm{u}_{\ell,k}\}_{N=1}^\infty$ is bounded in $L^\infty(0,T;W_0^{1,p}(\Omega))$, and this bound is independent of $k$. Recall that the sequence $\left\{|\Lambda_k \bm{u}_{\ell,k}|^\frac{\alpha-2}{2} \Lambda_k \bm{u}_{\ell,k}\right\}_{N=1}^\infty$ is bounded in $L^\infty(0,T;L^2(\Omega))$ independently of $k$, and that the sequence $\left\{D_k\left(|\Lambda_k \bm{u}_{\ell,k}|^\frac{\alpha-2}{2} \Lambda_k \bm{u}_{\ell,k}\right)\right\}_{N=1}^\infty$ is bounded in $L^2(0,T;L^2(\Omega))$ independently of $k$. Consequently, by extracting a suitable subsequence, we obtain
	\begin{equation}
		\label{conv-Dub-1}
		\begin{aligned}
			\Lambda_k \bm{u}_{\ell,k}\wsc u_\ell,&\quad\text{ in }L^\infty(0,T;W_0^{1,p}(\Omega))\text{ as }N\to\infty,\\
			|\Lambda_k \bm{u}_{\ell,k}|^\frac{\alpha-2}{2} \Lambda_k \bm{u}_{\ell,k}\wsc v_\ell,&\quad\text{ in } L^2(0,T;L^2(\Omega))\text{ as }N\to\infty,
		\end{aligned}
	\end{equation}
	and the first claimed convergence is thus established.
	
	The rest of the proof, devoted to the identification of the limit in the second convergence of~\eqref{conv-Dub-1}, is divided into three parts, numbered $(i)$--$(iii)$.
	
	$(i)$ \emph{Construction of a semi-normed cone}. For any $u \in W_0^{1,p}(\Omega)$, we seek a real number $\beta$ such that:
	\begin{equation*}
		\left||u|^\frac{\alpha-2}{2} u\right|^\frac{\beta-2}{2} |u|^\frac{\alpha-2}{2} u =u.
	\end{equation*}
	
	We note that a sufficient condition for this is that $\beta$ satisfies
	\begin{equation*}
		\left(\dfrac{\alpha-2}{2}+1\right)\dfrac{\beta-2}{2}+\dfrac{\alpha-2}{2}=0,
	\end{equation*}
	which is equivalent to
	\begin{equation*}
		\beta= 2+2\left(\dfrac{2-\alpha}{2}\dfrac{2}{\alpha}\right)=\dfrac{4}{\alpha}.
	\end{equation*}
	
	Let $v:=|u|^\frac{\alpha-2}{2} u$ and observe that if $\beta=4/\alpha$, then $|v|^\frac{\beta-2}{2} v \in W_0^{1,p}(\Omega)$. Hence, the set
	\begin{equation*}
		\tilde{S}:=\{v;(|v|^\frac{(4/\alpha)-2}{2}v) \in W_0^{1,p}(\Omega)\}
	\end{equation*}
	is a non-empty cone. Define the function
	\begin{equation*}
		\tilde{M}(v):=\left\|\nabla(|v|^\frac{(4/\alpha)-2}{2}v)\right\|_{\bm{L}^p(\Omega)}^\frac{\alpha}{2}=\left\|\nabla(|v|^\frac{2-\alpha}{\alpha} v)\right\|_{\bm{L}^p(\Omega)}^\frac{\alpha}{2},\quad\text{ for all }v \in \tilde{S}.
	\end{equation*}
	
	We need to check that $\tilde{M}$ is a definite homogeneous gauge. Clearly, the function $\tilde{M}$ is non-negative and if $v=0$ then $\tilde{M}(v)=0$.
	If $\tilde{M}(v)=0$, an application of the Poincar\'{e}--Friedrichs inequality gives
	\begin{equation*}
		0=\tilde{M}(v)\ge c_P^{-\alpha/2}\left\||v|^{\frac{2-\alpha}{\alpha}}v\right\|_{L^p(\Omega)}^{\alpha/2},
	\end{equation*}
	and the latter in turn implies that $|v|^{\frac{2-\alpha}{\alpha}}v=0$ in $L^p(\Omega)$. The definition of $\alpha$ in turn gives that
	\begin{equation*}
		\dfrac{2-\alpha}{\alpha}=\dfrac{p+1}{3p-1}>0,
	\end{equation*}
	which implies $v=0$ as it was to be proved. Finally, let $\lambda\ge 0$ and let $v\in\tilde{S}$ and note that a straightforward computation leads to $\tilde{M}(\lambda v)=\lambda \tilde{M}(v)$.
	
	$(ii)$ \emph{A pre-compactness argument}. Define the set
	\begin{equation*}
		\tilde{\mathscr{M}}:=\{v\in \tilde{S}; \tilde{M}(v)\le 1\}.
	\end{equation*}
	
	An application of the Poincar\'e--Friedrichs inequality (cf., e.g., Theorem~8.3-3$(a)$ in~\cite{Ciarlet2025}) gives:
	\begin{equation*}
		\begin{aligned}
			1 &\ge \tilde{M}(v) \ge (1+c_P^p)^{-\frac{\alpha}{2p}} \left\||v|^\frac{2-\alpha}{\alpha} v\right\|_{W^{1,p}(\Omega)}^\frac{\alpha}{2} \ge (1+c_P^p)^{-\frac{\alpha}{2p}} \left\||v|^\frac{2-\alpha}{\alpha} v\right\|_{L^p(\Omega)}^\frac{\alpha}{2}\\
			&=(1+c_P^p)^{-\frac{\alpha}{2p}}\left(\int_{\Omega} \left||v|^\frac{2-\alpha}{\alpha} v\right|^p\dd x\right)^{\alpha/(2p)}
			=(1+c_P^p)^{-\frac{\alpha}{2p}}\left(\int_{\Omega} |v|^\frac{2p}{\alpha}\dd x\right)^{\alpha/(2p)}
			=(1+c_P^p)^{-\frac{\alpha}{2p}}\|v\|_{L^\frac{2p}{\alpha}(\Omega)}.
		\end{aligned}
	\end{equation*}
	
	Let $\{v_m\}_{m=1}^\infty$ be a sequence in $\tilde{\mathscr{M}}$. By the Rellich--Kondra\v{s}ov theorem (cf., e.g., Theorem~8.4-3 in~\cite{Ciarlet2025}), we have $W_0^{1,p}(\Omega) \hookrightarrow\hookrightarrow L^p(\Omega)$. Thus, up to passing to a subsequence, there exists $w \in L^p(\Omega)$ such that:
	\begin{equation}
		\label{conv-3}
		(|v_m|^\frac{2-\alpha}{\alpha} v_m) \to w,\quad\text{ in } L^p(\Omega)\text{ as }m\to\infty.
	\end{equation}
	
	Given that $1<\alpha<2$ and $2.8\le p \le 5$, it follows that $1 \le p' <2<p<\frac{2p}{\alpha}<2p<\infty$ and that $\{v_m\}_{m=1}^\infty$ is bounded in $L^\frac{2p}{\alpha}(\Omega)$. The reflexivity of $L^\frac{2p}{\alpha}(\Omega)$ allows us to apply the Banach--Eberlein--Smulian theorem (see, e.g., Theorem~6.17-8 of~\cite{Ciarlet2025}) and extract a subsequence, still denoted $\{v_m\}_{m=1}^\infty$, that converges weakly to an element $v$ in $L^{\frac{2p}{\alpha}}(\Omega)$ as $m\to\infty$. Consider the mapping:
	\begin{equation*}
		v \in L^{\frac{2p}{\alpha}}(\Omega) \mapsto (|v|^\frac{2-\alpha}{\alpha} v) \in L^{\left(\frac{2p}{\alpha}\right)'}(\Omega).
	\end{equation*}
	
	This mapping is hemi-continuous and monotone because the function $\xi\in \mathbb{R} \to (|\xi|^\frac{2-\alpha}{\alpha}\xi) \in \mathbb{R}$ is continuous and monotone. Therefore, by Theorem~12.5-2$(a)$ of~\cite{Ciarlet2025}, we have $w=|v|^\frac{2-\alpha}{\alpha}v \in L^{\left(\frac{2p}{\alpha}\right)'}(\Omega)$. Hence, the convergence in \eqref{conv-3} and the uniqueness of the limit gives:
	\begin{equation}
		\label{conv-3-new}
		(|v_m|^\frac{2-\alpha}{\alpha} v_m) \to w=(|v|^\frac{2-\alpha}{\alpha} v),\quad\text{ in } L^p(\Omega)\text{ as }m\to\infty.
	\end{equation}
	
	To show that $\tilde{\mathscr{M}}$ is relatively compact in $L^\frac{2p}{\alpha}(\Omega)$, we must show that every sequence $\{v_m\}_{m=1}^\infty \subset \tilde{\mathscr{M}}$ has a convergent subsequence in $L^\frac{2p}{\alpha}(\Omega)$.
	
	Since $2p/\alpha>1$, Lemma~\ref{lem:2-0} gives:
	\begin{equation*}
		\begin{aligned}
			&\left|\|v_m\|_{L^\frac{2p}{\alpha}(\Omega)}-\|v\|_{L^\frac{2p}{\alpha}(\Omega)}\right|
			=\left|\left(\int_{\Omega} |v_m|^{\frac{2p}{\alpha}} \dd x\right)^{\frac{\alpha}{2p}}-\left(\int_{\Omega} |v|^{\frac{2p}{\alpha}} \dd x\right)^{\frac{\alpha}{2p}}\right|\\
			&\le\left|\int_{\Omega} |v_m|^{\frac{2p}{\alpha}} \dd x-\int_{\Omega} |v|^{\frac{2p}{\alpha}} \dd x\right|^{\frac{\alpha}{2p}}
			=\left|\int_{\Omega} \left||v_m|^{\frac{2-\alpha}{\alpha}}v_m\right|^p \dd x-\int_{\Omega} \left||v|^{\frac{2-\alpha}{\alpha}}v\right|^p \dd x\right|^{\frac{\alpha}{2p}}\\
			&=\left|\left\||v_m|^{\frac{2-\alpha}{\alpha}}v_m\right\|_{L^p(\Omega)}^p-\left\||v|^{\frac{2-\alpha}{\alpha}}v\right\|_{L^p(\Omega)}^p\right|^{\frac{\alpha}{2p}}.
		\end{aligned}
	\end{equation*}
	
	From \eqref{conv-3-new}, the right-hand side tends to zero as $m \to \infty$, establishing that
	\begin{equation*}
		\|v_m\|_{L^\frac{2p}{\alpha}(\Omega)}\to\|v\|_{L^\frac{2p}{\alpha}(\Omega)},\quad\text{ as }m\to\infty.
	\end{equation*}
	
	Since $L^\frac{2p}{\alpha}(\Omega)$ is uniformly convex, Theorem~5.15-3 of~\cite{Ciarlet2025} implies that 
	\begin{equation*}
		v_m \to v,\quad\textup{ in } L^\frac{2p}{\alpha}(\Omega) \textup{ as }m\to\infty,
	\end{equation*}
	which establishes the desired relative compactness.
	
	$(iii)$ \emph{Application of Lemma~\ref{Dub:dis} and limit identification.} Since $\tilde{\mathscr{M}}$ is relatively compact in $L^{\frac{2p}{\alpha}}(\Omega)$, since
	\begin{equation*}
		\tilde{M}\bigl(|\Lambda_k \bm{u}_{\ell,k}|^{\frac{\alpha-2}{2}} \Lambda_k \bm{u}_{\ell,k}\bigr)=\|\nabla\Lambda_k \bm{u}_{\ell,k}\|_{\bm{L}^p(\Omega)}^{\alpha/2}
	\end{equation*}
	is bounded in $L^\infty(0,T)$ independently of $k$, and since the sequence $\bigl\{D_k\bigl(|\Lambda_k \bm{u}_{\ell,k}|^{\frac{\alpha-2}{2}} \Lambda_k \bm{u}_{\ell,k}\bigr)\bigr\}_{N=1}^\infty$ is bounded in $L^2(0,T;L^2(\Omega))$ independently of~$k$, we apply Dubinskii's lemma for finite difference quotients (Lemma~\ref{Dub:dis}) with $A_0=L^{\frac{2p}{\alpha}}(\Omega)$, $A_1=L^2(\Omega)$, $q_0\in[1,\infty)$, and $q_1=2$. Therefore, the following convergence holds:
	\begin{equation}
		\label{conv-4}
		|\Lambda_k \bm{u}_{\ell,k}|^{\frac{\alpha-2}{2}} \Lambda_k \bm{u}_{\ell,k} \to v_\ell,
		\quad\textup{ in } L^{q_0}(0,T;L^{\frac{2p}{\alpha}}(\Omega)) \textup{ as } N\to\infty.
	\end{equation}
	
	We now identify the limit~$v_\ell$. Define the Nemytskii operator $\hat{A}:L^p((0,T)\times\Omega)\to L^{p'}((0,T)\times\Omega)$ by
	\begin{equation*}
		\hat{A}(w):=|w|^{\frac{\alpha-2}{2}} w,\quad \textup{ for all }w\in L^p((0,T)\times\Omega).
	\end{equation*}
	
	For $w\in L^p((0,T)\times\Omega)$ we have $|\hat{A}(w)|^{p'}=|w|^{\alpha p'/2}$. Thanks to the definition of $\alpha$ (viz.~\eqref{alpha}), the inequality $\frac{\alpha p'}{2}\le p$ reduces to $4p^2-7p+1\ge0$, which is satisfied for $2.8\le p\le5$. Hence, it results that $\hat{A}(w)\in L^{p'}((0,T)\times\Omega)$, and $\hat{A}$ is well defined.
	
	Since $2p/\alpha\ge p'$ (equivalent, again, to $4p^2-7p+1\ge0$), since $\Omega$ is bounded, and since $0<T<\infty$, the following continuous embedding holds:
	\begin{equation*}
		L^{\frac{2p}{\alpha}}((0,T)\times\Omega)\hookrightarrow L^{p'}((0,T)\times\Omega).
	\end{equation*}
	
	Choosing $q_0=p'$ in~\eqref{conv-4}, an application of Theorem~8.28 in~\cite{Leoni2017} gives that the previous strong convergence also holds in $L^{p'}(0,T;L^{p'}(\Omega))$:
	\begin{equation*}
		\hat{A}(\Lambda_k \bm{u}_{\ell,k}) \to v_\ell,\quad\textup{ in } L^{p'}(0,T;L^{p'}(\Omega)) \textup{ as } N\to\infty.
	\end{equation*}
	
	The scalar function $\xi\mapsto|\xi|^{\frac{\alpha-2}{2}}\xi$ is continuous and strictly increasing, therefore the associated Nemytskii operator $A$ is monotone and hemi-continuous from $L^p(0,T;L^p(\Omega))$ to its dual $L^{p'}(0,T;L^{p'}(\Omega))$.  From~\eqref{conv-Dub-1} we have $\Lambda_k \bm{u}_{\ell,k}\rightharpoonup u_\ell$ in $L^p(0,T;L^p(\Omega))$ as $N\to\infty$. Applying Theorem~12.5-2$(a)$ of~\cite{Ciarlet2025} with $X=L^p(0,T;L^p(\Omega))$ and $X'=L^{p'}(0,T;L^{p'}(\Omega))$ yields
	\begin{equation*}
		v_\ell = \hat{A}(u_\ell)=|u_\ell|^{\frac{\alpha-2}{2}} u_\ell,
	\end{equation*}
	and the proof is complete.
\end{proof}

Thanks to Lemma~\ref{Dub:appl-1}, we can apply Lemma~\ref{lem:5:improved} and Lemma~\ref{lem:11}, and establish that:
\begin{equation}
	\label{RHS:thermal}
	\left\{\left(\left\{|\Lambda_k \bm{u}_{\ell,k}|^\frac{p-1}{2p}-z\right\}^+\right)^p\right\}_{N=1}^\infty \textup{ converges strongly in }L^\frac{2p}{p-1}((0,T)\times\Omega\times (0,L)).
\end{equation}

We observe that, according to~\eqref{estimates} it is possible to extract a subsequence such that:
\begin{equation*}
	\Pi_k\tilde{\bm{\Theta}}_{\ell,k}\wsc\tilde{\Theta}_\ell,\quad\textup{ in }L^\infty(0,T;V) \textup{ as } N\to\infty.
\end{equation*}

We have already established in~\eqref{estimates} that $\{\Pi_k\tilde{\bm{\Theta}}_{\ell,k}\}_{N=1}^\infty$ is bounded in $L^\infty(0,T;V)$ independently of $N$ and that $\{D_k(\Pi_k\tilde{\bm{\Theta}}_{\ell, k})\}_{N=1}^\infty $ is bounded in $L^2(0,T;L^2(\Omega\times (0,L)))$ independently of $N$ and $\ell$. Thanks to the Rellich--Kondra\v{s}ov theorem, the dense and compact embedding $V\hookrightarrow\hookrightarrow L^q(\Omega\times (0,L))$ holds for all $2\le q<6=2^\ast$. Therefore, the following chain of embeddings holds:
\begin{equation}
	\label{chain-1}
	V\hookrightarrow\hookrightarrow L^q(\Omega\times (0,L))\hookrightarrow L^2(\Omega\times (0,L))\hookrightarrow L^{q'}(\Omega\times (0,L))\hookrightarrow\hookrightarrow V'.
\end{equation}

An application of the Aubin--Lions--Simon compactness theorem for finite difference quotients in time (Lemma~\ref{ALS:dis}) gives that, up to passing to subsequences, the following convergences hold for all $2\le q<6$:
\begin{equation}
	\label{ALS-fd-lr}
	\begin{aligned}
		\Pi_k\tilde{\bm{\Theta}}_{\ell,k}\to\tilde{\Theta}_\ell&,\quad\textup{ in } L^q(0,T;L^q(\Omega\times (0,L))) \textup{ as }N\to\infty,\\
		\Lambda_k\tilde{\bm{\Theta}}_{\ell,k}\to\tilde{\Theta}_\ell&,\quad\textup{ in }L^q(0,T;L^q(\Omega\times (0,L))) \textup{ as }N\to\infty.
	\end{aligned}
\end{equation}

The Nemitskii operator associated with the operator $A$ introduced in~\eqref{arrhenius} is denoted and defined by:
\begin{equation}
	\label{arrhenius-2}
	\mathcal{A}(\Xi)(t):=A(t,x,\Xi(t)),\quad\textup{ for all }\Xi\in L^1(0,T;L^1(\Omega\times (0,L))).
\end{equation}

Let us now discuss the regularity and continuity of the operators $\mathcal{A}$ and $\mu$.
\begin{lemma}
	\label{A-mu-properties}
	Let $0<T<\infty$, let $\Omega$ be a Lipschitz domain in $\mathbb{R}^2$, and let $0<L<\infty$. Assume that $1\le q<\infty$. Then, the operator $\mathcal{A}$ introduced in~\eqref{arrhenius-2} and the operator $\mu$ defined in~\eqref{mu-def:2} satisfy the following properties:
	\begin{itemize}
		\item[$(a)$] $\mathcal{A}(\Xi)\in\mathcal{C}^0([0,T];L^q(\Omega\times(0,L)))$, for all $\Xi\in\mathcal{C}^0([0,T];L^q(\Omega\times(0,L)))$;
		\item[$(b)$] $\mathcal{A}$ is continuous from $\mathcal{C}^0([0,T];L^q(\Omega\times(0,L)))$ to $\mathcal{C}^0([0,T];L^q(\Omega\times(0,L)))$;
		\item[$(c)$] $\mathcal{A}$ is continuous from $L^q(0,T;L^q(\Omega\times(0,L)))$ to $L^q(0,T;L^q(\Omega\times(0,L)))$;
		\item[$(d)$] $\mu(\Xi)\in\mathcal{C}^0([0,T])$, for all $\Xi\in\mathcal{C}^0([0,T];L^q(\Omega\times(0,L)))$;
		\item[$(e)$] $\mu$ is continuous from $\mathcal{C}^0([0,T];L^q(\Omega\times(0,L)))$ to $\mathcal{C}^0([0,T])$;
		\item[$(f)$] $\mu$ is continuous from $L^q(0,T;L^q(\Omega\times(0,L)))$ to $L^q(0,T)$.
	\end{itemize}
\end{lemma}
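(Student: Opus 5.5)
The whole argument rests on one pointwise Lipschitz bound, which I would prove first. For real numbers $\xi,\eta$,
\begin{equation*}
|A(\cdot,\cdot,\xi)-A(\cdot,\cdot,\eta)|\le A_0\beta e^{\beta\Theta_{\textup{m}}}\,|\xi-\eta|\quad\textup{whenever }\xi,\eta\ge 0 .
\end{equation*}
It follows from three facts: $s\mapsto\max\{s,A_{\textup{min}}\}$ is $1$-Lipschitz, the Mean Value Theorem applies to the exponential, and the argument $-\beta(\xi-\Theta_{\textup{m}})$ is bounded above by $\beta\Theta_{\textup{m}}$.

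For general real arguments the exponential is unbounded, so I would not use \eqref{arrhenius} directly. Instead I read $\mathcal{A}$ in the form actually used in \eqref{mu-def:2}, namely with the negative part $-\{\Xi-\Theta_{\textup{m}}\}^{-}$ in the exponent. Then the argument of $\exp$ is $-\beta\{\Xi-\Theta_{\textup{m}}\}^{-}$, which is non-positive. Combining Lemma~\ref{lem:1}$(a)$ (negative part is $1$-Lipschitz), Lemma~\ref{lemma:exp:1} (exponential of non-positive arguments is $1$-Lipschitz) and the $1$-Lipschitz max gives
\begin{equation*}
|\mathcal{A}(\Xi)-\mathcal{A}(\Psi)|\le A_0\beta|\Xi-\Psi|\quad\textup{pointwise.}
\end{equation*}
This is the step I expect to need care: the literal formula \eqref{arrhenius} is not globally Lipschitz for unconstrained $\Xi$. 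I would therefore state explicitly that $\mathcal{A}$ is understood through this truncated (negative-part) form, which is also the form $\mu$ uses.

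\textbf{Items $(a)$--$(c)$.} With the pointwise bound, these mirror Lemma~\ref{lemma:exp-2} and Lemma~\ref{lem:1}$(b)$. First, $\mathcal{A}(\Xi)(t)$ lies in $L^\infty\subset L^q(\Omega\times(0,L))$ because $A_{\textup{min}}\le\mathcal{A}\le A_0$ and the domain is bounded. Next, raise the pointwise bound to the power $q$ and integrate over $\Omega\times(0,L)$. This gives $\|\mathcal{A}(\Xi)(t)-\mathcal{A}(\Xi)(s)\|_{L^q}\le A_0\beta\|\Xi(t)-\Xi(s)\|_{L^q}$, which yields $(a)$. Taking $\max_t$ of the corresponding estimate for $\Xi_n$ versus $\Xi$ yields $(b)$. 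Integrating in $t$ instead of maximising, and using Theorem~8.28 of~\cite{Leoni2017} to identify the Bochner space with $L^q((0,T)\times\Omega\times(0,L))$, yields $(c)$.

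\textbf{Items $(d)$--$(f)$.} Set $F(\Xi)(t,s'):=\int_\Omega\{\Xi(t,x,s'L)-\Theta_{\textup{m}}\}^{-}\dd x$. The integrand of $\mu$ is a composition of $1$-Lipschitz maps applied to $F$, with the constant weight $(1-s')^p\le 1$. By Lemma~\ref{lemma:exp:1}, Lemma~\ref{lem:1}$(a)$ and Hölder's inequality (as in \eqref{Lip:est-1}), after the change of variables $z=s'L$ we get
\begin{equation*}
|\mu(\Xi)(t)-\mu(\Psi)(\tau)|\le C\int_0^L\!\!\int_\Omega|\Xi(t)-\Psi(\tau)|\dd x\dd z\le C|\Omega\times(0,L)|^{1-1/q}\|\Xi(t)-\Psi(\tau)\|_{L^q(\Omega\times(0,L))},
\end{equation*}
where $C$ depends only on $A_0,\beta,\rho,g,p,L,|\Omega|$. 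Taking $\Psi=\Xi$ and letting $\tau\to t$ gives $(d)$. Taking $\tau=t$ and the maximum over $t$ gives $(e)$. Taking the $L^q(0,T)$ norm in $t$ gives $(f)$; this can alternatively be phrased through Lemma~\ref{lem:avg}, Lemma~\ref{lem:avg-2} and Remark~\ref{rem:lem:avg}.
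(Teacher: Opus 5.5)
Your proposal is correct and follows essentially the paper's route. Like the paper (see its proof of $(c)$), you read $\mathcal{A}$ in the truncated form $\max\{A_0\exp(-\beta\{\Xi-\Theta_{\textup{m}}\}^{-}),A_{\textup{min}}\}$, and everything rests on the pointwise bound $|\mathcal{A}(\Xi)-\mathcal{A}(\Psi)|\le A_0\beta|\Xi-\Psi|$ together with the Lipschitz estimate \eqref{Lip:est-1} for $\mu$. The only difference is presentational: the paper gets $(a)$, $(b)$, $(d)$ and $(e)$ by chaining the continuity statements of Lemma~\ref{lem:1}, Lemma~\ref{lemma:exp-2}, Lemma~\ref{lem:avg-2} and Remark~\ref{rem:lem:avg}, whereas you derive all six items directly from the two quantitative Lipschitz bounds, and you state explicitly the truncation that the literal formula \eqref{arrhenius} would not support.
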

\begin{proof}
	The proof is divided into four steps, numbered $(i)$--$(iv)$.
	
	$(i)$ \emph{Proof of properties $(a)$ and $(b)$}. Observe that the operator $\mathcal{A}$ is composition of the affine shift $\Xi\mapsto (\Xi-\Theta_{\textup{m}})$, the operator $(-\{\cdot\}^{-})$ (that is Lipschitz continuous up to a change of sign by Lemma~\ref{lem:1}), multiplication by $-\beta$, the exponential (which is continuous by Lemma~\ref{lemma:exp-2}, noting the input is non-positive), multiplication by $A_0$ as well as the Lipschitz continuous mapping $f\mapsto\max\{f,A_{\textup{min}}\}$.
	Since each link in the composition preserves the $\mathcal{C}^0([0,T];L^q(\Omega\times (0,L)))$ regularity, properties $(a)$ and $(b)$ immediately follow.
	
	$(ii)$ \emph{Proof of property $(c)$}. Let $\{\Xi_n\}_{n=1}^\infty$ be a sequence such that $\Xi_n\to\Xi$ in $L^q(0,T;L^q(\Omega\times (0,L)))$ as $n\to\infty$. The definition of the operator $A$ in~\eqref{arrhenius}, Lemma~\ref{lem:1}$(a)$ and~\eqref{lip:exp} in Lemma~\ref{lemma:exp:1} give:
	\begin{equation*}
		\begin{aligned}
			&\int_{0}^{T}\int_{\Omega\times (0,L)}|\mathcal{A}(\Xi_n)-\mathcal{A}(\Xi)|^q\dd z\dd x\dd t\\
			&\le A_0^q\int_{0}^{T}\int_{\Omega\times (0,L)}\left|\exp\left(-\beta\{\Xi_n-\Theta_{\textup{m}}\}^{-}\right)-\exp\left(-\beta\{\Xi-\Theta_{\textup{m}}\}^{-}\right)\right|^q\dd z\dd x\dd t\\
			&\le A_0^q \beta^q\int_{0}^{T}\int_{\Omega\times (0,L)}|\Xi_n-\Xi|^q\dd z\dd x\dd t,
		\end{aligned}
	\end{equation*}
	and we observe that the assumed convergence for the sequence $\{\Xi_n\}_{n=1}^\infty$ implies that the latter term tends to zero as $n\to\infty$.
	
	$(iii)$ \emph{Proof of properties $(d)$ and $(e)$}. Properties $(d)$ and $(e)$ immediately follow as a result of an application of Lemma~\ref{lem:avg-2} and Remark~\ref{rem:lem:avg}.
	
	$(iv)$ \emph{Proof of property $(f)$}. Let $\{\Xi_n\}_{n=1}^\infty$ be a sequence such that $\Xi_n\to\Xi$ in $L^q(0,T;L^q(\Omega\times (0,L)))$ as $n\to\infty$. The definition of $\mu$ given in~\eqref{mu-def:2}, the non-expansiveness of $\{\cdot\}^{+}$ and $\{\cdot\}^{-}$, and the fact that the best Lipschitz constant for $\exp$ with negative inputs is $1$ give:
	\begin{equation*}
		\begin{aligned}
			&\int_{0}^{T}|\mu(\Xi_n)-\mu(\Xi)|^q\dd t\le \left(2\left(\rho g \dfrac{p-1}{2p}\right)^{p-1}A_0 \beta\right)^q(L|\Omega|)^{\frac{q-2}{2}}\int_{0}^{T}\int_{\Omega\times (0,L)}|\Xi_n-\Xi|^q\dd z \dd x\dd t.
		\end{aligned}
	\end{equation*}
	
	The assumed convergence for the sequence $\{\Xi_n\}_{n=1}^\infty$ gives that the right-hand side of the estimate above tends to zero as $n\to\infty$.
\end{proof}

We are finally able to establish the strong convergence of the right-hand side of~\eqref{FD:temp} in a suitable Bochner space.
\begin{theorem}
	\label{rhs-conv}
	Let $0<T<\infty$, let $\Omega$ be a Lipschitz domain in $\mathbb{R}^2$, and let $0<L<\infty$. Assume that $\frac{10}{3}\le q\le\frac{28}{5}$, and that $2.8\le p\le 5$. Let $\Lambda_k\bm{u}_{\ell,k}$ and $\Lambda_k\tilde{\bm{\Theta}}_{\ell,k}$ be the piecewise constant functions introduced in~\eqref{PilL} and~\eqref{PilTL}, respectively. Let $\sigma>0$ be defined in a way that:
	\begin{equation*}
		\dfrac{1}{\sigma}=\dfrac{1}{q}+\dfrac{p-1}{2p}.
	\end{equation*}
	
	Then, it results that $1\le q'\le\sigma\le 2$, and that the sequence $\{\Lambda_k \bm{h}_{\ell,k}\}_{N=1}^\infty$ with elements
	\begin{equation*}
		\Lambda_k \bm{h}_{\ell,k}:=\mathcal{A}(\Lambda_k\tilde{\bm{\Theta}}_{\ell,k})\left(\left\{|\Lambda_k\bm{u}_{\ell,k}|^\frac{p-1}{2p}-z\right\}^{+}\right)^p
	\end{equation*}
	admits a subsequence that converges strongly to $h_\ell:=\mathcal{A}(\tilde{\Theta}_{\ell})\left(\left\{|u_\ell|^\frac{p-1}{2p}-z\right\}^{+}\right)^p$ in $L^\sigma(0,T;L^\sigma(\Omega\times (0,L)))$ as $N\to\infty$.
\end{theorem}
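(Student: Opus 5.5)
The bounds $1\le q'\le\sigma\le 2$ follow directly from Lemma~\ref{lem:V'}, which also gives $L^{q}\hookrightarrow L^{\sigma'}\hookrightarrow L^2\hookrightarrow L^\sigma\hookrightarrow L^{q'}$ on the bounded set $(0,T)\times\Omega\times(0,L)$. For the convergence, I will write $\Lambda_k\bm{h}_{\ell,k}-h_\ell$ as the telescoping sum
\begin{equation*}
\left(\mathcal{A}(\Lambda_k\tilde{\bm{\Theta}}_{\ell,k})-\mathcal{A}(\tilde{\Theta}_\ell)\right)\left(\left\{|\Lambda_k\bm{u}_{\ell,k}|^\frac{p-1}{2p}-z\right\}^{+}\right)^p+\mathcal{A}(\tilde{\Theta}_\ell)\left[\left(\left\{|\Lambda_k\bm{u}_{\ell,k}|^\frac{p-1}{2p}-z\right\}^{+}\right)^p-\left(\left\{|u_\ell|^\frac{p-1}{2p}-z\right\}^{+}\right)^p\right].
\end{equation*}
I will then estimate each term in $L^\sigma$ via H\"older's inequality with exponents $q$ and $\frac{2p}{p-1}$, which is exactly the relation $\frac1\sigma=\frac1q+\frac{p-1}{2p}$. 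Theorem~8.28 of~\cite{Leoni2017} lets me work in $L^r((0,T)\times\Omega\times(0,L))$ throughout.

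For the first term, $\mathcal{A}$ is globally Lipschitz: the proof of Lemma~\ref{A-mu-properties}$(c)$ gives $\|\mathcal{A}(\Xi_1)-\mathcal{A}(\Xi_2)\|_{L^q}\le A_0\beta\|\Xi_1-\Xi_2\|_{L^q}$. Since $q\le 28/5<6$, the convergence~\eqref{ALS-fd-lr} gives $\Lambda_k\tilde{\bm{\Theta}}_{\ell,k}\to\tilde{\Theta}_\ell$ in $L^q(0,T;L^q(\Omega\times(0,L)))$, so this factor tends to zero in $L^q$. The other factor $\left(\left\{|\Lambda_k\bm{u}_{\ell,k}|^\frac{p-1}{2p}-z\right\}^{+}\right)^p$ converges strongly in $L^{2p/(p-1)}$ by~\eqref{RHS:thermal}, so it is bounded there. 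H\"older's inequality then sends the first term to zero in $L^\sigma$. For the second term, $0\le\mathcal{A}(\tilde{\Theta}_\ell)\le A_0$, so it is bounded in $L^\infty$, hence in $L^q$. The bracket tends to zero in $L^{2p/(p-1)}$ by~\eqref{RHS:thermal}, and H\"older's inequality again gives convergence to zero in $L^\sigma$.

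The main obstacle is identifying the limit in~\eqref{RHS:thermal} as $\left(\left\{|u_\ell|^\frac{p-1}{2p}-z\right\}^{+}\right)^p$ with the same $u_\ell$ as the weak-star limit. I will argue this through the chain of lemmas. Lemma~\ref{Dub:appl-1} identifies the strong limit of $|\Lambda_k\bm{u}_{\ell,k}|^\frac{\alpha-2}{2}\Lambda_k\bm{u}_{\ell,k}$ as $|u_\ell|^\frac{\alpha-2}{2}u_\ell$. The Cauchy argument of Lemma~\ref{lem:5:improved} then gives $|\Lambda_k\bm{u}_{\ell,k}|\to|u_\ell|$ in $L^p$: the estimate there bounds $\left||u_n|-|u|\right|^p$ by the difference of the $2p/\alpha$-powers of the transformed variables, which is exactly $|u_n|^p-|u|^p$. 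Lemma~\ref{lem:11} next gives convergence of $|\Lambda_k\bm{u}_{\ell,k}|^\frac{p-1}{2p}$ to $|u_\ell|^\frac{p-1}{2p}$. Finally, the non-expansiveness of $\{\cdot\}^+$ (Remark~\ref{rem:neg}) and Lemma~\ref{lem:10}$(b)$ identify the limit, each time along the subsequence already extracted.

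If the subsequences from Lemma~\ref{Dub:appl-1} and~\eqref{ALS-fd-lr} are not the same, I will take a common further subsequence. Passing to a subsequence is what the statement allows.
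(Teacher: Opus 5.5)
Your proof is correct and follows the paper's argument. Both use the same H\"older splitting with exponents $q$ and $\frac{2p}{p-1}$, fed by the $L^q$ convergence of $\mathcal{A}(\Lambda_k\tilde{\bm{\Theta}}_{\ell,k})$ (from Lemma~\ref{A-mu-properties}$(c)$ and \eqref{ALS-fd-lr}) and by \eqref{RHS:thermal}. The differences are cosmetic: you freeze the other factor in the telescoping decomposition, and you spell out the identification of the limit in \eqref{RHS:thermal} through Lemmas~\ref{Dub:appl-1}, \ref{lem:5:improved}, \ref{lem:11} and~\ref{lem:10}$(b)$, which the paper leaves implicit.
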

\begin{proof}
	To begin with, observe that if $2.8\le p\le 5$ and $\frac{10}{3}\le q\le\frac{28}{5}$, an application of Lemma~\ref{lem:V'} gives $q'\le\sigma\le 2$ and $2\le\sigma'\le q$.
	Additionally, again thanks to Lemma~\ref{lem:V'}, the chain of embeddings~\eqref{chain-1} can be improved as follows:
	\begin{equation}
		\label{chain-2}
		\begin{aligned}
			&V\hookrightarrow\hookrightarrow L^q(\Omega\times (0,L))\hookrightarrow L^{\sigma'}(\Omega\times (0,L)) \hookrightarrow L^2(\Omega\times (0,L))\\
			&\quad\hookrightarrow L^\sigma(\Omega\times (0,L))\hookrightarrow L^{q'}(\Omega\times (0,L))\hookrightarrow\hookrightarrow V'.
		\end{aligned}
	\end{equation}
	
	By the Aubin--Lions--Simon theorem, up to passing to suitable subsequences with common indices, we obtain that
	\begin{equation*}
		\Lambda_k\tilde{\bm{\Theta}}_{\ell,k}\to\tilde{\Theta}_\ell,\quad\textup{ in }L^q(0,T;L^q(\Omega\times(0,L))),
	\end{equation*}
	as $N\to\infty$ and that~\eqref{RHS:thermal} holds.
	
	An application of the generalised H\"{o}lder inequality (cf., e.g., Remark~2 on page~93 of~\cite{Brez11}), \eqref{RHS:thermal}, Lemma~\ref{A-mu-properties}$(c)$ and Theorem~8.28 in~\cite{Leoni2017} give
	\begin{equation*}
		\begin{aligned}
			&\left(\int_{(0,T)\times\Omega\times (0,L)}\left|\mathcal{A}(\Lambda_k\tilde{\bm{\Theta}}_{\ell,k})\left(\left\{|\Lambda_k\bm{u}_{\ell,k}|^\frac{p-1}{2p}-z\right\}^{+}\right)^p-\mathcal{A}(\tilde{\Theta}_\ell)\left(\left\{|u_\ell|^\frac{p-1}{2p}-z\right\}^{+}\right)^p\right|^\sigma \dd z\dd x \dd t\right)^{1/\sigma}\\
			&\le\|\mathcal{A}(\Lambda_k\tilde{\bm{\Theta}}_{\ell,k})\|_{L^q((0,T)\times\Omega\times (0,L))} \left\|\left(\left\{|\Lambda_k\bm{u}_{\ell,k}|^\frac{p-1}{2p}-z\right\}^{+}\right)^p-\left(\left\{|u_\ell|^\frac{p-1}{2p}-z\right\}^{+}\right)^p\right\|_{L^\frac{2p}{p-1}((0,T)\times\Omega\times (0,L))}\\
			&\quad+\|\mathcal{A}(\Lambda_k\tilde{\bm{\Theta}}_{\ell,k})-\mathcal{A}(\tilde{\Theta}_\ell)\|_{L^q((0,T)\times\Omega\times (0,L))} \left\|\left(\left\{|u_\ell|^\frac{p-1}{2p}-z\right\}^{+}\right)^p\right\|_{L^\frac{2p}{p-1}((0,T)\times\Omega\times (0,L))},
		\end{aligned}
	\end{equation*}
	and we observe that the addends on the right-hand side tend to zero as $N\to\infty$.
\end{proof}

\begin{remark}
	\label{rem:6}
	Thanks to~\eqref{chain-2} and the conclusion in Theorem~\ref{rhs-conv}, we infer that the sequence
	\begin{equation*}
		\left\{\mathcal{A}(\Lambda_k\tilde{\bm{\Theta}}_{\ell,k})\left(\left\{|\Lambda_k\bm{u}_{\ell,k}|^\frac{p-1}{2p}-z\right\}^{+}\right)^p\right\}_{N=1}^\infty
	\end{equation*}
	admits a subsequence that strongly converges to $\mathcal{A}(\tilde{\Theta}_{\ell})\left(\left\{|u_\ell|^\frac{p-1}{2p}-z\right\}^{+}\right)^p$ in $L^\sigma(0,T;V')$ as $N\to\infty$.
	\bqed
\end{remark}

\section{Existence of weak solutions for Problem~\ref{Pkappa}}\label{Sec:3:ter}

Thanks to~\eqref{estimates}, we can establish the existence of solutions for Problem~\ref{Pkappa}.

\begin{theorem}
	\label{thm:3}
	Let $T>0$, $\Omega \subset \mathbb{R}^2$ and $p$ be as in Section~\ref{Sec:2} and let $\alpha$ be as in~\eqref{alpha}. Let $\ell>0$ be given, let $N \ge 1$ be an integer, and define $k:=T/N$.
	Assume that $(H\ref{H1})$--$(H\ref{H4})$ hold.
	The a priori estimates~\eqref{estimates} imply that the following convergence process takes place (recall that $B_\ell$ has been defined in~\eqref{Bkappa})
	\begin{equation}
		\label{conv-proc}
		\begin{aligned}
			\Pi_k \bm{u}_{\ell,k} \wsc u_\ell &\textup{ in } L^\infty(0,T;W_0^{1,p}(\Omega)) \textup{ as }N\to\infty,\\
			|\Pi_k \bm{u}_{\ell,k}|^{\frac{\alpha-2}{2}} \Pi_k \bm{u}_{\ell,k}\wsc v_\ell &\textup{ in } L^\infty(0,T;L^2(\Omega))\textup{ as }N\to\infty,\\
			D_k(|\Pi_k \bm{u}_{\ell,k}|^{\frac{\alpha-2}{2}}\Pi_k\bm{u}_{\ell,k})\wsc\dfrac{\dd v_\ell}{\dd t} &\textup{ in } L^2(0,T;L^2(\Omega))\textup{ as }N\to\infty,\\
			|\Pi_k \bm{u}_{\ell,k}|^{\alpha-2} \Pi_k \bm{u}_{\ell,k}\wsc w_\ell &\textup{ in } L^\infty(0,T;L^{\alpha'}(\Omega))\textup{ as }N\to\infty,\\
			D_k(|\Pi_k \bm{u}_{\ell,k}|^{\alpha-2}\Pi_k\bm{u}_{\ell,k})\wsc\dfrac{\dd w_\ell}{\dd t} &\textup{ in } L^\infty(0,T;W^{-1,p'}(\Omega))\textup{ as }N\to\infty,\\
			|u_{\ell,k}^{N}|^{\alpha-2} u_{\ell,k}^{N}\rightharpoonup\chi_\ell&\textup{ in } L^{\alpha'}(\Omega)\textup{ as }N\to\infty,\\
			B_\ell(\Pi_k \bm{u}_{\ell,k})\wsc g_\ell&\textup{ in } L^\infty(0,T;W^{-1,p'}(\Omega))\textup{ as }N\to\infty,\\
			\Pi_k\tilde{\bm{\Theta}}_{\ell, k} \wsc \tilde{\Theta}_\ell&\textup{ in }L^\infty(0,T;V)\textup{ as }N\to\infty,\\
			-\Delta_b(\Pi_k\tilde{\bm{\Theta}}_{\ell, k})\wsc-\Delta_b(\tilde{\Theta}_\ell)&\textup{ in } L^\infty(0,T;V')\textup{ as }N\to\infty,\\
			\tilde{\Theta}_{\ell,k}^{N}\rightharpoonup\tilde{\tau}_\ell&\textup{ in } V \textup{ as }N\to\infty,\\
			D_k(\Pi_k\tilde{\bm{\Theta}}_{\ell, k}) \rightharpoonup \dfrac{\dd \tilde{\Theta}_\ell}{\dd t}&\textup{ in }L^2(0,T;L^2(\Omega\times (0,L)))\textup{ as }N\to\infty,
		\end{aligned}
	\end{equation}
	and the weak-star limits $g_\ell$, $v_\ell$, and $w_\ell$ satisfy:
	\begin{equation*}
		\begin{aligned}
			\langle g_\ell,v\rangle_{W^{-1,p'}(\Omega), W_0^{1,p}(\Omega)}&=\int_{\Omega}|\nabla u_\ell|^{p-2}\nabla u_\ell\cdot\nabla v \dd x,\\
			v_\ell&=|u_\ell|^\frac{\alpha-2}{2} u_\ell,\\
			w_\ell&=|u_\ell|^{\alpha-2} u_\ell,\\
			\chi_\ell&=|u_\ell(T)|^{\alpha-2}u_\ell(T),\\
			\tilde{\tau}_\ell&=\tilde{\Theta}_\ell(T).
		\end{aligned}
	\end{equation*}
	
	Besides, the pair $(u_\ell,\tilde{\Theta}_\ell)$ is a solution for Problem~\ref{Pkappa}.
\end{theorem}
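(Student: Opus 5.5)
The plan is the standard Rothe-method passage to the limit. I first extract the convergences in~\eqref{conv-proc}, then identify the nonlinear limits using the strong compactness already available, and finally let $N\to\infty$ in the discrete equations tested against time-dependent test functions.

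\emph{Step 1: extraction.} All convergences in~\eqref{conv-proc} follow from the bounds~\eqref{estimates} and the Banach--Alaoglu--Bourbaki theorem, taking a common subsequence. The terminal values $u_{\ell,k}^N$ and $\tilde{\Theta}_{\ell,k}^N$ are bounded in $L^{\alpha'}(\Omega)$ and $V$ by~\eqref{est:5:1} and~\eqref{est:8}. To show that the weak-star limits of the difference quotients are the time derivatives of the limits, I will use the piecewise-linear interpolants $\hat{v}_k$ of $\{|u^n_{\ell,k}|^{\frac{\alpha-2}{2}}u^n_{\ell,k}\}$ and of $\{\tilde{\Theta}^n_{\ell,k}\}$. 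These satisfy $\hat{v}_k'=D_k(\cdot)$, and by~\eqref{est:3},~\eqref{est:9} they differ from the $\Pi_k$ versions by $O(k^{1/2})$ in $L^2(0,T;L^2)$. Closedness of the weak derivative then gives the identification.

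\emph{Step 2: nonlinear identifications.} The identity $v_\ell=|u_\ell|^{\frac{\alpha-2}{2}}u_\ell$ is Lemma~\ref{Dub:appl-1}, and the same argument works for $\Pi_k$ by Remark~\ref{rem:left-shift}. Lemma~\ref{lem:5:improved} gives $|\Pi_k\bm{u}_{\ell,k}|\to|u_\ell|$ strongly in $L^p$. From the strong convergence of $|\cdot|^{\frac{\alpha-2}{2}}\cdot$ I also get a.e.\ convergence, hence $\Pi_k\bm{u}_{\ell,k}\to u_\ell$ a.e. Vitali's theorem, together with the $L^\infty(0,T;W_0^{1,p})$ bound, then yields $w_\ell=|u_\ell|^{\alpha-2}u_\ell$.

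For the temperature, the strong convergence~\eqref{ALS-fd-lr} and Lemma~\ref{A-mu-properties}$(f)$ give $\mu(\Pi_k\tilde{\bm{\Theta}}_{\ell,k})\to\mu(\tilde{\Theta}_\ell)$ in $L^q(0,T)$, with values in $[\mu_1,\mu_2]$. Theorem~\ref{rhs-conv} and Remark~\ref{rem:6} handle the source term of the thermal equation. All remaining terms of~\eqref{FD:temp} are linear or Lipschitz in $\tilde{\Theta}$: the advection and $-\Delta_b$ terms pass by weak convergence, and the penalty terms pass by strong $L^2$ convergence and Lemma~\ref{lem:1}. So~\eqref{penalty-thetatilde} follows after multiplying by $\varphi\in\mathcal{D}(0,T)$ and integrating. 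The terminal values $\chi_\ell,\tilde{\tau}_\ell$ and the initial conditions come from the interpolants: they converge in $\mathcal{C}^0([0,T];L^2)$ by Lemma~\ref{ALS:dis}$(b)$ and Lemma~\ref{lem:3}, and Lemma~\ref{lem:5} transfers this to $u_\ell$.

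\emph{Step 3 (main obstacle): $g_\ell=B_\ell(u_\ell)$.} This cannot come from compactness, since $\nabla u$ converges only weakly. I will use Minty's trick.

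\begin{itemize}
\item Divide~\eqref{penalty:seq} by $\mu(\tilde{\Theta}^{n+1}_{\ell,k})\ge\mu_1$, test with $\Pi_k\bm{u}_{\ell,k}\varphi$, and sum.
\item Use the first inequality of Lemma~\ref{lem:7} to bound the time-difference term below by a telescoping $L^\alpha$ norm.
\item Take $\limsup$. The products of $\mu^{-1}$ with the other terms converge because $\mu(\Pi_k\tilde{\bm{\Theta}}_{\ell,k})^{-1}$ converges strongly in every $L^q(0,T)$ and boundedly a.e. The penalty term converges by the strong $L^2$ convergence of $\{\cdot\}^-$.
\end{itemize}

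This yields
\[
\limsup_{N\to\infty}\int_0^T\langle B_\ell(\Pi_k\bm{u}_{\ell,k}),\Pi_k\bm{u}_{\ell,k}\rangle\dd t\le\int_0^T\langle g_\ell,u_\ell\rangle\dd t,
\]
once the limit equation is compared with the integration-by-parts formula of Lemma~\ref{lem:8}; this is where $\chi_\ell$ and the weak lower semicontinuity of the $L^\alpha$ norm enter. Monotonicity and hemicontinuity of $B_\ell$ then give $g_\ell=B_\ell(u_\ell)$.

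The delicate point is the time-dependent weight $1/\mu$ in the time-derivative term. I will handle it by testing against the $\mu$-weighted equation in its original form and using Lemma~\ref{lem:7} pointwise in $n$ with the nonnegative factor $1/\mu(\tilde{\Theta}^{n+1}_{\ell,k})$. The error from freezing $\mu$ between consecutive steps is then controlled by~\eqref{Lip:est-1} and~\eqref{est:9}, as in~\eqref{prelim:2-5}. With $g_\ell$ identified, passing to the limit in~\eqref{penalty:seq} tested with $v\varphi$ gives~\eqref{penalty-u}, so $(u_\ell,\tilde{\Theta}_\ell)$ solves Problem~\ref{Pkappa}.
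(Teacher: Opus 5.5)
Your Steps 1--2 are essentially fine. Identifying $w_\ell$ through a.e.\ convergence and Vitali is a legitimate, more elementary substitute for the paper's second discrete Dubinskii argument. Step 3, however, which you rightly call the crux, has a genuine gap.

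Once you divide \eqref{penalty:seq} by $\mu(\tilde{\Theta}^{n+1}_{\ell,k})$, Lemma~\ref{lem:7} gives only $\frac{1}{\alpha'}\sum_n m_{n+1}(a_{n+1}-a_n)$, where $m_n:=1/\mu(\tilde{\Theta}^{n}_{\ell,k})$ and $a_n:=\|u^n_{\ell,k}\|^\alpha_{L^\alpha(\Omega)}$. This sum does not telescope, and summation by parts leaves the commutator $\frac{1}{\alpha'}\sum_n (m_{n+1}-m_n)a_n$. Estimates \eqref{Lip:est-1} and \eqref{est:9} show only that this commutator is \emph{bounded}, by $C\int_0^T\|D_k(\Pi_k\tilde{\bm{\Theta}}_{\ell,k})\|_{L^2(\Omega\times(0,L))}\dd t$. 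It does not vanish: it converges to a quantity like $\frac{1}{\alpha'}\int_0^T\|u_\ell(t)\|^\alpha_{L^\alpha(\Omega)}\frac{\dd}{\dd t}\big(\mu(\tilde{\Theta}_\ell)^{-1}\big)\dd t$, which is in general nonzero.

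In \eqref{prelim:2-5} a bound sufficed because the goal was an a priori estimate. Minty's trick instead needs the \emph{exact} inequality $\limsup_N\int_0^T\langle B_\ell(\Pi_k\bm{u}_{\ell,k}),\Pi_k\bm{u}_{\ell,k}\rangle\dd t\le\int_0^T\langle g_\ell,u_\ell\rangle\dd t$, and an $O(1)$ error destroys it. On the limit side you would also be pairing $\frac{\dd}{\dd t}(|u_\ell|^{\alpha-2}u_\ell)$ with $u_\ell/\mu(\tilde{\Theta}_\ell)$, which Lemma~\ref{lem:8} does not cover. Your route can only be closed by proving two further facts, so that the discrete and continuous commutators match: $D_k\big(1/\mu(\Pi_k\tilde{\bm{\Theta}}_{\ell,k})\big)\rightharpoonup\frac{\dd}{\dd t}\big(1/\mu(\tilde{\Theta}_\ell)\big)$ in $L^2(0,T)$, and a time-weighted version of Lemma~\ref{lem:8}.

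The paper sidesteps this entirely by \emph{not} dividing. It tests \eqref{penalty:seq} in its original form with $k u^{n+1}_{\ell,k}$, so the time-difference term is unweighted and telescopes exactly by Lemma~\ref{lem:7}, giving \eqref{step:13}. Comparing this with the limit equation \eqref{step:4}, tested by $u_\ell$ through the unweighted Lemma~\ref{lem:8}, yields the \emph{weighted} inequality \eqref{step:16}. Minty's trick is then run on the weighted operator $\mu(\tilde{\Theta}_\ell(t))B_\ell$. The cross terms pass to the limit because $\mu(\Pi_k\tilde{\bm{\Theta}}_{\ell,k})\to\mu(\tilde{\Theta}_\ell)$ strongly in $L^q(0,T)$ while $B_\ell(\Pi_k\bm{u}_{\ell,k})$ converges weakly-star. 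Since $\mu\ge\mu_1>0$, the weight can be stripped at the end.

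A smaller point concerns the terminal values. Lemma~\ref{ALS:dis}$(b)$ and Lemma~\ref{lem:3} give compactness in $L^q(0,T;X_1)$ and continuity of the limit, not convergence of the discrete sequence at $t=T$. They therefore do not by themselves identify the nonlinear limit $\chi_\ell$. Either prove $\mathcal{C}^0([0,T];L^2(\Omega))$-compactness of your interpolants directly, or use the paper's comparison of \eqref{step:6} with \eqref{step:7} (and of \eqref{step:26} with \eqref{step:27}).
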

\begin{proof}
	The proof is divided into five steps, numbered $(i)$--$(v)$. The discussion of the thermal component of the model is novel and requires substantial care.
	
	The convergences in~\eqref{conv-proc} hold by virtue of the Banach--Alaoglu--Bourbaki theorem (cf., e.g., Theorem~3.6 of~\cite{Brez11}), the estimates~\eqref{estimates} and~\eqref{chi-kappa}.
	The non-trivial part of the proof amounts to identifying the weak-star limits $v_\ell$ and $w_\ell$, as well as to show that the weak-star limits $u_\ell$ and $\tilde{\Theta}_\ell$ solve Problem~\ref{Pkappa}.
	
	$(i)$ \emph{It results that $w_\ell=|u_\ell|^{\alpha-2} u_\ell$}. Given $u \in W_0^{1,p}(\Omega)$, set $v:=|u|^{\alpha-2} u$ and observe that if $\beta=\alpha'$ then $|v|^{\beta-2} v \in W_0^{1,p}(\Omega)$ so that the set
	\begin{equation*}
		S:=\{v;(|v|^{\alpha'-2}v) \in W_0^{1,p}(\Omega)\}
	\end{equation*}
	is non-empty. Consider the definite homogeneous gauge
	\begin{equation*}
		M(v):=\left\|\nabla(|v|^{\alpha'-2}v)\right\|_{\bm{L}^p(\Omega)}^\frac{1}{\alpha'-1},\quad\textup{ for all }v \in S,
	\end{equation*}
	and define the set
	\begin{equation*}
		\mathscr{M}:=\{v\in S; M(v)\le 1\}.
	\end{equation*}
	
	An application of the Poincar\'e--Friedrichs inequality gives that there exists a constant $c_0=c_0(\Omega)>0$ such that
	\begin{equation*}
		1 \ge M(v) \ge c_P^{-\frac{1}{\alpha'-1}}\left(\int_{\Omega} |v|^{(\alpha'-1)p}\dd x\right)^{1/(p(\alpha'-1))}=c_P^{-\frac{1}{\alpha'-1}}\|v\|_{L^{(\alpha'-1)p}(\Omega)}.
	\end{equation*}
	
	It is then possible to show that $\mathscr{M}$ is relatively compact in $L^{(\alpha'-1)p}(\Omega)$ so that an application of Lemma~\ref{Dub:dis} with $A_0=L^{(\alpha'-1)p}(\Omega)$, $A_1=W^{-1,p'}(\Omega)$, $q_0=q_1=2$ gives
	\begin{equation*}
		\label{conv-2}
		|\Pi_k \bm{u}_{\ell,k}|^{\alpha-2} \Pi_k \bm{u}_{\ell,k} \to w_\ell,\quad\textup{ in } L^2(0,T;L^{(\alpha'-1)p}(\Omega)) \textup{ as } N\to\infty,
	\end{equation*}
	where, once again, the monotonicity of the mapping $\xi\in\mathbb{R} \mapsto |\xi|^{\alpha-2} \xi$, the first convergence in the process~\eqref{conv-proc} and Theorem~12.5-2$(a)$ of~\cite{Ciarlet2025} imply that:
	\begin{equation*}
		w_\ell = |u_\ell|^{\alpha-2} u_\ell.
	\end{equation*}
	
	$(ii)$ \emph{Preliminary remarks for the recovery of the weak formulation~\eqref{penalty:seq}}.
	Let $v \in \mathcal{D}(\Omega)$ and let $\psi \in \mathcal{C}^1([0,T])$. For each $0 \le n \le N-1$, multiply~\eqref{penalty:seq} by $\{v \psi(nk)\}$, getting:
	\begin{equation}
		\label{step:1}
		\begin{aligned}
			&\dfrac{\psi(nk)}{k}\int_{\Omega}\{|u_{\ell,k}^{n+1}|^{\alpha-2} u_{\ell,k}^{n+1} - |u_{\ell,k}^{n}|^{\alpha-2}u_{\ell,k}^{n}\} v \dd x\\
			&\quad+\psi(nk)\int_{\Omega}\mu(\tilde{\Theta}_{\ell,k}^{n+1}) |\nabla u_{\ell,k}^{n+1}|^{p-2} \nabla u_{\ell,k}^{n+1} \cdot \nabla v\dd x
			-\psi(nk)\dfrac{\mu(\tilde{\Theta}_{\ell,k}^{n+1})}{\ell}\int_{\Omega}\{u_{\ell,k}^{n+1}\}^{-} v \dd x\\
			&=\int_{\Omega}\left(\dfrac{1}{k} \int_{nk}^{(n+1)k} \tilde{a}(t) \dd t\right) v \psi(nk) \dd x.
		\end{aligned}
	\end{equation}
	
	Define the mapping $\psi_k:[0,T]\to\mathbb{R}$ in a way that $\psi_k(t):=\psi(nk)$, for all $nk\le t< (n+1)k$. Observe that the following estimate holds:
	\begin{equation*}
		|\psi_k(t)-\psi(t)|\le\omega(k):=\max_{\substack{t_1, t_2 \in [0,T]\\|t_1-t_2|\le k}}|\psi(t_1)-\psi(t_2)|,\quad\textup{ for all }t\in[0,T].
	\end{equation*}
	
	Since the right-hand side of the estimate above is independent of $t$, and since the continuity of $\psi$ implies that $\omega(k)\to 0$ as $N\to\infty$, we obtain that $\|\psi_k-\psi\|_{L^\infty(0,T)}\to 0$ as $N\to\infty$ and thus, by H\"{o}lder's inequality, in $L^r(0,T)$ for all $4\le r\le\infty$. By virtue of~\eqref{ALS-fd-lr}, we are in position to apply Theorem~\ref{rhs-conv} and Lemma~\ref{A-mu-properties}$(f)$ and deduce, in particular, that $\mu(\Pi_k\tilde{\bm{\Theta}}_{\ell,k})\to\mu(\tilde{\Theta}_\ell)$ in $L^q(0,T)$, for any $4\le q\le\frac{28}{5}$, as $N\to\infty$. An application of the generalised H\"{o}lder inequality (cf., e.g., Remark~2 on page~93 in~\cite{Brez11}) gives that:
	\begin{equation}
		\label{conv-mu-psi}
		\mu(\Pi_k\tilde{\bm{\Theta}}_{\ell,k})\psi_k\to\mu(\tilde{\Theta}_\ell)\psi,\quad\textup{ in } L^\upsilon(0,T),
	\end{equation}
	as $N\to\infty$ where $\upsilon$ is defined in a way that $\frac{1}{\upsilon}:=\frac{1}{r}+\frac{1}{q}$ and, moreover, it results $2\le\upsilon<6$.
	
	Multiplying~\eqref{step:1} by $k$ and summing over $0 \le n \le N-1$ gives:
	\begin{equation}
		\label{step:3}
		\begin{aligned}
			&\int_{0}^{T} \int_{\Omega} D_k(|\Pi_k \bm{u}_{\ell,k}|^{\alpha-2}\Pi_k \bm{u}_{\ell,k}) v \dd x \psi_k(t) \dd t\\
			&\quad-\int_{0}^{T}\mu(\Pi_k\tilde{\bm{\Theta}}_{\ell,k})\int_{\Omega} \nabla \cdot \left(|\nabla (\Pi_k \bm{u}_{\ell,k})|^{p-2} \nabla (\Pi_k \bm{u}_{\ell,k})\right) v\dd x \psi_k(t) \dd t\\
			&\quad-\dfrac{1}{\ell} \int_{0}^{T}\mu(\Pi_k\tilde{\bm{\Theta}}_{\ell,k})\int_{\Omega} \{\Pi_k \bm{u}_{\ell,k}\}^{-} v\dd x \psi_k(t) \dd t
			=\int_{0}^{T} \left(\int_{\Omega} \tilde{a}(t) v \dd x\right) \psi_k(t) \dd t.
		\end{aligned}
	\end{equation}
	
	Letting $N\to\infty$ in~\eqref{step:3} and exploiting the convergence process~\eqref{conv-proc}, item $(i)$, and~\eqref{conv-mu-psi} give:
	\begin{equation}
		\label{step:4}
		\begin{aligned}
			&\int_{0}^{T} \left\langle \dfrac{\dd }{\dd t}\left(|u_\ell|^{\alpha-2} u_\ell\right), v \right\rangle_{W^{-1,p'}(\Omega), W_0^{1,p}(\Omega)} \psi(t) \dd t
			+\int_{0}^{T} \langle g_\ell(t), v \rangle_{W^{-1,p'}(\Omega), W_0^{1,p}(\Omega)} \mu(\tilde{\Theta}_\ell)\psi(t) \dd t\\
			&\quad-\dfrac{1}{\ell}\int_{0}^{T}\mu(\tilde{\Theta}_\ell)\psi(t)\int_{\Omega}\{u_\ell(t)\}^{-}v\dd x\dd t=\int_{0}^{T} \int_{\Omega} \tilde{a}(t) v \dd x \psi(t) \dd t.
		\end{aligned}
	\end{equation}
	
	Since by the Sobolev embedding theorem (cf., e.g., Theorem~8.4-1 of~\cite{Ciarlet2025}) it results $L^{\alpha'}(\Omega) \hookrightarrow W^{-1,p'}(\Omega)$, an integration by parts in~\eqref{step:4}, and the fact that $u_\ell\in\mathcal{C}^0([0,T];L^\alpha(\Omega))$ (Lemma~\ref{lem:3}) give:
	\begin{equation}
		\label{step:7}
		\begin{aligned}
			&-\int_{0}^{T} \int_{\Omega} |u_\ell|^{\alpha-2} u_\ell v \dd x \dfrac{\dd \psi}{\dd t} \dd t
			+\langle|u_\ell(T)|^{\alpha-2} u_\ell(T), \psi(T) v\rangle_{W^{-1,p'}(\Omega), W_0^{1,p}(\Omega)}\\
			&\quad -\int_{\Omega} |u_\ell(0)|^{\alpha-2} u_\ell(0) \psi(0) v \dd x + \int_{0}^{T} \langle g_\ell(t), v \rangle_{W^{-1,p'}(\Omega), W_0^{1,p}(\Omega)} \mu(\tilde{\Theta}_\ell)\psi(t) \dd t\\
			&\quad-\dfrac{1}{\ell}\int_{0}^{T}\mu(\tilde{\Theta}_\ell)\psi(t)\int_{\Omega}\{u_\ell(t)\}^{-}v\dd x\dd t=\int_{0}^{T}\int_{\Omega} \tilde{a}(t) v \dd x \psi(t) \dd t.
		\end{aligned}
	\end{equation}
	
	Observe that the first term on the left-hand side of equation~\eqref{step:3} can be rearranged as follows
	\begin{equation}
		\label{step:5}
		\begin{aligned}
			&\int_{\Omega} |u_{\ell,k}^{N}|^{\alpha-2} u_{\ell,k}^{N} v \psi(T-k) \dd x-\int_{\Omega} |u_0|^{\alpha-2} u_0 v \psi(0) \dd x\\
			&\quad-\sum_{n=0}^{N-2} \int_{nk}^{(n+1)k}\int_{\Omega} |\Pi_k\bm{u}_{\ell,k}(t)|^{\alpha-2} \Pi_k\bm{u}_{\ell,k}(t) v \psi'(\theta n k+(1-\theta)(1+n)k) \dd x \dd t\\
			&\quad +\sum_{n=0}^{N-1} \int_{nk}^{(n+1)k} \int_{\Omega}|\nabla (\Pi_k\bm{u}_{\ell,k}(t))|^{p-2} \nabla (\Pi_k\bm{u}_{\ell,k}(t)) \cdot \nabla(\mu(\Pi_k\tilde{\bm{\Theta}}_{\ell,k}) \psi_k(t) v)\dd x \dd t\\
			&\quad-\dfrac{1}{\ell} \sum_{n=0}^{N-1} \int_{nk}^{(n+1)k} \int_{\Omega} \{\Pi_k\bm{u}_{\ell,k}(t)\}^{-} \left(\mu(\Pi_k\tilde{\bm{\Theta}}_{\ell,k}) \psi_k(t) v\right) \dd x \dd t\\
			&=\sum_{n=0}^{N-1} \int_{nk}^{(n+1)k} \int_{\Omega}\left(\dfrac{1}{k} \int_{nk}^{(n+1)k} \tilde{a}(\tau) \dd \tau\right) v \psi_k(t) \dd x \dd t,
		\end{aligned}
	\end{equation}
	for some $0<\theta<1$ (cf., e.g., Theorem~9.10-1$(c)$ of~\cite{Ciarlet2025}).
	Thanks to~\eqref{conv-proc}, item~$(i)$, Lemma~\ref{A-mu-properties}$(f)$, and the fact that $\psi\in\mathcal{C}^1([0,T])$, we obtain that letting $N\to\infty$ in~\eqref{step:5} gives:
	\begin{equation}
		\label{step:6}
		\begin{aligned}
			&-\int_{0}^{T} \int_{\Omega} |u_\ell|^{\alpha-2} u_\ell v \dd x \dfrac{\dd \psi}{\dd t}\dd t
			+\int_{0}^{T} \langle g_\ell(t), v \rangle_{W^{-1,p'}(\Omega), W_0^{1,p}(\Omega)} \mu(\tilde{\Theta}_\ell)\psi(t) \dd t\\
			&\quad+\int_{\Omega} \left(\chi_\ell \psi(T)-|u_0|^{\alpha-2}u_0 \psi(0)\right) v \dd x-\dfrac{1}{\ell}\int_{0}^{T}\int_{\Omega}\{u_\ell(t)\}^{-}\left(\mu(\tilde{\Theta}_\ell)\psi(t)v\right)\dd x\dd t\\
			&= \int_{0}^{T} \int_{\Omega} \tilde{a}(t) v \dd x\psi(t) \dd t.
		\end{aligned}
	\end{equation}
	
	Comparing equations~\eqref{step:6} and~\eqref{step:7} gives:
	\begin{equation}
		\label{step:8}
		\begin{aligned}
			&\langle|u_\ell(T)|^{\alpha-2} u_\ell(T), \psi(T) v\rangle_{W^{-1,p'}(\Omega), W_0^{1,p}(\Omega)} -\int_{\Omega} |u_\ell(0)|^{\alpha-2} u_\ell(0) \psi(0) v \dd x\\
			&=\int_{\Omega} \left(\chi_\ell \psi(T)-|u_0|^{\alpha-2}u_0 \psi(0)\right) v \dd x.
		\end{aligned}
	\end{equation}
	
	Since $\psi \in \mathcal{C}^1([0,T])$ is arbitrarily chosen, specialising $\psi$ in~\eqref{step:8} in a way such that $\psi(0)=0$ gives:
	\begin{equation}
		\label{step:9}
		\langle|u_\ell(T)|^{\alpha-2} u_\ell(T) - \chi_\ell, \psi(T) v\rangle_{W^{-1,p'}(\Omega), W_0^{1,p}(\Omega)}=0.
	\end{equation}
	
	Since the duality in~\eqref{step:9} is continuous with respect to $v$, since $v$ has been chosen arbitrarily in $\mathcal{D}(\Omega)$, and since $\mathcal{D}(\Omega)$ is, by definition, dense in $W_0^{1,p}(\Omega)$, and since $\chi_\ell \in L^{\alpha'}(\Omega)$, we infer that:
	\begin{equation}
		\label{step:10}
		|u_\ell(T)|^{\alpha-2} u_\ell(T) = \chi_\ell \in L^{\alpha'}(\Omega).
	\end{equation}
	
	Specialising $\psi$ in~\eqref{step:8} in a way such that $\psi(T)=0$ gives:
	\begin{equation}
		\label{step:11}
		\int_{\Omega} \left(|u_\ell(0)|^{\alpha-2} u_\ell(0) - |u_0|^{\alpha-2}u_0 \right)\psi(0) v \dd x=0.
	\end{equation}
	
	Since the integration in~\eqref{step:11} is continuous with respect to $v$, since $v$ has been chosen arbitrarily in $\mathcal{D}(\Omega)$, and since $\mathcal{D}(\Omega)$ is, by definition, dense in $W_0^{1,p}(\Omega)$, we immediately infer that
	\begin{equation*}
		|u_\ell(0)|^{\alpha-2} u_\ell(0) = |u_0|^{\alpha-2}u_0,
	\end{equation*}
	so that the injectivity of the monotone and hemi-continuous operator $\xi \mapsto |\xi|^{\alpha-2} \xi$ in turn implies that:
	\begin{equation}
		\label{step:12}
		u_\ell(0)=u_0 \in K_{\textup{surf}}.
	\end{equation}
	
	$(iii)$ \emph{Identification of the weak-star limit $g_\ell$ via Minty's trick}. For each $0 \le n \le N-1$, we multiply~\eqref{penalty:seq} by $k u_{\ell,k}^{n+1}$, sum over $n=0,\dots, N-1$, and apply Lemma~\ref{lem:7}, thus getting
	\begin{equation*}
		\begin{aligned}
			&\dfrac{1}{\alpha'} \sum_{n=0}^{N-1} \left\{\left\||u_{\ell,k}^{n+1}|^{\alpha-2}u_{\ell,k}^{n+1}\right\|_{L^{\alpha'}(\Omega)}^{\alpha'} - \left\||u_{\ell,k}^{n}|^{\alpha-2}u_{\ell,k}^{n}\right\|_{L^{\alpha'}(\Omega)}^{\alpha'}\right\}\\
			&\quad+\sum_{n=0}^{N-1} k \mu(\tilde{\Theta}_{\ell,k}^{n+1}) \left\langle B_\ell(u_{\ell,k}^{n+1}), u_{\ell,k}^{n+1} \right\rangle_{W^{-1,p'}(\Omega), W_0^{1,p}(\Omega)}\\
			&\quad+\dfrac{1}{\ell}\sum_{n=0}^{N-1}k \mu(\tilde{\Theta}_{\ell,k}^{n+1})\|\{u_{\ell,k}^{n+1}\}^{-}\|_{L^2(\Omega)}^2\\
			&\le \sum_{n=0}^{N-1} k \int_{\Omega} \left(\dfrac{1}{k}\int_{nk}^{(n+1)k} \tilde{a}(t)\dd t\right) u_{\ell,k}^{n+1} \dd x,
		\end{aligned}
	\end{equation*}
	which in turn implies:
	\begin{equation}
		\label{step:13}
		\begin{aligned}
			&\dfrac{1}{\alpha'} \left\||u_{\ell,k}^{N}|^{\alpha-2}u_{\ell,k}^{N}\right\|_{L^{\alpha'}(\Omega)}^{\alpha'}
			+\int_{0}^{T}\mu(\Pi_k\tilde{\bm{\Theta}}_{\ell,k})\left\langle B_\ell(\Pi_k \bm{u}_{\ell,k}), \Pi_k \bm{u}_{\ell,k} \right\rangle_{W^{-1,p'}(\Omega), W_0^{1,p}(\Omega)}\dd t\\
			&\quad+\dfrac{1}{\ell}\int_{0}^{T}\mu(\Pi_k\tilde{\bm{\Theta}}_{\ell,k})\|\{\Pi_k \bm{u}_{\ell,k}(t)\}^{-}\|_{L^2(\Omega)}^2\dd t\\
			&\le \int_{0}^{T} \int_{\Omega} \tilde{a}(t) \Pi_k \bm{u}_{\ell,k}(t) \dd x \dd t
			+\dfrac{1}{\alpha'} \left\||u_0|^{\alpha-2}u_0\right\|_{L^{\alpha'}(\Omega)}^{\alpha'}.
		\end{aligned}
	\end{equation}
	
	Combining the fact that $\mu(\Pi_k\tilde{\bm{\Theta}}_{\ell,k})\to\mu(\tilde{\Theta}_\ell)$ in $L^q(0,T)$, for any $4\le q\le\frac{28}{5}$, as $N\to\infty$ (itself consequence of~\eqref{ALS-fd-lr} in the particular case where $4\le q\le\frac{28}{5}$, and Lemma~\ref{A-mu-properties}$(f)$), with the fact that $\{\Pi_k\bm{u}_{\ell,k}\}_{N=1}^\infty$ is bounded in $L^\infty(0,T;W^{1,p}_0(\Omega))$ independently of $N$ and $\ell$ (viz.~\eqref{conv-proc}), the properties of the $\liminf$, and the properties of weak convergence gives:
	\begin{equation}
		\label{liminf-neg-part}
		\begin{aligned}
			&\liminf_{N\to\infty}\left(\dfrac{1}{\ell}\int_{0}^{T}\mu(\Pi_k\tilde{\bm{\Theta}}_{\ell,k})\|\{\Pi_k\bm{u}_{\ell,k}(t)\}^{-}\|_{L^2(\Omega)}^2\dd t\right)\\
			&=\liminf_{N\to\infty}\bigg(\dfrac{1}{\ell}\int_{0}^{T}\left(\mu(\Pi_k\tilde{\bm{\Theta}}_{\ell,k})-\mu(\tilde{\Theta}_\ell)\right)\|\{\Pi_k\bm{u}_{\ell,k}(t)\}^{-}\|_{L^2(\Omega)}^2\dd t\\
			&\quad+\dfrac{1}{\ell}\int_{0}^{T}\mu(\tilde{\Theta}_\ell)\|\{\Pi_k\bm{u}_{\ell,k}(t)\}^{-}\|_{L^2(\Omega)}^2\dd t\bigg)\\
			&=\liminf_{N\to\infty}\left(\dfrac{1}{\ell}\int_{0}^{T}\mu(\tilde{\Theta}_\ell)\|\{\Pi_k\bm{u}_{\ell,k}(t)\}^{-}\|_{L^2(\Omega)}^2\dd t\right)\\
			&\ge\dfrac{1}{\ell}\int_{0}^{T}\mu(\tilde{\Theta}_\ell)\|\{u_\ell(t)\}^{-}\|_{L^2(\Omega)}^2\dd t.
		\end{aligned}
	\end{equation}
	
	Passing to the $\liminf$ as $N\to\infty$ in~\eqref{step:13}, we obtain that combining the convergence process~\eqref{conv-proc} with~\eqref{step:10}, \eqref{step:12} and~\eqref{liminf-neg-part} gives:
	\begin{equation}
		\label{step:14}
		\begin{aligned}
			&\dfrac{1}{\alpha'} \|u_{\ell}(T)\|_{L^{\alpha}(\Omega)}^{\alpha}
			+\liminf_{N\to\infty}\int_{0}^{T}\mu(\Pi_k\tilde{\bm{\Theta}}_{\ell,k}) \left\langle B_\ell(\Pi_k \bm{u}_{\ell,k}), \Pi_k \bm{u}_{\ell,k} \right\rangle_{W^{-1,p'}(\Omega), W_0^{1,p}(\Omega)} \dd t\\
			&\le \int_{0}^{T} \int_{\Omega} \tilde{a}(t) u_\ell \dd x \dd t+
			\dfrac{1}{\alpha'} \left\||u_0|^{\alpha-2}u_0\right\|_{L^{\alpha'}(\Omega)}^{\alpha'}-\dfrac{1}{\ell}\int_{0}^{T}\mu(\tilde{\Theta}_\ell)\|\{u_\ell(t)\}^{-}\|_{L^2(\Omega)}^2\dd t\\
			&=\int_{0}^{T} \int_{\Omega} \tilde{a}(t) u_\ell \dd x \dd t+\dfrac{1}{\alpha'}\|u_0\|_{L^\alpha(\Omega)}^\alpha-\dfrac{1}{\ell}\int_{0}^{T}\mu(\tilde{\Theta}_\ell)\|\{u_\ell(t)\}^{-}\|_{L^2(\Omega)}^2\dd t.
		\end{aligned}
	\end{equation}
	
	Observe that $u_\ell$ regarded as an element of $L^r(0,T;W^{1,p}_0(\Omega))$ with $4\le r<\infty$ can be approximated, in the sense of the strong convergence in $L^r(0,T;W^{1,p}_0(\Omega))$, by a function of the form $(\psi(t)v(x))$, where $\psi\in\mathcal{C}^1[0,T]$ and $v\in\mathcal{D}(\Omega)$ (cf., e.g., Theorem~8.21 in~\cite{Leoni2017}).
	An application of the triangle inequality thus gives that there exists a sequence $\{\psi_{k,\ell}\}_{N=1}^\infty\subset\mathcal{C}^1([0,T])$ and a sequence $\{v_{N,\ell}\}_{N=1}^\infty\subset\mathcal{D}(\Omega)$ such that
	\begin{equation}
		\label{step:14-2}
		\psi_{k,\ell} v_{N,\ell} \to u_\ell,\quad\textup{ in }L^r(0,T;W^{1,p}_0(\Omega))\textup{ as }N\to\infty,
	\end{equation}
	for all $4\le r<\infty$. In light of~\eqref{ALS-fd-lr} and Lemma~\ref{A-mu-properties}$(f)$ it results, in particular, that $\mu(\Pi_k\tilde{\bm{\Theta}}_{\ell,k})\to\mu(\tilde{\Theta}_\ell)$ in $L^q(0,T)$ as $N\to\infty$, for any $4\le q\le\frac{28}{5}$. Define the real number $s$ by $\frac{1}{s}:=\frac{1}{q}+\frac{1}{r}$, and observe that $s$ varies in the range $2\le s <6$ being $4\le q\le\frac{28}{5}$ and $4\le r<\infty$. An application of~\eqref{mu0}, of the convergence $\mu(\Pi_k\tilde{\bm{\Theta}}_{\ell,k})\to\mu(\tilde{\Theta}_\ell)$ in $L^q(0,T)$ as $N\to\infty$ for any $4\le q\le\frac{28}{5}$ recalled beforehand, and of the generalised H\"{o}lder inequality give:
	\begin{equation*}
		\begin{aligned}
			&\left(\int_{0}^{T}\left\|\mu(\Pi_k\tilde{\bm{\Theta}}_{\ell,k})\psi_{k,\ell}(t)v_{N,\ell}-\mu(\tilde{\Theta}_\ell)u_\ell(t)\right\|_{W^{1,p}(\Omega)}^s\dd t\right)^{1/s}\\
			&\le\mu_2\left(\int_{0}^{T}\|\psi_{k,\ell}(t)v_{N,\ell}-u_\ell(t)\|_{W^{1,p}(\Omega)}^s\dd t\right)^{1/s}\\
			&\quad+\left(\int_{0}^{T}\left|\mu(\Pi_k\tilde{\bm{\Theta}}_{\ell,k})-\mu(\tilde{\Theta}_\ell)\right|^q \dd t\right)^{1/q}
			\left(\int_{0}^{T}\|u_\ell(t)\|_{W^{1,p}(\Omega)}^r\right)^{1/r}.
		\end{aligned}
	\end{equation*}
	
	Thanks to~\eqref{mu0}, \eqref{step:14-2}, \eqref{conv-mu-psi}and the boundedness of $\{u_\ell\}_{\ell>0}$ in $L^\infty(0,T;W^{1,p}_0(\Omega))$ established in~\eqref{conv-proc}, the right-hand side of the estimate above tends to zero as $N\to\infty$, showing that
	\begin{equation}
		\label{step:14-3}
		\mu(\Pi_k\tilde{\bm{\Theta}}_{\ell,k})\psi_{k,\ell}v_{N,\ell}\to\mu(\tilde{\Theta}_\ell)u_\ell,\quad\textup{ in }L^s(0,T;W^{1,p}_0(\Omega))\textup{ as }N\to\infty,
	\end{equation}
	for some $2\le s<6$.
	An application of Lemma~\ref{lem:8} and~\eqref{step:14-3} to~\eqref{step:4} gives:
	\begin{equation}
		\label{step:15}
		\begin{aligned}
			&\dfrac{1}{\alpha'}\|u_\ell(T)\|_{L^\alpha(\Omega)}^\alpha +\int_{0}^{T} \mu(\tilde{\Theta}_\ell)\langle g_\ell(t), u_\ell(t) \rangle_{W^{-1,p'}(\Omega), W_0^{1,p}(\Omega)} \dd t\\
			&=\int_{0}^{T} \int_{\Omega} \tilde{a}(t) u_\ell(t) \dd x \dd t+\dfrac{1}{\alpha'}\|u_0\|_{L^\alpha(\Omega)}^\alpha-\dfrac{1}{\ell}\int_{0}^{T}\mu(\tilde{\Theta}_\ell)\|\{u_\ell(t)\}^{-}\|_{L^2(\Omega)}^2\dd x\dd t.
		\end{aligned}
	\end{equation}
	
	Combining~\eqref{step:14} and~\eqref{step:15} gives:
	\begin{equation}
		\label{step:16}
		\begin{aligned}
			&\liminf_{N\to\infty}\int_{0}^{T} \mu(\Pi_k\tilde{\bm{\Theta}}_{\ell,k})\left\langle B_\ell(\Pi_k \bm{u}_{\ell,k}), \Pi_k \bm{u}_{\ell,k} \right\rangle_{W^{-1,p'}(\Omega), W_0^{1,p}(\Omega)}\\
			&\le\int_{0}^{T} \mu(\tilde{\Theta}_\ell)\langle g_\ell(t), u_\ell(t) \rangle_{W^{-1,p'}(\Omega), W_0^{1,p}(\Omega)} \dd t.
		\end{aligned}
	\end{equation}
	
	We now exploit the method of Minty, known as Minty's trick~\cite{Minty1962}. Let $w\in L^p(0,T;W_0^{1,p}(\Omega))$. An application of~\eqref{mu0}, of the (strict) monotonicity of the operator $B_\ell$ defined in~\eqref{Bkappa}, and~\eqref{step:16} give:
	\begin{equation}
		\label{step:17}
		\begin{aligned}
			&\int_{0}^{T} \mu(\tilde{\Theta}_\ell)\langle g_\ell-B_\ell(w), u_\ell-w\rangle_{W^{-1,p'}(\Omega), W_0^{1,p}(\Omega)} \dd t\\
			&\ge \mu_1\liminf_{N\to\infty} \int_{0}^{T} \langle B_\ell(\Pi_k \bm{u}_{\ell,k})-B_\ell(w), \Pi_k \bm{u}_{\ell,k}-w \rangle_{W^{-1,p'}(\Omega), W_0^{1,p}(\Omega)} \dd t \ge 0.
		\end{aligned}
	\end{equation}
	
	Let $\lambda>0$ and specialise $w=u_\ell -\lambda v$ in~\eqref{step:17}, where $v$ is arbitrarily chosen in $L^p(0,T;W_0^{1,p}(\Omega))$. We obtain that:
	\begin{equation}
		\label{step:18}
		\int_{0}^{T} \mu(\tilde{\Theta}_\ell)\langle g_\ell-B_\ell(u_\ell -\lambda v), \lambda v \rangle_{W^{-1,p'}(\Omega), W_0^{1,p}(\Omega)} \dd t \ge 0.
	\end{equation}
	
	Dividing~\eqref{step:18} by $\lambda >0$, letting $\lambda \to 0^+$ and exploiting the hemi-continuity of $B_\ell$ gives:
	\begin{equation*}
		\label{step:19}
		\int_{0}^{T} \mu(\tilde{\Theta}_\ell)\langle g_\ell-B_\ell(u_\ell), v \rangle_{W^{-1,p'}(\Omega), W_0^{1,p}(\Omega)} \dd t \ge 0.
	\end{equation*}
	
	Specialising $v=\psi(t)\tilde{v}$, with $\psi\in\mathcal{D}(0,T)$ and $\tilde{v}\in\mathcal{D}(\Omega)$, we obtain that:
	\begin{equation*}
		\mu(\tilde{\Theta}_\ell)\langle g_\ell-B_\ell(u_\ell), \tilde{v} \rangle_{W^{-1,p'}(\Omega), W_0^{1,p}(\Omega)}=0,\quad\textup{ for a.a. }t\in (0,T).
	\end{equation*}
	
	Since $\mu_1\le\mu(\tilde{\Theta}_\ell)<\mu_2$ independently of $\ell$ by~\eqref{mu0} and since $\tilde{v}$ is chosen arbitrarily in $\mathcal{D}(\Omega)$, the latter implies:
	\begin{equation*}
		\label{step:20}
		g_\ell = B_\ell(u_\ell) \in L^\infty(0,T;W^{-1,p'}(\Omega)).
	\end{equation*}
	
	$(iv)$ \emph{Preliminary remarks for the recovery of the weak formulation~\eqref{penalty-thetatilde}}. Let $\Xi\in\mathcal{D}(\Omega\times (0,L))$ and let $\psi\in\mathcal{C}^1([0,T])$. For each $0 \le n \le N-1$, multiply~\eqref{FD:temp} by $\{\psi(nk)\Xi\}$, getting:
	\begin{equation}
		\label{step:22}
		\begin{aligned}
			&\rho c\int_{\Omega\times (0,L)}\dfrac{\tilde{\Theta}_{\ell,k}^{n+1}-\tilde{\Theta}_{\ell,k}^{n}}{k} \psi(nk)\Xi\dd z\dd x
			+\rho c\int_{\Omega\times (0,L)}(\bm{U}(0),v_z(0))\cdot\nabla\tilde{\Theta}_{\ell,k}^{n+1} (\psi(nk)\Xi)\dd z\dd x\\
			&\quad+\kappa\int_{\Omega\times (0,L)}\nabla\tilde{\Theta}_{\ell,k}^{n+1}\cdot\nabla(\psi(nk)\Xi)\dd z\dd x
			-\dfrac{1}{\ell}\int_{\Omega\times (0,L)}\{\tilde{\Theta}_{\ell,k}^{n+1}\}^{-}(\psi(nk)\Xi)\dd z\dd x\\
			&\quad+\dfrac{1}{\ell}\int_{\Omega\times (0,L)}\{\tilde{\Theta}_{\ell,k}^{n+1}-\Theta_{\textup{m}}\}^{+}(\psi(nk)\Xi)\dd z\dd x\\
			&=\int_{\Omega\times (0,L)}h_{\ell,k}^{n}(\psi(nk)\Xi)\dd z\dd x-\int_{\Omega}q_{\textup{geo}}^\perp(\psi(nk)\Xi)\dd x.
		\end{aligned}
	\end{equation}
	
	Define the mapping $\psi_k:[0,T]\to\mathbb{R}$ in a way that $\psi_k(t):=\psi(nk)$, for all $nk\le t< (n+1)k$.
	Multiplying~\eqref{step:22} by $k$ and summing over $n=0,\dots, N-1$ gives:
	\begin{equation}
		\label{step:23}
		\begin{aligned}
			&\rho c\int_{0}^{T}\int_{\Omega\times (0,L)}D_k(\Pi_k\tilde{\bm{\Theta}}_{\ell,k})\psi_k(t)\Xi\dd z\dd x\dd t\\
			&\quad+\rho c\int_{0}^{T}\int_{\Omega\times (0,L)}(\bm{U}(0),v_z(0))\cdot\nabla(\Pi_k\tilde{\bm{\Theta}}_{\ell,k}) (\psi_k(t)\Xi) \dd z\dd x\dd t\\
			&\quad+\kappa\int_{0}^{T}\int_{\Omega\times (0,L)}\nabla(\Pi_k\tilde{\bm{\Theta}}_{\ell,k})\cdot\nabla(\psi_k(t)\Xi)\dd z\dd x\dd t\\
			&\quad-\dfrac{1}{\ell}\int_{0}^{T}\int_{\Omega\times (0,L)}\{\Pi_k\tilde{\bm{\Theta}}_{\ell,k}\}^{-} (\psi_k(t)\Xi)\dd z \dd x \dd t\\
			&\quad+\dfrac{1}{\ell}\int_{0}^{T}\int_{\Omega\times (0,L)}\{\Pi_k\tilde{\bm{\Theta}}_{\ell,k}-\Theta_{\textup{m}}\}^{+}(\psi_k(t)\Xi)\dd z\dd x\dd t\\
			&=\int_{0}^{T}\int_{\Omega\times (0,L)}\Lambda_k\bm{h}_{\ell,k}(\psi_k(t)\Xi)\dd z\dd x\dd t-\int_{0}^{T}\int_{\Omega}q_{\textup{geo}}^\perp(\psi_k(t)\Xi)\dd x\dd t.
		\end{aligned}
	\end{equation}
	
	An application of Theorem~\ref{rhs-conv}, the properties of finite difference quotients (viz., e.g., Section~5.8.2 in~\cite{Evans2010}), and the affinity and continuity of the $-\Delta_b:L^\infty(0,T;V)\to L^\infty(0,T;V')$ (both inherited by the affinity and continuity of the original \emph{elliptic} operator $-\Delta_b:V\to V'$) allow us to pass to the limit as $N\to\infty$ in~\eqref{step:23}, so as to obtain
	\begin{equation}
		\label{step:24}
		\begin{aligned}
			&\rho c\int_{0}^{T}\int_{\Omega\times (0,L)}\dfrac{\dd\tilde{\Theta}_\ell}{\dd t}(\psi(t)\Xi) \dd z\dd x\dd t
			+\rho c\int_{0}^{T}\int_{\Omega\times (0,L)}(\bm{U}(0),v_z(0))\cdot\nabla\tilde{\Theta}_\ell (\psi(t)\Xi)\dd z\dd x\dd t\\
			&\quad+\kappa\int_{0}^{T}\langle-\Delta_b(\tilde{\Theta}_\ell),\Xi\rangle_{V',V}\psi(t)\dd t
			-\dfrac{1}{\ell}\int_{0}^{T}\int_{\Omega\times (0,L)}\{\tilde{\Theta}_\ell\}^{-}(\psi(t)\Xi)\dd z\dd x\dd t\\
			&\quad+\dfrac{1}{\ell}\int_{0}^{T}\int_{\Omega\times (0,L)}\{\tilde{\Theta}_\ell-\Theta_{\textup{m}}\}^{+}(\psi(t)\Xi)\dd z\dd x\dd t\\
			&=\int_{0}^{T}\int_{\Omega\times (0,L)}h_\ell (\psi(t)\Xi)\dd z\dd x\dd t,
		\end{aligned}
	\end{equation}
	on the one hand.
	On the other hand, observe that the first term in~\eqref{step:22} can be written as:
	\begin{equation*}
		\begin{aligned}
			&\dfrac{\rho c}{k}\int_{\Omega\times (0,L)}\tilde{\Theta}_{\ell,k}^{N}\psi(T-k)\Xi\dd z\dd x-\dfrac{\rho c}{k}\int_{\Omega\times (0,L)}\tilde{\Theta}_0\psi(0)\Xi\dd z\dd x\\
			&\quad-\rho c\sum_{n=0}^{N-2}\int_{\Omega\times (0,L)}\tilde{\Theta}_{\ell,k}^{n+1}\left[\dfrac{\psi((n+1)k)-\psi(nk)}{k}\right]\Xi\dd z\dd x.
		\end{aligned}
	\end{equation*}
	
	Multiplying~\eqref{step:22} by $k$, summing over $n=0,\dots, N-1$, and applying the Taylor--MacLaurin formula (cf., e.g., Theorem~9.10-1$(c)$ in~\cite{Ciarlet2025}) to the latter leads to:
	\begin{equation}
		\label{step:25}
		\begin{aligned}
			&\rho c\int_{\Omega\times (0,L)}\tilde{\Theta}_{\ell,k}^{N}\psi(T-k)\Xi\dd z\dd x-\rho c\int_{\Omega\times (0,L)}\tilde{\Theta}_0\psi(0)\Xi\dd z\dd x\\
			&\quad-\rho c\sum_{n=0}^{N-2}\int_{nk}^{(n+1)k}\int_{\Omega\times (0,L)}\Pi_k\tilde{\bm{\Theta}}_{\ell,k}(t)\psi'(\theta n k+(1-\theta)(1+n)k)\Xi\dd z\dd x\dd t\\
			&\quad+\rho c\sum_{n=0}^{N-1}\int_{nk}^{(n+1)k}\int_{\Omega\times (0,L)}(\bm{U}(0),v_z(0))\cdot\nabla(\Pi_k\tilde{\bm{\Theta}}_{\ell,k}(t)) (\psi_k(t)\Xi)\dd z\dd x \dd t\\
			&+\kappa\sum_{n=0}^{N-1}\int_{nk}^{(n+1)k}\int_{\Omega\times (0,L)}\nabla(\Pi_k\tilde{\bm{\Theta}}_{\ell,k}(t))\cdot\nabla(\psi_k(t)\Xi)\dd z\dd x \dd t\\
			&\quad-\dfrac{1}{\ell}\sum_{n=0}^{N-1}\int_{nk}^{(n+1)k}\int_{\Omega\times (0,L)}\{\Pi_k\tilde{\bm{\Theta}}_{\ell,k}(t)\}^{-}(\psi_k(t)\Xi)\dd z\dd x \dd t\\
			&\quad+\dfrac{1}{\ell}\sum_{n=0}^{N-1}\int_{nk}^{(n+1)k}\int_{\Omega\times (0,L)}\{\Pi_k\tilde{\bm{\Theta}}_{\ell,k}(t)-\Theta_{\textup{m}}\}^{+}\psi_k(t)\Xi\dd z\dd x \dd t\\
			&=\sum_{n=0}^{N-1}\int_{nk}^{(n+1)k}\int_{\Omega\times (0,L)}\Lambda_k\bm{h}_{\ell,k}(t)(\psi_k(t)\Xi)\dd z\dd x \dd t-\sum_{n=0}^{N-1}\int_{nk}^{(n+1)k}\int_{\Omega}q_{\textup{geo}}^\perp (\psi_k(t)\Xi)\dd x\dd t.
		\end{aligned}
	\end{equation}
	
	An application of Theorem~\ref{rhs-conv} and~\eqref{conv-proc} allow us to pass to the limit as $N\to\infty$ in~\eqref{step:25}, so as to obtain:
	\begin{equation}
		\label{step:26}
		\begin{aligned}
			&\rho c\int_{\Omega\times (0,L)}\tilde{\tau}_\ell\psi(T)\Xi \dd z\dd x-\rho c\int_{\Omega\times (0,L)}\tilde{\Theta}_0\psi(0)\Xi\dd z\dd x
			-\rho c\int_{0}^{T}\int_{\Omega\times (0,L)}\tilde{\Theta}_\ell \Xi \dfrac{\dd\psi}{\dd t}\dd z\dd x\dd t\\
			&\quad+\rho c\int_{0}^{T}\int_{\Omega\times (0,L)}(\bm{U}(0),v_z(0))\cdot\nabla\tilde{\Theta}_\ell(\psi(t)\Xi)\dd z\dd x\dd t
			+\kappa\int_{0}^{T}\langle-\Delta_b(\tilde{\Theta}_\ell(t)),\Xi\rangle_{V',V}\psi(t)\dd t\\
			&\quad-\dfrac{1}{\ell}\int_{0}^{T}\int_{\Omega\times (0,L)}\{\tilde{\Theta}_\ell\}^{-}(\psi(t)\Xi)\dd z\dd x\dd t
			+\dfrac{1}{\ell}\int_{0}^{T}\int_{\Omega\times (0,L)}\{\tilde{\Theta}_\ell-\Theta_{\textup{m}}\}^{+}(\psi(t)\Xi)\dd z\dd x\dd t\\
			&=\int_{0}^{T}\int_{\Omega\times (0,L)}h_\ell(\psi(t)\Xi)\dd z\dd x\dd t.
		\end{aligned}
	\end{equation}
	
	Taking an integration by parts in time of~\eqref{step:24} leads to:
	\begin{equation}
		\label{step:27}
		\begin{aligned}
			&\rho c\int_{\Omega\times (0,L)}\tilde{\Theta}_\ell(T)\psi(T)\Xi\dd z\dd x-\rho c\int_{\Omega\times (0,L)}\tilde{\Theta}_\ell(0)\psi(0)\Xi\dd z\dd x\\
			&\quad-\rho c\int_{0}^{T}\int_{\Omega\times (0,L)}\tilde{\Theta}_\ell\left(\dfrac{\dd\psi}{\dd t}\Xi\right) \dd z\dd x\dd t\\
			&\quad+\rho c\int_{0}^{T}\int_{\Omega\times (0,L)}(\bm{U}(0),v_z(0))\cdot\nabla\tilde{\Theta}_\ell (\psi(t)\Xi)\dd z\dd x\dd t\\
			&\quad+\kappa\int_{0}^{T}\langle-\Delta_b\tilde{\Theta}_\ell,\Xi\rangle_{V',V}\psi(t)\dd t
			-\dfrac{1}{\ell}\int_{0}^{T}\int_{\Omega\times (0,L)}\{\tilde{\Theta}_\ell\}^{-}(\psi(t)\Xi)\dd z\dd x\dd t\\
			&\quad+\dfrac{1}{\ell}\int_{0}^{T}\int_{\Omega\times (0,L)}\{\tilde{\Theta}_\ell-\Theta_{\textup{m}}\}^{+}(\psi(t)\Xi)\dd z\dd x\dd t\\
			&=\int_{0}^{T}\int_{\Omega\times (0,L)}h_\ell (\psi(t)\Xi)\dd z\dd x\dd t.
		\end{aligned}
	\end{equation}
	
	Therefore, subtracting~\eqref{step:27} from~\eqref{step:26} gives:
	\begin{equation}
		\label{step:28}
		\int_{\Omega\times (0,L)}(\tilde{\tau}_\ell-\tilde{\Theta}_\ell(T))\psi(T)\Xi\dd z\dd x=\int_{\Omega\times (0,L)}(\tilde{\Theta}_0-\tilde{\Theta}_\ell(0))\psi(0)\Xi\dd z\dd x.
	\end{equation}
	
	Specialising $\psi$ in~\eqref{step:28} first in a way that $\psi(T)=0$ and, second, in a way that $\psi(0)=0$ leads to, respectively:
	\begin{equation*}
		\begin{aligned}
			\tilde{\Theta}_\ell(0)&=\tilde{\Theta}_0,\\
			\tilde{\Theta}_\ell(T)&=\tilde{\tau}_\ell.
		\end{aligned}
	\end{equation*}
	
	$(v)$ \emph{Completion of the proof}. Putting together steps $(i)$--$(iv)$ shows that Problem~\ref{Pkappa} admits at least one solution $(u_\ell,\tilde{\Theta}_\ell)$, as it was to be proved.
\end{proof}

\section{Recovery of the limit model~\eqref{weak-u}--\eqref{theta:ic}}
\label{Sec:4}

The next step, which constitutes the final step of the existence result we want to establish, is the passage to the limit as the penalty parameter approaches zero, as well as the recovery of a sound \emph{concept of solution} for the model governing the evolution of time-dependent thermally-coupled shallow ice sheets. We mention that the strategy proposed here solves a mistake in the proof proposed in~\cite{PT23} for the recovery for the limit model. While the mistake in~\cite{PT23} affects the identification of the concept of solution, the idea itself is valid and the correct result can still be reached by arguing in a different fashion.

The estimates~\eqref{est:1}--\eqref{est:9}, the estimates~\eqref{penalty-est-1}, \eqref{penalty-est-2}, \eqref{penalty-est-3} and Theorem~\ref{thm:3} imply that there exists a constant $C>0$ independent of $\ell$ such that:
\begin{equation}
	\label{boundukappa}
	\begin{aligned}
		\|u_\ell\|_{L^\infty(0,T;W_0^{1,p}(\Omega))} &\le C,\\
		\left\||u_\ell|^\frac{\alpha-2}{2}u_\ell\right\|_{L^\infty(0,T;L^2(\Omega))} &\le C,\\
		\left\|\dfrac{\dd }{\dd t}(|u_\ell|^\frac{\alpha-2}{2} u_\ell)\right\|_{L^2(0,T;L^2(\Omega))} &\le C,\\
		\left\||u_\ell|^{\alpha-2} u_\ell\right\|_{L^\infty(0,T;L^{\alpha'}(\Omega))} &\le C,\\
		\|u_\ell(T)\|_{L^\alpha(\Omega)}&\le C,\\
		\|\{u_\ell\}^{-}\|_{L^2(0,T;L^2(\Omega))}&\le C\sqrt{\ell},\\
		\|\tilde{\Theta}_\ell\|_{L^\infty(0,T;V)} &\le C,\\
		\left\|\dfrac{\dd\tilde{\Theta}_\ell}{\dd t}\right\|_{L^2(0,T;L^2(\Omega\times (0,L)))} &\le C,\\
		\|\{\tilde{\Theta}_\ell\}^{-}\|_{L^2(0,T;L^2(\Omega\times (0,L)))}&\le C\sqrt{\ell},\\
		\|\{\tilde{\Theta}_\ell-\Theta_{\textup{m}}\}^{+}\|_{L^2(0,T;L^2(\Omega\times (0,L)))}&\le C\sqrt{\ell}.
	\end{aligned}
\end{equation}

By the Banach--Alaoglu--Bourbaki theorem (cf., e.g., Theorem~3.6 of~\cite{Brez11}) we infer that, up to passing to subsequences $\{u_{\ell_n}\}_{n=1}^\infty$ and $\{\tilde{\Theta}_{\ell_n}\}_{n=1}^\infty$, with $\ell_n\to 0^+$ as $n\to\infty$, it results:
\begin{equation}
	\label{conv-proc-kappa}
	\begin{aligned}
		u_{\ell_n} \wsc u, &\textup{ in } L^\infty(0,T;W_0^{1,p}(\Omega))\textup{ as }n\to\infty,\\
		|u_{\ell_n}|^\frac{\alpha-2}{2} u_{\ell_n} \wsc v, &\textup{ in } L^\infty(0,T;L^2(\Omega))\textup{ as }n\to\infty,\\
		\dfrac{\dd }{\dd t}\left(|u_{\ell_n}|^\frac{\alpha-2}{2} u_{\ell_n}\right) \rightharpoonup \dfrac{\dd v}{\dd t}, &\textup{ in } L^2(0,T;L^2(\Omega))\textup{ as }n\to\infty,\\
		|u_{\ell_n}|^{\alpha-2} u_{\ell_n} \wsc w, &\textup{ in } L^\infty(0,T;L^{\alpha'}(\Omega))\textup{ as }n\to\infty,\\
		u_{\ell_n}(T)\rightharpoonup\chi,&\textup{ in }L^\alpha(\Omega)\textup{ as }n\to\infty,\\
		\{u_{\ell_n}\}^{-} \to 0,&\textup{ in }L^2(0,T;L^2(\Omega))\textup{ as }n\to\infty,\\
		\tilde{\Theta}_{\ell_n}\wsc\tilde{\Theta},&\textup{ in }L^\infty(0,T;V)\textup{ as }n\to\infty,\\
		\dfrac{\dd\tilde{\Theta}_{\ell_n}}{\dd t}\rightharpoonup\dfrac{\dd\tilde{\Theta}}{\dd t},&\textup{ in }L^2(0,T;L^2(\Omega\times(0,L)))\textup{ as }n\to\infty,\\
		\{\tilde{\Theta}_{\ell_n}\}^{-}\to 0,&\textup{ in }L^2(0,T;L^2(\Omega\times(0,L)))\textup{ as }n\to\infty,\\
		\{\tilde{\Theta}_{\ell_n}-\Theta_{\textup{m}}\}^{+}\to 0,&\textup{ in }L^2(0,T;L^2(\Omega\times(0,L)))\textup{ as }n\to\infty.
	\end{aligned}
\end{equation}

The first convergence in~\eqref{conv-proc-kappa} implies that:
\begin{equation*}
	u_{\ell_n} \rightharpoonup u,\quad\textup{ in }L^2(0,T;L^2(\Omega)) \simeq L^2((0,T)\times \Omega) \textup{ as }n\to\infty.
\end{equation*}

The continuity and the monotonicity of the negative part established in Lemma~\ref{lem:1}$(a)$ allow us to apply Theorem~12.5-2$(a)$ of~\cite{Ciarlet2025}, getting
\begin{equation}
	\label{sign}
	\{u\}^{-} =0 \quad \textup{ in } L^2(0,T;L^2(\Omega)),
\end{equation}
which means that $u(t) \ge 0$ a.e. in $\Omega$, for a.e. $t \in (0,T)$.

Using the same compactness argument as in Lemma~\ref{Dub:appl-1}, an application of Lemma~\ref{Dub}$(b)$ with $A_0:=L^\frac{2p}{\alpha}(\Omega)$, $A_1:=L^2(\Omega)$, $q_0=\infty$ and $q_1=2$ gives that:
\begin{equation}
	\label{Dub:strong}
	\left\{|u_{\ell_n}|^\frac{\alpha-2}{2}u_{\ell_n}\right\}_{n=1}^\infty \textup{ strongly converges in } \mathcal{C}^0([0,T];L^\frac{2p}{\alpha}(\Omega))\textup{ as }n\to\infty.
\end{equation}

The seventh and eighth convergences in~\eqref{conv-proc-kappa} put us in position to apply the classical Aubin--Lions--Simon compactness theorem, so as to infer that, up to passing to an ulterior subsequence
\begin{equation}
	\label{ALS-temp}
	\tilde{\Theta}_{\ell_n}\to\tilde{\Theta},\quad\textup{ in }\mathcal{C}^0([0,T];L^{q}(\Omega\times(0,L))),
\end{equation}
as $n\to\infty$, for all $2\le q<6$.
Since, by Lemma~\ref{A-mu-properties}$(c)$, the operator $\mathcal{A}$ is continuous from $L^q(0,T;L^q(\Omega\times(0,L)))$ to itself in the particular case where $\frac{10}{3}\le q\le\frac{28}{5}$, we obtain that an application of~\eqref{ALS-temp} gives:
\begin{equation}
	\label{A-conv}
	\mathcal{A}(\tilde{\Theta}_{\ell_n})\to\mathcal{A}(\tilde{\Theta}),\quad\textup{ in }L^q(0,T;L^q(\Omega\times(0,L))) \textup{ as }n\to\infty.
\end{equation}

In light of~\eqref{Dub:strong} and~\eqref{A-conv}, we can systematically apply, in this order, Lemma~\ref{lem:5:improved}, Lemma~\ref{lem:11}, \eqref{conv-rhs-temp} and Theorem~\ref{rhs-conv} (applied to $\ell$ instead of $N$) to obtain that:
\begin{equation}
	\label{key}
	h_\ell\to h:=\mathcal{A}(\tilde{\Theta})\left(\left\{u^\frac{p-1}{2p}-z\right\}^{+}\right)^p,\quad\textup{ in }L^\sigma(0,T;L^\sigma(\Omega\times (0,L))) \textup{ as }n\to\infty,
\end{equation}
where $\sigma$ is the real number defined in Theorem~\ref{rhs-conv}.

The same monotonicity argument as in Theorem~\ref{thm:3} and the third convergence in~\eqref{conv-proc-kappa} in turn imply that the limit $v$ of the sequence in~\eqref{Dub:strong} takes the following form:
\begin{equation*}
	v=|u|^\frac{\alpha-2}{2} u \in H^1(0,T;L^2(\Omega)).
\end{equation*}

For each $v \in L^2(0,T;L^2(\Omega))$ we have
\begin{equation*}
	\begin{aligned}
		&\int_{0}^{T} \int_{\Omega} v (|u_{\ell_n}|-u)\dd x \dd t
		=\int_{0}^{T} \int_{\Omega} v (\{u_{\ell_n}\}^{+}+\{u_{\ell_n}\}^{-}-u)\dd x \dd t\\
		&=\int_{0}^{T} \int_{\Omega} v \left((\{u_{\ell_n}\}^{+}-\{u_{\ell_n}\}^{-})+2\{u_{\ell_n}\}^{-}-u\right)\dd x \dd t\\
		&=\int_{0}^{T} \int_{\Omega} v (u_{\ell_n} -u)\dd x \dd t+2\int_{0}^{T} \int_{\Omega} v\{u_{\ell_n}\}^{-}\dd x \dd t\to 0,
	\end{aligned}
\end{equation*}
as $n\to\infty$ since the first term converges to zero by the first convergence in~\eqref{conv-proc-kappa} and the second term converges to zero thanks to~\eqref{sign}. In conclusion, we have established the following convergence:
\begin{equation}
	\label{conv-abs}
	|u_{\ell_n}| \rightharpoonup u,\quad\textup{ in }L^2(0,T;L^2(\Omega)) \textup{ as }n\to\infty.
\end{equation}

In light of~\eqref{Dub:strong} and~\eqref{conv-abs}, we are in position to apply Lemma~\ref{lem:5} so as to obtain
\begin{equation*}
	\left\{|u_{\ell_n}|\right\}_{n=1}^\infty \textup{ strongly converges in } \mathcal{C}^0([0,T];L^\alpha(\Omega))\textup{ as }n\to\infty,
\end{equation*}
and, so, it also strongly converges in $L^2(0,T;L^\alpha(\Omega))$. Combining this with the convergences~\eqref{sign} and~\eqref{conv-abs} gives that:
\begin{equation}
	\label{cont}
	|u_{\ell_n}| \to u,\quad\textup{ in } \mathcal{C}^0([0,T];L^\alpha(\Omega))\textup{ as }n\to\infty.
\end{equation}

Define the linear and continuous operator $L_0:\mathcal{C}^0([0,T];L^\alpha(\Omega)) \to L^\alpha(\Omega)$ by:
\begin{equation*}
	L_0(v):=v(0),\quad\textup{ for all }v \in \mathcal{C}^0([0,T];L^\alpha(\Omega)).
\end{equation*}

By~\eqref{cont} and the continuity of $L_0$, we have that $|u_{\ell_n}(0)| \to u(0)$ in $L^\alpha(\Omega)$ as $n\to\infty$. However, since $u_{\ell_n}(0)=u_0\in K_{\textup{surf}}$ for all $n\in\mathbb{N}$, we immediately deduce that $u(0)=u_0 \in K_{\textup{surf}}$ and so that the weak-star limit $u$ in~\eqref{conv-proc-kappa} satisfies the expected initial condition.

An application of the first and fourth convergences in~\eqref{conv-proc-kappa} give $u_{\ell_n}\rightharpoonup u$ in $L^2(0,T;L^\alpha(\Omega))$ as $n\to\infty$ and $(|u_{\ell_n}|^{\alpha-2}u_{\ell_n})\rightharpoonup w$ in $L^2(0,T; L^{\alpha'}(\Omega))$ as $n\to\infty$, respectively. Observing that an application of~\eqref{cont} gives $(|u|^{\alpha-2}u)\in L^2(0,T;L^{\alpha'}(\Omega))$ as well as
\begin{equation*}
	\int_{0}^{T}\int_{\Omega}\left(|u_{\ell_n}|^{\alpha-2}u_{\ell_n}\right) u_{\ell_n}\dd x\dd t=\int_{0}^{T}\int_{\Omega}|u_{\ell_n}|^\alpha\dd x\dd t\to\int_{0}^{T}\int_{\Omega}|u|^\alpha\dd x\dd t=\int_{0}^{T}\int_{\Omega}\left(|u|^{\alpha-2}u\right) u\dd x\dd t,
\end{equation*}
as $n\to\infty$, we are in position to apply Theorem~12.5-2$(a)$ in~\cite{Ciarlet2025} and conclude that:
\begin{equation}
	\label{eq:10}
	w=|u|^{\alpha-2}u,\quad\textup{ in }L^2(0,T;L^{\alpha'}(\Omega)).
\end{equation}

Thanks to Lemma~\ref{lem:3}, Lemma~\ref{lem:5} and Lemma~\ref{lem:8}, it results:
\begin{equation}
	\label{eq:11}
	\begin{aligned}
		&\int_{0}^{T} \left\langle\dfrac{\dd}{\dd t}\left(|u_{\ell_n}(t)|^{\alpha-2} u_{\ell_n}(t)\right),u_{\ell_n}(t)\right\rangle_{W^{-1,p'}(\Omega), W_0^{1,p}(\Omega)} \dd t
		=\dfrac{\|u_{\ell_n}(T)\|_{L^\alpha(\Omega)}^\alpha}{\alpha'}-\dfrac{\|u_0\|_{L^\alpha(\Omega)}^\alpha}{\alpha'}\\
		&\to \dfrac{\|u(T)\|_{L^\alpha(\Omega)}^\alpha}{\alpha'}-\dfrac{\|u_0\|_{L^\alpha(\Omega)}^\alpha}{\alpha'},\quad\textup{ as }n\to\infty.
	\end{aligned}
\end{equation}

Define the operator $B:L^1(0,T;W_0^{1,p}(\Omega))\to L^\infty(0,T;W^{-1,p'}(\Omega))$ as follows:
\begin{equation*}
	\begin{aligned}
		\langle B(u) , v \rangle_{L^\infty(0,T;W^{-1,p'}(\Omega)),L^1(0,T;W_0^{1,p}(\Omega))}:=\int_{0}^{T} \int_{\Omega} \mu(\tilde{\Theta}(t))|\nabla(u(t))|^{p-2} \nabla(u(t)) \cdot \nabla(v(t)) \dd x \dd t,
	\end{aligned}
\end{equation*}
for all $u,v \in L^1(0,T;W_0^{1,p}(\Omega))$.
For any $v \in L^1(0,T;W^{1,p}_0(\Omega))$, an application of~\eqref{mu0} and H\"older's inequality gives:
\begin{equation*}
	\begin{aligned}
		&\left|\int_{0}^{T} \int_{\Omega} \mu(\tilde{\Theta}) |\nabla (u_{\ell_n}(t))|^{p-2} \nabla(u_{\ell_n}(t)) \cdot\nabla(v(t)) \dd x \dd t\right|\\
		&=\left|\int_{0}^{T} \sum_{i=1}^2\int_{\Omega} \mu(\tilde{\Theta}) |\nabla (u_{\ell_n}(t))|^{p-2} \partial_i(u_{\ell_n}(t)) \partial_i(v(t)) \dd x \dd t\right|\\
		& \le \mu_2\Bigg|\sum_{i=1}^2\int_{0}^{T} \left(\int_{\Omega}\left| |\nabla(u_{\ell_n}(t))|^{p-2} \partial_i(u_{\ell_n}(t))\right|^{p/(p-1)} \dd x\right)^{(p-1)/p}\\
		&\qquad\qquad\times\left(\int_{\Omega} |\partial_i(v(t))|^p \dd x\right)^{1/p} \dd t\Bigg|\\
		& \le \mu_2\Bigg|\sum_{i=1}^2\int_{0}^{T} \left(\int_{\Omega}|\nabla(u_{\ell_n}(t))|^{p(p-2)/(p-1)} |\partial_i(u_{\ell_n}(t))|^{p/(p-1)} \dd x\right)^{(p-1)/p}\\
		&\qquad\qquad\times\left(\int_{\Omega} |\partial_i(v(t))|^p \dd x\right)^{1/p} \dd t\Bigg|\\
		&\le 2\mu_2\left|\int_{0}^{T} \|\nabla(u_{\ell_n}(t))\|_{\bm{L}^p(\Omega)}^{p-1} \|\nabla(v(t))\|_{\bm{L}^p(\Omega)} \dd t\right|\\
		&\le 2\mu_2 \left(\sup_{t\in (0,T)} \|u_{\ell_n}(t)\|_{W^{1,p}(\Omega)}\right)^{p-1} \|v\|_{L^1(0,T;W_0^{1,p}(\Omega))}.
	\end{aligned}
\end{equation*}

This estimate shows that the sequence $\{B(u_{\ell_n})\}_{n=1}^\infty$ is bounded in $L^\infty(0,T;W^{-1,p'}(\Omega))$ independently of $n$. Since $\mu$ is bounded below by $\mu_1>0$ (viz.~\eqref{mu0}), we obtain that the sequence $\{-\nabla\cdot(|\nabla u_{\ell_n}|^{p-2}\nabla u_{\ell_n})\}_{n=1}^\infty$ is bounded in $L^\infty(0,T;W^{-1,p'}(\Omega))$ independently of $n$. A subsequent application of the Banach--Alaoglu--Bourbaki theorem (cf., e.g., Theorem~3.6 of~\cite{Brez11}) ensures the existence of an element $G \in L^\infty(0,T;W^{-1,p'}(\Omega))$ such that:
\begin{equation}
	\label{eq:13-bis}
	-\nabla\cdot\left(|\nabla u_{\ell_n}|^{p-2}\nabla u_{\ell_n}\right)\wsc G,\quad\textup{ in }L^\infty(0,T;W^{-1,p'}(\Omega)) \textup{ as }n\to\infty.
\end{equation}

The continuity of $\mu$ asserted in Lemma~\ref{A-mu-properties}$(e)$, the boundedness of the sequence $\{u_{\ell_n}\}_{n=1}^\infty$ in $L^\infty(0,T;W^{1,p}_0(\Omega))$ stated in the first line of~\eqref{conv-proc-kappa}, and the sequential weak lower semicontinuity of the convex functional $w\in L^p(0,T;W^{1,p}(\Omega))\mapsto\int_{0}^{T}\mu(\tilde{\Theta})\int_{\Omega}|\nabla(w(t))|^p\dd x\dd t$ give:
\begin{equation}
	\label{eq:16}
	\begin{aligned}
		&\limsup_{n\to\infty}\left(-\int_{0}^{T} \int_{\Omega} \mu(\tilde{\Theta}_{\ell_n}) |\nabla (u_{\ell_n}(t))|^p \dd x \dd t\right)
		=-\liminf_{n\to\infty}\int_{0}^{T} \int_{\Omega} \mu(\tilde{\Theta}) |\nabla (u_{\ell_n}(t))|^p \dd x \dd t\\
		&\le -\int_{0}^{T}\int_{\Omega} \mu(\tilde{\Theta}) |\nabla(u(t))|^p \dd x \dd t.
	\end{aligned}
\end{equation}

Thanks to Lemma~\ref{A-mu-properties}$(e)$, the first bound in~\eqref{boundukappa}, H\"{o}lder's inequality, and~\eqref{eq:13-bis} we obtain that, up to passing to a suitable subsequence, the following convergence holds as $n\to\infty$:
\begin{equation}
	\label{c:1}
	\begin{aligned}
		&\int_{0}^{T}\mu(\tilde{\Theta}_{\ell_n})\int_{\Omega}|\nabla(u_{\ell_n}(t))|^{p-2}\nabla(u_{\ell_n}(t))\cdot\nabla(v(t))\dd x\dd t\\
		&=\int_{0}^{T}\left(\mu(\tilde{\Theta}_{\ell_n})-\mu(\tilde{\Theta})\right)\int_{\Omega}|\nabla(u_{\ell_n}(t))|^{p-2}\nabla(u_{\ell_n}(t))\cdot\nabla(v(t))\dd x\dd t\\
		&\quad+\int_{0}^{T}\int_{\Omega}|\nabla(u_{\ell_n}(t))|^{p-2}\nabla(u_{\ell_n}(t))\cdot\nabla\left(\mu(\tilde{\Theta})v(t)\right)\dd x\dd t\\
		&\to\int_{0}^{T}\mu(\tilde{\Theta})\langle G(t),v(t)\rangle_{W^{-1,p'}(\Omega), W_0^{1,p}(\Omega)}\dd t.
	\end{aligned}
\end{equation}

Thanks to Lemma~\ref{th:gradBochner}, we infer that the seventh convergence in~\eqref{conv-proc-kappa} implies:
\begin{equation}
	\label{grad-temp-conv}
	\widetilde{\nabla\tilde{\Theta}_{\ell_n}}\rightharpoonup\widetilde{\nabla\tilde{\Theta}},\quad\textup{ in }L^2(0,T;\bm{L}^2(\Omega\times (0,L))) \textup{ as }n\to\infty.
\end{equation}

Let $v \in W^{1,2,2}(0,T;W^{1,p}_0(\Omega);L^\alpha(\Omega))$ be such that $v(t) \ge 0$ in $\overline{\Omega}$ (since $p>2$), multiply equation~\eqref{penalty-u} by $(v(t)-u_{\ell_n}(t))$ in the sense of the duality between $W^{-1,p'}(\Omega)$ and $W^{1,p}_0(\Omega)$. A subsequent integration in $(0,T)$ gives:
\begin{equation}
	\label{preliminary}
	\begin{aligned}
		&\int_{0}^{T} \left\langle \dfrac{\dd }{\dd t}(|u_{\ell_n}|^{\alpha-2} u_{\ell_n}), v(t)-u_{\ell_n}(t) \right\rangle_{W^{-1,p'}(\Omega), W_0^{1,p}(\Omega)} \dd t\\
		&\quad+\int_{0}^{T} \int_{\Omega} \mu(\tilde{\Theta}_{\ell_n}) |\nabla(u_{\ell_n}(t))|^{p-2} \nabla(u_{\ell_n}(t)) \cdot (\nabla(v(t))-\nabla(u_{\ell_n}(t))) \dd x \dd t\\
		&\ge \int_{0}^{T} \int_{\Omega} \tilde{a}(t) (v(t)-u_{\ell_n}(t)) \dd x\dd t,
	\end{aligned}
\end{equation}
where the inequality holds thanks to the monotonicity of $(-\{\cdot\}^{-})$ established in Lemma~\ref{lem:1}$(a)$ and~\eqref{mu0}.

Passing to the $\limsup$ as $n\to\infty$ in~\eqref{preliminary}, an application of~\eqref{eq:10} (note that $(\alpha'-1)p>\alpha'$), \eqref{eq:11} and~\eqref{eq:16} and~\eqref{c:1} give that the weak-star limit $u$ satisfies:
\begin{equation}
	\label{limit-problem-u}
	\begin{aligned}
		&\int_{\Omega}|u(T)|^{\alpha-2}u(T)v(T)-u_0^{\alpha-1}v(0)\dd x-\int_{0}^{T}\int_{\Omega}|u(t)|^{\alpha-2}u(t)\dfrac{\dd v}{\dd t}(t)\dd x\dd t\\
		&\quad-\dfrac{\|u(T)\|_{L^\alpha(\Omega)}^\alpha}{\alpha'}+\dfrac{\|u_0\|_{L^\alpha(\Omega)}^\alpha}{\alpha'}
		+\int_{0}^{T} \mu(\tilde{\Theta})\langle \gls{G}(t),v(t)\rangle_{W^{-1,p'}(\Omega), W_0^{1,p}(\Omega)} \dd t\\
		&\quad-\int_{0}^{T} \mu(\tilde{\Theta})\int_{\Omega} |\nabla(u(t))|^p \dd x \dd t
		\ge \int_{0}^{T} \int_{\Omega} \tilde{a}(t) (v(t)-u(t)) \dd x\dd t.
	\end{aligned}
\end{equation}

Choose $\Xi\in L^\infty(0,T;V)$ such that $0\le\Xi(t)\le\Theta_{\textup{m}}$ for a.a. $t\in (0,T)$ and multiply equation~\eqref{penalty-thetatilde} by $(\Xi(t)-\tilde{\Theta}_{\ell_n}(t))$ in the sense of duality between $V'$ and $V$. A subsequent integration in $(0,T)$ gives:
\begin{equation*}
	\begin{aligned}
		&\rho c\int_{0}^{T}\int_{0}^{L}\int_{\Omega}\left(\dfrac{\dd \tilde{\Theta}_{\ell_n}}{\dd t}(t)+U_1(0)\dfrac{\partial\tilde{\Theta}_{\ell_n}}{\partial x_1}(t)+U_2(0)\dfrac{\partial\tilde{\Theta}_{\ell_n}}{\partial x_2}(t)+v_z(0)\dfrac{\partial\tilde{\Theta}_{\ell_n}}{\partial z}(t)\right) (\Xi(t)-\tilde{\Theta}_{\ell_n}(t)) \dd x \dd z \dd t\\
		&\quad+\kappa\int_{0}^{T}\int_{0}^{L}\int_{\Omega}\nabla\tilde{\Theta}_{\ell_n}(t)\cdot\nabla(\Xi(t)-\tilde{\Theta}_{\ell_n}(t))\dd x \dd z\dd t-\dfrac{1}{\ell}\int_{0}^{T}\int_{0}^{L}\int_{\Omega}\{\tilde{\Theta}_{\ell_n}(t)\}^{-}(\Xi(t)-\tilde{\Theta}_{\ell_n}(t)) \dd x \dd z \dd t\\
		&\quad+\dfrac{1}{\ell}\int_{0}^{T}\int_{0}^{L}\int_{\Omega}\{\tilde{\Theta}_{\ell_n}(t)-\Theta_{\textup{m}}\}^{+}(\Xi(t)-\tilde{\Theta}_{\ell_n}(t)) \dd x \dd z \dd t\\
		&= 2(\rho g C_0)^p\int_{0}^{T}\int_{0}^{L}\int_{\Omega}A(\cdot,\cdot,\tilde{\Theta}_{\ell_n}(t))\left(\left\{|u_{\ell_n}(t)|^{\frac{p-1}{2p}}-z\right\}^{+}\right)^p (\Xi(t)-\tilde{\Theta}_{\ell_n}(t))\dd x \dd z \dd t\\
		&\quad-\int_0^T\int_{\Omega}q_{\textup{geo}}^\perp (\Xi(t)-\tilde{\Theta}_{\ell_n}(t))\dd x\dd t.
	\end{aligned}
\end{equation*}

An application of the seventh and eight convergences in~\eqref{conv-proc-kappa}, \eqref{ALS-temp}, \eqref{key}, \eqref{grad-temp-conv}, the monotonicity of $(-\{\cdot\}^{-})$ and $\{\cdot\}^{+}$ established in Lemma~\ref{lem:1}$(a)$ and Remark~\ref{rem:neg}, and the fact that
\begin{equation*}
	\int_{0}^{T}\int_{0}^{L}\int_{\Omega}|\nabla\tilde{\Theta}(t)|^2\dd x\dd z\dd t\le\liminf_{n\to\infty}\int_{0}^{T}\int_{0}^{L}\int_{\Omega}|\nabla\tilde{\Theta}_{\ell_n}(t)|^2\dd x\dd z\dd t,
\end{equation*}
give that the limit $\tilde{\Theta}$ satisfies:
\begin{equation}
	\label{limit-problem-tildetheta}
	\begin{aligned}
		&\rho c\int_{0}^{T}\int_{0}^{L}\int_{\Omega}\left(\dfrac{\dd \tilde{\Theta}}{\dd t}(t)+U_1(0)\dfrac{\partial\tilde{\Theta}}{\partial x_1}(t)+U_2(0)\dfrac{\partial\tilde{\Theta}}{\partial x_2}(t)+v_z(0)\dfrac{\partial\tilde{\Theta}}{\partial z}(t)\right) (\Xi(t)-\tilde{\Theta}(t)) \dd x \dd z \dd t\\
		&\quad+\kappa\int_{0}^{T}\int_{0}^{L}\int_{\Omega}\nabla\tilde{\Theta}(t)\cdot\nabla(\Xi(t)-\tilde{\Theta}(t))\dd x \dd z \dd t\\
		&\ge 2(\rho g C_0)^p\int_{0}^{T}\int_{0}^{L}\int_{\Omega}A(\cdot,\cdot,\tilde{\Theta}(t))\left(\left\{u(t)^{\frac{p-1}{2p}}-z\right\}^{+}\right)^p (\Xi(t)-\tilde{\Theta}(t))\dd x \dd z \dd t\\
		&\quad-\int_0^T\int_{\Omega}q_{\textup{geo}}^\perp (\Xi(t)-\tilde{\Theta}(t))\dd x\dd t.
	\end{aligned}
\end{equation}

In the end, we are in position to write down the model governing the evolution of the thickness of a shallow ice sheet, and to assert that this model admits at least one solution.
This is the main result of this article, the proof of which has been given above.
\begin{theorem}
	\label{thm:4}
	Let $T>0$, let $\Omega \subset \mathbb{R}^2$ be a Lipschitz domain, let $2.8\le p\le 5$ and let $\alpha$ be defined in~\eqref{alpha}.
	Assume that ($H$\ref{H1})--($H$\ref{H4}) hold.
	Then, the family $\{u_\ell\}_{\ell>0}$ of solutions of Problem~\ref{Pkappa} generated by Theorem~\ref{thm:3} satisfies the following convergences up to passing to a subsequence $\{u_{\ell_n}\}_{n=1}^\infty$, where $\{\ell_n\}_{n=1}^\infty$ is such that $\ell_n\to 0^+$ as $n\to\infty$:
	\begin{equation*}
		\begin{aligned}
			u_{\ell_n} \wsc u, &\textup{ in } L^\infty(0,T;W_0^{1,p}(\Omega))\textup{ as }n\to\infty,\\
			|u_{\ell_n}|^\frac{\alpha-2}{2} u_{\ell_n} \wsc |u|^\frac{\alpha-2}{2} u, &\textup{ in } L^\infty(0,T;L^2(\Omega))\textup{ as }n\to\infty,\\
			\dfrac{\dd }{\dd t}\left(|u_{\ell_n}|^\frac{\alpha-2}{2} u_{\ell_n}\right) \rightharpoonup \dfrac{\dd }{\dd t}\left(|u|^\frac{\alpha-2}{2} u\right), &\textup{ in } L^2(0,T;L^2(\Omega))\textup{ as }n\to\infty,\\
			|u_{\ell_n}|^{\alpha-2} u_{\ell_n} \wsc |u|^{\alpha-2} u, &\textup{ in } L^\infty(0,T;L^{\alpha'}(\Omega))\textup{ as }n\to\infty,\\
			\tilde{\Theta}_{\ell_n}\wsc\tilde{\Theta},&\textup{ in }L^\infty(0,T;V)\textup{ as }n\to\infty,\\
			\dfrac{\dd\tilde{\Theta}_{\ell_n}}{\dd t}\rightharpoonup\dfrac{\dd\tilde{\Theta}}{\dd t},&\textup{ in }L^2(0,T;L^2(\Omega\times(0,L)))\textup{ as }n\to\infty.
		\end{aligned}
	\end{equation*}
	
	Moreover, the limit $(u,\tilde{\Theta})$ is a solution to the following variational problem:
	\begin{customprob}{$\mathcal{P}$}
		\label{P}
		Find $u\in\gls{Ksurf2}:=\{v\in L^\infty(0,T;W_0^{1,p}(\Omega)); v(t)\in K_{\textup{surf}} \textup{ a.e. in } (0,T)\}$ and $\tilde{\Theta}\in\gls{Ktemp2}:=\{\Xi\in L^\infty(0,T;V);0\le\Xi(t)\le\Theta_{\textup{m}} \textup{ a.e. in }(0,T)\}$ such that
		\begin{align*}
			u &\in L^\infty(0,T;W_0^{1,p}(\Omega)),\\
			\dfrac{\dd }{\dd t}\left(|u|^\frac{\alpha-2}{2} u\right) &\in L^2(0,T;L^2(\Omega)),\\
			\tilde{\Theta}&\in L^\infty(0,T;V),\\
			\dfrac{\dd\tilde{\Theta}}{\dd t}&\in L^2(0,T;L^2(\Omega\times (0,L))),
		\end{align*}
		and find $G \in L^\infty(0,T;W^{-1,p'}(\Omega))$ satisfying the variational inequalities~\eqref{limit-problem-u} and~\eqref{limit-problem-tildetheta}, namely,
		\begin{equation*}
			\begin{aligned}
				&\int_{\Omega}|u(T)|^{\alpha-2}u(T)v(T)-u_0^{\alpha-1}v(0)\dd x-\int_{0}^{T}\int_{\Omega}|u(t)|^{\alpha-2}u(t)\dfrac{\dd v}{\dd t}(t)\dd x\dd t\\
				&\quad-\dfrac{\|u(T)\|_{L^\alpha(\Omega)}^\alpha}{\alpha'}+\dfrac{\|u_0\|_{L^\alpha(\Omega)}^\alpha}{\alpha'}
				+\int_{0}^{T} \mu(\tilde{\Theta})\langle G(t),v(t)\rangle_{W^{-1,p'}(\Omega), W_0^{1,p}(\Omega)} \dd t\\
				&\quad-\int_{0}^{T} \mu(\tilde{\Theta})\int_{\Omega} |\nabla(u(t))|^p \dd x \dd t
				\ge \int_{0}^{T} \int_{\Omega} \tilde{a}(t) (v(t)-u(t)) \dd x\dd t,
			\end{aligned}
		\end{equation*}
		and
		\begin{equation*}
			\begin{aligned}
				&\rho c\int_{0}^{T}\int_{0}^{L}\int_{\Omega}\left(\dfrac{\dd \tilde{\Theta}}{\dd t}(t)+U_1(0)\dfrac{\partial\tilde{\Theta}}{\partial x_1}(t)+U_2(0)\dfrac{\partial\tilde{\Theta}}{\partial x_2}(t)+v_z(0)\dfrac{\partial\tilde{\Theta}}{\partial z}(t)\right) (\Xi(t)-\tilde{\Theta}(t)) \dd x \dd z \dd t\\
				&\quad+\kappa\int_{0}^{T}\int_{0}^{L}\int_{\Omega}\nabla\tilde{\Theta}(t)\cdot\nabla(\Xi(t)-\tilde{\Theta}(t))\dd x \dd z \dd t\\
				&\ge 2(\rho g C_0)^p\int_{0}^{T}\int_{0}^{L}\int_{\Omega}A(\cdot,\cdot,\tilde{\Theta}(t))\left(\left\{u(t)^{\frac{p-1}{2p}}-z\right\}^{+}\right)^p (\Xi(t)-\tilde{\Theta}(t))\dd x \dd z \dd t\\
				&\quad-\int_0^T\int_{\Omega}q_{\textup{geo}}^\perp (\Xi(t)-\tilde{\Theta}(t))\dd x\dd t,
			\end{aligned}
		\end{equation*}
		for all $v\in W^{1,2,2}(0,T;W_0^{1,p}(\Omega); L^\alpha(\Omega))$ such that $v(t)\ge 0$ in $\overline{\Omega}$ for all $t\in [0,T]$, and for all $\Xi\in L^\infty(0,T;V)$ such that $0\le\Xi(t)\le\Theta_{\textup{m}}$ for a.a. $t\in (0,T)$.
		
		Moreover, the functions $u$ and $\tilde{\Theta}$ satisfy the following initial conditions
		\begin{equation*}
			\begin{aligned}
				u(0)&=u_0,\\
				\tilde{\Theta}(0)&=\tilde{\Theta}_0,
			\end{aligned}
		\end{equation*}
		for some prescribed $u_0 \in K_{\textup{surf}}$ and $\tilde{\Theta}_0\in K_{\textup{temp}}$.
	\end{customprob}
	\qed
\end{theorem}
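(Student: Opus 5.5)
The plan is to assemble the convergences already derived in this section into the statement of Theorem~\ref{thm:4}: pass to the limit $\ell_n\to0^+$ in the penalised Problem~\ref{Pkappa}, whose solutions are given by Theorem~\ref{thm:3}. First, the bounds \eqref{boundukappa} hold uniformly in $\ell$; they follow from Theorem~\ref{thm:2}, which is uniform in $N$ and $\ell$, together with weak-star lower semicontinuity of norms as $N\to\infty$. I will then extract the subsequence in \eqref{conv-proc-kappa} by Banach--Alaoglu--Bourbaki. To identify the limits in the displayed convergences I will use the strong convergences. The first is \eqref{Dub:strong}, obtained from Lemma~\ref{Dub}$(b)$ with the gauge $\tilde M$ of Lemma~\ref{Dub:appl-1}. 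The second is \eqref{cont}, obtained via Lemma~\ref{lem:5} and \eqref{conv-abs}. With these, monotonicity (Theorem~12.5-2$(a)$ in~\cite{Ciarlet2025}) gives $v=|u|^{\frac{\alpha-2}{2}}u$ and $w=|u|^{\alpha-2}u$, as in \eqref{eq:10}.

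Admissibility is the next step. The vanishing of $\{u_{\ell_n}\}^-$ in $L^2$ gives \eqref{sign}, so $u\in\mathcal{K}_{\textup{surf}}$. In the same way, the last two lines of \eqref{conv-proc-kappa}, combined with the strong convergence \eqref{ALS-temp} and the Lipschitz continuity from Lemma~\ref{lem:1}, give $\{\tilde\Theta\}^-=0$ and $\{\tilde\Theta-\Theta_{\textup{m}}\}^+=0$, so $\tilde\Theta\in\mathcal{K}_{\textup{temp}}$. The initial conditions follow from continuity of the trace at $t=0$:
\begin{itemize}
\item $u(0)=u_0$ follows from \eqref{cont} and the operator $L_0$;
\item $\tilde\Theta(0)=\tilde\Theta_0$ follows from \eqref{ALS-temp} in $\mathcal{C}^0([0,T];L^q)$.
\end{itemize}

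For the variational inequalities, I will test \eqref{penalty-u} with $v-u_{\ell_n}$ and drop the penalty term by monotonicity, which gives \eqref{preliminary}. I will then integrate the time-derivative term by parts against $v$ and use Lemma~\ref{lem:8} for the pairing with $u_{\ell_n}$, which gives \eqref{eq:11}. The elliptic part splits into two pieces:
\begin{itemize}
\item the linear-in-$v$ piece converges by \eqref{eq:13-bis} and \eqref{c:1} to the abstract limit $G$;
\item the quadratic piece $-\int\mu|\nabla u_{\ell_n}|^p$ is handled by a $\limsup$ via the weak lower semicontinuity \eqref{eq:16}, after replacing $\mu(\tilde\Theta_{\ell_n})$ by $\mu(\tilde\Theta)$ using the uniform convergence from Lemma~\ref{A-mu-properties}$(e)$.
\end{itemize}
Together these give \eqref{limit-problem-u}.

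The thermal inequality is analogous. Both penalty terms have the right sign against $\Xi-\tilde\Theta_{\ell_n}$ when $0\le\Xi\le\Theta_{\textup{m}}$. The right-hand side converges strongly by \eqref{key}, which combines Lemmas~\ref{lem:5:improved} and \ref{lem:11} with Theorem~\ref{rhs-conv}. The advection and time-derivative terms pair a weakly convergent factor with the strongly convergent factor $\tilde\Theta_{\ell_n}$. The term $-\kappa\int|\nabla\tilde\Theta_{\ell_n}|^2$ is handled by lower semicontinuity, which gives \eqref{limit-problem-tildetheta}.

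I expect the main obstacle to be the elliptic term in the $u$-inequality. Minty's trick is unavailable at this level: the energy identity needed for it would require $\limsup\int\mu|\nabla u_{\ell_n}|^p\le\int\mu\langle G,u\rangle$, and the penalty and obstacle structure prevents recovering this. This is precisely why the limit model keeps $G$ as an independent unknown rather than identifying $G=-\nabla\cdot(|\nabla u|^{p-2}\nabla u)$. The delicate point is the order of the limits. One must handle $\mu(\tilde\Theta_{\ell_n})-\mu(\tilde\Theta)\to0$ uniformly in time, which follows from \eqref{ALS-temp} and Lemma~\ref{A-mu-properties}$(e)$. This is needed before invoking the convexity of $w\mapsto\int\mu(\tilde\Theta)|\nabla w|^p$, so that \eqref{eq:16} holds with the fixed weight $\mu(\tilde\Theta)$.
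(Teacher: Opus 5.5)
Your proposal is correct and follows the paper's own argument in Section~\ref{Sec:4} essentially step for step. That argument runs through the uniform bounds \eqref{boundukappa}, extraction of subsequences, and Dubinskii and Aubin--Lions--Simon compactness \eqref{Dub:strong}, \eqref{ALS-temp}. It then identifies the limits by monotonicity, tests the penalised equations with $v-u_{\ell_n}$ and $\Xi-\tilde{\Theta}_{\ell_n}$ while dropping the signed penalty terms, and uses a $\limsup$/lower-semicontinuity passage that leaves $G$ as an abstract weak-star limit.

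Your explicit checks that $\tilde{\Theta}\in\mathcal{K}_{\textup{temp}}$ and $\tilde{\Theta}(0)=\tilde{\Theta}_0$ usefully fill in points the paper leaves implicit. One small correction: Theorem~\ref{thm:2} is not uniform in $\ell$ for \eqref{est:5}, and this is precisely why $G$ cannot be identified in the limit.
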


It is worth observing that a potential identification of the weak-star limit $G$ with the \emph{expected} term $-\nabla\cdot(|\nabla u|^{p-2}\nabla u)$ hinges, for instance, on the higher regularity for the term $\frac{\dd}{\dd t}(|u|^{\alpha-2}u)$. In general, we cannot say that the term $\frac{\dd}{\dd t}(|u|^{\alpha-2}u)$ is the weak-star limit of the sequence $\{\frac{\dd}{\dd t}(|u_{\ell_n}|^{\alpha-2}u_{\ell_n})\}_{n=1}^\infty$ in $L^\infty(0,T;W^{-1,p'}(\Omega))$ as $n\to\infty$, as this sequence is in general not bounded independently of $n$ (viz.~\eqref{prelim:6}; where $n$ refer to the index of the subsequence $\{\ell_n\}_{n=1}^\infty$).

\addtocontents{toc}{\protect\setcounter{tocdepth}{-1}}

\section*{Conclusions}
In this article we proposed a new model governing the simultaneous evolution of the surface elevation in a shallow ice sheet lying on a flat bedrock and its internal temperature.
Since the ice internal temperature is defined over a \emph{moving domain}, whose shape is determined by the surface elevation of the shallow ice sheet under consideration, and since both the ice surface elevation and the ice internal temperature are subjected to physical constraints, the model takes the form of a coupled system of evolutionary variational inequalities. In order to remove the presence of a boundary term, in the \emph{thermal} variational inequalities, depending on the surface elevation of the shallow ice sheet under consideration, we apply a \emph{corrector} that cuts the ice internal temperature off near the ice surface. This gives rise to a model resembling to the diffuse-interface ones that have been widely studied in the literature for the Cahn--Hilliard equation. The dependence of the approximate model on the parameter $r_0$ still remains, as the initial condition is chosen in a way that it complies with the construction performed in Section~\ref{Sec:1bis}.

The intrinsic degeneracy for the model governing the evolution of the surface elevation in the shallow ice sheet under consideration inserts an additional layer of difficulty, that we overcome by resorting to a change of variables originally suggested by P.-A. Raviart~\cite{Raviart1970}.

In order to establish the existence of solutions for the problem under consideration, we first define a \emph{penalised} version for the formal model we derived, and we establish the existence of solutions for this penalised model by finite differences in time. Finally, we pass to the limit in the penalty parameter and we recover a model that agrees with the one we had recovered formally.

\section*{Acknowledgements}

The research of P.P. is currently being supported by the following agencies:
\begin{itemize}
	\item Natural Science Foundation of China (NSFC), grant number W2533011;
	\item ZJ Talent Program of the Guangdong Province, grant number 2024QN11X057;
	\item Peacock Talent Program - Type~C of the Shenzhen Municipality.
\end{itemize}

P.P. is grateful to Professor Maurizio Grasselli (Politecnico di Milano, Italy) for the insightful discussion on the literature concerning the Cahn--Hilliard equation and diffuse interface models.

\addtocontents{toc}{\protect\setcounter{tocdepth}{1}}

\bibliographystyle{abbrvnat} %unsrt	
\bibliography{references.bib}

\addresseshere

\newpage
% 5. Print the two glossaries separately
\printglossary[type=mathsymbols, title={Mathematical Symbols}]
\printglossary[type=physsymbols, title={Physics Symbols}]

%\newpage
%\appendix
%\input{appendix}

\end{document}